\documentclass[12pt,oneside,a4papper]{osajnl}
\usepackage{amsthm}
\usepackage{dsfont}
\usepackage{comment}
\journal{jocn} 

\setboolean{shortarticle}{false}
\title{\centering Nash-Stackelberg controllability for coupled systems of degenerate equations in non-cylindrical domains}

\author[2,3,*]{\centering \textcolor{black}{Alfredo S. Gamboa}}
\author[1]{\textcolor{black}{Juan B. Limaco}}
\author[1]{\textcolor{black}{Luis P. Yapu}}

\affil[1]{Universidade Federal Fluminense, Instituto de Matemática e Estatística, Niterói, Brazil}
\affil[2]{Universidad Privada Boliviana, Departamento de Ciencias Exactas, Cochabamba, Bolivia}
\affil[3]{Universidade Estadual do Rio de Janeiro, Escola Politécnica, Nova Friburgo, Brazil}

\affil[*]{\centering Corresponding author: alfredo.soliz@iprj.uerj.br}

\dates{\centering Compiled \today}

\doi{}

\begin{abstract}
In this paper we investigate the Hierarchical null controllability of a coupled degenerate semilinear parabolic equation in domains which are moving in time.
We show the local null controllability of the semilinear system using Liusternik's inverse function theorem.
Nevertheless, the main difficulty is to adapt a Carleman estimate for the controllability of the linearized otimality system, using a Carleman inequality for degenerate non-autonomous equation obtanied by the authors previously. 
\end{abstract}

\definecolor{mygreen}{RGB}{44,162,67}
\definecolor{mylilas}{RGB}{186,85,211}
\setboolean{displaycopyright}{false}

\def\R{\mathbb{R}}

\newcommand{\dpar}[2]{\frac{\partial #1}{\partial #2}}

\newcommand{\rea}{\mathbb{R}}

\newcommand{\dpd}[2]{\frac{\partial^2 #1}{\partial #2^2}}

\newcommand{\cara}{\mathbb{1}}

\newtheorem{defi}{Definition} %[chapter]
\newtheorem{teo}{Theorem} %[chapter]
\newtheorem{propo}{Proposition} %[chapter]
\newtheorem{lema}{Lemma} %[chapter]
\newtheorem{coro}{Corollary} %[chapter]

\setboolean{displaycopyright}{false}

\begin{document}
	
	\maketitle

    \section{Introduction}
    \label{sec:intro}
	
	Let us consider the open interval $\Omega = (0,1) \subset \mathbb{R}$, with boundary $\Gamma=\partial\Omega=\{0,1\}$. Take $T > 0$ and consider the cylinder $Q = \Omega\times (0, T)$, with lateral boundary $\Sigma=\Gamma\times (0, T)$. Let us consider a family of functions $\{\tau_t\}_{0\leq t\leq T}$ , where for each $t$, $\tau_t$ is a deformation of $\Omega$ into an open bounded interval $\Omega_t$ defined by
	$$\Omega_t=\{ x'\in \rea \ | \ x'=\tau_t(x), \ \ \text{for} \ \ x\in\Omega \}$$
    
	For $t = 0$, $\tau_0$ is the identity map and $\Omega_0$ is identified with $\Omega$. The smooth boundary of  $\Omega_t$ is denoted by $\Gamma_t$. The non-cylindrical domain $\widehat{Q}$ and its lateral boundary $\widehat{\Sigma}$ are defined by 
	$$\widehat{Q}=\bigcup_{0\leq t\leq T}\{\Omega_t\times\{t \}\}, \ \ \ \ \ \ \ \ \ \ \ \ \ \widehat{\Sigma}=\bigcup_{0\leq t\leq T}\{ \Gamma_t\times\{t\}  \},$$
	respectively.
	
	We assume the following regularity on the functions $\tau_t$ for $0\leq t\leq T$:
	\begin{description}
		\item[(R1)] $\tau_t$ is a  $C^2$ diffeomorphism from $\Omega$ to $\Omega_t$,
		\item[(R2)] The map $t \mapsto \tau_t$ is in $C^1([0,T];C^0(\overline{\Omega}))$.
	\end{description}
    
	Thus, we have a diffeomorphism $\tau : Q\rightarrow \widehat{Q}$ defined by
	$$(x,t)\in Q \rightarrow (x',t)\in \widehat{Q}, \quad \text{where} \quad x'=\tau_t(x).$$
	
	We are interested in the controllability of a degenerate semilinear parabolic system defined in the noncylindrical domain $\widehat Q$.
    The strategy of control is hierarchical since the system has one leader control and two followers controls pursuing independent goals. Let us consider the system:
    %\begin{comment}
	%\begin{equation}\label{eq1a}
	%	\begin{cases}
	%		u_{1t}-({a}(x')u_{1x'})_{x'}+c_{11}u_1+c_{12}u_2=\widehat{h}\cara_{_{\widehat{\mathcal{O}}}}+\widehat{v}^1\cara_{_{\widehat{\mathcal{O}}_1}}+\widehat{v}^2\cara_{_{\widehat{\mathcal{O}}_2}}, & \\
	%		u_{2t}-({a}(x')u_{2x'})_{x'}+c_{21}u_1+c_{22}u_2=0 & \ \ \ \text{en} \ \ \ \widehat{Q}\\
	%		u_1(0,t)=u_1(\ell(t),t)=u_2(0,t)=u_2(\ell(t),t)=0, & \ \ \ \text{en} \ \ \ \widehat{\Sigma}\\
	%		u_1(0)=u_0^1(x'), \ u_2(0)=u_2^0(x') & \ \ \ \text{en} \ \ \ \Omega_t
	%	\end{cases}
	%\end{equation}
    %\end{comment}
    \begin{equation}\label{sistemanocilin4}
	\begin{cases}
		u_{1t}-({a}(x')u_{1x'})_{x'}+F_1(u_1,u_2)=\widehat{h}\cara_{_{\widehat{\mathcal{O}}}}+\widehat{v}^1\cara_{_{\widehat{\mathcal{O}}_1}}+\widehat{v}^2\cara_{_{\widehat{\mathcal{O}}_2}}, & \ \ \ \text{in} \ \ \ \widehat{Q},\\
		u_{2t}-({a}(x')u_{2x'})_{x'}+F_2(u_1,u_2)=0, & \ \ \ \text{in} \ \ \ \widehat{Q},\\
		u_1(0,t)=u_1(\ell(t),t)=u_2(0,t)=u_2(\ell(t),t)=0, & \ \ \ \text{on} \ \ \ \widehat{\Sigma},\\
		u_1(0)=u_1^0(x'), \ u_2(0)=u_2^0(x'), & \ \ \ \text{in} \ \ \ \Omega_t,
	\end{cases}
    \end{equation}  
	where $u_1^0$ and $u_2^0$ are the initial data, $\widehat{h}$ is the control of the leader, $\widehat{v}^i$ are controls of the followers and $\cara_{_{A}}$ denotes the characteristic function of the set $A$.
    The set $\widehat{\mathcal{O}}\subset \Omega_t$ denotes the domain of the leader control $\widehat{h}=\widehat{h}(x',t)$, whereas
	$\widehat{\mathcal{O}}_1, \widehat{\mathcal{O}}_2\subset\Omega_t$ denote the domains of the followers $\widehat{v}^1=\widehat{v}^1(x',t)$ and $ \widehat{v}^2=\widehat{v}^2(x',t)$.
    %On the other hand, $\widehat{\mathcal{O}}_{1,d}$, $\widehat{\mathcal{O}}_{2,d}\subset \Omega_t$ are open sets denoting the observation domains of the followers.

    The function $a(\cdot)$ represents the (degenerate) diffusion coefficient and we assume that it vanishes at $x=0$ and satisfies the following conditions:
    \begin{subequations}\label{condiciones_a4}
    	\begin{eqnarray}
    		&&a\in C([0,1])\cap C^1((0,1]), \quad a>0 \ \text{in} \ (0,1] \quad \text{and} \quad a(0)=0,\label{condiciones_a_a4}\\
    		&&\exists K \in (0,1] : a\geq 0, \ \ x a'(x)\leq Ka(x), \ \ \forall x\in[0,1],\label{condiciones_a_b4}\\
    		&& a(x\cdot y)=a(x)a(y), \quad \forall x,y\in [0,1]. \label{condiciones_a_c4}
    	\end{eqnarray}
    \end{subequations}
    
    In other words, the function $a$ behaves like $x^\alpha$, with $\alpha\in(0, 1)$. This condition is called \emph{weakly degenerate} in \cite{Alabau06}.
    On the other hand, for $i=1,2$, the coupling functions $F_i$ verify
    %\color{red}
    \begin{subequations}\label{condiciones_a4f}
    	\begin{eqnarray}
    		&&F_i(0,0)=0, \label{condiciones_a_a4f}\\
    		&&F_i\in C^2(\mathbb{R\times \R}),\label{condiciones_a_b4f}\\
    		&& \exists M>0 : \sum_{j=1}^2|D_j F_i(r,s)| + \sum_{j,k=1}^2 |D^2_{j,k}F''_i(r,s)|\leq M, \ \ \ \forall r,s\in \mathbb{R}.  \label{condiciones_a_c4f}
    	\end{eqnarray}
    \end{subequations}
    %\color{black}
    
    \noindent\textbf{Cost functionals in the noncylindrical domain $\widehat{Q}$}. [Associated to the solution $u_i$, $i=1,2$, of \eqref{sistemanocilin4}].  

    We denote by $\widehat{\mathcal{O}}_{1,d}$ and  $\widehat{\mathcal{O}}_{2,d}$ open sets of $\Omega_t$ defining the observation domains of the followers. For each follower, we consider the (secondary) functional:
    \begin{multline}\label{funcionalcilindro4}
        \widehat{J}_i(\widehat{h},\widehat{v}^1,\widehat{v}^2)=\frac{\alpha_i}{2}\iint_{\widehat{\mathcal{O}}_{i,d}\times(0,T)}\left(|u_1-u^i_{1,d}|^2+|u_2-u^i_{2,d}|^2\right)\ \left| Jac(\tau_t)\right|^{-1}dx' dt\\
	   +\frac{\mu_i}{2}\iint_{\widehat{\mathcal{O}}_{i}\times(0,T)}\rho_*^2|\widehat{v}^i|^2\ \left| Jac(\tau_t)\right|^{-1} dx' dt, \ \ \ \ \ \ \ i=1,2.
    \end{multline}
where $\alpha_i, \mu_i >0$ are constants and $(u^i_{1,d},u^i_{2,d})\in L^2((0,T);\widehat{\mathcal{O}}_{1,d})\times L^2((0,T);\widehat{\mathcal{O}}_{2,d})$ are given functions and $\rho_*=\rho_*(t)\in C^\infty ([0,T])$ is a weight function that blows up at $t=0$ and $t=T$.

%	Consideremos los funcionales asociados a los seguidores y líder %\limits 
%	\begin{eqnarray}\label{ec2a}
		%\widehat{J}_i(\widehat{h},\widehat{v}^1,\widehat{v}^2)&=&\frac{\alpha_i}{2}\iint_{\widehat{\mathcal{O}}_{i,d}\times(0,T)}\left(|u_1-u^1_{i,d}|^2+|u_2-u^i_{2,d}|^2\right)\ \left| J(\tau_t)\right|^{-1}dx' dt+\frac{\mu_i}{2}\iint_{\widehat{\mathcal{O}}_{i}\times(0,T)}\rho_*^2|\widehat{v}^i|^2\ \left| J(\tau_t)\right|^{-1} dx' dt\nonumber\\
	%\end{eqnarray}
	
	%onde $\alpha_i>0$, $\mu_i>0$ são contantes e $u_d^i=(u^i_{1d},u^i_{2d})\in L^2((0,T),\widehat{\mathcal{O}}_{1d	})\times L^2((0,T),\widehat{\mathcal{O}}_{2d})$ são funções dadas y $\rho_*=\rho_*(t)\in C^\infty ([0,T])$ son funciones peso que explotan en $t=0$ y $t=T$ 

%------------------------------------------------------

We can describe the Stackelberg-Nash control process in the following way:
\begin{enumerate}
	\item The followers $\widehat{v}^i$ assume that the leader $\widehat{h}$ has made a choice and intend to be in Nash equilibrium for the costs $\widehat{J}_i$. That is, once $\widehat{h}$ has been fixed, we look for controls $ \widehat{v}^i\in  L^2(\widehat{\mathcal{O}}_i\times  (0,T))$ that satisfy
	\begin{equation}\label{minimo4}
		\widehat{J}_1(\widehat{h};\widehat{v}^1,\widehat{v}^2):=\min_{\widetilde{\widehat{v}^1}}\widehat{J}_1(\widehat{h};\widetilde{\widehat{v}^1},\widehat{v}^2), \ \ \ \ \ \ 	\widehat{J}_2(\widehat{h};\widehat{v}^1,\widehat{v}^2):=\min_{\widetilde{\widehat{v}^2}}\widehat{J}_2(\widehat{h};\widehat{v}^2,\widetilde{\widehat{v}^2}).
	\end{equation}
	\begin{defi}\label{defiequilibrio4}
		Any pair $(\widehat{v}^1,\widehat{v}^2)$ satisfying (\ref{minimo4}) is called a Nash equilibrium for $\widehat{J}_1$ and $\widehat{J}_2$.
	\end{defi}
	Note that, if the functionals $\widehat{J}_i$, $i=1,2$,  are convex, then $(\widehat{v}^1,\widehat{v}^2)$ is a Nash equilibrium if and only if
	\begin{equation}\label{derivada14}
		\widehat{J}'_1(\widehat{h};\widehat{v}^1,\widehat{v}^2)(\widetilde{\widehat{v}^1},0)=0, \ \ \ \ \ \forall \widetilde{\widehat{v}^1}\in L^2(\widehat{\mathcal{O}}_1\times (0,T)),
	\end{equation}
	and 
	\begin{equation}\label{derivada24}
		\widehat{J}'_2(\widehat{h};\widehat{v}^1,\widehat{v}^2)(0,\widetilde{\widehat{v}^2})=0, \ \ \ \ \ \forall \widetilde{\widehat{v}^2}\in L^2(\widehat{\mathcal{O}}_2\times (0,T)).
	\end{equation}
	\begin{defi}\label{quasi4}
		Any pair $(\widehat{v}^1,\widehat{v}^2)$ satisfying (\ref{derivada14}) and (\ref{derivada24}) is called a Nash quasi-equilibrium for $\widehat{J}_1$ and $\widehat{J}_2$.
	\end{defi}
	\item Once the Nash equilibrium has been identified and fixed for each $\widehat{h}$, we look for a control $\widetilde{\widehat{h}}\in L^2(\widehat{\mathcal{O}}\times (0,T))$ giving of null controllability, i.e., such that
	\begin{equation}\label{nulo}
		u_1(T)=u_2(T)=0, \ \ \ \ \ \text{in} \ \ \ \Omega_T.
	\end{equation}
\end{enumerate}

Our first main result shows the existence of Nash quasi-equilibrium $(\hat v_1,\hat v_2)$ for $\hat J_1$ and $\hat J_2$, such that we obtain local null controllability of  \eqref{sistemanocilin4}. Since the equation is degenerate, the initial condition $u_0$ is taken in the weighted space $H_a^1(\hat \Omega)$,  defined in \cite{Alabau06} in the case of a cylindrical domain.

\begin{teo}
	\label{thm:local_null_controllability4}
	Let us assume the hypothesis considered in the system \eqref{sistemanocilin4} and that,
	$$
	\hat{\mathcal{O}}_{i,d} \cap \hat{\mathcal{O}} \neq \emptyset, \ \ \ \ \ \ \text{for}\quad i=1,2.
	$$
	and
	$$
	\hat{\mathcal{O}}_{d} := \hat{\mathcal{O}}_{1,d} = \hat{\mathcal{O}}_{2,d}.
	$$
	Then, for any $T>0$ there exist $\varepsilon>0$ and a positive function $\rho_2(t)$ blowing up at $t=T$ such that, if $\rho_2 u^i_{d} \in L^2(\hat{\mathcal{O}}_{d} \times (0,T))$ and $u^0_1, u^0_2 \in H_a^1(\Omega)$ verify
	$$
	\|(u^0_1, u^0_2)\|_{H_a^1(\hat \Omega_0)} \leq \varepsilon, 
	$$
	then there exists a control $\hat h \in L^2(\hat{\mathcal{O}}\times(0,T))$ and an associated Nash quasi-equilibrium $(\hat v^1,\hat v^2)$ such that the solution of \eqref{sistemanocilin4} is null-controllable at time $T$.
\end{teo}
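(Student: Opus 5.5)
We first transform the problem to the cylinder $Q=\Omega\times(0,T)$ through the diffeomorphism $\tau$ given by (R1)--(R2). Setting $y_i(x,t)=u_i(\tau_t(x),t)$, system \eqref{sistemanocilin4} becomes a coupled degenerate parabolic system on $Q$ with time-dependent coefficients: a principal part of the form $-(b(x,t)y_{ix})_x$ with $b$ comparable to $a(x)$ (this uses the multiplicativity \eqref{condiciones_a_c4}, so that $a(\tau_t(x))$ factors in a controlled way), plus first-order drift terms coming from $\partial_t\tau_t$. The controls are supported in the pulled-back sets $\mathcal{O}=\tau_t^{-1}(\widehat{\mathcal{O}})$, etc. The Jacobian factors $|Jac(\tau_t)|^{-1}$ in \eqref{funcionalcilindro4} are chosen exactly so that $\widehat J_i$ become standard quadratic functionals on $Q$. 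Null controllability of $(y_1,y_2)$ is then equivalent to that of $(u_1,u_2)$.

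Next we characterize Nash quasi-equilibria. For fixed $h$ we differentiate $J_i$ as in \eqref{derivada14}--\eqref{derivada24} and introduce adjoint states $p^i=(p^i_1,p^i_2)$, $i=1,2$, solving backward linearized degenerate systems with sources $\alpha_i(y-y^i_d)\cara_{\mathcal{O}_d}$ and zero final data. This gives $v^i=-\frac{1}{\mu_i}\rho_*^{-2}p^i_1\cara_{\mathcal{O}_i}$. We thus obtain an optimality system coupling $(y,p^1,p^2)$. For $\mu_i$ large, or by a fixed-point argument in the linear case, this system is well posed, so the followers' strategy is determined by $h$. The null controllability question then becomes the question of controlling this forward--backward system with the single control $h$.

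We then treat the linearized optimality system at $0$, in which the $F_i$ are replaced by $D_jF_i(0,0)$. Its adjoint is a forward--backward system in $(\varphi,\psi^1,\psi^2)$, and we want the observability inequality
\[
\iint_Q \rho^{-2}\big(|\varphi|^2+|\psi^1|^2+|\psi^2|^2\big)+\|\varphi(0)\|^2\le C\iint_{\mathcal{O}\times(0,T)}\rho_0^{-2}|\varphi_1|^2 .
\]
To prove it, we apply the previously obtained Carleman inequality for degenerate non-autonomous equations to each component. The zero-order coupling terms are absorbed using \eqref{condiciones_a_c4f} and large Carleman parameters. The observations on $\mathcal{O}_d$ coming from the $\psi^i$ terms are transferred to $\mathcal{O}$ using $\mathcal{O}_d\cap\mathcal{O}\ne\emptyset$ together with the common observation set $\mathcal{O}_d$, which lets us combine $\psi^1,\psi^2$ linearly. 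The observation of $\varphi_2$ is eliminated through the coupling $D_1F_2$ by the usual local energy argument on $\varphi_2$ in a subset of $\mathcal{O}$.

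\textbf{This step is the main obstacle.} The Carleman weights must be compatible with both the degeneracy at $x=0$ and the moving domain. The weights $\rho_*$ and $\rho_2$ must also be chosen so that the backward adjoints, which are not singular at $t=0$, are handled by weights blowing up only at $t=T$. This is done by the standard modification of the weights on $(0,T/2)$ combined with energy estimates.

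From the observability inequality we obtain, by a Lax--Milgram/penalization argument, a control $h$ and a solution of the linearized optimality system in weighted spaces $E$. In these spaces $\rho_2 y\in L^2$, so in particular $y(T)=0$, and the equations hold with weighted $L^2$ right-hand sides. Finally, we define the map $\mathcal{A}(y,p^1,p^2,h)=(\text{equations},\,y(0))$ from $E$ into the weighted $L^2$ data space times $H^1_a(\Omega)^2$. We check that $\mathcal{A}$ is $C^1$, which uses the $C^2$ bounds \eqref{condiciones_a_c4f} and degenerate embedding estimates in $H^1_a$. Its derivative at $0$ is onto by the linear result, so Liusternik's inverse function theorem yields the required $\varepsilon$. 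Pulling the solution back by $\tau^{-1}$ gives $\hat h$, $\hat v^i$ and $u_1(T)=u_2(T)=0$.
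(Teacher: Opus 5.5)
Your proposal follows essentially the same route as the paper: change of variables to the cylinder with time-dependent coefficients $b(t)$, $B(x,t)$; characterization of the quasi-equilibrium through adjoint states; a Carleman estimate for the adjoint of the linearized optimality system, in which $\psi^1,\psi^2$ are combined via the common set $\mathcal{O}_d$, the local terms of $\alpha_1\psi^1_j+\alpha_2\psi^2_j$ on $\mathcal{O}_d\cap\mathcal{O}$ are removed through the $\varphi$-equations and the local $\varphi_2$ term through the coupling $D_1F_2(0,0)$; weights modified on $(0,T/2)$ by energy estimates; then Lax--Milgram and Liusternik's theorem. As in the paper, two hypotheses stay tacit in your sketch: removing the $\varphi_2$ observation needs $D_1F_2(0,0)\neq 0$, and since $\mathcal{A}(0)$ contains the terms $\alpha_i y^i_{j,d}$ on $\mathcal{O}_d$, Liusternik's theorem applied at $0$ reaches the target $(0,\dots,0,y^0_1,y^0_2)$ only if $\|\rho_2 y^i_{j,d}\|_{L^2}$ is small as well, not merely finite.
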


    Our next result shows conditions under which the Nash quasi-equilibrium $(\hat v_1,\hat v_2)$ obtained in Theorem \ref{thm:local_null_controllability4} (see also Proposition \ref{propocaracteriza} in Section \ref{sec:characterization}) is a Nash equilibrium. More precisely, if  $\mu_i$ , $i = 1, 2$ are sufficiently large, then the functionals $J_i , i = 1, 2$ given by  \eqref{eqprin24} are convex.
	\begin{teo}\label{teoconvex_intro}
        Under the hypotheses of Theorem \ref{thm:local_null_controllability4}. If if $(\hat{v}^1,\hat{v}^2)$ in a Nash quasi-equilibrium for $\widehat J_i, i=1,2$, and $\mu_i, i=1,2$ are sufficiently large,  then there exists a constant $C>0$ independent of $\mu_i$, $i=1,2$ such
		\begin{equation*}%\label{ecsegundaderivada}
			D_i^2 \widehat J_i (\widehat h; \hat{v}^1,\hat{v}^2)\cdot (\widetilde{v}^i,\widetilde{v}^i)\geq C\|\widetilde{v}^i\|^2_{L^2(\mathcal{O}_i\times (0,T)} \ \ \ \ \ \ \ \ \
			\forall \widetilde{v}^i \in L^2( \mathcal{\widehat O}_i\times (0,T)), \ \ \ i=1,2.
		\end{equation*}
	\end{teo}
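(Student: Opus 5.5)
We follow the standard approach for semilinear Stackelberg--Nash problems. First transfer everything to the cylinder $Q$ through the diffeomorphism $\tau$, so that $\widehat J_i$ becomes a functional $J_i$ on $Q$; the factor $|Jac(\tau_t)|^{-1}$ is designed to cancel the change of variables. Then compute the second derivative of $J_i$ with respect to $v^i$. Fix $h$ and $(v^1,v^2)$ with associated state $u=(u_1,u_2)$. For a direction $\widetilde v^i$ and $s\in\mathbb{R}$, write $u^s$ for the state associated with $v^i+s\widetilde v^i$. Let $w=(w_1,w_2)=\frac{d}{ds}u^s|_{s=0}$; it solves the linearized degenerate system
\[
w_{1t}-(a w_{1x})_x+D_1F_1(u)w_1+D_2F_1(u)w_2=\widetilde v^i\cara_{\mathcal O_i},\quad w_{2t}-(a w_{2x})_x+D_1F_2(u)w_1+D_2F_2(u)w_2=0,
\]
with zero initial and boundary data. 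Here the coefficients coming from $\tau$ are bounded by (R1)--(R2). Let $z=\frac{d^2}{ds^2}u^s|_{s=0}$. It solves the same linear system with right-hand sides $-\sum_{j,k}D^2_{jk}F_\ell(u)w_jw_k$, $\ell=1,2$, and zero control.

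Differentiating $J_i$ twice gives
\[
D_i^2J_i\cdot(\widetilde v^i,\widetilde v^i)=\alpha_i\iint_{\mathcal O_d\times(0,T)}\big(|w_1|^2+|w_2|^2\big)+\alpha_i\iint_{\mathcal O_d\times(0,T)}\big((u_1-u^i_{1,d})z_1+(u_2-u^i_{2,d})z_2\big)+\mu_i\iint_{\mathcal O_i\times(0,T)}\rho_*^2|\widetilde v^i|^2 .
\]
The first term is nonnegative. Since $\rho_*$ blows up at $t=0,T$, it is bounded below by a positive constant, so the last term is at least $c\mu_i\|\widetilde v^i\|^2_{L^2}$. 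It remains to bound the middle term by $C\|\widetilde v^i\|^2$ with $C$ independent of $\mu_i$.

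To do this, introduce the adjoint $\phi=(\phi_1,\phi_2)$. It solves the backward adjoint of the linearized system with source $(u_\ell-u^i_{\ell,d})\cara_{\mathcal O_d}$ and $\phi(T)=0$. Duality then gives the middle term as
\[
-\alpha_i\iint_Q\sum_\ell\phi_\ell\sum_{j,k}D^2_{jk}F_\ell(u)w_jw_k .
\]
By \eqref{condiciones_a_c4f} it is bounded by $CM\|\phi\|_{L^\infty(Q)}\|w\|^2_{L^2(Q)}$. The energy estimate for the degenerate linearized system (multiply by $w$, use the bounded coefficients and Gronwall) gives $\|w\|_{L^2(Q)}\le C\|\widetilde v^i\|_{L^2}$. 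Hence it suffices to have $\phi\in L^\infty(Q)$, uniformly with respect to $\mu_i$.

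This $L^\infty$ bound is the main obstacle. To get it, we would use that in one dimension with weak degeneracy ($K<1$) the space $H^1_a(0,1)$ embeds continuously into $C([0,1])$. Indeed, $|w(x)|\le\int_0^x|w_x|\le\|\sqrt a w_x\|_{L^2}\,(\int_0^1 a^{-1})^{1/2}$, and $1/a$ is integrable because $a\sim x^\alpha$ by \eqref{condiciones_a_c4}. A parabolic regularity estimate in $L^2(0,T;H^1_a)\cap H^1(0,T;L^2)$, hence $C([0,T];H^1_a)$, then bounds $\|\phi\|_{L^\infty}$ by $C\|u-u^i_d\|_{L^2(\mathcal O_d\times(0,T))}$.

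Uniformity in $\mu_i$ comes from the fixed point of Theorem \ref{thm:local_null_controllability4}. There $\|u\|$ is bounded by the small data $\varepsilon$ and by $\|\rho_2u_d^i\|$, and $\rho_2\ge c>0$. We must check that these bounds, coming from the Carleman/Liusternik argument, do not deteriorate as $\mu_i$ grows; we expect they stay bounded because larger $\mu_i$ only makes the followers' optimality system better conditioned. We conclude $D^2_iJ_i\ge(c\mu_i-C)\|\widetilde v^i\|^2$, which is at least $C'\|\widetilde v^i\|^2$ once $\mu_i$ is large. Transporting back by $\tau$ gives the statement for $\widehat J_i$.
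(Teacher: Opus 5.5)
Your computation is the paper's proof of Theorem \ref{teoconvex} with the differentiation done in a different order. You differentiate the state twice and pair $z$ with the adjoint; the paper differentiates the adjoint (the system for $\eta^1$) and pairs it with $\theta$, your $w$. Both give the same three pieces: the nonnegative term $\alpha_i\iint_{\mathcal O_d\times(0,T)}|\theta|^2$, a term $\iint_Q \sum_\ell p^i_\ell\, D^2F_\ell(y)[\theta,\theta]$, and $\mu_i\iint\rho_*^2|\widetilde v^i|^2$. Both also control the middle term through $H^1_a\hookrightarrow C([0,1])$.

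\textbf{The gap.} What is missing is the step that carries the content of the statement: a bound on the adjoint that does not depend on $\mu_i$. You propose to import it from the Liusternik construction of Theorem \ref{thm:local_null_controllability4}, and you only ``expect'' that it does not deteriorate as $\mu_i$ grows. Nothing in the paper supports this. The map $\mathcal A$ depends on $\mu_i$, hence so do the constants $\delta$ and $K$ of Theorem \ref{Liusternik}, and uniformity of these constants in $\mu_i$ is never established. In addition, Theorem \ref{teoconvex} concerns an arbitrary $h\in L^2(\mathcal O\times(0,T))$ and an arbitrary quasi-equilibrium, not only the one produced by the inverse function theorem.

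\textbf{How to close it.} Use a direct absorption argument on the optimality system \eqref{optimal1}--\eqref{optimal2}; this is the content of the paper's estimate \eqref{eq:energy_p_ik}. Write $y$ for the state (your $u$).
\begin{itemize}
\item The adjoint equations are linear, with zero-order coefficients $D_jF_k(y)$ bounded by $M$. An energy estimate therefore gives $\|p^i\|_{L^\infty(0,T;L^2)\cap L^2(0,T;H^1_a)}\le C\|y-y^i_d\|_{L^2(\mathcal O_d\times(0,T))}$ with $C$ independent of $\mu_i$.
\item The state equation has a globally Lipschitz nonlinearity with $F_\ell(0,0)=0$. Its source is $h$ plus $-\mu_i^{-1}\rho_*^{-2}p^i_1$ on $\mathcal O_i$, and $\rho_*^{-2}$ is bounded.
\item Combining the two, $\|y\|_{L^2(Q)}\le C(\|y^0\|+\|h\|)+C\sum_i\mu_i^{-1}\bigl(\|y\|_{L^2(Q)}+\|y^i_d\|\bigr)$. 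For $\mu_i\ge\mu_0$ the $y$-term on the right is absorbed into the left.
\end{itemize}
This bounds $p^i$ by $\|h\|$, $\|y^0\|$, $\|y_d\|$ only, uniformly for $\mu_i\ge\mu_0$. Your argument then yields $D_i^2J_i\ge(c\mu_i-C)\|\widetilde v^i\|^2$ with $C$ independent of $\mu_i$.

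\textbf{A smaller point.} $L^2(0,T;H^1_a)\cap H^1(0,T;L^2)$ does not embed into $C([0,T];H^1_a)$; you would also need $(a\phi_x)_x\in L^2(Q)$. You can avoid requiring $\phi\in L^\infty(Q)$ at all by pairing as the paper does. Bound the middle term by $\sup_t\|\theta(t)\|^2_{L^2}\int_0^T\|p(t)\|_{L^\infty(0,1)}\,dt$ and use $\|p(t)\|_{L^\infty(0,1)}\le C\|p(t)\|_{H^1_a}$. Then only the basic $L^2(0,T;H^1_a)$ energy bound on the adjoint is needed.
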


    For hierarchical control in coupled parabolic systems, Kéré, Marcan and Mophou \cite{francessis27} proposed a bi-objective control strategy with finite constraints in one of the states. These results were based on a Carleman observability inequality adapted to these constraints. In the works \cite{francessis22} and \cite{frances23}, V. Hernández-Santamaría, de Teresa and Poznyak  estudied a Stackelberg–Nash strategy for cascade systems of parabolic equations, as well as for cascade systems where the leader is modeled as a vector function acting on both equations. More recently, Límaco and Huaman \cite{frances33} applied the Stackelberg–Nash strategy to a coupled quasi-linear parabolic system, with controls acting inside the domain.

    In all the previous studies, hierarchical strategies were applied to non-degenerate systems. To the best of our knowledge, no works in the literature addresses the Stackelberg–Nash strategy in the context of a coupled degenerate parabolic system. As far as we know, the only studies examining a hierarchical strategy applied to a degenerate equation are those of Araruna, Araujo and Fernandez-Cara \cite{ararunano} and Djomegne, Kenne, Dorville and Zongo \cite{sistemacuate}, where the Stackelberg–Nash strategy for semi-linear degenerate parabolic equations in fixed domains is analyzed.

    The present work introduces a novelty by extending the results on the Stackelberg–Nash control to the coupled degenerate parabolic equation in moving domains, representing an advance in the application of hierarchical strategies to degenerate systems. It is based on a Carleman inequality for degenerate non-autonomous parabolic equation obtained by the authors in \cite{GYL-Carleman-2025} and also \cite{GYL-equation-2025} where the authors study the hierarchical control of one degenerate equation in moving domains.

%---------------------------------------------------------

    The paper is organized as follows. In Section \ref{sec:characterization} we characterize the follower controls in Nash quasi-equilibrium and prove Theorem \ref{teoconvex_intro}. In Section \ref{sec:carleman_estimates} we prove a Carleman estimate for the linearized optimality system using a Carleman inequality for non-autonomous equations in \cite{GYL-Carleman-2025}. In Section \ref{control for nonlinear system} we apply Liusternik's inverse function Theorem to show local null controllability of our semilinear system \eqref{sistemanocilin4}. 
    %Finally in Section ... we present some final remarks and related problems.
    Finally, in Appendices \ref{appendix_a} and \ref{appendix_b} we prove some technical results related to the Carleman inequality for our linearized optimality system.

\section{Characterization of Nash Equilibrium}
\label{sec:characterization}

First we use a change of variables given by a diffeomorphism to pass from the non-cylindrical domain $\widehat{Q}$ to a cylindrical domain $Q$, at the cost of new time-dependent coefficients of the transformed equation in $Q$.  

\subsection{Passing to the cylindrical domain}

The methodology in this paper consists in changing the non-cylindrical state equation (\ref{sistemanocilin4}) to a cylindrical one by the diffeomorphism $\tau : Q \rightarrow \widehat{Q}$. Its inverse is given by
$$\tau_t^{-1}: \widehat{Q}\rightarrow Q \ \ \text{such that } \ (x',t)\in \widehat{Q} \rightarrow (x,t)\in {Q}, \ \ \ \ \text{with} \ \ \ x=\tau_t^{-1}(x')=\psi(x',t)$$
The domains are transformed in the following way
$$\mathcal{O} \rightarrow \widehat{\mathcal{O}}, \ \quad  \mathcal{O}_1 \rightarrow \widehat{\mathcal{O}}_1, \ \quad 
\mathcal{O}_2 \rightarrow \widehat{\mathcal{O}}_2, \ \quad \mathcal{O}_{i,d} \rightarrow \widehat{\mathcal{O}}_{i,d}, \ \ i=1,2.$$

Then, we have the functions in the new variable $x$,
$$\widehat{h}(x',t)=h(\tau_t(x),t), \ \ \ \widehat{v}^i(x',t)=v^i(\tau_t(x),t), i=1,2, \quad
u_i(x',t)=y_i(x,t)=y_i\left(\tau_t^{-1}(x'),t\right),$$
$$\mathds{1}_{\widehat{\mathcal{O}}}(x',t)=\mathds{1}_{{\mathcal{O}}}(\tau_t(x),t), \ \ \ \ \mathds{1}_{\widehat{\mathcal{O}}_i}(x',t)=\mathds{1}_{{\mathcal{O}}_i}(\tau_t(x),t), \ \ \ i=1,2.$$	
Finally, applying the diffeomorphism to the system (\ref{sistemanocilin4}), then we obtain the following system:
{\small	
	\begin{equation}\label{ec1p4}
		\hspace*{-0.5cm}	\begin{cases}
			y_{1t}-\left( \dpar{\psi}{x'}(\tau_t(x),t) \right)^2{a}(\tau_t(x))y_{1xx}+\left(\dpar{\psi}{t}(\tau_t(x),t)-{a}'(\tau_t(x))\dpar{\psi}{x'}-{a}(\tau_t(x))\dpd{\psi}{x'}(\tau_t(x),t) \right)y_{1x}\\
			\hspace*{7.5cm}+F_1(y_1,y_2) ={h}\cara_{_{{\mathcal{O}}}}+{v}^1\cara_{_{{\mathcal{O}}_1}}+{v}^2\cara_{_{{\mathcal{O}}_2}}, & \text{in} \ \ {Q},\\
			y_{2t}-\left( \dpar{\psi}{x'}(\tau_t(x),t) \right)^2{a}(\tau_t(x))y_{2xx}+\left(\dpar{\psi}{t}(\tau_t(x),t)-{a}'(\tau_t(x))\dpar{\psi}{x'}-{a}(\tau_t(x))\dpd{\psi}{x'}(\tau_t(x),t) \right)y_{2x}\\
			\hspace*{10.8cm}+F_2(y_1,y_2) =0, & \text{in} \ \ {Q},\\
			y_1(0,t)=y_1(1,t)=y_2(0,t)=y_2(1,t)=0, & \text{in} \ \ {(0,T)},\\
			y_1(0)=u_1^0(\tau_0(x)), y_2(0)=u_2^0(\tau_0(x)), &  \text{in} \ \ \Omega.
		\end{cases}
	\end{equation}
	}
	%\begin{equation}\label{ec1p}
	%	\hspace*{-0.5cm}	\begin{cases}
		%		y_t-\left( \dpar{\psi}{x'}(\tau_t(x),t) \right)^2{a}(\tau_t(x))y_{xx}+\left(\dpar{\psi}{t}(\tau_t(x),t)-{a}'(\tau_t(x))\dpar{\psi}{x'}-{a}(\tau_t(x))\dpd{\psi}{x'}(\tau_t(x),t) \right)y_x\\
		%		\hspace*{6cm}+F\left(y,\dpar{\psi}{x'} \sqrt{a(\tau_t(x))}y_x\right)={h}\cara_{_{{\mathcal{O}}}}+{v}^1\cara_{_{{\mathcal{O}}_1}}+{v}^2\cara_{_{{\mathcal{O}}_2}}, & \text{in} \ \ {Q}\\
		%		y=0, & \text{in} \ \ {\Sigma}\\
		%		y(0)=v_0(\tau_0(x))=y_0, &  \text{in} \ \ \Omega
		%	\end{cases}
	%\end{equation}

Since the problem is treated in one dimension, we can explicitly define the diffeomorphism as follows. Let $\Omega_t=\{x'\in\mathbb{R} \ | \ 0<x' <\ell(t)\}$,   $x=\tau_t^{-1}(x')=\psi(x',t)=\frac{x'}{\ell(t)}$ and\\ $\Omega=\{x \in \mathbb{R} \ | \ 0< x<1 \}$, where the function $\ell(t)$ is positive, i.e. $\ell(t)>0$ for all $t$ in $(0,T)$. The function $\ell(t)$ must also satisfy that $\frac{\ell'(t)}{\ell(t)}\leq C$ with $C>0$.
	
Then, we have that $$\dpar{\psi}{x'}(\tau_t(x),t)=\frac{1}{\ell(t)}, \ \ \ \ \ \dpd{\psi}{x'}(\tau_t(x),t)=0, \ \ \ \ \ \ \dpar{\psi}{t}(\tau_t(x),t)=-x'\frac{\ell'(t)}{\ell(t)^2}=-x\ell(t)\frac{\ell'(t)}{\ell(t)^2}=-\frac{\ell'(t)}{\ell(t)}x$$
and
	\begin{equation}\label{eqprin54}
		\begin{cases}
			y_{1t}-\frac{1}{\ell(t)^2}\left({a}(\ell(t)x)y_{1x}\right)_x-\frac{\ell'(t)}{\ell(t)}xy_{1x}+F_1(y_1,y_2)={h}\cara_{_{{\mathcal{O}}}}+{v}^1\cara_{_{{\mathcal{O}}_1}}+{v}^2\cara_{_{{\mathcal{O}}_2}}, & \ \ \ \text{in} \ \ \ {Q},\\
			y_{2t}-\frac{1}{\ell(t)^2}\left({a}(\ell(t)x)y_{2x}\right)_x-\frac{\ell'(t)}{\ell(t)}xy_{2x}+F_2(y_1,y_2)=0, & \ \ \ \text{in} \ \ \ {Q},\\
			y_1(0,t)=y_2(1,t)=y_2(0,t)=y_2(1,t)=0, & \ \ \ \text{on} \ \ \ (0,T),\\
			y_1(0)=y_1^0(x), \ y_2(0)=y_2^0(x), & \ \ \ \text{in} \ \ \ \Omega.
		\end{cases}
	\end{equation}
	Let us define
    \begin{equation}\label{eq:def_b_B}
        b(t)=\frac{a(\ell(t))}{\ell(t)^2}, \qquad B(x,t)=\frac{\ell'(t)x}{\ell(t)\sqrt{a}}.
    \end{equation}
    Then, 
	\begin{equation}\label{eqprin544}
	\begin{cases}
		y_{1t}-b(t)\left({a}(x)y_{1x}\right)_x-B(x,t)\sqrt{a}y_{1x}+F_1(y_1,y_2)={h}\cara_{_{{\mathcal{O}}}}+{v}^1\cara_{_{{\mathcal{O}}_1}}+{v}^2\cara_{_{{\mathcal{O}}_2}}, & \ \ \ \text{in} \ \ \ {Q},\\
		y_{2t}-b(t)\left({a}(x)y_{2x}\right)_x-B(x,t)\sqrt{a}y_{2x}+F_2(y_1,y_2)=0, & \ \ \ \text{in} \ \ \ {Q},\\
		y_1(0,t)=y_2(1,t)=y_2(0,t)=y_2(1,t)=0, & \ \ \ \text{on} \ \ \ (0,T),\\
		y_1(0)=y_1^0(x), \ y_2(0)=y_2^0(x), & \ \ \ \text{in} \ \ \ \Omega.
	\end{cases}
\end{equation}	
The functionals for $J_i$, $i=1,2$, take the form
\begin{multline}\label{eqprin24}
	{J}_i({h},{v}^1,{v}^2)=\frac{\alpha_i}{2}\iint_{{\mathcal{O}}_{i,d}\times(0,T)}\left(|y_1-y^i_{1,d}|^2+|y_2-y^i_{2,d}|^2\right) dx dt
	+\frac{\mu_i}{2}\iint_{{\mathcal{O}}_{i}\times(0,T)}\rho_*^2|{v}^i|^2\  dx dt. %, \ \ \ i=1,2.
\end{multline}
    	
	We will suppose the following
	\begin{equation}\label{ec9}
		\mathcal{O}_{1,d}=\mathcal{O}_{2,d}, %=\mathcal{O}_{d},
	\end{equation}
    and we denote both sets by $\mathcal{O}_{d}$.
%	Assim, denotaremos esses conjuntos por $\mathcal{O}_{d}$, ver abaixo, na Seção 5, alguns comentários sobre a necessidade da hipótese.\\
	 %a controlabilidade exata das trajetórias é equivalente à propriedade de controlabilidade nula. Vale o seguinte resultado:

%\subsection{\bf Caracterização do Equilíbrio de Nash}

\subsection{Characterization of the Nash-quase equilibrium and the optimality system}
\label{subsec:characterization}

First, we give the following characterization of Nash quasi-equilibrium pair (recall Definition \ref{defiequilibrio4}) $(v^1,v^2)$ of \eqref{sistemanocilin4} for the functionals  $J_i$, $i = 1, 2$ given by \eqref{eqprin24}.
	
\begin{propo}\label{propocaracteriza}
	Let $h\in L^2(\mathcal{O}\times(0,T))$ and assume that $\mu_i, i = 1, 2$ are sufficiently large. Also let   $(v^1,v^2) \in L^2((0,T);L^2(\mathcal{O}_1))\times L^2((0,T);L^2(\mathcal{O}_2))$ be the Nash
	quasi-equilibrium pair for $(J_1, J_2)$. Then, there exists $p^i=(p_1^i, p_2^i)\in \mathbb{H}$  such that the Nash quasi-equilibrium pair   $(v^1,v^2)$ is characterized by
	\begin{equation}\label{caracteriza}
		v^i=-\frac{1}{\mu_i}\rho_*^{-2}p_1^i\cara_{_{\mathcal{O}_i}},
	\end{equation}
	where $y = (y_1, y_2)$ and $p^i = (p^i_1, p^i_2)$ are solutions of the following optimality system:
	\begin{equation}\label{optimal1}
		\begin{cases}
			y_{1t}-b(t)\left({a}(x)y_{1x}\right)_x-B\sqrt{a}y_{1x}+F_1(y_1,y_2)\\
			\hspace*{6cm}={h}\cara_{_{{\mathcal{O}}}}-\frac{1}{\mu_1}\rho_*^{-2}p_1^1\cara_{_{{\mathcal{O}}_1}}-\frac{1}{\mu_2}\rho_*^{-2}p_1^2\cara_{_{{\mathcal{O}}_2}}, & \ \ \ \text{in} \ \ \ {Q},\\
			y_{2t}-b(t)\left({a}(x)y_{2x}\right)_x-B\sqrt{a}y_{2x}+F_2(y_1,y_2)=0 & \ \ \ \text{in} \ \ \ {Q},\\
			y_1(0,t)=y_2(1,t)=y_2(0,t)=y_2(1,t)=0, & \ \ \ \text{on} \ \ \ (0,T),\\
			y_1(0)=y_1^0(x), \ y_2(0)=y_2^0(x) & \ \ \ \text{in} \ \ \ \Omega.
		\end{cases}
	\end{equation}	
	and, for $i=1,2$,
	\begin{equation}\label{optimal2}
        \begin{cases}
        	-p^i_{1t}-b(t)\left(a(x)p^i_{1x}\right)_x+\left(B\sqrt{a}p^i_1\right)_x + D_1 F_1(y_1,y_2)p^i_1 + D_1 F_2(y_1,y_2) p^i_2 \\
            \qquad= \alpha_i\left( y_1-y_{1,d}^i  \right)\cara_{_{{\mathcal{O}}_{i,d}}}, &\ \ \ \text{in} \ \ \ {Q},\\
        	-p^i_{2t}-b(t)\left(a(x)p^i_{2x}\right)_x+\left(B\sqrt{a}p^i_2\right)_x + D_2 F_1(y_1,y_2)p^i_1 + D_2 F_2(y_1,y_2)p^i_2 \\
            \qquad = \alpha_i\left( y_2-y_{2,d}^i  \right)\cara_{_{{\mathcal{O}}_{i,d}}}, &\ \ \ \text{in} \ \ \ {Q},\\
        	p^i_1(0,t)=p^i_1(1,t)=p^i_2(0,t)=p^i_2(1,t)=0, & \ \ \ \text{on} \ \ \ (0,T),\\
        	p^i_1(T)=0, \ p^i_2(T)=0, & \ \ \ \text{in} \ \ \ \Omega.
        \end{cases}%, \ \ \ i=1,2.
	\end{equation}	
\end{propo}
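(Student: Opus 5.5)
The characterization is the Euler–Lagrange condition for each follower. We make it explicit through the adjoint (Lagrange multiplier) technique. Fix $h\in L^2(\mathcal{O}\times(0,T))$ and let $(v^1,v^2)$ be a Nash quasi-equilibrium, i.e. conditions \eqref{derivada14}–\eqref{derivada24}, written for the cylindrical functionals $J_i$ in \eqref{eqprin24}, hold.

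\textbf{Step 1: differentiability of the control-to-state map.} Because $F_i\in C^2$ with bounded first and second derivatives \eqref{condiciones_a4f}, the semilinear system \eqref{eqprin544} is well posed in the weighted energy space associated with $H^1_a$. The map $(v^1,v^2)\mapsto y=(y_1,y_2)$ is Gâteaux differentiable. Its derivative in the direction $(\widetilde v^1,0)$ is the solution $z=(z_1,z_2)$ of the linearized system
\[
z_{jt}-b(t)(a z_{jx})_x-B\sqrt a\, z_{jx}+D_1F_j(y)z_1+D_2F_j(y)z_2=\widetilde v^1\cara_{\mathcal{O}_1}\delta_{j1},
\]
with homogeneous Dirichlet data and $z(0)=0$. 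This follows by taking difference quotients $(y^\lambda-y)/\lambda$. A Gronwall-type energy estimate, using the bound $M$ together with $b(t)\ge c>0$ and the control of the first-order term $B\sqrt a\,\partial_x$ via $|\ell'/\ell|\le C$, lets us pass to the limit.

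\textbf{Step 2: the adjoint system.} From the quadratic structure of $J_1$,
\[
J_1'(h;v^1,v^2)(\widetilde v^1,0)=\alpha_1\iint_{\mathcal{O}_d\times(0,T)}\big((y_1-y^1_{1,d})z_1+(y_2-y^1_{2,d})z_2\big)+\mu_1\iint_{\mathcal{O}_1\times(0,T)}\rho_*^2v^1\widetilde v^1 .
\]
Let $p^1$ solve the backward system \eqref{optimal2} with $i=1$. Its operator is the formal adjoint of the linearized operator: the transpose of the Jacobian coupling and the adjoint $(B\sqrt a\,p)_x$ of the drift term. The system is a linear backward degenerate parabolic problem with bounded coefficients and a source in $L^2(Q)$, so it has a unique solution in $\mathbb{H}$ by the same well-posedness theory, after reversing time $t\mapsto T-t$. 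Multiply the $z$-equations by $p^1_j$, sum over $j$, and integrate over $Q$. The time boundary terms vanish since $z(0)=0$ and $p^1(T)=0$. The spatial boundary terms vanish by the Dirichlet conditions; at the degenerate point $x=0$ we use $a(0)=0$ and the trace properties of $H^1_a$ in the weakly degenerate case. This gives
\[
\iint_Q \alpha_1\big((y_1-y^1_{1,d})z_1+(y_2-y^1_{2,d})z_2\big)\cara_{\mathcal{O}_d}=\iint_{\mathcal{O}_1\times(0,T)}p^1_1\widetilde v^1 .
\]
Hence \eqref{derivada14} becomes $\iint_{\mathcal{O}_1\times(0,T)}(p^1_1+\mu_1\rho_*^2v^1)\widetilde v^1=0$ for all $\widetilde v^1$, which yields \eqref{caracteriza} for $i=1$. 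The case $i=2$ is identical. Substituting $v^i$ back into \eqref{eqprin544} gives \eqref{optimal1}.

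\textbf{Main obstacle.} The delicate point is the rigorous integration by parts near the degenerate boundary $x=0$, together with the well-posedness of the coupled optimality system \eqref{optimal1}–\eqref{optimal2}. I would handle the former by working with the weak formulation in $H^1_a$ and density of smooth functions; the weak degeneracy $K<1$ ensures that $H^1_a$ functions have traces at $0$. For the latter, the fully coupled forward–backward system includes the feedback term $\mu_i^{-1}\rho_*^{-2}p^i_1$. Since $\rho_*^{-2}$ is bounded, I would obtain existence by a fixed-point argument, for instance Banach's, on $p\mapsto y\mapsto p$. Energy estimates show the map is a contraction once $\mu_i$ is large, because its Lipschitz constant is $O(\mu_i^{-1})$. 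This is where the hypothesis that $\mu_i$ is sufficiently large is used.
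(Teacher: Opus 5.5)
Your argument is correct and follows the paper's proof. You differentiate $\lambda\mapsto J_1(h,v^1+\lambda\widetilde v^1,v^2)$, identify the state derivative with the linearized system, and introduce the adjoint \eqref{optimal2} with the transposed Jacobian and the term $(B\sqrt a\,p)_x$. Integrating by parts then gives $\alpha_1\iint_{\mathcal{O}_d\times(0,T)}\big((y_1-y^1_{1,d})z_1+(y_2-y^1_{2,d})z_2\big)=\iint_{\mathcal{O}_1\times(0,T)}p^1_1\widetilde v^1$, from which you read off \eqref{caracteriza}, exactly as the paper does.

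One remark: your fixed-point discussion is unnecessary here. Since $(v^1,v^2)$ is given, $y$ is just the state associated with $(h,v^1,v^2)$, and each $p^i$ solves a linear backward system with bounded coefficients. So the hypothesis that $\mu_i$ is large is not used in this proposition; in the paper it only enters through the convexity result, Theorem \ref{teoconvex}.
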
	
\begin{proof}
    We first compute $J'_1(h,v^1,v^2)(\widetilde{v}^1,0)=\left.\frac{d}{d\lambda}J_1(h,v^1+\lambda\widetilde{v}^1,v^2)\right|_{\lambda=0}$. We have
	\begin{multline*}
        J_1(h,v^1+\lambda\widetilde{v}^1,v^2)=\frac{\alpha_1}{2}\iint_{\mathcal{O}_{1,d}\times(0,T)}\left( |y_1^\lambda-y^1_{1,d}|^2+|y_2^\lambda-y^1_{2,d}|^2 \right)\ dx dt\\
        +\frac{\mu_1}{2}\iint_{\mathcal{O}_{1}\times(0,T)}\rho_*^2|v^1+\lambda\widetilde{v}^1|^2\ dx dt,
	\end{multline*}
	where $y^\lambda$ is the solution of
		\begin{equation}\label{eqprin244}
			\hspace*{-0.2cm}	\begin{cases}
				y^\lambda_{1t}-b(t)\left(a(x)y^\lambda_{1x}\right)_x-B\sqrt{a}y^\lambda_{1x}+F_1(y^\lambda_1,y^\lambda_2)={h}\cara_{_{{\mathcal{O}}}}+(v^1+\lambda\widetilde{v}^1)\cara_{_{{\mathcal{O}}_1}}+{v}^2\cara_{_{{\mathcal{O}}_2}}, & \ \ \ \text{in} \ \ \ {Q},\\
				y^\lambda_{2t}-b(t)\left(a(x)y^\lambda_{2x}\right)_x-B\sqrt{a}y^\lambda_{2x}+F_2(y^\lambda_1,y^\lambda_2)=0,& \ \ \ \text{in} \ \ \ {Q},\\
				y^\lambda_1(0,t)=y^\lambda_2(1,t)=y^\lambda_2(0,t)=y^\lambda_2(1,t)=0, & \ \ \ \text{on} \ \ \ (0,T),\\
				y^\lambda_1(0)=y_0^1, \ y^\lambda_2(0)=y_2^0, & \ \ \ \text{in} \ \ \ \Omega.
			\end{cases}
		\end{equation}
		From the continuity with respect to the initial data, we have that $y_1^\lambda\rightarrow y_1$, $y_2^\lambda\rightarrow y_2$, as $\lambda\rightarrow 0$. We denote $w^1_1=\lim\limits_{\lambda\rightarrow 0}\frac{y_1^\lambda-y}{\lambda}$  and $w^1_2=\lim\limits_{\lambda\rightarrow 0}\frac{y_2^\lambda-y}{\lambda}$. 
		Subtracting (\ref{eqprin244}) from (\ref{optimal1}), dividing by $\lambda$, and making $\lambda\rightarrow 0$, we get
		\begin{equation}\label{eqprin344}
			\begin{cases}
				w^1_{1t}-b(t)\left(a(x)w^1_{1x}\right)_x-B\sqrt{a}w^1_{1x} + D_1 F_1(y_1,y_2)w^1_1 + D_2 F_1(y_1,y_2)w^1_2 =\widetilde{v}^1\cara_{_{{\mathcal{O}}_1}}, &\ \ \ \text{in} \ \ \ {Q}, \\
				w^1_{2t}-b(t)\left(a(x)w^1_{2x}\right)_x-B\sqrt{a}w^1_{2x}+D_1 F_2 (y_1,y_2)w^1_1 + D_2 F_2(y_1,y_2)w^1_2=0, &\ \ \ \text{in} \ \ \ {Q},\\
				w^1_1(0,t)=w^1_1(1,t)=w^1_2(0,t)=w^1_2(1,t)=0, & \ \ \ \text{on} \ \ \ (0,T),\\
				w^1_1(0)=0, \ w^1_2(0)=0, & \ \ \ \text{in} \ \ \ \Omega.
			\end{cases}
		\end{equation}
		Then, the adjoint system is given by
		\begin{equation}\label{eqprin444}
			\begin{cases}
				-p^1_{1t}-b(t)\left(a(x)p^1_{1x}\right)_x+\left(B\sqrt{a}p^1_1\right)_x + D_1 F_1(y_1,y_2)p^1_1 + D_1 F_2(y_1,y_2)p^1_2\\
                \qquad =\alpha_1\left( y_1-y_{1,d}^1  \right)\cara_{_{{\mathcal{O}}_{1,d}}}, &\ \ \ \text{in} \ \ \ {Q}, \\
				-p^1_{2t}-b(t)\left(a(x)p^1_{2x}\right)_x+\left(B\sqrt{a}p^1_2\right)_x + D_2 F_1(y_1,y_2)p^1_1 + D_2 F_2(y_1,y_2)p^1_2\\
                \qquad =\alpha_1\left( y_2-y_{2,d}^1  \right)\cara_{_{{\mathcal{O}}_{2,d}}}, &\ \ \ \text{in} \ \ \ {Q},\\
				p^1_1(0,t)=p^1_1(1,t)=p^1_2(0,t)=p^1_2(1,t)=0, & \ \ \ \text{on} \ \ \ {(0,T)},\\
				p^1_1(T)=0, \ p^1_2(T)=0, & \ \ \ \text{in} \ \ \ \Omega.
			\end{cases}
		\end{equation}
				
		If we multiply (\ref{eqprin444})$_1$ by $w_1^1$ and (\ref{eqprin444})$_2$ by $w_2^1$, where $(w_1^1,w_2^1)$ is solution of (\ref{eqprin344}), we have
		\begin{eqnarray*}
			&&\int_{Q}\left(-p^1_{1t}-b(t)\left(a(x)p^1_{1x}\right)_x+\left(B\sqrt{a}p^1_1\right)_x + D_1 F_1(y_1,y_2)p^1_1 + D_1 F_2(y_1,y_2)p^1_2\right)w^1_1\\
            &&\qquad=\alpha_1\int_{Q}\left( y_1-y_{1,d}^1  \right)\cara_{_{{\mathcal{O}}_{1,d}}}w_1^1,\\
			&&\int_{Q}\left(-p^1_{2t}-b(t)\left(a(x)p^1_{2x}\right)_x+\left(B\sqrt{a}p^1_2\right)_x+ D_2 F_1(y_1,y_2)p^1_1 + D_2 F_2(y_1,y_2)p^1_2\right)w^1_2\\
            &&\qquad =\alpha_1\int_{Q}\left( y_2-y_{2,d}^1  \right)\cara_{_{{\mathcal{O}}_{2,d}}}w^1_2,
		\end{eqnarray*}
		and, integrating by parts on $Q$, we obtain
		\begin{eqnarray*}
			&&\int_{Q}\left(w^1_{1t}-b(t)\left(a(x)w^1_{1x}\right)_x-B\sqrt{a}w^1_{1x} + D_1 F_1(y_1,y_2)w^1_1 + D_2 F_1(y_1,y_2)w^1_2\right)p^1_1\\
            &&\qquad = \alpha_1\int_{Q}\left( y_1-y_{1,d}^1  \right)\cara_{_{{\mathcal{O}}_{1,d}}}w_1^1,\\
			&&\int_{Q}\left(w^1_{2t}-b(t)\left(a(x)w^1_{2x}\right)_x-B\sqrt{a}w^1_{2x} + D_1 F_2 (y_1,y_2)w^1_1 + D_2 F_2(y_1,y_2)w^1_2 \right)p^1_2\\
            &&\qquad = \alpha_1 \int_{Q}\left( y_2-y_{2,d}^1  \right)\cara_{_{{\mathcal{O}}_{2,d}}}w^1_2.
		\end{eqnarray*}
		
		Substituting the value of the right-hand side of the two equations (\ref{eqprin344}) and summing both equations we get
        %from (\ref{ecderivadaJ4}), we get
		$$\int_{Q}\widetilde{v}^1\cara_{_{{\mathcal{O}}_1}}p^1_1=\alpha_1\int_{Q}\left( y_1-y_{1,d}^1  \right)\cara_{_{{\mathcal{O}}_{1,d}}}w_1^1+\alpha_1\int_{Q}\left( y_2-y_{2,d}^1  \right)\cara_{_{{\mathcal{O}}_{2,d}}}w^1_2.$$
        
		Using the fact that ${\mathcal{O}}_{1,d}={\mathcal{O}}_{2,d}$, one has
        $$\int_{Q}\widetilde{v}^1\cara_{_{{\mathcal{O}}_1}}p^1_1 = \alpha_1\int_{{\mathcal{O}}_{1,d}}\left[\left( y_1-y_{1,d}^1  \right)w_1^1+\left( y_2-y_{2,d}^1  \right)w^1_2\right].$$

        From the derivative of the functional $J_1$, one has
		\begin{equation*}%\label{ecderivadaJ4}
			\alpha_1\iint_{\mathcal{O}_{1,d}\times(0,T)}\left( (y_1-y^1_{1,d})w^1_1+(y_2-y^1_{2,d})w_2^1 \right)\ dx dt+\mu_1\iint_{\mathcal{O}_{1}\times(0,T)}\rho_*^2v^1\widetilde{v}^1\ dx dt=0.
		\end{equation*}
        
		%Using the derivative of the functional given in (\ref{ecderivadaJ4}), we have
        Thus,
		$$\iint_{\mathcal{O}_{1}\times(0,T)}\widetilde{v}^1p^1_1=-\mu_1\iint_{\mathcal{O}_{1}\times(0,T)}\rho_*^2v^1\widetilde{v}^1\ dx dt, \ \ \ \ \ \ \ \forall \widetilde{v}^1\in L^2(\mathcal{O}_i \times (0,T)), $$
		from which
		$$  	v^1=-\frac{1}{\mu_1}\rho_*^{-2}p^1_1\cara_{_{\mathcal{O}_1}}.$$
		
		Analogously, computing $J'_2(h,v^1,v^2)$, we get $v^2=-\frac{1}{\mu_2}\rho_*^{-2}p^2_1\cara_{_{\mathcal{O}_2}}$. Thus, substituting the followers controls, we get the optimality system \eqref{optimal1} and \eqref{optimal2}.
				
	\end{proof}

Now, we prove that if  $\mu_i$ , $i = 1, 2$ are sufficiently large, then the functionals $J_i , i = 1, 2$ given by  \eqref{eqprin24} are indeed convex, which demonstrates the equivalence between Nash quasi-equilibrium and Nash equilibrium. 
More precisely, we have the following result:
\begin{teo}\label{teoconvex}
	Assume that \eqref{condiciones_a4}, \eqref{condiciones_a4f} hold and that $y_1^0, y_2^0\in H^1_a(\Omega)$ and $y^i_{1,d} \in L^\infty((0,T)\times \mathcal{O}_{1,d})$, $y^i_{2,d} \in L^\infty((0,T)\times \mathcal{O}_{2,d})$. Let us suppose that $h\in L^2((0,T)\times \mathcal{O})$ and $\mu_i, i=1,2$ are sufficiently large. Then, if $(v^1,v^2)$ is a Nash quasi-equilibrium for $J_i, i=1,2$, there exists a constant $C>0$ independent of $\mu_i$, $i=1,2$, such that
		\begin{equation}\label{ecsegundaderivada}
			D_i^2 J_i (h; v^1,v^2)\cdot (\widetilde{v}^i,\widetilde{v}^i)\geq C\|\widetilde{v}^i\|^2_{L^2(\mathcal{O}_i\times (0,T))} \ \ \ \ \ \ \ \ \
			\forall \widetilde{v}^i \in L^2(\mathcal{O}_i\times (0,T)), \ \ \ i=1,2.
		\end{equation}
	\end{teo}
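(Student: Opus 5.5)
We compute the second derivative of $J_1$ in the direction $(\widetilde v^1,0)$ directly; the case $i=2$ is identical. Fix $h$, $(v^1,v^2)$ and $\widetilde v^1$. Let $y^\lambda$ solve \eqref{eqprin244} and let $w^\lambda=(w^\lambda_1,w^\lambda_2)$ solve the linearized system \eqref{eqprin344} with coefficients $D_jF_i(y^\lambda_1,y^\lambda_2)$. Let $p^{\lambda}=(p^\lambda_1,p^\lambda_2)$ solve the adjoint system \eqref{eqprin444} with $y$ replaced by $y^\lambda$.

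By the duality computation in the proof of Proposition \ref{propocaracteriza},
\[
D_1J_1(h;v^1+\lambda\widetilde v^1,v^2)\cdot\widetilde v^1=\iint_{\mathcal O_1\times(0,T)}\left(p^\lambda_1+\mu_1\rho_*^2(v^1+\lambda\widetilde v^1)\right)\widetilde v^1\,dx\,dt .
\]
Differentiating in $\lambda$ at $\lambda=0$ gives
\[
D_1^2J_1\cdot(\widetilde v^1,\widetilde v^1)=\mu_1\iint_{\mathcal O_1\times(0,T)}\rho_*^2|\widetilde v^1|^2+\iint_{\mathcal O_1\times(0,T)}q_1\widetilde v^1 ,
\]
where $q=(q_1,q_2)=\lim_{\lambda\to0}(p^\lambda-p)/\lambda$. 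This $q$ solves the backward system
\[
-q_{it}-b(t)(aq_{ix})_x+(B\sqrt a\,q_i)_x+\sum_j D_iF_j(y)q_j
=\alpha_1 w_i\cara_{\mathcal O_{d}}-\sum_{j,k}D^2_{ik}F_j(y)\,w_k\,p_j ,
\qquad q(T)=0,
\]
with $w=w^0$ the solution of \eqref{eqprin344}. Justifying the differentiation uses $F_i\in C^2$ with bounded derivatives \eqref{condiciones_a4f} and continuity of the maps $\lambda\mapsto y^\lambda, w^\lambda, p^\lambda$. Since $\rho_*\ge c_0>0$ (it blows up at the endpoints and is continuous and positive inside), the first term is at least $\mu_1c_0^2\|\widetilde v^1\|^2_{L^2}$.

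It then suffices to show $\bigl|\iint q_1\widetilde v^1\bigr|\le C_0\|\widetilde v^1\|^2$ with $C_0$ independent of $\mu_1$. Then for $\mu_1$ large we have $\mu_1c_0^2-C_0\ge C>0$, which gives \eqref{ecsegundaderivada}.

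Energy estimates for the degenerate non-autonomous systems go through because $b(t)$ is bounded above and below, by \eqref{condiciones_a_c4} and the bounds on $\ell$. The term $B\sqrt a\,y_x$ is controlled by $\frac{\ell'}{\ell}x\,y_x$ together with Hardy-type inequalities in $H^1_a$, and the zero-order couplings by $M$. These estimates give
\[
\|w\|_{L^\infty(L^2)}+\|w\|_{L^2(H^1_a)}\le C\|\widetilde v^1\|_{L^2}.
\]
For $q$ the source contains the bilinear term $D^2F(y)\,w\,p$, so we need $\|wp\|_{L^2(Q)}\le C\|w\|\,\|p\|$. We estimate $\|wp\|_{L^2}\le\|w\|_{L^2(L^\infty)}\|p\|_{L^\infty(L^2)}$. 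In one dimension and in the weakly degenerate case $H^1_a(0,1)\hookrightarrow C([0,1])$, since $\int|u_x|\le(\int a u_x^2)^{1/2}(\int a^{-1})^{1/2}$ and $a^{-1}$ is integrable because $a\sim x^\alpha$ with $\alpha<1$. Hence $\|w\|_{L^2(L^\infty)}\le C\|\widetilde v^1\|$.

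The main obstacle is that $\|p\|_{L^\infty(L^2)}$ must be bounded independently of $\mu_1$, and $p$ depends on $\mu_i$ through $y$. I would handle this as follows. The optimality system \eqref{optimal1} has feedback term $\frac1{\mu_i}\rho_*^{-2}p^i_1$, which is small when $\mu_i$ is large because $\rho_*^{-2}\le c_0^{-2}$. Energy estimates for the coupled system $(y,p^1,p^2)$ then give, for $\mu_i\ge\mu_0$, a bound on $\|y\|+\|p\|$ in terms of $\|y^0\|_{H^1_a}$, $\|h\|$ and $\|y_d\|_{L^\infty}$ that does not depend on $\mu_i$. This uses the hypothesis $y^i_{j,d}\in L^\infty$. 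With that bound, energy estimates for $q$ give $\|q_1\|_{L^2}\le C\|\widetilde v^1\|$, and Cauchy--Schwarz concludes. The constant $C$ depends on the data but not on $\mu_i$, as required.
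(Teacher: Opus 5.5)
Your proposal is correct, and its skeleton is the same as the paper's. Both use the first-variation formula $D_1J_1\cdot\widetilde v^1=\iint(p^\lambda_1+\mu_1\rho_*^2(v^1+\lambda\widetilde v^1))\widetilde v^1$. Differentiating in $\lambda$ produces the pair $(w,q)$, which is the paper's $(\theta,\eta^1)$ with the same backward system. Coercivity comes from $\mu_1\rho_*^2\ge\mu_1c_0^2$, and the cross term needs a bound $C\|\widetilde v^1\|^2$ independent of $\mu$.

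Where you differ is in how that cross term is controlled.

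\textbf{The paper's route.} The paper never estimates $\eta^1$ quantitatively. It tests the $\theta$-system against $\eta^1$ and integrates by parts, obtaining $\iint\eta^1_1\widetilde v^1=\alpha_1\iint_{\mathcal O_d\times(0,T)}|\theta|^2-\iint\theta\cdot\mathcal N^1(\eta,\theta)$. This is a quadratic form in $\theta$ weighted by $p^1$. It then bounds $\iint|\theta|^2|p^1|$ by $\|\theta\|^2_{L^\infty(L^2)}\|p^1\|_{L^1(L^\infty)}$, applying $H^1_a\hookrightarrow L^\infty$ to $p^1$.

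\textbf{Your route.} You run a backward energy estimate on $q$ itself. The bilinear source goes into $L^2(Q)$ via $\|w\|_{L^2(L^\infty)}\|p\|_{L^\infty(L^2)}$, so you apply the embedding to $w$ instead.

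\textbf{Comparison.} Both versions need only the basic $L^\infty(L^2)\cap L^2(H^1_a)$ energy bounds. The paper invokes an $L^\infty(0,T;H^1_a)$ bound on $p^1$, but only $L^1(0,T;H^1_a)$ is actually used. The duality form saves an estimate for the $q$-system. It also shows that the tracking contribution $\alpha_1\|\theta\|^2$ has a favourable sign, so only the $D^2F$ term genuinely forces $\mu_1$ to be large. Your route is more direct but treats every term as a perturbation.

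\textbf{What your argument adds.} Your absorption argument for $p$ rests on the feedback $\frac{1}{\mu_i}\rho_*^{-2}p^i_1$ being of order $1/\mu_i$. Hence, for $\mu_i\ge\mu_0$, the forward estimate for $y$ and the backward estimate for $p$ close a priori on the given solution. This justifies the $\mu$-independence of the constant in \eqref{eq:energy_p_ik}, which the paper only asserts. Both routes rely on $H^1_a(0,1)\hookrightarrow C([0,1])$. As you note, this requires $a^{-1}\in L^1$, i.e.\ $K<1$ in \eqref{condiciones_a_b4}.
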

	
\begin{proof}
	The proof is based on the approach used in \cite[Proposition 1.4,]{FC2015} and \cite{sistemafrances}.
    Since $(v^1,v^2)$ is a Nash quasi-equilibrium of $J_i$, $i=1,2$, then
%\begin{eqnarray*}
%	J_1(f,v^1,v^2)&=&\frac{\alpha_1}{2}\iint_{\mathcal{O}_{1,d}\times(0,T)}|z-z_{1,d}|^2\ dx dt+\frac{\mu_1}{2}\iint_{\mathcal{O}_{1}\times(0,T)}|v^1|^2\ dx dt\nonumber\\
%	& & \\
%	J_2(f,v^1,v^2)&=&\frac{\alpha_2}{2}\iint_{\mathcal{O}_{2,d}\times(0,T)}|z-z_{2,d}|^2\ dx dt+\frac{\mu_2}{2}\iint_{\mathcal{O}_{2}\times(0,T)}|v^2|^2\ dx dt\nonumber
%\end{eqnarray*}
%o que significa
%	\begin{equation*}
	%	\begin{array}{ccc}
		%		J_1(f;v^1,v^2)& = &\min\limits_{\hat{v}^1} J_1(f;\hat{v}^1,v^2)  \\
		%		J_2(f;v^1,v^2)& = & \min\limits_{\hat{v}^2} J_2(f;v^1,\hat{v}^2)
		%	\end{array}	
	%\end{equation*}
	% se o somente se \'e um quase equilíbrio se
	%  
	$$J_1'(f,v^1,v^2)(\hat{v}_1,0)=0, \ \ \ \ \ \forall \hat{v}^1\in L^2(\mathcal{O}_1 \times (0,T)), \qquad J_2'(f,v^1,v^2)(0,\hat{v}^2)=0, \ \ \ \ \ \forall \hat{v}^2\in L^2(\mathcal{O}_2 \times (0,T)).$$
    
	  That is, for $i=1,2$,	
	%J_i'(f,v^1,v^2)(\hat{v}_1,0) =
    $$\alpha_i\int_{\mathcal{O}_{i,d}\times (0,T)}\left( (y_1-y_{1,d})w^i_1 + (y_2-y_{2,d})w^i_2 \right) dxdt+\mu_i\int_{\mathcal{O}_{i}\times (0,T)}v^i\widetilde{v}^idxdt=0,
	\ \ \ \ \ \forall \widetilde{v}^i\in L^2(\mathcal{O}_i \times (0,T)),$$
	where $w^i$ satisfies the systems \eqref{eqprin344}.
    
%	\begin{equation}\label{eqprin443}
%		\begin{cases}
%			w^i_{1t}-b(t)\left(a(x)w^i_{1x}\right)_x-B \sqrt{a}w^i_{1x} + D_1 F_1\left(y_1,y_2 \right)w^i_1 + D_2 F_1\left(y_1,y_2 \right)w^i_2=\widetilde{v}^i\cara_{_{{\mathcal{O}}_i}}, & \ \ \ \text{in} \ \ \ {Q}\\
%			w^i=0, & \ \ \ \text{in} \ \ \ {\Sigma}\\
%			w^i(0)=0, & \ \ \ \text{in} \ \ \ \Omega
%		\end{cases}
%	\end{equation}

	Fix $i=1$. Given $\lambda\in \rea$ and $\widetilde{v}^1, \overline{v}^1$ in $L^2(\mathcal{O}_1 \times (0,T))$, define 
	$$P(\lambda) \rightarrow \langle D_1 J_1(h,v^1+\lambda\widetilde{v}^1,v^2 ),\overline{v}^1 \rangle$$
	such that  
	\begin{multline}\label{D2lambda}
        \langle D_1 J_1(h,v^1+\lambda\widetilde{v}^1,v^2 ),\overline{v}^1 \rangle=\alpha_1\int_{0}^{T}\int_{\mathcal{O}_{1,d}}\left( (y_1^\lambda-y^1_{1,d})q^{1,\lambda}_1+(y_2^\lambda-y^1_{2,d})q^{1,\lambda}_2 \right)\ dx dt\\
        +\mu_1\int_{0}^{T}\int_{\mathcal{O}_{1}}\rho_*^2(v^1+\lambda\widetilde{v}^1)\overline{v}^1\ dx dt,
	\end{multline}
%	$$\langle D_1 J_1(h,v^1+\lambda\widetilde{v}^1,v^2 ),\overline{v}^1 \rangle=\alpha_1\int_{0}^{T}\int_{\mathcal{O}_{1,d}}\left( (y_1-y^1_{1,d})w^1_1+(y_2-y^1_{2,d})w_2^1 \right)\ dx dt+\mu_1\int_{0}^{T}\int_{\mathcal{O}_{1}}\rho_*^2(v^1+\lambda\widetilde{v}^1)\widetilde{v}^1\ dx dt$$
%	$$\langle D_1 J_1(h,v^1+\lambda\widetilde{v}^1,v^2 ),\overline{v}^1 \rangle=\alpha_1\int_{\mathcal{O}_{1,d}\times (0,T)}(y^\lambda-y_{1d})q^\lambda dxdt+\mu_1\int_{\mathcal{O}_{1}\times (0,T)}(v^1+\lambda\widetilde{v}^1)\overline{v}^1dxdt$$
		where $(y_1^\lambda, y_2^\lambda)$ is the solution of
\begin{equation}\label{ylambda}
	\hspace*{-0.2cm}	\begin{cases}
		y^\lambda_{1t}-b(t)\left(a(x)y^\lambda_{1x}\right)_x-B\sqrt{a}y^\lambda_{1x}+F_1(y^\lambda_1,y^\lambda_2)={h}\cara_{_{{\mathcal{O}}}}+(v^1+\lambda\widetilde{v}^1)\cara_{_{{\mathcal{O}}_1}}+{v}^2\cara_{_{{\mathcal{O}}_2}}, & \ \ \ \text{in} \ \ \ {Q},\\
		y^\lambda_{2t}-b(t)\left(a(x)y^\lambda_{2x}\right)_x-B\sqrt{a}y^\lambda_{2x}+F_2(y^\lambda_1,y^\lambda_2)=0,& \ \ \ \text{in} \ \ \ {Q},\\
		y^\lambda_1(0,t)=y^\lambda_2(1,t)=y^\lambda_2(0,t)=y^\lambda_2(1,t)=0, & \ \ \ \text{on} \ \ \ (0,T),\\
		y^\lambda_1(0)=y_0^1, \ y^\lambda_2(0)=y_2^0, & \ \ \ \text{in} \ \ \ \Omega.
	\end{cases}
\end{equation}
	and $q^{1,\lambda}_1, q^{1,\lambda}_2$ are the derivative of the state $y^\lambda_1$ and $y^\lambda_2$ with respect to $v^1$ in the direction $\widetilde{v}^1$, i.e, the solution of   
		\begin{equation}\label{qlambda}
	\begin{cases}
		q^{1,\lambda}_{1t}-b(t)\left(a(x)q^{1,\lambda}_{1x}\right)_x-B\sqrt{a}q^{1,\lambda}_{1x}+D_1 F_1(y_1^\lambda,y_2^\lambda)q^{1,\lambda}_1 + D_2 F_1(y_1^\lambda,y_2^\lambda)q^{1,\lambda}_2=\widetilde{v}^1\cara_{_{{\mathcal{O}}_1}}, & \ \ \text{in} \ \ {Q}, \\
		q^{1,\lambda}_{2t}-b(t)\left(a(x)q^{1,\lambda}_{2x}\right)_x-B\sqrt{a}q^{1,\lambda}_{2x}+D_1 F_2(y_1^\lambda,y_2^\lambda) q^{1,\lambda}_1 + D_2 F_2(y_1^\lambda,y_2^\lambda)q^{1,\lambda}_2=0, &\ \ \text{in} \ \ {Q},\\
		q^{1,\lambda}_1(0,t)=q^{1,\lambda}_1(1,t)=q^{1,\lambda}_2(0,t)=q^{1,\lambda}_2(1,t)=0, & \ \ \text{on} \ \ (0,T),\\
		q^{1,\lambda}_1(0)=0, \ q^{1,\lambda}_2(0)=0, & \ \ \text{in} \ \ \Omega.
	\end{cases}
\end{equation}
The adjoint system of \eqref{qlambda} is given by
\begin{equation}\label{qlambdaadjunto}
	\begin{cases}
		-p^{1,\lambda}_{1t}-b(t)\left(a(x)p^{1,\lambda}_{1x}\right)_x+\left(B\sqrt{a}p^{1,\lambda}_1\right)_x + D_1 F_1(y_1^\lambda,y_2^\lambda)p^{1,\lambda}_1+D_1 F_2(y_1^\lambda,y_2^\lambda)p^{1,\lambda}_2\\
        \qquad =\alpha_1\left( y^\lambda_1-y_{1,d}^1  \right)\cara_{_{{\mathcal{O}}_{1,d}}}, &\ \text{in} \ {Q}, \\
		-p^{1,\lambda}_{2t}-b(t)\left(a(x)p^{1,\lambda}_{2x}\right)_x+\left(B\sqrt{a}p^{1,\lambda}_2\right)_x + D_2 F_1(y_1^\lambda,y_2^\lambda)p^{1,\lambda}_1 + D_2 F_2(y_1^\lambda,y_2^\lambda)p^{1,\lambda}_2\\
        \qquad =\alpha_1\left( y^\lambda_2-y_{2,d}^1  \right)\cara_{_{{\mathcal{O}}_{2,d}}}, &\ \text{in} \ {Q},\\
		p^{1,\lambda}_1(0,t)=p^{1,\lambda}_1(1,t)=p^{1,\lambda}_2(0,t)=p^{1,\lambda}_2(1,t)=0, & \ \text{on} \  {(0,T)},\\
		p^{1,\lambda}_1(T)=0, \ p^{1,\lambda}_2(T)=0, & \ \text{in} \ \Omega,
	\end{cases}
\end{equation}
	where we denote $y_1=\left. y_1^\lambda\right|_{\lambda=0}$, $y_2=\left. y_2^\lambda\right|_{\lambda=0}$ and $q^1_1=\left. q^{1,\lambda}_1\right|_{\lambda=0}$, $q^1_2=\left. q^{1,\lambda}_2\right|_{\lambda=0}$. Using the systems above, we consider
	\begin{multline}\label{ecD2defi}
		P(\lambda)-P(0)=\lambda \mu_1 \int_{0}^{T}\int_{\mathcal{O}_{1}}\rho_*^2\overline{v}^1\widetilde{v}^1dxdt
		+\alpha_1\int_{0}^{T}\int_{\mathcal{O}_{1,d}}\left( (y_1^\lambda-y^1_{1,d})q^{1,\lambda}_1+(y_2^\lambda-y^1_{2,d})q^{1,\lambda}_2 \right)\ dx dt \\
		-\alpha_1\int_{0}^{T}\int_{\mathcal{O}_{1,d}}\left( (y_1-y^1_{1,d})q^{1}_1+(y_2-y^1_{2,d})q^{1}_2 \right)\ dx dt.
	\end{multline}
	
    Multiplying the first and the second equation of \eqref{qlambda} by $p^{1,\lambda}_{1}$ and $p^{1,\lambda}_{2}$ , respectively, we obtain
	\begin{equation*}
	    \begin{split}
	        &\int_{Q}\left(q^{1,\lambda}_{1t}-b(t)\left(a(x)q^{1,\lambda}_{1x}\right)_x-B\sqrt{a}q^{1,\lambda}_{1x}+D_1 F_1(y_1^\lambda,y_2^\lambda)q^{1,\lambda}_1 + D_2 F_1(y_1^\lambda,y_2^\lambda)q^{1,\lambda}_2 \right)p^{1,\lambda}_{1} dxdt \\
            &\qquad=\int_{Q} \widetilde{v}^1\cara_{_{{\mathcal{O}}_1}}p^{1,\lambda}_{1} dxdt,
	    \end{split}
	\end{equation*}
	$$
	\int_{Q}\left(q^{1,\lambda}_{2t}-b(t)\left(a(x)q^{1,\lambda}_{2x}\right)_x-B\sqrt{a}q^{1,\lambda}_{2x}+D_1 F_2(y_1^\lambda,y_2^\lambda) q^{1,\lambda}_1 + D_2 F_2(y_1^\lambda,y_2^\lambda)q^{1,\lambda}_2\right)p^{1,\lambda}_{2} dxdt=0.
	$$
    
	Integrating by parts over $Q$ and summing the two equations, one has
	\begin{multline*}
        \int_{Q}\left(-p^{1,\lambda}_{1t}-b(t)\left(a(x)p^{1,\lambda}_{1x}\right)_x+\left(B\sqrt{a}p^{1,\lambda}_1\right)_x+ D_1 F_1(y_1^\lambda,y_2^\lambda)p^{1,\lambda}_1+D_1 F_2(y_1^\lambda,y_2^\lambda)p^{1,\lambda}_2\right)q^{1,\lambda}_{1} dxdt\\
        +\int_{Q}\left(-p^{1,\lambda}_{2t}-b(t)\left(a(x)p^{1,\lambda}_{2x}\right)_x+\left(B\sqrt{a}p^{1,\lambda}_2\right)_x+ D_2 F_1(y_1^\lambda,y_2^\lambda)p^{1,\lambda}_1 + D_2 F_2(y_1^\lambda,y_2^\lambda)p^{1,\lambda}_2\right)q^{1,\lambda}_{2} dxdt\\
        =\int_{Q} \widetilde{v}^1\cara_{_{{\mathcal{O}}_1}}p^{1,\lambda}_{1} dxdt.
	\end{multline*}
	
	Using \eqref{qlambdaadjunto}, we get
	\begin{equation}\label{inter1}
		\alpha_1\int_{0}^{T}\int_{\mathcal{O}_{1,d}}\left( (y_1^\lambda-y^1_{1,d})q^{1,\lambda}_1+(y_2^\lambda-y^1_{2,d})q^{1,\lambda}_2 \right)\ dx dt=\int_{Q} \widetilde{v}^1\cara_{_{{\mathcal{O}}_1}}p^{1,\lambda}_{1} \ dxdt.
	\end{equation}
	
    Similarly, when $\lambda=0$ we have
		\begin{equation}\label{inter2}
			\alpha_1\int_{0}^{T}\int_{\mathcal{O}_{1,d}}\left( (y_1-y^1_{1,d})q^{1}_1+(y_2-y^1_{2,d})q^{1}_2 \right)\ dx dt=\int_{Q} \widetilde{v}^1\cara_{_{{\mathcal{O}}_1}}p^{1}_{1} \ dxdt,
	\end{equation}
    and using \eqref{ecD2defi}, \eqref{inter1} and \eqref{inter2} we obtain
 	%\begin{equation}\label{ecD2defisimpli}
 	  %    P(\lambda)-P(0)=\lambda \mu_1 \int_{0}^{T}\int_{\mathcal{O}_{1}}\rho_*^2\overline{v}^1\widetilde{v}^1dxdt
 	  %    +\int_{0}^{T}\int_{\mathcal{O}_{1}}\left( p^{1,\lambda}_1-p^{1}_1 \right)\widetilde{v}^1 \ dx dt.
    %\end{equation}
 %
	%Then,
	\begin{equation}\label{dividelambda}
		\frac{P(\lambda)-P(0)}{\lambda}=\int_{0}^{T}\int_{\mathcal{O}_{1}}\left(\frac{ p^{1,\lambda}_1-p^{1}_1}{\lambda}\right)\widetilde{v}^1 dxdt+\mu_1 \int_{0}^{T}\int_{\mathcal{O}_{1}} \rho_*^2 \overline{v}^1\widetilde{v}^1 \ dxdt.
	\end{equation}
	
	Notice that, using the systems \eqref{ylambda} and \eqref{qlambdaadjunto}, one has
	$$	\begin{cases}
		(y_1^\lambda-y_1)_{t}-b(t)\left(a(x)(y_1^\lambda-y_1)_{x}\right)_x-B\sqrt{a}(y_1^\lambda-y_1)_{x}+F_1(y^\lambda_1,y^\lambda_2)-F_1(y_1,y_2)=\lambda\widetilde{v}^1\cara_{_{{\mathcal{O}}_1}}, &\\
		(y_2^\lambda-y_2)_{t}-b(t)\left(a(x)(y_2^\lambda-y_2)_{x}\right)_x-B\sqrt{a}(y_2^\lambda-y_2)_{x}
        %+c_{21}(y_1^\lambda-y_1)
        +F_2(y^\lambda_1,y^\lambda_2)-F_2(y_1,y_2)=0,&
	\end{cases}$$
	and
		$$		\begin{cases}
			-(p^{1,\lambda}_1-p^{1}_1)_{t}-b(t)\left(a(x)(p^{1,\lambda}_1-p^{1}_1)_{x}\right)_x+\left(B\sqrt{a}(p^{1,\lambda}_1-p^{1}_1)\right)_x&\\
			\hspace*{2cm}+[D_1 F_1(y_1^\lambda,y_2^\lambda)-D_1 F_1(y_1,y_2)]p^{1,\lambda}_1 + D_1 F_1(y_1,y_2)(p^{1,\lambda}_1-p^{1}_1)\\
            \hspace*{2cm}+[D_1 F_2(y_1^\lambda,y_2^\lambda)-D_1 F_2(y_1,y_2)]p^{1,\lambda}_2 + D_1 F_2(y_1,y_2)(p^{1,\lambda}_2-p^{1}_2)
            %+c_{21}(p^{1,\lambda}_2-p^{1}_2)
            =\alpha_1\left( y^\lambda_1-y_1  \right)\cara_{_{{\mathcal{O}}_{1d}}}, & \\
			-(p^{1,\lambda}_2-p^{1}_2)_{t}-b(t)\left(a(x)(p^{1,\lambda}_2-p^{1}_2)_{x}\right)_x+\left(B\sqrt{a}(p^{1,\lambda}_2-p^{1}_2)\right)_x\\
            \hspace*{2cm}+[D_2 F_1(y_1^\lambda,y_2^\lambda)-D_2 F_1(y_1,y_2)]p^{1,\lambda}_1 + D_2 F_1(y_1,y_2)(p^{1,\lambda}_1-p^{1}_1)\\
            \hspace*{2cm}+[D_2 F_2(y_1^\lambda,y_2^\lambda)-D_2 F_2(y_1,y_2)]p^{1,\lambda}_2 + D_2 F_2(y_1,y_2)(p^{1,\lambda}_2-p^{1}_2)
			%\hspace*{2cm}+[F_2'(y_2^\lambda)-F_2'(y_2)]p^{1,\lambda}_2+F_2'(y_2)(p^{1,\lambda}_2-p^{1}_2)
            =\alpha_1\left( y^\lambda_2-y_2  \right)\cara_{_{{\mathcal{O}}_{2d}}}. &
		\end{cases}$$
		
		%Consequently, using that $F_i\in C^2(\mathbb{R})$, we obtain that the following limits:

	Let us denote
	\begin{equation*}
		\eta^1_i=\lim_{\lambda\to 0} \frac{1}{\lambda}(p_i^{1,\lambda} - p_i^1) \ \ \ \  \ \ \text{and} \ \ \ \ \ \  \theta_i=\lim_{\lambda\to 0} \frac{1}{\lambda}(y^\lambda_i - y_i), \ \ \ \ \ \text{for} \ \ i=1,2.
	\end{equation*}

%------------------------------------------------
    
	Using that $F_i\in C^2(\mathbb{R})$, dividing the previous two systems  by $\lambda$ and making $\lambda \to 0$ we deduce the following systems:
	\begin{equation}\label{sistemaconvexo1}
	\begin{cases}
		\theta_{1t}-b(t)\left(a(x)\theta_{1x}\right)_x-B\sqrt{a}\theta_{1x}+D_1 F_1(y_1,y_2)\theta_1 + D_2 F_1(y_1,y_2)\theta_2 = \widetilde{v}^1\cara_{_{{\mathcal{O}}_1}}, & \ \ \ \text{in} \ \ \ {Q},\\
		\theta_{2t}-b(t)\left(a(x)\theta_{2x}\right)_x-B\sqrt{a}\theta_{2x}+D_1 F_2(y_1,y_2)\theta_1 + %+c_{21}\theta_1+
        D_2 F_2(y_1,y_2)\theta_2=0,& \ \ \ \text{in} \ \ \ {Q},\\
		\theta_1(0,t)=\theta_1(1,t)=\theta_2(0,t)=\theta_2(1,t)=0, & \ \ \ \text{on} \ \ \ (0,T),\\
		\theta_1(0)=0, \ \theta_2(0)=0, & \ \ \ \text{in} \ \ \ \Omega,
	\end{cases}
    \end{equation}
	and
    \begin{equation}\label{sistemaconvexo2_0}
		\begin{cases}
			-\eta^1_{1t}-b(t)\left(a(x)\eta^1_{1x}\right)_x+\left(B\sqrt{a}\eta^1_1\right)_x
            +[D_{11}^2 F_1(y_1,y_2)\theta_1+D_{12}^2 F_1(y_1,y_2)\theta_2]p_1^1 \\
            \qquad + D_1 F_1(y_1,y_2)\eta_1^1 +[D_{11}^2 F_2(y_1,y_2)\theta_1+D_{12}^2 F_2(y_1,y_2)\theta_2]p_2^1 + D_1 F_2(y_1,y_2)\eta_2^1 =\alpha_1\theta_1\cara_{_{{\mathcal{O}}_{1d}}}, &\ \text{in} \ {Q},\\
            %+F_1'(y_1)\eta^1_1+F_1''(y_1)\theta_1p^{1}_1+c_{21}\eta^1_2=\alpha_1\theta_1\cara_{_{{\mathcal{O}}_{1d}}}, &\ \text{in} \ {Q},\\
			-\eta^1_{2t}-b(t)\left(a(x)\eta^1_{2x}\right)_x+\left(B\sqrt{a}\eta^1_2\right)_x
            +[D_{21}^2 F_1(y_1,y_2)\theta_1+D_{22}^2 F_1(y_1,y_2)\theta_2]p_1^1 \\
            \qquad + D_2 F_1(y_1,y_2)\eta_1^1 + [D_{21}^2 F_2(y_1,y_2)\theta_1+D_{22}^2 F_2(y_1,y_2)\theta_2]p_2^1 + D_2 F_2(y_1,y_2)\eta_2^1 =\alpha_1\theta_2\cara_{_{{\mathcal{O}}_{1d}}}, &\ \text{in} \ {Q},\\
            %+F_2'(y_2)\eta^1_2+F_2''(y_2)\theta_2p^{1}_2=\alpha_1\theta_2\cara_{_{{\mathcal{O}}_{1d}}}, &\ \text{in} \ {Q},\\
			\eta^{1}_1(0,t)=\eta^{1}_1(1,t)=\eta^{1}_2(0,t)=\eta^{1}_2(1,t)=0, & \ \text{on} \  {(0,T)},\\
			\eta^{1}_1(T)=0, \ \eta^{1}_2(T)=0, & \ \text{in} \ \Omega.
		\end{cases}
	\end{equation}
    Or, in a simpler form
    \begin{equation}\label{sistemaconvexo2}
		\begin{cases}
			-\eta^1_{1t}-b(t)\left(a(x)\eta^1_{1x}\right)_x+\left(B\sqrt{a}\eta^1_1\right)_x
            + D_1 F_1(y_1,y_2)\eta_1^1 + D_1 F_2(y_1,y_2)\eta_2^1 +\mathcal{N}_1^1(\eta,\theta) \\
            \qquad =\alpha_1\theta_1\cara_{_{{\mathcal{O}}_{1d}}}, &\ \text{in} \ {Q},\\
            %+F_1'(y_1)\eta^1_1+F_1''(y_1)\theta_1p^{1}_1+c_{21}\eta^1_2=\alpha_1\theta_1\cara_{_{{\mathcal{O}}_{1d}}}, &\ \text{in} \ {Q},\\
			-\eta^1_{2t}-b(t)\left(a(x)\eta^1_{2x}\right)_x+\left(B\sqrt{a}\eta^1_2\right)_x
            + D_2 F_1(y_1,y_2)\eta_1^1 + D_2 F_2(y_1,y_2)\eta_2^1 + \mathcal{N}_2^1(\eta,\theta) \\
            \qquad =\alpha_1\theta_2\cara_{_{{\mathcal{O}}_{1d}}}, &\ \text{in} \ {Q},\\
            %+F_2'(y_2)\eta^1_2+F_2''(y_2)\theta_2p^{1}_2=\alpha_1\theta_2\cara_{_{{\mathcal{O}}_{1d}}}, &\ \text{in} \ {Q},\\
			\eta^{1}_1(0,t)=\eta^{1}_1(1,t)=\eta^{1}_2(0,t)=\eta^{1}_2(1,t)=0, & \ \text{on} \  {(0,T)},\\
			\eta^{1}_1(T)=0, \ \eta^{1}_2(T)=0, & \ \text{in} \ \Omega,
            
		\end{cases}
	\end{equation}
    where
    \begin{equation*}
        \begin{split}
			\mathcal{N}_1^1(\eta,\theta) &=[D_{11}^2 F_1(y_1,y_2)\theta_1+D_{12}^2 F_1(y_1,y_2)\theta_2]p_1^1
			+[D_{11}^2 F_2(y_1,y_2)\theta_1+D_{12}^2 F_2(y_1,y_2)\theta_2]p_2^1, \\
            \mathcal{N}_2^1(\eta,\theta) &= [D_{21}^2 F_1(y_1,y_2)\theta_1+D_{22}^2 F_1(y_1,y_2)\theta_2]p_1^1
            +[D_{21}^2 F_2(y_1,y_2)\theta_1+D_{22}^2 F_2(y_1,y_2)\theta_2]p_2^1.
        \end{split}
	\end{equation*}

	Then, making $\lambda \to 0$ in \eqref{dividelambda}, we obtain the second derivative, 
	\begin{equation}
		\label{D2J}
		\langle D_1^2 J_1(h,v^1,v^2), (\bar{v}^{1}, \tilde{v}^{1}) \rangle = \int_{0}^{T}\int_{\mathcal{O}_1} \eta_1^1 \tilde{v}^{1} \ dxdt + \mu_1 \int_{0}^{T}\int_{\mathcal{O}_1}\rho_*^2 \bar{v}^{1}\tilde{v}^{1} \ dxdt, \ \forall\bar{v}^{1},\Tilde{v}^{1} \in \mathcal{V}_{1}. 
	\end{equation}
    
	In particular, taking $\Bar{v}^{1}=\Tilde{v}^{1}$, one has
	\begin{equation}
		\label{D2Jigual}
		\langle D_1^2 J_1(h,v^1,v^2), (\bar{v}^{1}, \bar{v}^{1}) \rangle = \int_{0}^{T}\int_{\mathcal{O}_1} \eta_1^1 \bar{v}^{1} \ dxdt + \mu_1 \int_{0}^{T}\int_{\mathcal{O}_1} \rho_*^2|\bar{v}^{1}|^{2} \ dxdt. 
	\end{equation}
	
	Let us prove that there exists a constant $C> 0$ such that
	\begin{equation}
		\label{eq:integral_eta_w1_bounded3}
%		\left|\int_{O_1 \times (0,T)} \eta_1^1 \bar{v}^{1} \ dxdt\right|\leq C \left( 1 +  \|h\|_{\mathcal{V}}+\|y_{0}\| +\sum_{i=1}^{2}\|y_{i,d}\|^{2}_{L^{2}((0,T),L^2(O_{i,d}))}  \right) \|\bar{v}^{1}\|^{2}_{L^2(\mathcal{O}_1\times (0,T))}
		\left|\int_{O_1 \times (0,T)} \eta_1^1 \bar{v}^{1} \ dxdt\right|\leq C  \|\bar{v}^{1}\|^{2}_{L^2(\mathcal{O}_1\times (0,T))}
	\end{equation}
	In fact, using \eqref{sistemaconvexo1}, integration by parts, and \eqref{sistemaconvexo2}, we get
	{ \begin{eqnarray*}
			\int_{O_1 \times (0,T)} \eta^1_1\bar{v}^{1} \ dxdt &=& \int_{Q} \eta_1^1\left(\theta_{1t}-b(t)\left(a(x)\theta_{1x}\right)_x-B\sqrt{a}\theta_{1x}+D_1 F_1(y_1,y_2)\theta_1 + D_2 F_1(y_1,y_2)\theta_2 \right)\\
            %&&+F'_1(y_1)\theta_1\right)\\
            &&\qquad + \eta_2^1\left( \theta_{2t}-b(t)\left(a(x)\theta_{2x}\right)_x-B\sqrt{a}\theta_{2x}+D_1 F_2(y_1,y_2)\theta_1 + %+c_{21}\theta_1+
            D_2 F_2(y_1,y_2)\theta_2 \right)\\
			&=&\int_{Q} \theta_1\left(-\eta^1_{1t}-b(t)\left(a(x)\eta^1_{1x}\right)_x+\left(B\sqrt{a}\eta^1_1\right)_x
            + D_1 F_1(y_1,y_2)\eta_1^1 + D_1 F_2(y_1,y_2)\eta_2^1 \right)\\
            &&\qquad \theta_2\left( 			-\eta^1_{2t}-b(t)\left(a(x)\eta^1_{2x}\right)_x+\left(B\sqrt{a}\eta^1_2\right)_x
            + D_2 F_1(y_1,y_2)\eta_1^1 + D_2 F_2(y_1,y_2)\eta_2^1 \right)\\
			%&=& \int_{Q} \left( \alpha_1\theta_1^2\cara_{_{{\mathcal{O}}_{1d}}} - F_1''(y_1)\theta_1^2p^{1}_1-c_{21}\eta^1_2\theta_1 \right) dxdt
            &=& \int_{Q} \theta_1\left( \alpha_1\theta_1\cara_{_{{\mathcal{O}}_{1d}}} - \mathcal{N}_1^1(\eta,\theta) \right) + \theta_2\left( \alpha_1\theta_2\cara_{_{{\mathcal{O}}_{1d}}} - \mathcal{N}_2^1(\eta,\theta) \right).
	\end{eqnarray*}}

    We observe that the linear system \eqref{sistemaconvexo1} is uncoupled. By standard energy estimates (see e.g. Appendix in \cite{GYL-Carleman-2025}) we have
    \begin{equation}\label{eq:energy_theta}
	    \sum_{i=1}^2 \|\sqrt{a} \theta_{i,x}\|^2_{L^\infty(0,T;L^2(\Omega))} + \sum_{i=1}^2 	\|\theta_i\|^2_{L^\infty(0,T;L^2(\Omega))} \leq C \|\widetilde{v}^1\|^2_{L^2(\mathcal{O}_1\times (0,T))}.
    \end{equation}

    On the other hand, \eqref{qlambdaadjunto} with $\lambda=0$ is the same equation as \eqref{optimal2}. Using energy estimates for the optimality system \eqref{optimal1}-\eqref{optimal2} we have
    \begin{equation}\label{eq:energy_p_ik}
        \begin{split}
	       &\sum_{k=1}^2 \|\sqrt{a} p_{k,x}^1\|^2_{L^\infty(0,T;L^2(\Omega))} + \sum_{k=1}^2 	\|p_k^1\|^2_{L^\infty(0,T;L^2(\Omega))} \\
           &\qquad \leq C \left( \|h\|^2_{L^2(\mathcal{O}\times (0,T))} + \sum_{k=1}^2 \|y_k^0\|_{H^1_a(\Omega)} + \sum_{k=1}^2 \|y_{k,d}^1\|_{L^2(\mathcal{O}_{i,d}\times (0,T))}^2 \right).
        \end{split}
    \end{equation}
    The proof of the estimate (\ref{eq:energy_p_ik}) is the same as the one in the Appendix of \cite{GYL-equation-2025}.

    Therefore, 
    \color{red}

    \color{black}

    \begin{eqnarray*}
		\left|\int_{O_1 \times (0,T)} \eta \tilde{v}^{1} \ dxdt\right|&\leq& C \int_{Q} \left( |\theta_1|^2\cara_{_{{\mathcal{O}}_{1d}}} +  [|\theta_1|^2 + 2|\theta_1||\theta_2|+|\theta_2|^2]|p_1^1| \right. \\
        &&\left. \qquad +|\theta_2|^2\cara_{_{{\mathcal{O}}_{1d}}} + [|\theta_1|^2 + 2|\theta_1||\theta_2|+|\theta_2|^2]|p_2^1|  
        %- F_1''(y_1)\|\theta_1|^2|p^{1}_1|-c_{21}|\eta^1_2||\theta_1|
        \right) dxdt\\
        &\leq& C \int_{Q} \left( |\theta_1|^2 + |\theta_2|^2 +  2(|\theta_1|^2+|\theta_2|^2)(|p_1^1|+|p_2^1|)  
        %- F_1''(y_1)\|\theta_1|^2|p^{1}_1|-c_{21}|\eta^1_2||\theta_1|
        \right) dxdt.
    \end{eqnarray*}
    From the energy estimates \eqref{eq:energy_theta} and \eqref{eq:energy_p_ik} we have
    \begin{eqnarray*}
	    \left|\int_{O_1 \times (0,T)} \eta \tilde{v}^{1} \ dxdt\right|	&\leq & C(\|\theta_1\|_{L^2(Q)}^{2} + \|\theta_2\|_{L^2(Q)}^{2}) \\
        &&+ C\int_0^T \left( \|\theta_1(t)\|_{L^2(\Omega)}^2 + \|\theta_2(t)\|_{L^2(\Omega)}^2 \right) %\|F''_1\|_{L^\infty}\|\theta_1\|_{L^2(Q)}^{2} 
        \left(\|p_1^1(t)\|_{L^\infty(\Omega)} + \|p_2^1(t)\|_{L^\infty(\Omega)} \right)\\
	%&&+\|c_{21}\|_{L^\infty}\|\eta^1_2\|_{L^2(Q)}^{2}\|\theta_1\|_{L^2(Q)}^{2}\\
		&\leq& C\left(\|\theta_1\|_{L^\infty(0,T;L^2(\Omega))}^{2} + \|\theta_2\|_{L^\infty(0,T;L^2(\Omega))}^{2}\right) \times \\
        &&\qquad \times \left( 1 +  \int_0^T \left(\|p_1^1(t)\|_{H^1_a(\Omega)} + \|p_2^1(t)\|_{H^1_a(\Omega)} \right) \right) \\
        &\leq& 
        C\left(1+\|p_1^1\|^{2}_{L^\infty(0,T;H^1_a(\Omega))}+\|p_2^1\|^{2}_{L^\infty(0,T;H^1_a(\Omega))} %+\|y_{1,d}\|^{2}_{L^{2}(O_{1,d}\times(0,T))} +
        %+\|\sqrt{a}p_{1x}^1\|^{2}_{L^\infty(0,T;L^{2}(\Omega))}+\|\sqrt{a}p_{2x}^1\|^{2}_{L^\infty(0,T;L^{2}(\Omega))}
        \right)\|\bar{v}^{1}\|^{2}\\
		&\leq&C\left( 1 + \|h\|_{\mathcal{V}}^2 + \|y_{1,d}\|^{2}_{L^{2}(O_{1,d}\times(0,T))}+\|y_{0}\|_{H^1_a(\Omega)}\right)\|\bar{v}^{1}\|^{2}_{\mathcal{V}_{1}}\\
		&\leq&C \|\widetilde{v}^1\|^2_{L^2(\mathcal{O}_1\times (0,T))},
	\end{eqnarray*} 
	where $C$ is a positive constant independent of $\mu_1$ and $\mu_2$.\\

	We can use \eqref{D2Jigual}, to obtain the following inequality
		\begin{equation}
		\langle D_1^2 J_1(h,v^1,v^2), (\bar{v}^{1}, \bar{v}^{1}) \rangle \geq  \mu_1 \int_{0}^{T}\int_{\mathcal{O}_1} \rho_*^2|\widetilde{v}^{1}|^{2} \ dxdt -C \|\widetilde{v}^1\|^2_{L^2(\mathcal{O}_1\times (0,T))}. 
	\end{equation}
	
	Using the definition of $\rho_*(t)$, we get $\rho_*^{-2}\leq e^{2\varphi_*}\leq e^{s\varphi}\leq C$\\
	\begin{equation}
		\langle D_1^2 J_1(h,v^1,v^2), (\bar{v}^{1}, \bar{v}^{1}) \rangle \geq  (\mu_1  -C) \|\bar{v}^1\|^2_{L^2(\mathcal{O}_1\times (0,T))}. 
	\end{equation}
							
    In a similar way, we can prove that there exists a positive constant $C$ independent of $\mu_1$ and $\mu_2$ such that
	\begin{equation}
	    \langle D_2^2 J_1(h,v^1,v^2), (\bar{v}^{2}, \bar{v}^{2}) \rangle \geq  (\mu_2 -C) \|\bar{v}^2\|^2_{L^2(\mathcal{O}_2\times (0,T))}. 
    \end{equation}

Taking $\mu_i$ sufficiently large, then the functional $J_i$, $i = 1, 2$ given by \eqref{eqprin24} is convex
and therefore the pair $(v^1,v^2)$ is a Nash equilibrium.

\end{proof}

    In the next sections, we prove the Carleman estimates which will allow us to prove the global null controllability of the linearized optimality system (see \eqref{eq:linearized_system_y} and \eqref{eq:linearized_system_p} bellow). In turn, this will allow us to apply a general inverse mapping theorem to get local null controllability. 
    
	\section{Carleman Estimates}
    \label{sec:carleman_estimates}
	Now, let us present a Carleman estimate that will play an important role in the next section.\\
	\noindent Let $O'=(\alpha',\beta') \subset\subset O$ and let $\Psi : [0,1] \to \R$ be a $C^2$ function such that
	\begin{equation}\label{ecpsi3}
		\Psi(x) = 
		\begin{cases}
			\int\limits_0^x \frac{s}{a(s)} ds, \quad x \in [0,\alpha'), \\
			-\int\limits_{\beta'}^x \frac{s}{a(s)} ds, \quad x \in [\beta',1].
		\end{cases}
	\end{equation}
	For $\lambda \geq \lambda_0$ define the functions
	\begin{eqnarray}\label{ecpsi23}
		\theta(t) = \frac{1}{(t(T-t))^4}, \qquad \eta(x) &=& e^{\lambda(|\Psi|_\infty + \Psi)},  \qquad \sigma(x,t) = \theta(t) \eta(x)\nonumber\\
		\varphi(x,t) &=& \theta(t) (e^{\lambda(|\Psi|_\infty + \Psi)}-e^{3\lambda|\Psi|_\infty}).
	\end{eqnarray}
	Let us also consider the linear system   
	 First we show a Carleman estimate for the linear equation
	\begin{equation}\label{adjunto2_texto_art3}
		\begin{cases}
			v_t+b(t)\left(a(x)v_x\right)_x +d(x,t)\sqrt{a}v_x+c(x,t)v=h(x,t), & \ \ \ \text{in} \ \ \ {Q}\\
			v=0, & \ \ \ \text{in} \ \ \ {\Sigma}\\
			v(T) = v_T, \ \ \ \text{in} \ \ \ (0,1)
		\end{cases}
	\end{equation}
	where $b$ is a continuous function in $[0,T]$ and we denote
	$$
	m := \min\limits_{t \in [0,T]} b(t), \qquad M :=\max\limits_{t \in [0,T]} b(t).
	$$
	Moreover we suppose that $\frac{b'(t)}{b(t)} \leq C_b$, for some constant $C_b>0$. 
	\begin{propo}\label{prop_carleman_13}
		There exist $C>0$ and $\lambda_0,  s_0>0$ such that every solution $v$ of (\ref{adjunto2_texto_art3}) satisfies, for all
		$s\geq s_0$ and $\lambda\geq \lambda_0$,
		\begin{equation}\label{ecjv343}
			\int_{0}^{T}\int_{0}^{1}e^{2s\varphi}\left(  (s\lambda)\sigma a b^2 v_x^2+(s\lambda)^2\sigma^2b^2 v^2 \right)\leq C\left(	\int_{0}^{T}\int_{0}^{1} e^{2s\varphi} |h|^2+(\lambda s)^3\int_{0}^{T}\int_{\omega}e^{2s\varphi} \sigma^3  v^2  \right)
		\end{equation}
	\end{propo}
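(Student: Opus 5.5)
The plan is to reduce Proposition \ref{prop_carleman_13} to the Carleman inequality for the degenerate non-autonomous equation obtained in \cite{GYL-Carleman-2025}. That inequality treats
\[
v_t+b(t)(a(x)v_x)_x=g
\]
with $b$ continuous, $0<m\le b\le M$ and $b'/b\le C_b$, and uses the weights $\varphi,\sigma$ of \eqref{ecpsi23}.

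We rewrite \eqref{adjunto2_texto_art3} as
\[
v_t+b(t)(a v_x)_x=g:=h-d\sqrt{a}\,v_x-c\,v,
\]
where $d$ and $c$ are bounded. In our application $d$ comes from $B\sqrt a$ and $D_jF_i$, and $B$ is bounded because $x/\sqrt{a(x)}$ is bounded for a weakly degenerate $a$ (since $a(x)\ge a(1)x^K$ with $K<1$). Applying the known estimate gives
\[
\iint e^{2s\varphi}\bigl(s\lambda\sigma a b^2 v_x^2+(s\lambda)^2\sigma^2 b^2 v^2\bigr)\le C\Bigl(\iint e^{2s\varphi}|g|^2+(s\lambda)^3\iint_{\omega}e^{2s\varphi}\sigma^3v^2\Bigr).
\]
If the cited result carries slightly different powers, for instance $(s\lambda)^3\sigma^3$ on the zero-order term, the stated version is weaker and follows directly.

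Next we expand $|g|^2\le 3|h|^2+3\|d\|_\infty^2 a v_x^2+3\|c\|_\infty^2 v^2$. We then absorb the lower-order terms into the left-hand side. Since $\theta\ge (2/T)^8$ and $\eta\ge1$, we have $\sigma\ge c_0>0$. Also $b\ge m>0$, so $b^2\ge m^2$. Hence
\[
\iint e^{2s\varphi}a v_x^2\le \frac{1}{s\lambda c_0 m^2}\iint e^{2s\varphi}s\lambda\sigma a b^2v_x^2,
\]
and a similar bound holds for the $v^2$ term with factor $(s\lambda)^{-2}$. Choosing $s_0$ and $\lambda_0$ large, depending on $\|c\|_\infty,\|d\|_\infty,m,T$, makes these contributions at most half of the left-hand side. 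This yields \eqref{ecjv343}.

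The main obstacle is verifying that the hypotheses of the cited Carleman inequality are met. In particular, we must check that the time-dependent coefficient $b(t)=a(\ell(t))/\ell(t)^2$ of \eqref{eq:def_b_B} is positive, bounded and satisfies $b'/b\le C_b$. Using the multiplicativity \eqref{condiciones_a_c4} together with $\ell'/\ell\le C$, we expect $b'/b=(\ell a'(\ell)/a(\ell)-2)\ell'/\ell$, which is bounded by $(K+2)C$.

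If \cite{GYL-Carleman-2025} only covers the pure equation with a specific $\omega$, we must also check that the observation region $\omega\supset O'$ is compatible with the choice of $\Psi$ in \eqref{ecpsi3}. If necessary, this is handled by a standard cut-off argument that transfers the local term from a neighborhood of $O'$ to $\omega$.
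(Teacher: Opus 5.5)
Your proposal is correct and matches the paper, whose entire proof of this proposition is to invoke the main Carleman inequality of \cite{GYL-Carleman-2025} for (\ref{adjunto2_texto_art3}). Your extra step of absorbing $d\sqrt{a}\,v_x$ and $c\,v$ into the left-hand side, using $\sigma\ge (2/T)^8$, $b\ge m>0$ and $s,\lambda$ large, is a standard and valid safeguard in case the cited estimate only covers the principal part; the paper takes the cited result to already include these lower-order terms.
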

	\begin{proof}
    It is the main result in \cite{GYL-Carleman-2025}.
	\end{proof}

    \subsection{Linearized system}

    We compute formally the derivative a zero of the map 
	$$\mathcal{T}(y_1,y_2,p^1_1,p^2_1,p_2^1,p_2^2)=\mathcal{T}(y_1,y_2,p^j_i)=[\mathcal{T}^1_0(y_1,y_2,p^j_i),\mathcal{T}^2_0(y_1,y_2,p^j_i),\mathcal{T}^1_i(y_1,y_2,p^j_i),\mathcal{T}^2_i(y_1,y_2,p^j_i)]$$
    where, for $i=1,2$,
	\begin{eqnarray*}
		\mathcal{T}^1_0(y_1,y_2,p^j_i)&=&y_{1t}-b(t)\left({a}(x)y_{1x}\right)_x-B\sqrt{a}y_{1x}+F_1(y_1,y_2)-{h}\cara_{_{{\mathcal{O}}}}+\frac{1}{\mu_1}\rho_*^{-2}p_1^1\cara_{_{{\mathcal{O}}_1}}+\frac{1}{\mu_2}\rho_*^{-2}p_1^2\cara_{_{{\mathcal{O}}_2}},\\
		\mathcal{T}^2_0(y_1,y_2,p^j_i)&=&y_{2t}-b(t)\left({a}(x)y_{2x}\right)_x-B\sqrt{a}y_{2x}+F_2(y_1,y_2),\\
		\mathcal{T}^1_i(y_1,y_2,p^j_i)&=&-p^i_{1t}-b(t)\left(a(x)p^i_{1x}\right)_x+\left(B\sqrt{a}p^i_1\right)_x+D_1 F_1(y_1,y_2)p^i_1+D_1 F_2(y_1,y_2)p^i_2-\alpha_i y_1\cara_{_{{\mathcal{O}}_{id}}},  \\
		\mathcal{T}^2_i(y_1,y_2,p^j_i)&=&-p^i_{2t}-b(t)\left(a(x)p^i_{2x}\right)_x+\left(B\sqrt{a}p^i_2\right)_x+D_2 F_1(y_1,y_2)p^i_1 + D_2 F_2(y_1,y_2)p^i_2-\alpha_i y_2\cara_{_{{\mathcal{O}}_{id}}}.
	\end{eqnarray*}
	Thus, $D\mathcal{T}(0,0,0,0,0,0)(\overline{y}_1,\overline{y}_2,\overline{p}^1_1,\overline{p}^1_2,\overline{p}^2_1,\overline{p}^2_2)=\lim\limits_{
		\lambda\rightarrow 0}\frac{\mathcal{T}(\lambda(\overline{y}_1,\overline{y}_2,\overline{p}^1_1,\overline{p}^1_2,\overline{p}^2_1,\overline{p}^2_2))-\mathcal{T}(0,0,0,0,0,0)}{\lambda}$. Removing the
	tilde in the notations, we get the  following linearized system,	
	
		\begin{equation}%\label{optimal133_linearized1}
        \label{eq:linearized_system_y}
		\begin{cases}
			y_{1t}-b(t)\left({a}(x)y_{1x}\right)_x-B\sqrt{a}y_{1x}+D_1 F_1(0,0)y_1 + D_2 F_1(0,0)y_2\\
			\qquad={h}\cara_{_{{\mathcal{O}}}}-\frac{1}{\mu_1}\rho_*^{-2}p_1^1\cara_{_{{\mathcal{O}}_1}}-\frac{1}{\mu_2}\rho_*^{-2}p_1^2\cara_{_{{\mathcal{O}}_2}}, & \ \ \ \text{in} \ \ \ {Q},\\
			y_{2t}-b(t)\left({a}(x)y_{2x}\right)_x-B\sqrt{a}y_{2x}+D_1 F_2(0,0) y_1+D_2 F_2(0,0)y_2=0 & \ \ \ \text{in} \ \ \ {Q},\\
			y_1(0,t)=y_2(1,t)=y_2(0,t)=y_2(1,t)=0, & \ \ \ \text{on} \ \ \ (0,T),\\
			y_1(0)=y_1^0(x), \ y_2(0)=y_2^0(x) & \ \ \ \text{in} \ \ \ \Omega.
		\end{cases}
	\end{equation}	
	and, for $i=1,2$,
	\begin{equation}%\label{optimal233_linearized2}
    \label{eq:linearized_system_p}
		\begin{cases}
			%-p^i_{1t}-b(t)\left(a(x)p^i_{1x}\right)_x+\left(B\sqrt{a}p^i_1\right)_x+F_1'(0)p^i_1+c_{21}p^i_2=\alpha_i y_1 \cara_{_{{\mathcal{O}}_{id}}}, &\ \ \ \text{in} \ \ \ {Q},\\
            -p^i_{1t}-b(t)\left(a(x)p^i_{1x}\right)_x+\left(B\sqrt{a}p^i_1\right)_x+D_1 F_1(0,0)p^i_1+D_1 F_2(0,0)p^i_2=\alpha_i y_1 \cara_{_{{\mathcal{O}}_{id}}}, &\ \ \ \text{in} \ \ \ {Q},\\
			-p^i_{2t}-b(t)\left(a(x)p^i_{2x}\right)_x+\left(B\sqrt{a}p^i_2\right)_x+D_2 F_1(0,0)p^i_1 + D_2 F_2(0,0)p^i_2=\alpha_i y_2 \cara_{_{{\mathcal{O}}_{id}}}, &\ \ \ \text{in} \ \ \ {Q},\\
			p^i_1(0,t)=p^i_1(1,t)=p^i_2(0,t)=p^i_2(1,t)=0, & \ \ \ \text{on} \ \ \ (0,T),\\
			p^i_1(T)=0, \ p^i_2(T)=0, & \ \ \ \text{in} \ \ \ \Omega.
		\end{cases}
	\end{equation}

	\noindent We introduce the notation
	$$
	I(\varsigma) = \int_Q e^{2s\varphi}((s\lambda)\sigma b^2 a \varsigma_x^2 + (s\lambda)^2 \sigma^2 b^2 \varsigma^2)dxdt.
	$$

In this section we establish an observability inequality that allows us to prove the null
controllability of system 
%\eqref{optimal1}--\eqref{optimal2}. 
\eqref{eq:linearized_system_y}--\eqref{eq:linearized_system_p}. 
We start by proving an observability inequality for the adjoint systems associated
		\begin{equation}\label{optimaladj1}
		\begin{cases}
			-\phi_{1t}-b(t)\left({a}(x)\phi_{1x}\right)_x+(B\sqrt{a}\phi_1)_{x}+c_{11}\phi_1+c_{21}\phi_2=\alpha_1\psi_1^1\cara_{_{{\mathcal{O}}_{1d}}}+\alpha_2\psi_1^2\cara_{_{{\mathcal{O}}_{2d}}}+F, & \ \ \ \text{in} \ \ \ {Q}\\
			-\phi_{2t}-b(t)\left({a}(x)\phi_{2x}\right)_x+(B\sqrt{a}\phi_2)_{x}+c_{12}\phi_1+c_{22}\phi_2=\alpha_1\psi_1^2\cara_{_{{\mathcal{O}}_{1d}}}+\alpha_2\psi_2^2\cara_{_{{\mathcal{O}}_{2d}}}+\overline{F},& \ \ \ \text{in} \ \ \ {Q}\\
			\phi_1(0,t)=\phi_1(1,t)=\phi_2(0,t)=\phi_2(1,t)=0, & \ \ \ \text{on} \ \ \ (0,T)\\
			\phi_1(T)=\phi_1^T, \ \phi_2(T)=\phi_2^T & \ \ \ \text{in} \ \ \ \Omega
		\end{cases}
	\end{equation}	
	and
	\begin{equation}\label{optimaladj2}
		\begin{cases}
			\psi^i_{1t}-b(t)\left(a(x)\psi^i_{1x}\right)_x-B\sqrt{a}\psi^i_{1x}+c_{11}\psi^i_1+c_{12}\psi^i_2=-\frac{1}{\mu_i}\rho_*^{-2}\phi_1\cara_{_{{\mathcal{O}}_{i}}}+F_i, &\ \ \ \text{in} \ \ \ {Q}\\
			\psi^i_{2t}-b(t)\left(a(x)\psi^i_{2x}\right)_x-B\sqrt{a}\psi^i_{2x}+c_{21}\psi^i_1+c_{22}\psi^i_2=\overline{F}_i, &\ \ \ \text{in} \ \ \ {Q}\\
			\psi^i_1(0,t)=\psi^i_1(1,t)=\psi^i_2(0,t)=\psi^i_2(1,t)=0, & \ \ \ \text{on} \ \ \ (0,T)\\
			\psi^i_1(0)=0, \ \psi^i_2(0)=0, & \ \ \ \text{in} \ \ \ \Omega
		\end{cases}, \ \ \ i=1,2.
	\end{equation}	
	where $c_{ij}=D_j F_j(0,0)$, $c_{ij} \in L^{\infty}(Q)$, for $i,j=1,2$, and $\psi_1^T, \phi_2^T\in L^2{\Omega}$\\
	Before continuing, we consider that the following result is useful for the rest of the thesis.
%	\begin{lema}[Caccioppoli’s inequality]\label{cacio}
%Let $\mathcal{O}'$ be a subset of $\mathcal{O}$ such that $\mathcal{O}'\Subset \mathcal{O}$. Let a and a be the solution of (5.16) and (5.17)
%respectively. Then, there exists a positive constant $C$ such that
%	\end{lema}
%	\begin{proof}
%	
%	\end{proof}
%	
%	
%	
%	
	
	Since the first assumption of ${\mathcal{O}}_{1d}={\mathcal{O}}_{2d}$ holds and if we set , $\rho_i=\alpha_1\psi_i^1+\alpha_2\psi_i^2, \ i = 1, 2$, then we obtain
    \begin{equation}\label{optimaladj1_simp}
	\begin{cases}
		-\phi_{1t}-b(t)\left({a}(x)\phi_{1x}\right)_x+(B\sqrt{a}\phi_1)_{x}+c_{11}\phi_1+c_{21}\phi_2=\rho_1\cara_{_{{\mathcal{O}}_{d}}}+G_0, &  \text{in} \ \ \ {Q}\\
		-\phi_{2t}-b(t)\left({a}(x)\phi_{2x}\right)_x+(B\sqrt{a}\phi_2)_{x}+c_{12}\phi_1+c_{22}\phi_2=\rho_2\cara_{_{{\mathcal{O}}_{d}}}+\overline{G}_0,& \text{in} \ \ \ {Q}\\
		\rho_{1t}-b(t)\left(a(x)\rho_{1x}\right)_x-B\sqrt{a}\rho_{1x}+c_{11}\rho_1 + c_{12}\rho_2=-\rho_*^{-2}  \left( \frac{\alpha_1}{\mu_1}\cara_{_{{\mathcal{O}}_{1}}} +\frac{\alpha_2}{\mu_2}\cara_{_{{\mathcal{O}}_{2}}}  \right)    \phi_1+G, & \text{in} \ {Q}\\
		\rho_{2t}-b(t)\left(a(x)\rho_{2x}\right)_x-B\sqrt{a}\rho_{2x}+c_{21}\rho_1+c_{22}\rho_2=\overline{G}, & \text{in}  \ {Q}\\
		\phi_1(0,t)=\phi_1(1,t)=\phi_2(0,t)=\phi_2(1,t)=0, & \text{on} \ \ \ (0,T)\\
		\rho_1(0,t)=\rho_1(1,t)=\rho_2(0,t)=\rho_2(1,t)=0, & \text{on} \ \ \ (0,T)\\
		\phi_1(T)=\phi_1^T, \ \phi_2(T)=\phi_2^T \ \ \rho_1(0)=0, \ \rho_2(0)=0, & \text{in} \ \ \ \Omega
	\end{cases}
\end{equation}
where $G=\alpha_{1}F_1+\alpha_{2} F_2$ and $\overline{G}=\alpha_{1} \overline{F}_1+\alpha_{2}\overline{F}_2$.

\begin{propo}\label{prop:carleman_sistema}
	There exist positive constants $C$, $\lambda_0$ and $s_0$ such that, for any $s\geq s_0$, $\lambda\geq \lambda_0$ and any $\phi_1^T, \phi_2^T \in L^2((0,T)\times (0, 1))$, the corresponding solution $(\phi_1, \phi_2, \rho_1, \rho_2)$ to \eqref{optimaladj1_simp} satisfies\begin{multline}\label{carlemansistema}
		I(\phi_1)+I(\phi_2)+I(\rho_1)+I(\rho_2)\leq C\int_{0}^{T}\int_{0}^{1}e^{2s\varphi}\lambda^{14}s^{14}\sigma^{14} \left(|{G}_0|^2+|{G}|^2+|\overline{G}_0|^2+|\overline{G}|^2\right)\\
		+ C \int_{0}^{T}\int_{{\mathcal{O}}}s^{28}\lambda^{28} \sigma^{28} e^{2s\varphi} |\phi_{1}|^2
	\end{multline}
\end{propo}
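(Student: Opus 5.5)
We will apply the scalar Carleman estimate of Proposition \ref{prop_carleman_13} to each of the four equations in \eqref{optimaladj1_simp}. The $\phi$-equations are backward and fit \eqref{adjunto2_texto_art3} directly once $(B\sqrt a\phi_i)_x$ is expanded as $B\sqrt a\phi_{ix}+(B\sqrt a)_x\phi_i$. For the $\rho$-equations we reverse time, $t\mapsto T-t$. This preserves the symmetric weights $\theta,\varphi,\sigma$ and turns them into equations of the same type. We choose the observation set $\omega$ in Proposition \ref{prop_carleman_13} as an open set $\omega\subset\subset\mathcal{O}\cap\mathcal{O}_d$, which is nonempty by hypothesis. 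The zero-order couplings $c_{ij}$, the term $(B\sqrt a)_x$, and the right-hand side $\rho_*^{-2}(\alpha_1\mu_1^{-1}\cara_{\mathcal O_1}+\alpha_2\mu_2^{-1}\cara_{\mathcal O_2})\phi_1$ are placed in $h$. Their squares carry no powers of $s\lambda\sigma$, so they are absorbed into the left-hand side $(s\lambda)^2\sigma^2b^2$-terms for $s,\lambda$ large, using $b\ge m>0$ and $\rho_*^{-2}\le C$. The same applies to $\rho_i\cara_{\mathcal O_d}$ in the $\phi$-equations. The outcome is
\begin{equation*}
\sum I \le C\int_Q e^{2s\varphi}(|G_0|^2+|\overline G_0|^2+|G|^2+|\overline G|^2)+C(s\lambda)^3\int_0^T\!\!\int_\omega e^{2s\varphi}\sigma^3(|\phi_1|^2+|\phi_2|^2+|\rho_1|^2+|\rho_2|^2).
\end{equation*}

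Next we eliminate the local terms. This is done in the cascade order $\rho_2\to\rho_1\to\phi_2\to\phi_1$, each time increasing the power of $s\lambda\sigma$. This is where the powers $14$ and $28$ in \eqref{carlemansistema} come from.

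\textbf{Removing $\phi_2$, $\rho_1$, $\rho_2$.} Take a cutoff $\zeta\in C_c^\infty(\omega')$ with $\omega\subset\subset\omega'\subset\subset\mathcal O\cap\mathcal O_d$ and $\zeta=1$ on $\omega$.
\begin{itemize}
\item For $\rho_2$: multiply the $\phi_2$-equation by $(s\lambda)^k\sigma^k e^{2s\varphi}\zeta\rho_2$ and integrate by parts. The resulting term $\int\zeta(s\lambda\sigma)^k e^{2s\varphi}|\rho_2|^2$ appears on one side. The terms involving $\phi_2$ are paired with the $\rho_2$-equation, giving $\int \phi_2(\rho_{2t}-b(a\rho_{2x})_x-\dots)$, which equals $\int \phi_2(\overline G-c_{21}\rho_1-c_{22}\rho_2)$.
\item The commutator terms, where derivatives fall on $\zeta(s\lambda\sigma)^k e^{2s\varphi}$, are bounded by $\varepsilon I(\rho_2)+C_\varepsilon(s\lambda\sigma)^{2k'}|\phi_2|^2$ localized. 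This uses $|\varphi_t|\le C\theta^{5/4}$ and the fact that $a$ is bounded below on $\omega'$, since $\omega'$ stays away from the degeneracy at $x=0$.
\item We do the same for $\rho_1$ using the $\phi_1$-equation.
\end{itemize}
This produces local terms in $\phi_1$ and $\phi_2$ with higher weights. To remove $\phi_2$ we use the $\rho_1$-equation in the opposite direction: its source contains $-c_{12}\rho_2$ and $\rho_*^{-2}\phi_1$. This requires the coupling $c_{12}\neq0$ on $\omega'$ (or $c_{21}$, depending on the structure), which we assume as in the cascade literature. That step produces local $\phi_1$ terms with doubled powers.

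\textbf{Global terms and the main obstacle.} At each stage the global terms are absorbed, using $e^{2s\varphi}$-weighted source terms with at most $\sigma^{14}$. The final weight on $\phi_1$ is $\sigma^{28}$.

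The main obstacle is this cascade elimination. In particular, each integration by parts must generate only lower-order powers that can be absorbed with the $\varepsilon$-Young splitting. The mixed weight also has to remain uniform near the degenerate point. Localizing in $\omega'\subset\subset(0,1)$ avoids that point, so the time derivatives of the weights are the delicate part.
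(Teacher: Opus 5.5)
Your first stage matches the paper's Appendix~A, including your elimination of the local $\rho_1,\rho_2$ terms. There you multiply the $\phi_i$-equation by $\chi e^{2s\varphi}(s\lambda\sigma)^3\rho_i$ and substitute the $\rho_i$-equation after integrating by parts in $t$. This leaves local terms in $\phi_1,\phi_2$ with weight $(s\lambda\sigma)^8$.

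The gap is in removing $\phi_2$. The $\rho_1$-equation involves only $\rho_1,\rho_2,\phi_1$, so no multiplier applied to it can produce $\int\chi e^{2s\varphi}(s\lambda\sigma)^k|\phi_2|^2$. The only equation in which $\phi_2$ enters at zero order is the $\phi_1$-equation, through $c_{21}\phi_2$. The paper multiplies that equation by $\chi e^{2s\varphi}(s\lambda\sigma)^8\phi_2$. This requires $c_{21}$, not $c_{12}$, to be bounded away from zero on the localization set.

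Even with the right equation, your cascade does not close as described.
\begin{itemize}
\item Both $\phi_1$ and $\phi_2$ solve backward equations. So the second-order terms do not cancel after integration by parts, as they did in the $\phi_i$--$\rho_i$ pairing. Instead they add up to $-2\int_Q\chi e^{2s\varphi}(s\lambda\sigma)^8 b\,a\,\phi_{1x}\phi_{2x}$. After Young's inequality against $I(\phi_2)$, this leaves a local gradient term $\int_0^T\!\int_{\widetilde{\mathcal O}}e^{2s\varphi}(s\lambda\sigma)^{15}a|\phi_{1x}|^2$, which your scheme does not control.
\item The term $\rho_1\cara_{\mathcal{O}_d}$ in the $\phi_1$-equation brings back a local $\rho_1$ term. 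So does $\rho_2\cara_{\mathcal{O}_d}$ in the $\phi_2$-equation, which you need to replace $\phi_{2t}$, for $\rho_2$. Both now carry weights of order $(s\lambda\sigma)^{16}$.
\end{itemize}
Removing these $\rho$-terms again by your first elimination regenerates local $\phi_2$ at an even higher weight, through $c_{21}\phi_2\rho_1$. The two steps feed each other with non-small constants, so the loop $\rho\to\phi_2\to\rho$ never terminates.

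The paper breaks this loop with two ingredients that are not Carleman estimates:
\begin{itemize}
\item A localized (Caccioppoli-type) energy identity for the $\phi_1$-equation, which controls the gradient term. Its coupling term $(s\lambda\sigma)^{15}|\phi_1||\phi_2|$ gives, by Young, $\varepsilon I(\phi_2)+C(s\lambda\sigma)^{28}|\phi_1|^2$. This is where the exponent $28$ comes from; it is not produced by repeated doubling in the cascade.
\item An energy estimate for the forward $(\rho_1,\rho_2)$-system, which has zero initial data and is driven only by $-\rho_*^{-2}(\cdots)\phi_1+G$ and $\overline G$. Combined with boundedness of the weight $e^{2s\varphi}\sigma^{16}$ and the smallness $\rho_*^{-2}\le e^{s\varphi}$, it bounds the returning local $\rho$-terms by $\phi_1$ and the sources.
\end{itemize}
Your proposal needs something playing these two roles before it reaches \eqref{carlemansistema}.
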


The proof can be found in Appendix \ref{appendix_a}.

In order to get the global null controllability of the linearized system, we need a Carleman inequality with weights that do not vanish at $t = 0$. For that, consider a function $m \in C^\infty([0, T])$ satisfying $m(0) > 0$,
\begin{equation}
	\label{eq:def_m}
	m(t) \geq t^4(T-t)^4, \quad t \in (0,T/2], \qquad\qquad m(t) = t^4(T-t)^4, \quad t \in [T/2,T],    
\end{equation}
and define
$$
    \tau(t) = \frac{1}{m(t)}, \qquad \zeta(x,t) = \tau(t) \eta(x), \qquad A(x,t)  = \tau(t) (e^{\lambda(|\Psi|_\infty + \Psi)}-e^{3\lambda|\Psi|_\infty}).
$$

Moreover, define
\begin{multline*}
    \Gamma_0(\phi_1,\phi_2,\rho_1,\rho_2) = \int_{0}^{T}\int_{0}^{1} e^{2s A}(s\lambda) \zeta b^2 a (|\phi_{1x}|^2 + |\phi_{2x}|^2+|\rho_{1x}|^2 + |\rho_{2x}|^2)dxdt\\
    +\int_{0}^{T}\int_{0}^{1} e^{2s A}(s\lambda)^2 {\zeta}^2 b^2 (|\phi_1|^2 + |\phi_2|^2+|\rho_1|^2 + |\rho_2|^2)dxdt
\end{multline*}
%		$$
%		\Gamma(\phi,\varrho,\tilde \varrho) = \int_Q e^{2s A}((s\lambda) \zeta b^2 a (|\phi_x|^2+|\varrho_x|^2+|\tilde \varrho_x|^2) + (s\lambda)^2 {\zeta}^2 b^2 (|\phi|^2 + |\varrho|^2 + |\tilde \varrho|^2))dxdt.
%		$$
and
\begin{multline*}
	\Gamma(\phi_1,\phi_2,\psi^1_1,\psi^2_1,\psi^1_2,\psi^2_2)= \int_{0}^{T}\int_{0}^{1} e^{2s A}(s\lambda) \zeta b^2 a \left( |\phi_{1x}|^2 + |\phi_{2x}|^2+\sum_{i,j=1}^{2}|\psi_{ix}^j|^2 \right)dxdt\\
	+\int_{0}^{T}\int_{0}^{1} e^{2s A}(s\lambda)^2 {\zeta}^2 b^2 \left(|\phi_1|^2 + |\phi_2|^2+\sum_{i,j=1}^{2}|\psi_{i}^j|^2 \right)dxdt.
\end{multline*}
        
\begin{propo} \label{propocarleman23}%\cite{DemarqueLimacoViana_deg_sys2020}
	There exist positive constants $C$, $\lambda_0$ and $s_0$ such that, for any $s\geq s_0$, $\lambda\geq \lambda_0$ and any $\phi_1^T, \phi_2^T \in L^2((0,T)\times (0, 1))$, the corresponding solution $(\phi_1, \phi_2, \rho_1, \rho_2)$ to \eqref{optimaladj1_simp} verifies
	\begin{multline*}
        \Gamma_0(\phi_1,\phi_2,\rho_1,\rho_2) \leq C \left( \int_{0}^{T}\int_{0}^{1}e^{2sA}\lambda^{14}s^{14}\zeta^{14} \left(|{G}_0|^2+|{G}|^2+|\overline{G}_0|^2+|\overline{G}|^2\right)\right. \\
        \left.+ C \int_{0}^{T}\int_{{\mathcal{O}}}e^{2sA} s^{28}\lambda^{28} \zeta^{28} |\phi_{1}|^2\right).
	\end{multline*}
\end{propo}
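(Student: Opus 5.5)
This is the standard argument that removes the degeneracy of the Carleman weights at $t=0$. Split $Q$ into $(0,T/2)$ and $(T/2,T)$. On $(T/2,T)$ we have $m(t)=t^4(T-t)^4$, so $\tau=\theta$, $\zeta=\sigma$ and $A=\varphi$. There the integrand of $\Gamma_0$ coincides with that of $I(\phi_1)+I(\phi_2)+I(\rho_1)+I(\rho_2)$. Proposition \ref{prop:carleman_sistema} therefore bounds this part by its right-hand side. We then restrict the right-hand side to $(T/2,T)$, where the weights again coincide. On $(0,T/2)$ the factors $e^{2s\varphi}\sigma^{k}$ are smaller than $e^{2sA}\zeta^k$ only up to constants; in fact $e^{2s\varphi}\sigma^k$ is bounded by $C e^{2sA}\zeta^k$ there, since $A$ is bounded away from $-\infty$ on $[0,T/2]$ and $\zeta$ is bounded above and below. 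So that portion of the right-hand side is handled as well.

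The work is on $(0,T/2)$. There $\tau$ and $\zeta$ are bounded above and below by positive constants depending on $s,\lambda$, and $e^{2sA}$ is bounded, so it suffices to bound
\[
\int_0^{T/2}\!\!\int_0^1 \big(a|\phi_{ix}|^2+|\phi_i|^2+a|\rho_{ix}|^2+|\rho_i|^2\big)\,dx\,dt .
\]
Take a cutoff $\chi\in C^\infty([0,T])$ with $\chi=1$ on $[0,T/2]$ and $\chi=0$ on $[3T/4,T]$.

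For $\rho=(\rho_1,\rho_2)$, which starts from zero at $t=0$, use forward energy estimates for the coupled system in \eqref{optimaladj1_simp}. The coefficients $b,B,c_{ij}$ are bounded, with $b\ge m>0$. This gives that $\rho$ on $(0,3T/4)$ is bounded by the $L^2(0,3T/4)$ norms of $\rho_*^{-2}\phi_1$, $G$ and $\overline G$. Because $\rho_*^{-2}$ is bounded, this reduces everything to estimating $\phi_1$ on $(0,3T/4)$.

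For $\phi=(\phi_1,\phi_2)$, apply backward energy estimates to $\chi\phi$. This function vanishes at $t=T$ and satisfies the $\phi$-system with right-hand side $\chi(\rho\cara_{\mathcal O_d}+G_0)-\chi'\phi$. The term $\chi'\phi$ is supported in $[T/2,3T/4]$. The term $\chi\rho$ is bounded via the forward estimate, giving a Gronwall-type closed bound. Here one must check that the $\rho$-$\phi$ coupling does not create a circular estimate: the smallness of $\rho_*^{-2}/\mu_i$ or a Gronwall argument in time should close it. The result is that the $(0,T/2)$ portion of $\Gamma_0$ is bounded by $C\|\phi\|^2_{L^2(T/2,3T/4;L^2)}$, plus weighted norms of the source terms $G_0,\overline G_0,G,\overline G$.

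Finally, on $[T/2,3T/4]$ the weights $e^{2s\varphi}(s\lambda)^2\sigma^2b^2$ are bounded below. So $\|\phi\|^2_{L^2(T/2,3T/4)}$ is controlled by $I(\phi_1)+I(\phi_2)$, hence by Proposition \ref{prop:carleman_sistema}, with right-hand side again comparable to the one stated.

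The source terms on $(0,T/2)$ need the lower bound $e^{2sA}\zeta^{14}\ge c>0$ there. This holds since $m(0)>0$ and $m$ is smooth and positive on $[0,T/2]$.

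The main obstacle is this energy step: the coupled forward/backward structure, where $\rho$ is driven by $\phi_1$ and $\phi$ is driven by $\rho$, must be closed without losing control. I would first try a combined energy estimate for $(\chi\phi,\rho)$ on $(0,3T/4)$ with Gronwall, using that $b$ is bounded below and that $B\sqrt a$ is bounded, since $x/\sqrt{a}$ is bounded under \eqref{condiciones_a4}. This would be combined with the degenerate Hardy-type inequality for $H^1_a$ to absorb the first-order terms.
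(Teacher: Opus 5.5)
Your outline follows the paper's route: the split at $T/2$, Proposition~\ref{prop:carleman_sistema} on $[T/2,T]$ (where $A=\varphi$ and $\zeta=\sigma$), energy estimates on $[0,T/2]$, and the lower bound of the Carleman weights on a middle interval. The paper integrates from $t$ to $t+T/4$ and averages rather than using your cutoff $\chi$; the two are equivalent. The gap is the step you yourself call the main obstacle, which you leave open.

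The circularity is real. The backward estimate bounds $\chi\phi$ on $[0,T/2]$ by $\rho\cara_{\mathcal O_d}$ on $[0,3T/4]$. The forward estimate bounds $\rho$ on $[0,3T/4]$ by $\phi_1$ on $[0,3T/4]$, and that interval contains $[0,T/2]$. A Gronwall argument cannot break this loop, because the dependence is not of Volterra type. Indeed, $\phi(t)$ depends on $\rho$ at later times, and $\rho$ there depends on $\phi_1$ at all earlier times, including times before $t$.

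The paper does not resolve this either. In \eqref{ecsiste310} it puts $-\frac12\frac{d}{dt}$ in front of $\|\rho_i\|^2$ too, i.e.\ it treats $\rho_1,\rho_2$ as solutions of backward equations. In \eqref{optimaladj1_simp}, however, they solve forward equations with $\rho_i(0)=0$. So the derivation of \eqref{ecsis311}, which bounds $\rho(t)$ by $\rho(t+T/4)$, is invalid, and the coupling problem never surfaces there.

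Your smallness suggestion does work once it is made quantitative. Set $\kappa=\rho_*^{-2}\big(\frac{\alpha_1}{\mu_1}\cara_{\mathcal O_1}+\frac{\alpha_2}{\mu_2}\cara_{\mathcal O_2}\big)$ and $\delta=\|\kappa\|_{L^\infty(Q)}$. Write $\|\cdot\|_{(t_0,t_1)}$ for the norm of $L^2(t_0,t_1;L^2(0,1))$.
\begin{itemize}
\item The forward estimate gives $\|\rho\|^2_{(0,3T/4)}\le C_1\big(\delta^2\|\phi_1\|^2_{(0,3T/4)}+\|G\|^2+\|\overline G\|^2\big)$.
\item The cutoff estimate gives $\|\phi\|^2_{(0,T/2)}\le C_2\big(\|\rho\|^2_{(0,3T/4)}+\|\phi\|^2_{(T/2,3T/4)}+\|G_0\|^2+\|\overline G_0\|^2\big)$.
\end{itemize}
Here $C_1,C_2$ do not depend on $\mu_i$, so you can absorb once $C_1C_2\delta^2<1/2$. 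This requires $\mu_i$ large. That is the paper's standing assumption (Proposition~\ref{propocaracteriza}, Theorem~\ref{teoconvex}), but it must then be added to the statement.

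Alternatively, you can avoid any largeness by using the adjoint structure of \eqref{optimaladj1_simp}: the $\phi$-equations use $L^*$ and the $\rho$-equations use $L$ with the transposed coupling. This gives
\[
\frac{d}{dt}\int_0^1(\phi_1\rho_1+\phi_2\rho_2)\,dx=-\int_{\mathcal O_d}\big(|\rho_1|^2+|\rho_2|^2\big)\,dx-\int_0^1\kappa|\phi_1|^2\,dx+\int_0^1\big(G\phi_1+\overline G\phi_2-G_0\rho_1-\overline G_0\rho_2\big)\,dx.
\]
Integrate this from $0$, where $\rho=0$, to $t_1\in[T/2,3T/4]$. Combine it with two bounds:
\begin{itemize}
\item the forward bound $\sup_{[0,t_1]}\|\rho\|^2_{L^2(0,1)}\le C\big(\int_0^{t_1}\!\int_0^1\kappa|\phi_1|^2+\|G\|^2+\|\overline G\|^2\big)$, which uses $\kappa^2\le\delta\kappa$;
\item the backward bound for $\phi$ on $[t,t_1]$.
\end{itemize}
Young's inequality then yields $\sup_{[0,t_1]}\|\phi\|^2_{L^2(0,1)}\le C\big(\|\phi(t_1)\|^2_{L^2(0,1)}+\text{source terms}\big)$. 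Averaging over $t_1$ gives the bound by $\|\phi\|^2_{(T/2,3T/4)}$ that your last step needs.
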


The proof is given in Appendix \ref{appendix_b}.
%\begin{proof}
%			In the same way as the previous proposition, we can work in a general for, using the following system \eqref{optimaladj1_simp}.\\
%\end{proof}

%	+\int_{0}^{T}\int_{0}^{1}\chi e^{2s\varphi}\lambda^8s^8\sigma^8 \phi_2\rho_1\cara_{\mathcal{O}_{d}}=
Returning to the original variables, it is immediate to get the following result:

\begin{propo}\label{prop:carleman3}
	There exist positive constants $C$, $\lambda_0$ and $s_0$ such that, for any $s \geq s_0$, $\lambda \geq \lambda_0$ and any  $\phi_1^T, \phi_2^T \in L^2((0,T)\times (0, 1))$, the corresponding solution $(\phi_1,\phi_2,\psi^1_1,\psi^2_1,\psi^1_2,\psi^2_2)$ of \eqref{optimaladj1} and \eqref{optimaladj2} satisfies
    \vspace*{-0.5cm}
    \begin{multline}
        \label{Carleman for eq:adjoint_optimality_system3}
        \Gamma(\phi_1,\phi_2,\psi^1_1,\psi^2_1,\psi^1_2,\psi^2_2) \leq C \left[ \int_Q e^{2s A} s^{14} \lambda^{14} \zeta^{14} \left(\sum_{i=0}^{2}|F_i|^2+|\overline{F}_i|^2\right)dxdt\right.\\
        \left.+ \int_{0}^{T}\int_{\mathcal{O}} e^{2s A} s^{28} \lambda^{28} \zeta^{28} |\phi_1|^2 dxdt\right].
	\end{multline}
\end{propo}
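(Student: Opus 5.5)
The plan is to deduce Proposition \ref{prop:carleman3} from Proposition \ref{propocarleman23}. The change of variables is $\rho_i=\alpha_1\psi_i^1+\alpha_2\psi_i^2$ ($i=1,2$), together with an estimate for each $\psi^j_i$ separately.

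\textbf{Step 1: apply Proposition \ref{propocarleman23}.} Given a solution $(\phi_1,\phi_2,\psi^1_1,\psi^2_1,\psi^1_2,\psi^2_2)$ of \eqref{optimaladj1}--\eqref{optimaladj2}, the quadruple $(\phi_1,\phi_2,\rho_1,\rho_2)$ solves \eqref{optimaladj1_simp}. Here $\mathcal{O}_{1,d}=\mathcal{O}_{2,d}=\mathcal{O}_d$, $G_0=F$, $\overline G_0=\overline F$, $G=\alpha_1F_1+\alpha_2F_2$ and $\overline G=\alpha_1\overline F_1+\alpha_2\overline F_2$. Then
$$|G|^2+|\overline G|^2\le C\sum_{i=1}^2\left(|F_i|^2+|\overline F_i|^2\right),$$
so Proposition \ref{propocarleman23} bounds $\Gamma_0(\phi_1,\phi_2,\rho_1,\rho_2)$ by the right-hand side of \eqref{Carleman for eq:adjoint_optimality_system3}, with $F_0:=F$ and $\overline F_0:=\overline F$.

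\textbf{Step 2: recover each $\psi^j_i$.} $\Gamma$ contains the individual $\psi^j_i$, not only the combinations $\rho_i$, so $\Gamma_0$ alone does not control it. For fixed $j$, the pair $(\psi^j_1,\psi^j_2)$ solves a forward linear coupled system with zero initial data. Its right-hand sides are $-\mu_j^{-1}\rho_*^{-2}\phi_1\cara_{\mathcal{O}_j}+F_j$ and $\overline F_j$, so it is driven by $\phi_1$, which is already controlled.

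I would estimate it with a weighted energy estimate. Set $\tilde\psi=\theta_w\psi^j$, where $\theta_w=e^{sA}(s\lambda)\zeta$ (or a purely time-dependent majorant of it). Multiply the system for $\tilde\psi$ by $\tilde\psi$ and integrate over $\Omega\times(0,t)$; the initial term vanishes since $\psi^j(0)=0$. Using the bounds on $B$ and $c_{ij}$ and Gronwall, this gives
$$\iint e^{2sA}\left((s\lambda)^2\zeta^2 b^2|\psi^j|^2+(s\lambda)\zeta b^2a|\psi^j_x|^2\right)\le C\iint\left(|\theta_w'|^2|\psi^j|^2+\theta_w^2\left(\rho_*^{-4}|\phi_1|^2+|F_j|^2+|\overline F_j|^2\right)\right).$$

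\textbf{The main obstacle} is the term $|\theta_w'|^2|\psi^j|^2$. Since $A\le 0$ is proportional to $\tau$, $\theta_w'$ carries an extra factor $s\tau^{5/4}$ near $t=T$, and $\tau$ does not blow up at $t=0$. I would absorb it either by a bootstrap with a slightly weaker weight, the standard trick in Stackelberg--Nash papers, or more simply by using $\psi^j=(\rho-\alpha_{j'}\psi^{j'})/\alpha_j$ only for one index and estimating the other directly.

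Cleanest option: in $\Gamma_0$, $\rho_i$ already appears with the correct weights. It then suffices to estimate one of $\psi^1,\psi^2$ by the energy method with a weight of the form $e^{2s\hat A}$, where $\hat A$ is the maximum over $x$ of $A$ (since $e^{2sA}\le e^{2s\hat A}$). The $\phi_1$ source term, $\rho_*^{-4}\le C$, is bounded by $I$-type terms of $\phi_1$ in $\Gamma_0$ after choosing $\rho_*$ compatibly with the weights. The factor $\theta_w'$ is then handled using $\lambda,s$ large and $|\tau'|\le C\tau^{5/4}$. Adding Steps 1 and 2 gives \eqref{Carleman for eq:adjoint_optimality_system3}.
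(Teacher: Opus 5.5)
Your Step 1 is all the paper does: it gives no argument beyond ``returning to the original variables, it is immediate''. You are right that this is not enough, since $\Gamma_0$ only controls $\rho_i=\alpha_1\psi_i^1+\alpha_2\psi_i^2$ while $\Gamma$ contains each $\psi_i^j$. But your Step 2 does not close the gap, and the obstacle you name is not the real one.

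\textbf{The $\theta_w'$ term is harmless.} Take a purely time-dependent weight $\theta(t)$ that is non-increasing near $T$ and smooth and bounded on $[0,T/2]$. Both $e^{sA^*}s\lambda\zeta^*$ and $e^{s\hat A}s\lambda\hat\zeta$ qualify once $s$ is large. Multiplying the forward system for $(\psi^j_1,\psi^j_2)$ by $\theta^2\psi^j$ produces $-\theta\theta'\|\psi^j\|^2\ge -C\theta^2\|\psi^j\|^2$. Gronwall from $\psi^j(0)=0$ then closes.

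\textbf{The real obstruction is the $x$-dependence of $A$.} Conjugating with $e^{sA}$ produces $b\,s^2aA_x^2\,|e^{sA}\psi^j|^2\sim(s\lambda\zeta)^2|e^{sA}\psi^j|^2$ on the right-hand side, with the wrong sign. With no observation of $\psi^j$, nothing absorbs it; your displayed inequality simply omits it.

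\textbf{The time majorant does not rescue the stated inequality.} Your ``cleanest option'' uses your $\hat A$, which is the paper's $A^*=\max_xA$. Then the $F_j,\overline F_j$ terms carry $e^{2sA^*}$. This exceeds the admissible $e^{2sA}s^{14}\lambda^{14}\zeta^{14}$ by the factor $e^{2s(A^*-A)}$, which is exponentially large in $s\tau$ wherever $\Psi<\max\Psi$. The $\phi_1$ term is dominated by $\Gamma_0$ only if $\rho_*^{-4}e^{2s(A^*-A)}\le C(s\lambda\zeta)^2$, not merely $\rho_*^{-4}\le C$. Expressing one $\psi^j$ through $\rho$ does not help, because the other still needs the same estimate.

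In fact the literal $x$-weighted bound for the individual $\psi_i^j$ cannot hold in general. Take $\phi^T=0$, $F=\overline F=0$ and $\alpha_1F_1+\alpha_2F_2=\alpha_1\overline F_1+\alpha_2\overline F_2=0$. Pair the $\phi$-equations of \eqref{optimaladj1_simp} with $\rho$ and the $\rho$-equations with $\phi$. This gives $\iint_{\mathcal{O}_d\times(0,T)}(|\rho_1|^2+|\rho_2|^2)=-\iint_Q\rho_*^{-2}\big(\frac{\alpha_1}{\mu_1}\cara_{\mathcal{O}_1}+\frac{\alpha_2}{\mu_2}\cara_{\mathcal{O}_2}\big)|\phi_1|^2$, hence $(\phi,\rho)\equiv0$. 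The claim then reduces to an unobserved weighted estimate for the forward system $L\psi^1=(F_1,\overline F_1)$, $\psi^1(0)=0$. Localize a nonnegative source near $(x_0,T-\delta)$ and read the solution near $(x_1,T-(1-\vartheta)\delta)$, with $\Psi(x_0)<\Psi(x_1)$ and $\vartheta$ small. The ratio of the two sides then grows like $\exp(cs\delta^{-4}-C\delta^{-1})$ as $\delta\to0$.

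What your time-weighted energy argument does prove is Corollary \ref{cor:carleman_pesos_t3}. There you put $e^{2s\hat A}$, with $\hat A=\min_xA$, on the left, so the $\phi_1$ source is controlled by $\Gamma_0$ using only $\rho_*^{-2}\le C$. You put $e^{2sA^*}$ on the right. Only that version, through \eqref{eq:carleman_simples3}, is used afterwards, so that is the statement to aim for.
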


As a corollary, we get the following observability inequality.
\begin{coro} %\cite{DemarqueLimacoViana_deg_sys2020}
	\label{cor:observability33}
	There exist positive constants $C$, $\lambda_0$ and $s_0$ such that, for any $s \geq s_0$, $\lambda \geq \lambda_0$ and any  $\phi_1^T, \phi_2^T \in L^2((0,T)\times (0, 1))$, the corresponding solution $(\phi_1,\phi_2,\psi^1_1,\psi^2_1,\psi^1_2,\psi^2_2)$ of \eqref{optimaladj1} and \eqref{optimaladj2} with $F_i=\overline{F}_i = 0, \ \ i=0,1,2$, satisfies
	\begin{equation}\label{Observability3}
		\|\phi_1(0)\|^2_{L^2(0,1)}+\|\phi_2(0)\|^2_{L^2(0,1)} + \sum_{i,j=1}^{2}\|\psi_i^j(T)\|^2_{L^2(0,1)} \leq C \int_{0}^{T}\int_{O} e^{2s A} s^{28} \lambda^{28} \zeta^{28} |\phi_1|^2dxdt.
	\end{equation}
\end{coro}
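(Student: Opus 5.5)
The observability inequality \eqref{Observability3} will be obtained from Proposition \ref{prop:carleman3} by taking $F_i=\overline{F}_i=0$, which leaves a Carleman estimate with only the local observation term on the right, and then adding standard energy estimates on two time slabs. Because the weight $A$ and $\zeta$ come from $\tau=1/m$ with $m(0)>0$, they stay bounded and do not degenerate near $t=0$. They blow up only at $t=T$, so $e^{2sA}$ and all powers of $\zeta$ are bounded above and below on $[0,3T/4]$ by positive constants depending on $s,\lambda$.

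For the backward component $(\phi_1,\phi_2)$ I will use the energy estimate of the first two equations of \eqref{optimaladj1_simp} with $G_0=\overline{G}_0=0$. I multiply by $\phi_1,\phi_2$ and integrate over $(0,t)\times(0,1)$ for $t\in[T/4,3T/4]$. The drift term $(B\sqrt a\phi)_x$ is handled by integrating by parts, and the coupling terms $c_{ij}$ are bounded. The source terms $\rho_k\cara_{\mathcal{O}_d}$ are estimated by $\|\rho_k\|_{L^2}$. Gronwall's lemma then gives
\[
\|\phi_1(0)\|^2+\|\phi_2(0)\|^2\le C\int_{T/4}^{3T/4}\!\!\int_0^1\big(|\phi_1|^2+|\phi_2|^2+|\rho_1|^2+|\rho_2|^2\big)\,dx\,dt .
\]
The weights $e^{2sA}\zeta^2b^2$ are bounded below on this slab, since $b\ge m>0$. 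So the right-hand side is controlled by $\Gamma_0$, and hence by $\Gamma$.

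For $\psi^j_i(T)$ I will use the forward equations \eqref{optimaladj2} with $F_i=\overline F_i=0$. These have the source $-\mu_i^{-1}\rho_*^{-2}\phi_1\cara_{\mathcal O_i}$. Here I use that $\rho_*^{-2}$ decays like the Carleman weight $e^{2sA}$ near $T$ (this is the design choice for $\rho_*$, as in the convexity proof where $\rho_*^{-2}\le C e^{sA}$-type bounds are used), so that $\rho_*^{-4}|\phi_1|^2\le C e^{2sA}\zeta^2|\phi_1|^2$ on $(T/2,T)$. I multiply by a cutoff $\chi(t)$ equal to $0$ near $T/4$ and $1$ on $[T/2,T]$ and do the energy estimate on $(T/4,T)$. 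This gives $\sum\|\psi_i^j(T)\|^2$ bounded by the $L^2$ norm of $\psi$ on $(T/4,T/2)$, where the weights are bounded below, plus the weighted $\phi_1$ term. Both are controlled by $\Gamma$.

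Combining the two slab estimates with Proposition \ref{prop:carleman3} yields \eqref{Observability3} with the fixed $s=s_0$, $\lambda=\lambda_0$ absorbed into $C$. The main obstacle is the second step: making sure the source $\rho_*^{-2}\phi_1$ is dominated near $t=T$ by the Carleman weight. If $\rho_*$ is not yet defined compatibly with $A$, I would fix it here, for instance $\rho_*=e^{-sA}\zeta^{-1}$ (up to constants), so that $\rho_*^{-2}$ is bounded and the required weight comparison holds.
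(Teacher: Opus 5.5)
Your proposal is correct and is essentially the argument the paper defers to (it only cites Corollary~1 of \cite{DemarqueLimacoViana_deg_sys2020}). It consists of Proposition~\ref{prop:carleman3} with zero sources, a backward energy estimate for $(\phi_1,\phi_2)$ using that $e^{2sA}$ and $\zeta$ are bounded above and below on $[0,3T/4]$, and, for the extra terms $\psi_i^j(T)$, a forward energy estimate. That forward estimate genuinely needs $\rho_*^{-4}\le C e^{2sA}\zeta^2$ near $t=T$, which follows from the bound $\rho_*^{-2}\le e^{s\varphi}$ that the paper tacitly uses elsewhere, together with $\varphi\le A$.

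Only small touch-ups are needed:
\begin{itemize}
\item In your first display, the $\rho_k$-terms come out integrated over $(0,3T/4)$ rather than $(T/4,3T/4)$. This is harmless, by your own lower bound on the weights.
\item A sample choice of $\rho_*$ must be a fixed function of $t$ only, so use $\hat A,\hat\zeta$ with a fixed $s$, not $A,\zeta$.
\item $C$ must depend on $s,\lambda$, as you allow, since for fixed $\phi_1$ the right-hand side of \eqref{Observability3} tends to $0$ as $s\to\infty$.
\end{itemize}
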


The proof is similar to Corollary 1 of \cite{DemarqueLimacoViana_deg_sys2020}.
    %\ref{cor:observability}.

 In the following sections we will need weights which depend only on $t$. Let us define
\begin{equation*}
	\begin{cases}
		A^*(t) = \displaystyle\max_{x \in (0,1)} A(x,t), \qquad \hat A(t) = \displaystyle\min_{x \in (0,1)} A(x,t), \\
		\zeta^*(t) = \displaystyle\max_{x \in (0,1)} \zeta(x,t), \qquad \hat \zeta(t) = \displaystyle\min_{x \in (0,1)} \zeta(x,t),
	\end{cases}
\end{equation*}
where we observe that $A^*(t) < 0$, $\hat A(t) < 0$ and that $\zeta^*(t) / \hat \zeta(t)$ does not depend on $t$ and is equal to some constant $\zeta_0 \in \R$. Moreover, if $\lambda$ is sufficiently large we can suppose
\begin{equation}
	\label{eq:comp_pesos3}
	3 A^*(t) < 2 \hat A(t) < 0.
\end{equation}

 Let us define 
				\begin{multline*}
	\hat{\Gamma}(\phi_1,\phi_2,\psi^1_1,\psi^2_1,\psi^1_2,\psi^2_2)= \int_{0}^{T}\int_{0}^{1} e^{2s \hat{A}}(s\lambda) \hat{\zeta} b^2 a \left( |\phi_{1x}|^2 + |\phi_{2x}|^2+\sum_{i,j=1}^{2}|\psi_{ix}^j|^2 \right)dxdt\\
	+\int_{0}^{T}\int_{0}^{1} e^{2s \hat{A}}(s\lambda)^2 \hat{\zeta}^2 b^2 \left(|\phi_1|^2 + |\phi_2|^2+\sum_{i,j=1}^{2}|\psi_{i}^j|^2 \right)dxdt.
\end{multline*}

Thus, Proposition \ref{prop:carleman3} and Corollary \ref{cor:observability33}  imply directly the following corollary where the weights depend only on $t$.

\begin{coro} %\cite{DemarqueLimacoViana_deg_sys2020}
	\label{cor:carleman_pesos_t3}
	There exist positive constants $C$, $\lambda_0$ and $s_0$ such that, for any $s \geq s_0$, $\lambda \geq \lambda_0$ and any  $\phi_1^T, \phi_2^T \in L^2((0,T)\times (0, 1))$, the corresponding solution $(\phi_1,\phi_2,\psi^1_1,\psi^2_1,\psi^1_2,\psi^2_2)$ of \eqref{optimaladj1} and \eqref{optimaladj2} satisfies
	\begin{multline}\label{pesos_t_Carleman for eq:adjoint_optimality_system3}
		\|\phi_1(0)\|^2_{L^2(0,1)}+\|\phi_2(0)\|^2_{L^2(0,1)}+\hat{\Gamma}(\phi_1,\phi_2,\psi^1_1,\psi^2_1,\psi^1_2,\psi^2_2)\\
		\leq C \left[ \int_Q e^{2s A^*} s^{14} \lambda^{14} (\zeta^*)^{14} \left(\sum_{i=0}^{2}|F_i|^2+|\overline{F}_i|^2\right)dxdt\right.
		\left.+ \int_{0}^{T}\int_{\mathcal{O}} e^{2s A^*} s^{28} \lambda^{28} (\zeta^*)^{28} |\phi_1|^2 dxdt\right]
	\end{multline}
\end{coro}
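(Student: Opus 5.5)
The plan is to derive Corollary \ref{cor:carleman_pesos_t3} from Proposition \ref{prop:carleman3} and Corollary \ref{cor:observability33} in two steps. First we replace the space-time weights by the purely temporal ones. Then we add the initial-time terms $\|\phi_k(0)\|^2$ through an energy argument on $[0,T/2]$, where the weights $A,\zeta$ stay bounded because $m(0)>0$.

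\textbf{Step 1 (pointwise weight comparison).} Since $A<0$ and $\hat A\le A\le A^*$, $\hat\zeta\le\zeta\le\zeta^*$, we have $e^{2s\hat A}\hat\zeta^k\le e^{2sA}\zeta^k$ for $k=1,2$. Hence $\hat\Gamma\le\Gamma$. In the same way $e^{2sA}\zeta^{14}\le e^{2sA^*}(\zeta^*)^{14}$ and $e^{2sA}\zeta^{28}\le e^{2sA^*}(\zeta^*)^{28}$. The right-hand side of \eqref{Carleman for eq:adjoint_optimality_system3} is therefore bounded by the right-hand side of \eqref{pesos_t_Carleman for eq:adjoint_optimality_system3}. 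This gives the estimate for $\hat\Gamma$.

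\textbf{Step 2 (the terms $\|\phi_k(0)\|^2$).} Corollary \ref{cor:observability33} only covers $F_i=\overline F_i=0$, so we redo its argument with source terms.
\begin{itemize}
\item Take a cutoff $\chi\in C^1([0,T])$ with $\chi=1$ on $[0,T/2]$ and $\chi=0$ on $[3T/4,T]$.
\item Multiply the backward equations for $(\phi_1,\phi_2)$ in \eqref{optimaladj1} by $\chi\phi_k$ and integrate over $\Omega\times(t,T)$. The Dirichlet conditions and the weak degeneracy of $a$ make the boundary terms vanish.
\item The drift term $(B\sqrt a\,\phi)_x\phi$ is handled by integrating by parts onto $\sqrt a\,\phi_x$ and absorbing with $b\ge \min b>0$. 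The zero-order terms are handled with $c_{ij}\in L^\infty$.
\item Gronwall's inequality then gives
\[
\|\phi_1(0)\|^2+\|\phi_2(0)\|^2\le C\int_0^{3T/4}\!\!\int_0^1\bigl(|\phi_1|^2+|\phi_2|^2+|\psi|^2+|F_0|^2+|\overline F_0|^2\bigr)\,dxdt ,
\]
where $\psi$ denotes the coupling terms $\psi^j_i$ on the right of \eqref{optimaladj1}, and the term $\chi'$ contributes an integral of $|\phi|^2$ over $(T/2,3T/4)$.
\end{itemize}

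To return to the weighted quantities, note that on $[0,3T/4]$ the function $m$ is bounded below. So $\hat\zeta$ and $e^{2s\hat A}$ are bounded above and below by positive constants (depending on $s,\lambda$, which are fixed), and $b$ is bounded below. The $\phi$ and $\psi$ integrals are then controlled by $C\hat\Gamma$. For the source terms, $e^{2sA^*}(\zeta^*)^{14}$ is bounded below on $[0,3T/4]$, so $\int|F_0|^2+|\overline F_0|^2$ there is controlled by the weighted source integral. Combining with Step 1 gives \eqref{pesos_t_Carleman for eq:adjoint_optimality_system3}.

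The main obstacle is the energy estimate in Step 2. It needs the degenerate integration by parts for the drift and divergence terms, which require $\sqrt a\,\phi_x\in L^2$ and the behaviour near $x=0$ ensured by \eqref{condiciones_a4}. It also needs the observation that the constants may depend on $s,\lambda$ but not on the solution. I would first try to reuse the energy estimates of the Appendix of \cite{GYL-Carleman-2025} for the scalar backward equation, treating the coupling and $\psi$ terms as sources.
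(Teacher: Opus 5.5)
Your proposal is correct and follows the paper's route. The paper simply asserts that the corollary follows directly from Proposition \ref{prop:carleman3}, via the pointwise comparisons $e^{2s\hat A}\hat\zeta^k\le e^{2sA}\zeta^k$ and $e^{2sA}\zeta^k\le e^{2sA^*}(\zeta^*)^k$, together with the energy argument behind Corollary \ref{cor:observability33}; your Step 2 makes explicit what the paper leaves implicit by rerunning that energy estimate with the sources $F_0,\overline F_0$ and the $\psi$-coupling included, with constants depending on the fixed $s,\lambda$ (unavoidable, since the unweighted $\|\phi_k(0)\|^2$ cannot be bounded uniformly in $s$ by terms whose weights decay like $e^{-cs}$).
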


\subsection{Global null controllability of the linearized system}

Let us define the weights: 
\begin{equation}\label{eq:weights_rhos3}
	\left\{ \begin{array}{l}
		\rho_0 = e^{-sA^*}  (\zeta^*)^{-7}, \qquad   \rho_1 = e^{-sA^*}  (\zeta^*)^{-14},\\  \rho_2 = e^{-3sA^*/2}  \hat \zeta^{-1},  \qquad \hat{\rho} = e^{-sA^*} (\zeta^*)^{-21/2},      
	\end{array}\right.
\end{equation}
satisfying
\begin{equation}\label{eq:compara_rhos3}
	\rho_{1}\leq C \hat{\rho}\leq C\rho_{0}\leq C\rho_{2}, \qquad \hat{\rho}^{2}=\rho_{1}\rho_{0} \qquad \textcolor{black}{\text{and} \qquad \rho_2 \leq C \rho_1^2}.   
\end{equation}

 In particular, Corollary \ref{cor:carleman_pesos_t3} and \eqref{eq:comp_pesos3} imply 
\begin{multline}\label{eq:carleman_simples3}
\|\phi_1(0)\|^2_{L^2(0,1)}+\|\phi_2(0)\|^2_{L^2(0,1)}+\int_{0}^{T}\int_{0}^{1} \rho_2^{-2} \left(|\phi_1|^2 + |\phi_2|^2+\sum_{i,j=1}^{2}|\psi_{i}^j|^2 \right)\\
\leq C \left[ \int_Q \rho_0^{-2} \left(\sum_{i=0}^{2}|F_i|^2+|\overline{F}_i|^2\right)dxdt\right.
\left.+ \int_{0}^{T}\int_{\mathcal{O}} \rho_1^{-2} |\phi_1|^2 dxdt\right].
\end{multline}

 In the following theorem we show the global null controllability of the linearized system \eqref{eq:linearized_system_y} and \eqref{eq:linearized_system_p}. In particular, since the weight $\rho_0$ blows up at $t=T$, then \eqref{estimate for solution3} shows that $y(\cdot,T)=0$ in $[0,1]$. The same vanishing conclusion holds for $p^i$, $i=1,2$ (which are used to define the follower's controls $v^i$) and the leader control $h$.
%Furthermore, estimates \eqref{des Proposition 5} and \eqref{des Proposition 6} give additional estimates verified by the derivatives of the states $(y,p^1,p^2)$ that are needed for the local null controllability of the nonlinear system in Section \ref{control for nonlinear system}.

\begin{teo}\label{theorem case linear3}
	If $y^{0}_1, y_2^0\in H^1_a(\Omega)$ and  $\rho_2 F_i,  \rho_2\overline{F}_i \in L^2(Q), i=0,1,2$, then there exists a control $h\in L^{2}(O\times (0,T))$ with associated states $y_1, p^1_1, p^2_1,  y_2, p^1_2, p^2_2 \in C^{0}([0,T];L^{2}(0,1))\cap L^{2}(0,T;H^{1}_{a})$  from \eqref{eq:linearized_system_y} and \eqref{eq:linearized_system_p} such that
	\begin{multline}\label{estimate for solution3}
\int_Q \rho_0^2 \left(|y_1|^2+|y_2|^2\right) dxdt+ \int_Q \rho_0^2 \left(\sum_{i,j=1}^2|p_i^j|^2\right)dxdt+ \int_{0}^{T}\int_{\mathcal{O}} \rho_1^2 |h|^2dxdt \leq
C \kappa_{0}(F_i,\overline{F}_i,y^0_1,y_2^0),   
	\end{multline}
	where $$\kappa_{0}(F_i,\overline{F}_i,y^0_1,y_2^0)=  \|y_1^0\|^2_{L_2(0,1)}+ \|y_2^0\|^2_{L_2(0,1)}+\sum_{i=0}^{2}\left(\|\rho_2 F_i\|^2_{L^2(Q)}+\|\rho_2 \overline{F}_i\|^2_{L^2(Q)}\right).$$
	 In particular, $y_1(x,T)=y_2(x,T)=0$, for all $x\in [0,1]$.
	
	%Furthermore, 
	%\begin{equation}\label{des Proposition 5}
	%    \begin{array}{c}
		%\displaystyle\sup_{[0,T]}(\hat{\rho}^{2}\|y\|^{2}_{L^{2}(0,1)}) + \displaystyle\sup_{[0,T]}(\hat{\rho}^{2}\|p^{1}\|^{2}_{L^{2}(0,1)}) + \displaystyle\sup_{[0,T]}(\hat{\rho}^{2}\|p^{2}\|^{2}_{L^{2}(0,1)})\vspace{0.1cm}\\
		%+\displaystyle\int_{Q}\hat{\rho}^{2} a(x)(|y_{x}|^{2} + |p^{1}_{x}|^{2}+|p^{2}_{x}|^{2})dxdt\ \leq C \kappa_{0}(H,H_{1},H_{2},y_{0})  
		%    \end{array}
	%\end{equation}
	%and, if $y_{0}\in H^{1}_{a}(0,1)$ 
	%\begin{equation}\label{des Proposition 6}
	%    \begin{array}{c}
		%\displaystyle\sup_{[0,T]}(\rho_{1}^{2}\|\sqrt{a}y_{x} \|^{2}_{L^{2}(0,1)})+ \displaystyle\sup_{[0,T]}(\rho_{1}^{2}\|\sqrt{a}p^{1}_{x} \|^{2}_{L^{2}(0,1)}) + \displaystyle\sup_{[0,T]}(\rho_{1}^{2}\|\sqrt{a}p^{2}_{x} \|^{2}_{L^{2}(0,1)})\\
		%+ \displaystyle\int_{Q}\rho_{1}^{2}(|y_{t}|^{2}+|p^{1}_{t}|^{2}+|p^{2}_{t}|^{2}+|(a(x)y_{x})_{x}|^{2} + |(a(x)p^{1}_{x})_{x}|^{2}+ |(a(x)p^{2}_{x})_{x}|^{2})dxdt\\
		%\leq C \kappa_{1}(H,H_{1},H_{2},y_{0}),  
		%    \end{array}
	%\end{equation}
	%where $\kappa_{1}(H,H_{1},H_{1},y_{0})= |\rho_2 H|^2_{L^2(Q)} + |\rho_2 H_1|^2_{L^2(Q)} + |\rho_2 H_2|^2_{L^2(Q)} + \|y_0\|^2_{H^{1}_{a}}$. 
\end{teo}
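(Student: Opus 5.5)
The plan is the standard Fursikov--Imanuvilov duality / penalized minimization argument, as in \cite{DemarqueLimacoViana_deg_sys2020} and \cite{GYL-equation-2025}, built on the observability-type inequality \eqref{eq:carleman_simples3}. Let $L$ be the operator of the coupled forward system \eqref{eq:linearized_system_y} for $(y_1,y_2)$ and let $L^*_i$ be the backward operators in \eqref{eq:linearized_system_p}. The adjoint of the whole linearized optimality system is exactly \eqref{optimaladj1}--\eqref{optimaladj2}, with $\phi=(\phi_1,\phi_2)$ backward and $\psi^i$ forward. Let $P_0$ be the space of smooth $(\phi_1,\phi_2,\psi^1_1,\psi^2_1,\psi^1_2,\psi^2_2)$ satisfying the boundary conditions and $\psi^i(0)=0$. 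On $P_0$ define the bilinear form
\[
a((\phi,\psi),(\tilde\phi,\tilde\psi))=\int_Q \rho_0^{-2}\Big(\sum |L^*\phi-\alpha\psi\cara_{\mathcal O_d}|\,|\cdots|+\sum|L\psi^i+\mu_i^{-1}\rho_*^{-2}\phi_1\cara_{\mathcal O_i}|\,|\cdots|\Big)+\int_0^T\!\!\int_{\mathcal O}\rho_1^{-2}\phi_1\tilde\phi_1 ,
\]
that is, the left-hand sides of \eqref{optimaladj1}--\eqref{optimaladj2} played against each other with weight $\rho_0^{-2}$, plus the observation term. Define also the linear form
\[
\ell(\phi,\psi)=\int_\Omega (y_1^0\phi_1(0)+y_2^0\phi_2(0))\,dx+\int_Q\Big(\sum_i F_i\phi_i+\sum \overline F_i\psi\cdots\Big),
\]
pairing the source terms of the linearized system with the adjoint variables.

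The estimate \eqref{eq:carleman_simples3}, where $F_i,\overline F_i$ are read as the residuals $L^*\phi-\cdots$ and $L\psi-\cdots$, shows three things. First, $a(\cdot,\cdot)$ is a scalar product on $P_0$. Second, if $P$ denotes the completion of $P_0$, then $\|\phi(0)\|_{L^2}^2+\int_Q\rho_2^{-2}(|\phi|^2+|\psi|^2)\le C\,a((\phi,\psi),(\phi,\psi))$. Third, $\ell$ is continuous on $P$: by Cauchy--Schwarz,
\[
|\ell(\phi,\psi)|\le C\big(\|y^0\|_{L^2}+\|\rho_2F\|+\|\rho_2\overline F\|\big)\,a((\phi,\psi),(\phi,\psi))^{1/2}.
\]
Lax--Milgram then gives a unique $(\hat\phi,\hat\psi)\in P$ with $a((\hat\phi,\hat\psi),\cdot)=\ell$ on $P$. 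We then set
\[
y=\rho_0^{-2}\big(L^*\hat\phi-\alpha\hat\psi\cara_{\mathcal O_d}\big),\qquad p^i=\rho_0^{-2}\big(L\hat\psi^i+\cdots\big),\qquad h=-\rho_1^{-2}\hat\phi_1\cara_{\mathcal O}.
\]
The estimate \eqref{estimate for solution3} follows immediately, because the left side equals $a((\hat\phi,\hat\psi),(\hat\phi,\hat\psi))\le C\kappa_0$. Keeping track of the constants $\alpha_i$ and $\mu_i$ in the cross terms is where the hypothesis $\mathcal O_{1,d}=\mathcal O_{2,d}$ enters, through the reduction to $\rho_i=\alpha_1\psi^1_i+\alpha_2\psi^2_i$ in \eqref{optimaladj1_simp}.

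The next step is to identify $(y,p^1,p^2,h)$ as the weak (transposition) solution of \eqref{eq:linearized_system_y}--\eqref{eq:linearized_system_p} with initial data $y^0$. Unwinding the variational identity gives, for all test functions in $P_0$,
\[
\int_Q y\cdot(\text{adjoint residual})+\sum_i\int_Q p^i\cdot(\text{residual})=\ell-\int h\phi_1 .
\]
This is precisely the transposition definition of the solution of the linearized system, with data $y^0$ at $t=0$ and zero final data for $p^i$. The weak solution, whose existence is known from the energy estimates and well-posedness in $C^0([0,T];L^2)\cap L^2(0,T;H^1_a)$, is unique, so it coincides with $(y,p)$. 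This yields the claimed regularity.

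The final step is $y(T)=0$. Since $\rho_0$ blows up at $t=T$ and $\int_Q\rho_0^2|y|^2<\infty$ with $y\in C^0([0,T];L^2)$, we get $y(\cdot,T)=0$. The same argument gives vanishing of $p^i$ near $T$ in the weighted sense, and $h$ similarly, since $\rho_1$ blows up at $T$.

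The main obstacle is the rigorous justification of the identification step: one must check that the space $P$ is rich enough and that the transposition formulation matches the coupled forward/backward structure, with the correct sign conventions for the terms $\mu_i^{-1}\rho_*^{-2}\phi_1$ and $\alpha_i\psi$. One must also verify the weight compatibility, i.e.\ that $\rho_*^{-2}\rho_0^{-1}$ and the coefficient terms are controlled using \eqref{eq:compara_rhos3} and $\rho_*^{-2}\le C$.
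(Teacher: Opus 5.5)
Your proposal is correct and follows essentially the same route as the paper. Both use the Lax--Milgram construction on the completion of smooth adjoint test functions (with $\psi^i(\cdot,0)=0$). The bilinear form is built from the $\rho_0^{-2}$-weighted residuals of \eqref{optimaladj1}--\eqref{optimaladj2} plus the $\rho_1^{-2}$-weighted observation of $\phi_1$ on $\mathcal{O}$. Continuity of $\ell$ comes from \eqref{eq:carleman_simples3}, and $(y,p^1,p^2,h)$ is defined from the residuals of the Lax--Milgram solution, so the left side of \eqref{estimate for solution3} is exactly $a((\hat\phi,\hat\psi),(\hat\phi,\hat\psi))\le C\kappa_0$. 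The step you flag as delicate, identifying $(y,p)$ as the transposition solution with $C^0([0,T];L^2)\cap L^2(0,T;H^1_a)$ regularity, is handled in the paper with the same appeal to well-posedness of the linear system. Finally, $y(\cdot,T)=0$ follows from the blow-up of $\rho_0$ at $t=T$ exactly as you argue.
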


%\textcolor{red}{Los coeficientes del sistema de optimalidad de orden uno y cero no son iguales para ambas ecuaciones, como en el caso de Miguel, para definir un único operador $L$ y $L^*$. Como salvar?}

\begin{proof}
	Let us denote by 
	\begin{equation}\label{operador}
		\mathcal{L}\begin{pmatrix}
			\varphi_1\\
			\varphi_2
		\end{pmatrix}=\begin{pmatrix}
		L_1(\varphi_1,\varphi_2)\\
		L_2(\varphi_1,\varphi_2)
		\end{pmatrix}=\begin{pmatrix}
		\varphi_{1t} - b(t)(a(x)\varphi_{1x})_{x} - B\sqrt{a} \varphi_{1x} + c_{11}(x,t)\varphi_1 + c_{12}(x,t)\varphi_2\\
\varphi_{2t} - b(t)(a(x)\varphi_{2x})_{x} - B\sqrt{a} \varphi_{2x} + c_{21}(x,t)\varphi_1+c_{22}(x,t)\varphi_2
		\end{pmatrix}
	\end{equation}
	and
		\begin{equation}\label{operadoradjun}
		\mathcal{L}^\ast\begin{pmatrix}
			\varphi_1\\
			\varphi_2
		\end{pmatrix}=\begin{pmatrix}
			L_1^\ast(\varphi_1,\varphi_2)\\
			L_2^*(\varphi_1,\varphi_2)
		\end{pmatrix}=\begin{pmatrix}
			-\varphi_{1t} - b(t)(a(x)\varphi_{1x})_{x}+ (B\sqrt{a}\varphi_1)_{x} + c_{11}(x,t)\varphi_1+c_{21}(x,t)\varphi_2\\
			-\varphi_{2t} - b(t)(a(x)\varphi_{2x})_{x}+ (B\sqrt{a}\varphi_2)_{x} + c_{12}(x,t)\varphi_1 + c_{22}(x,t)\varphi_2
		\end{pmatrix}
	\end{equation}
	%(\textcolor{green}{Temos que ver sobre a função regular que substitui a função caracteristica. Se vamos defini-la no inicio ou somente aqui, como no trabalho do Danny}).
	Then, we define 
	\begin{eqnarray*}
\mathcal{X}_{0}& =&\left\{  (\phi_1,\psi^{1}_1,\psi^{2}_1;\phi_2,\psi^{1}_2,\psi^{2}_2)\in \left[C^{2}(\overline{Q})\right]^{6}; \phi_1(0,t)=\phi_1(1,t)=\phi_2(0,t)=\phi_2(1,t) = 0\ \right.\\
  && \ \ \ \ \ \left. \text{a.e in}\ (0,T),\ \psi_{i}^j(0,t)=\psi_{i}^j(1,t)=0\, \text{a.e in}\, (0,T),  \text{and} \   \psi_i^{j}(\cdot,0)=0\ \text{in}\ \Omega,  \ i,j=\lbrace 1, 2\rbrace\right\}
	\end{eqnarray*}
	and the application $b:\mathcal{X}_{0}\times \mathcal{X}_{0}\rightarrow \mathbb{R}$ given by
	\begin{eqnarray}\label{eq:def_b3}
		&&b\left( (\widetilde{\phi}_1,\widetilde{\psi}^{1}_1,\widetilde{\psi}^{2}_1;\widetilde{\phi}_2,\widetilde{\psi}^{1}_2,\widetilde{\psi}^{2}_2),(\phi_1,\psi^{1}_1,\psi^{2}_1;\phi_2,\psi^{1}_2,\psi^{2}_2)   \right)\nonumber\\
		&=&\int_{Q}\rho_{0}^{-2}(L_1^{\ast}(\widetilde{\phi}_1,\widetilde{\phi}_2)-\alpha_{1}\widetilde{\psi}^{1}_1 \cara_{\mathcal{O}_{d}}-\alpha_{2}\widetilde{\psi}^{2}_1\cara_{\mathcal{O}_{d}})(L_1^{\ast}({\phi}_1,{\phi}_2)-\alpha_{1}{\psi}^{1}_1 \cara_{\mathcal{O}_{d}}-\alpha_{2}{\psi}^{2}_1\cara_{\mathcal{O}_{d}})\ dx\ dt \nonumber	\\
			&&+\int_{Q}\rho_{0}^{-2}(L_2^{\ast}(\widetilde{\phi}_1,\widetilde{\phi}_2)-\alpha_{1}\widetilde{\psi}^{1}_2 \cara_{\mathcal{O}_{d}}-\alpha_{2}\widetilde{\psi}^{2}_2\cara_{\mathcal{O}_{d}})(L_2^{\ast}({\phi}_1,{\phi}_2)-\alpha_{1}{\psi}^{1}_2 \cara_{\mathcal{O}_{d}}-\alpha_{2}{\psi}^{2}_2\cara_{\mathcal{O}_{d}})\ dx\ dt	\nonumber\\
		&&+\int_{Q}\rho_{0}^{-2}\left(L_1(\widetilde{\psi}^{1}_1,\widetilde{\psi}^{1}_2) + \frac{1}{\mu_{1}}\rho_*^{-2}\widetilde{\phi}_1\cara_{\mathcal{O}_{1}}\right) \left(L_1({\psi}^{1}_1,{\psi}^{1}_2) + \frac{1}{\mu_{1}}\rho_*^{-2}\phi_1\cara_{\mathcal{O}_{1}}\right)\ dx\ dt\nonumber\\
		&&+\int_{Q}\rho_{0}^{-2}L_2(\widetilde{\psi}^{1}_1,\widetilde{\psi}^{1}_2)L_2({\psi}^{1}_1,{\psi}^{1}_2) \ dx\ dt\nonumber\\
			&&+\int_{Q}\rho_{0}^{-2}\left(L_1(\widetilde{\psi}^{2}_1,\widetilde{\psi}^{2}_2) + \frac{1}{\mu_{2}}\rho_*^{-2}\widetilde{\phi}_1\cara_{\mathcal{O}_{2}}\right) \left(L_1({\psi}^{2}_1,{\psi}^{2}_2) + \frac{1}{\mu_{2}}\rho_*^{-2}\phi_1\cara_{\mathcal{O}_{2}}\right)\ dx\ dt\nonumber\\
		&&+\int_{Q}\rho_{0}^{-2}L_2(\widetilde{\psi}^{2}_1,\widetilde{\psi}^{2}_2)L_2({\psi}^{2}_1,{\psi}^{2}_2) \ dx\ dt\nonumber\\
		&&+\int_{0}^{T}\int_{\mathcal{O}}\rho_{1}^{-2}\widetilde{\phi}_1\phi_1\ dx\ dt,\ \ \qquad \forall (\widetilde{\phi}_1,\widetilde{\psi}^{1}_1,\widetilde{\psi}^{2}_1;\widetilde{\phi}_2,\widetilde{\psi}^{1}_2,\widetilde{\psi}^{2}_2),(\phi_1,\psi^{1}_1,\psi^{2}_1;\phi_2,\psi^{1}_2,\psi^{2}_2) \in \mathcal{X}_{0},
	\end{eqnarray}
	which is bilinear on $\mathcal{X}_{0}$ and also defines an inner product. Indeed, taking $(\widetilde{\phi}_1,\widetilde{\psi}^{1}_1,\widetilde{\psi}^{2}_1;\widetilde{\phi}_2,\widetilde{\psi}^{1}_2,\widetilde{\psi}^{2}_2)=(\phi_1,\psi^{1}_1,\psi^{2}_1;\phi_2,\psi^{1}_2,\psi^{2}_2)$ in \eqref{eq:def_b3}, we have that $b(\cdot,\cdot)$ is positive definite from  \eqref{Observability3}. The other properties are straightforwardly verified.
	
	Let us consider the space $\mathcal{X}$ the completion of $\mathcal{X}_{0}$ for the norm associated to $b(\cdot,\cdot)$ (which we denote by $\|\cdot\|_{\mathcal{X}}$). Then, $b(\cdot,\cdot)$ is a symmetric, continuous and coercive bilinear form on $\mathcal{X}$. \\
	
	Now, let us define the functional  $\ell :\mathcal{X}\rightarrow\mathbb{R}$ as 
	\begin{multline*}
        \langle\ell, (\phi_1,\psi^{1}_1,\psi^{2}_1;\phi_2,\psi^{1}_2,\psi^{2}_2)\rangle = \displaystyle\int_{0}^{1}y^0_{1}\phi_1(0)dx+\int_{0}^{1}y^0_{2}\phi_2(0)dx \\
        +\displaystyle\int_{Q}\left(F_0 \phi_1+\overline{F}_0 \phi_2 + F_{1}\psi^{1}_1+\overline{F}_1 \psi_2^1 +F_{2}\psi^{2}_1+\overline{F}_2 \psi_2^2\right)dx dt.
	\end{multline*}
	
	Note that $\ell$ is a bounded linear form on $\mathcal{X}$. Indeed, applying the Cauchy-Schwartz inequality 
	%$|u\cdot v|\leq |u||v|$, for $u,v\in 
	in $\mathbb{R}^{8}$ and using \eqref{eq:carleman_simples3}, we get
	\begin{equation}\label{l limitado}
		\begin{array}{l}
			|\langle\ell, (\phi,\psi^{1},\psi^{2})\rangle|\leq  |y^{0}_1|_{L^{2}(0,1)}|\phi_1(0)|_{L^{2}(0,1)} +|y^{0}_2|_{L^{2}(0,1)}|\phi_2(0)|_{L^{2}(0,1)}+ |\rho_{2}F_0|_{L^{2}(Q)}|\rho_{2}^{-1}\phi_1|_{L^{2}(Q)} \\
			\hspace{3cm} +|\rho_{2}\overline{F}_0|_{L^{2}(Q)}|\rho_{2}^{-1}\phi_2|_{L^{2}(Q)}+ |\rho_{2}F_{1}|_{L^{2}(Q)}|\rho_{2}^{-1}\psi^{1}_1|_{L^{2}(Q)} +|\rho_{2}\overline{F}_{1}|_{L^{2}(Q)}|\rho_{2}^{-1}\psi^{1}_2|_{L^{2}(Q)}\\
			\hspace{3cm}
			+ |\rho_{2}F_{2}|_{L^{2}(Q)}|\rho_{2}^{-1}\psi^{2}_1|_{L^{2}(Q)} +|\rho_{2}\overline{F}_{2}|_{L^{2}(Q)}|\rho_{2}^{-1}\psi^{2}_2|_{L^{2}(Q)}\\
			\leq C\left(|y^0_{1}|^{2}_{L^{2}(0,1)}+|y^0_{2}|^{2}_{L^{2}(0,1)} + |\rho_{2}F_0|_{L^{2}(Q)} +|\rho_{2}\overline{F}_0|_{L^{2}(Q)}+ |\rho_{2}F_{1}|_{L^{2}(Q)} +|\rho_{2}\overline{F}_{1}|_{L^{2}(Q)}\right.\\
			\hspace*{4cm}  \left.+ |\rho_{2}F_{2}|_{L^{2}(Q)} +|\rho_{2}\overline{F}_{2}|_{L^{2}(Q)}\right)^{1/2}\left(b((\phi,\psi^{1},\psi^{2}),(\phi,\psi^{1},\psi^{2}))\right)^{1/2}\\
			\leq C\left(|y^0_{1}|^{2}_{L^{2}(0,1)}+|y^0_{2}|^{2}_{L^{2}(0,1)} + |\rho_{2}F_0|_{L^{2}(Q)} +|\rho_{2}\overline{F}_0|_{L^{2}(Q)}+ |\rho_{2}F_{1}|_{L^{2}(Q)} +|\rho_{2}\overline{F}_{1}|_{L^{2}(Q)}\right.\\
			\hspace*{4cm}  \left.+ |\rho_{2}F_{2}|_{L^{2}(Q)} +|\rho_{2}\overline{F}_{2}|_{L^{2}(Q)}\right)^{1/2}\|(\phi_1,\psi^{1}_1,\psi^{2}_1;\phi_2,\psi^{1}_2,\psi^{2}_2)\|_{\mathcal{X}}
		\end{array}
	\end{equation}
	for all $(\phi_1,\psi^{1}_1,\psi^{2}_1;\phi_2,\psi^{1}_2,\psi^{2}_2)\in\mathcal{X}$.\\

	 Consequently, in view of Lax-Milgram's lemma, there is a unique $(\hat{\phi}_1,\hat{\psi}^{1}_1,\hat{\psi}^{2}_1;\hat{\phi}_2,\hat{\psi}^{1}_2,\hat{\psi}^{2}_2)\in\mathcal{X}$ satisfying
	 \begin{multline}\label{eq: por Lax-M3} 
	 	b((\hat{\phi}_1,\hat{\psi}^{1}_1,\hat{\psi}^{2}_1;\hat{\phi}_2,\hat{\psi}^{1}_2,\hat{\psi}^{2}_2),(\phi_1,\psi^{1}_1,\psi^{2}_1;\phi_2,\psi^{1}_2,\psi^{2}_2)) = \langle\ell, (\phi_1,\psi^{1}_1,\psi^{2}_1;\phi_2,\psi^{1}_2,\psi^{2}_2)\rangle,\\
	 	 \forall (\phi_1,\psi^{1}_1,\psi^{2}_1;\phi_2,\psi^{1}_2,\psi^{2}_2)\in\mathcal{X}.
	 \end{multline}

\noindent	Let us set
	\begin{equation}\label{definição de y, pi, h3}
		\left\{\begin{array}{lll}
			y_1 = \rho_{0}^{-2}(L_1^{\ast}(\hat{\phi}_1,\hat{\phi}_2)-\alpha_{1}\hat{\psi}^{1}_1 \cara_{\mathcal{O}_{d}}-\alpha_{2}\hat{\psi}^{2}_1\cara_{\mathcal{O}_{d}})&\text{in} & Q,\\
			y_2 = \rho_{0}^{-2}(L_2^{\ast}(\hat{\phi}_1,\hat{\phi}_2)-\alpha_{1}\hat{\psi}^{1}_2 \cara_{\mathcal{O}_{d}}-\alpha_{2}\hat{\psi}^{2}_2\cara_{\mathcal{O}_{d}})&\text{in} & Q,\\
			p^{i}_1 = \rho_{0}^{-2}\left(L_1(\hat{\psi}^{i}_1,\hat{\psi}^{i}_2) + \frac{1}{\mu_{i}}\rho_*^{-2}\hat{\phi}_1\cara_{\mathcal{O}_{i}}\right),\, i=\{1,2\} &\text{in} & Q,\\
			p^{i}_2 = \rho_{0}^{-2}L_2(\hat{\psi}^{i}_1,\hat{\psi}^{i}_2)  &\text{in} & Q,\\
			h = -\rho_{1}^{-2}\hat{\phi}_1\cara_{\mathcal{O}} &\text{in} & Q.
		\end{array}\right.
	\end{equation}
    
	Then, replacing \eqref{definição de y, pi, h3} in \eqref{eq: por Lax-M3} we have
	\begin{multline*}
		\int_{Q}y_1 B_0\,dx\,dt +\int_{Q}y_2 \overline{B}_0\,dx\,dt+ \displaystyle\int_{Q}(p^{1}_1B_{1}+p^1_2 \overline{B}_1 +p^{2}_1B_{2}+p^2_2 \overline{B}_2)\,dx\,dt\\
		=\displaystyle\int_{0}^{1}y^{0}_1\phi_1(0)dx+\int_{0}^{1}y^{0}_2\phi_2(0)dx + \displaystyle\int_{O\times(0,T)} h \phi_1\,dx\,dt\\
		+\int_{Q}\left(F_0 \phi_1+\overline{F}_0 \phi_2 + F_{1}\psi^{1}_1+\overline{F}_1 \psi_2^1 +F_{2}\psi^{2}_1+\overline{F}_2 \psi_2^2\right)dx dt,
	\end{multline*}
	where $(\phi_1,\psi^{1}_1,\psi^{2}_1;\phi_2,\psi^{1}_2,\psi^{2}_2)$ is a solution to the system
	\begin{equation*}
		\left\{\begin{array}{lll}
		L_1^{\ast}({\phi}_1,{\phi}_2)=B_0+\alpha_{1}{\psi}^{1}_1 \cara_{\mathcal{O}_{d}}+\alpha_{2}{\psi}^{2}_1\cara_{\mathcal{O}_{d}}  &\text{in}& Q,  \\
				L_2^{\ast}({\phi}_1,{\phi}_2)=\overline{B}_0+\alpha_{1}{\psi}^{1}_2 \cara_{\mathcal{O}_{d}}+\alpha_{2}{\psi}^{2}_2\cara_{\mathcal{O}_{d}}  &\text{in}& Q,  \\
				L_1({\psi}^{i}_1,{\psi}^{i}_2) = B_i + \frac{1}{\mu_{i}}\rho_*^{-2}{\phi}_1\cara_{\mathcal{O}_{i}},\, i=\{1,2\} &\text{in} & Q,\\
			L_2({\psi}^{i}_1,{\psi}^{i}_2)  = \overline{B}_i  &\text{in} & Q,\\
			\phi_i(0,t)=\phi_i(1,t)=0 & \text{on} & (0,T), \\ \psi_i^j(0,t)=\psi_i^j(1,t)=0, \ \ \ \ \ \ \ \ \ i,j=\{1,2\}& \text{on}& (0,T), \, \\
			\phi_i(\cdot,T)=0,\, \psi_i^{j}(\cdot,0)=0 &\text{in}& \Omega.
		\end{array}\right.
	\end{equation*}
    
    Therefore, $(y_1,p^{1}_1,p^{2}_1;y_2,p^{1}_2,p^{2}_2)$ is a solution by transposition of 
 
 	\begin{equation}\label{optimal123}
 	\begin{cases}
 		L_1({y}_1,{y}_2)=F_0+{h}\cara_{_{{\mathcal{O}}}}-\frac{1}{\mu_1}\rho_*^{-2}p_1^1\cara_{_{{\mathcal{O}}_1}}-\frac{1}{\mu_2}\rho_*^{-2}p_1^2\cara_{_{{\mathcal{O}}_2}}, & \ \ \ \text{in} \ \ \ {Q},\\
 		L_2({y}_1,{y}_2)=\overline{F}_0, & \ \ \ \text{in} \ \ \ {Q},\\
 			L_1^*({p}_1^i,{p}_2^i)=F_i+\alpha_i y_1\cara_{_{{\mathcal{O}}_{id}}}, \ \ \ \ i=\{1,2\} &\ \ \ \text{in} \ \ \ {Q},\\
 			L_2^*({p}_1^i,{p}_2^i)=\overline{F}_i+\alpha_iy_2  \cara_{_{{\mathcal{O}}_{id}}}, &\ \ \ \text{in} \ \ \ {Q},\\
 		y_1(0,t)=y_2(1,t)=y_2(0,t)=y_2(1,t)=0, & \ \ \ \text{on} \ \ \ (0,T),\\
 		p^i_1(0,t)=p^i_1(1,t)=p^i_2(0,t)=p^i_2(1,t)=0, & \ \ \ \text{on} \ \ \ (0,T),\\
 		p^i_1(T)=0, \ p^i_2(T)=0, & \ \ \ \text{in} \ \ \ \Omega,\\
 		y_1(0)=y_1^0(x), \ y_2(0)=y_2^0(x) & \ \ \ \text{in} \ \ \ \Omega.
 	\end{cases}
 \end{equation}	

  Also, as $(\hat{\phi}_1,\hat{\psi}^{1}_1,\hat{\psi}^{2}_1;\hat{\phi}_2,\hat{\psi}^{1}_2,\hat{\psi}^{2}_2)\in\mathcal{X}$ and 
	$F_i, \overline{F}_i\in L^{2}(Q), i=0,1,2$,  
	\color{black}
	using the well-posedness result  applied to a linear system  equation, we obtain
	%Proposition \ref{Regularidade para linear system}, we obtain
	$$y_1,p^{1}_1,p^{2}_1;y_2,p^{1}_2,p^{2}_2\in C^{0}([0,T];L^{2}(0,1))\cap L^{2}(0,T;H^{1}_{a}).$$
	\color{black}
	%\textcolor{red}{Esta faltando una proposition tipo la 3.1 del artículo con Joao y Suerlan. Esa proposicion tiene el label "Regularidade para linear system",por eso sale el ??. Corresponderia al Teorema \ref{teo34noauto} de este pdf.}
	
	\noindent Moreover, from \eqref{eq:def_b3} and \eqref{eq: por Lax-M3}, one has
	\begin{multline*}
\left(b((\hat{\phi}_1,\hat{\psi}^{1}_1,\hat{\psi}^{2}_1;\hat{\phi}_2,\hat{\psi}^{1}_2,\hat{\psi}^{2}_2),(\hat{\phi}_1,\hat{\psi}^{1}_1,\hat{\psi}^{2}_1;\hat{\phi}_2,\hat{\psi}^{1}_2,\hat{\psi}^{2}_2))\right)^{1/2} \\
\leq C \left(|y^0_{1}|^{2}_{L^{2}(0,1)}+|y^0_{2}|^{2}_{L^{2}(0,1)} + |\rho_{2}F_0|^2_{L^{2}(Q)} +|\rho_{2}\overline{F}_0|^2_{L^{2}(Q)}+ |\rho_{2}F_{1}|^2_{L^{2}(Q)} +|\rho_{2}\overline{F}_{1}|^2_{L^{2}(Q)}\right.\\
\hspace*{4cm}  \left.+ |\rho_{2}F_{2}|^2_{L^{2}(Q)} +|\rho_{2}\overline{F}_{2}|^2_{L^{2}(Q)}\right)^{1/2},
	\end{multline*}	
	that is,
	\begin{multline*}
		\int_{Q}\rho_{0}^{2}\left(|y_1|^{2}+|y_2|^{2}\right)\ dxdt       + \displaystyle\sum_{i,j=1}^{2}\displaystyle\int_{Q}\rho_{0}^{2}|p_{i}^j|^{2}\ dxdt +\displaystyle\int_{O\times (0,T)}\rho_{1}^{2}|h|^{2} dx dt\\
		\leq C\left(|y^0_{1}|^{2}_{L^{2}(0,1)}+|y^0_{2}|^{2}_{L^{2}(0,1)} + \sum_{i=0}^{2}|\rho_{2}F_i|^2_{L^{2}(Q)}+|\rho_{2}\overline{F}_i|^2_{L^{2}(Q)}\right),
	\end{multline*}
	which proves \eqref{estimate for solution3}.
	
\end{proof}

%%\newpage
%\noindent 
In order to get the local null controllability of the nonlinear system we will need the following additional estimates.

\begin{propo}
	\label{addicional_estimates_case_linear3}
	Under the hypothesis of Theorem \ref{theorem case linear3}, we have furthermore that the control $h\in L^{2}(\mathcal{O}\times (0,T))$ and the associated states $y_1, p^{1}_1, p^{2}_1, y_2, p^{1}_2, p^{2}_2\in C^{0}([0,T];L^{2}(0,1))\cap L^{2}(0,T;H^{1}_{a})$, solution of \eqref{eq:linearized_system_y} and \eqref{eq:linearized_system_p}, satisfy the additional estimates 
	\begin{multline}\label{des Proposition 53}
		\sup_{[0,T]}(\hat{\rho}^{2}\|y_1\|^{2}_{L^{2}(0,1)})+\sup_{[0,T]}(\hat{\rho}^{2}\|y_2\|^{2}_{L^{2}(0,1)}) + \sum_{i,j=1}^{2}\sup_{[0,T]}(\hat{\rho}^{2}\|p^{j}_i\|^{2}_{L^{2}(0,1)}) \\
		+\displaystyle\int_{Q}\hat{\rho}^{2} a(x)\left(|y_{1x}|^{2} +|y_{2x}|^{2}+ \sum_{i,j=1}^{2}|p^{j}_{ix}|^{2}\right)dxdt\ \leq C \kappa_{0}(F_i,\overline{F}_i,y^0_1,y_2^0).
	\end{multline}
%	and, if $y_{0}\in H^{1}_{a}(0,1)$ 
%	\begin{multline}\label{des Proposition 6}
%		\displaystyle\sup_{[0,T]}(\rho_{1}^{2}\|\sqrt{a}y_{x} \|^{2}_{L^{2}(0,1)})+ \displaystyle\sup_{[0,T]}(\rho_{1}^{2}\|\sqrt{a}p^{1}_{x} \|^{2}_{L^{2}(0,1)}) + \displaystyle\sup_{[0,T]}(\rho_{1}^{2}\|\sqrt{a}p^{2}_{x} \|^{2}_{L^{2}(0,1)})\\
%		+ \displaystyle\int_{Q}\rho_{1}^{2}(|y_{t}|^{2}+|p^{1}_{t}|^{2}+|p^{2}_{t}|^{2}+|(a(x)y_{x})_{x}|^{2} + |(a(x)p^{1}_{x})_{x}|^{2}+ |(a(x)p^{2}_{x})_{x}|^{2})dxdt\\
%		\leq C \kappa_{1}(H,H_{1},H_{2},y_{0}),  
%	\end{multline}
%	where $\kappa_{1}(H,H_{1},H_{1},y_{0})= |\rho_2 H|^2_{L^2(Q)} + |\rho_2 H_1|^2_{L^2(Q)} + |\rho_2 H_2|^2_{L^2(Q)} + \|y_0\|^2_{H^{1}_{a}}$. 
\end{propo}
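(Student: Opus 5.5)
The approach is the standard weighted energy estimate used in the degenerate Stackelberg--Nash literature (e.g. \cite{DemarqueLimacoViana_deg_sys2020}): multiply each equation of the linearized optimality system \eqref{optimal123} by $\hat\rho^2$ times the unknown and integrate in space. The only weight information needed is the bound on $(\hat\rho^2)_t$. Since $\hat\rho$ depends only on $t$, we have $|(\hat\rho^{2})_t|\leq C s\,\hat\rho^{2}(\zeta^*)^{5/4}$, using $|\tau'|\le C\tau^{5/4}$ near $t=T$ and the fact that $\tau$ is smooth and bounded near $t=0$ thanks to $m(0)>0$. Combined with $\hat\rho^2=\rho_1\rho_0$ and $\rho_1\le C\rho_0 (\zeta^*)^{-7}$, this gives
\[
|(\hat\rho^2)_t|\le C\rho_0^2 .
\]
The bounded constant $s$ is absorbed into $C$.

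\textbf{Step 1: the state equations.} For $y_k$, $k=1,2$, set $\tilde y_k=\hat\rho y_k$. Multiply the $k$-th state equation by $\hat\rho^2 y_k$, integrate over $(0,1)$, and sum in $k$. Integrating the diffusion term by parts and using $b(t)\ge m>0$ yields the dissipative term $m\int_0^1\hat\rho^2 a|y_{kx}|^2$. For the transport term $B\sqrt a\, y_{kx}\,y_k$, note that $|B\sqrt a|=|\ell'/\ell|\,x\le C$. This term is therefore bounded by $\varepsilon\hat\rho^2 a|y_{kx}|^2+C_\varepsilon\hat\rho^2 a^{-1}x^2 y_k^2$, and since $x^2/a(x)\le C$ (because $a\sim x^\alpha$ with $\alpha<1$), this is at most $C\hat\rho^2 y_k^2$. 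The coupling terms $c_{ij}y_j$ are bounded because $c_{ij}\in L^\infty$. The right-hand side contributions are handled as follows.
\begin{itemize}
\item The term $\hat\rho^2 F_0 y_1$ is bounded by $\rho_2^2|F_0|^2+C\rho_0^2|y_1|^2$, using $\hat\rho\le C\rho_0\le C\rho_2$.
\item The control term satisfies $\hat\rho^2 h y_1\le \rho_1^2|h|^2+\rho_0^2|y_1|^2$, by $\hat\rho^2=\rho_0\rho_1$.
\item The follower terms $\mu_i^{-1}\rho_*^{-2}\hat\rho^2 p_1^i y_1$ are bounded by $C\rho_0^2(|p^i_1|^2+|y_1|^2)$, since $\rho_*^{-2}$ is bounded.
\end{itemize}
Gronwall in forward time starting from $y_k(0)=y^0_k$, together with $\hat\rho(0)$ finite and \eqref{estimate for solution3}, then bounds $\sup_t\hat\rho^2\|y_k\|^2+\int_Q\hat\rho^2 a|y_{kx}|^2$ by $C\kappa_0$.

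\textbf{Step 2: the adjoint equations.} For the $p^i_k$, proceed analogously but backwards in time from $p^i(T)=0$. Multiply by $\hat\rho^2 p^i_k$ and integrate over $(t,T)\times(0,1)$. The term $(B\sqrt a\, p)_x p$, after integration by parts, becomes $-B\sqrt a\, p\, p_x$, which is absorbed exactly as above. The source terms $\alpha_i y_k\mathbf 1_{\mathcal O_d}$ and $F_i,\overline F_i$ are controlled by \eqref{estimate for solution3} and $\kappa_0$, since $\hat\rho\le C\rho_0$. The derivative of the weight again produces only $C\int\rho_0^2|p|^2$, which is already bounded by \eqref{estimate for solution3}. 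Taking the supremum over $t$ and summing both steps gives \eqref{des Proposition 53}.

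\textbf{Main obstacle.} The delicate point is the weight-derivative bound $|(\hat\rho^2)_t|\le C\rho_0^2$, i.e.\ checking that the powers of $\zeta^*$ chosen in \eqref{eq:weights_rhos3} leave enough room: $\hat\rho$ carries $(\zeta^*)^{-21/2}$ while $\rho_0$ carries $(\zeta^*)^{-7}$. I would first verify it near $t=T$, where $\tau=(t(T-t))^{-4}$, and then treat $[0,T/2]$ separately using $m(0)>0$. A second subtlety is justifying the degenerate integration by parts at $x=0$ for functions in $H^1_a$ only. I would handle this by working with smooth approximations in $\mathcal X_0$, or by invoking the well-posedness and regularity theory for the degenerate non-autonomous equation from \cite{GYL-Carleman-2025}.
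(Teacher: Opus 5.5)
Your argument is correct and is essentially the paper's proof. Like the paper, you multiply each equation by $\hat\rho^2$ times the unknown, bound the weight derivative by $|(\hat\rho^2)_t|\le C\rho_0^2$, split the source terms using $\hat\rho^2=\rho_0\rho_1$ and $\hat\rho\le C\rho_0\le C\rho_2$, and close with \eqref{estimate for solution3}. The paper reaches the adjoint equations by a time reversal rather than by integrating backward from $t=T$, which amounts to the same thing. In either form you should add one line justifying $\hat\rho^2(t)\|p^i_k(t)\|^2_{L^2(0,1)}\to 0$ along a sequence $t\to T$. This follows from $\int_Q\rho_0^2|p^i_k|^2<\infty$ together with $\hat\rho^2=\rho_0^2(\zeta^*)^{-7}$.
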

\begin{proof}
	%Let us denote by $L\varphi =  \varphi_{t} - (a(x)\varphi_{x})_{x} + c(x,t)\phi$ and $L^{\ast}\varphi=-\varphi_{t} - (a(x)\varphi_{x})_{x} + c(x,t)\phi$.
	%(\textcolor{green}{Temos que ver sobre a função regular que substitui a função caracteristica. Se vamos defini-la no inicio ou somente aqui, como no trabalho do Danny}). 
	We proceed following the steps of \cite{DemarqueLimacoViana2018}. %Therefore, to establish estimates like \eqref{des Proposition 5},  we need to 
    First we change variables in the form $\tilde{p}^{j}_i(x,t)=p^{j}_i(x,T-t)$ with $i,j=1,2$, and $ \tilde{F}_{i}(x,t)=F_{i}(x,T-t)$, with $i=\lbrace 0,1,2\rbrace$, so that we obtain a system in new variables $(\tilde {y}_1,\tilde {y}_2,\tilde{p}^{j}_i,\tilde{F}_{i})$ in which all the equations are forward in time. The new system has initial conditions $(\tilde{y}(\cdot,0),\tilde{p}^{1}(\cdot,0),\tilde{p}^{2}(\cdot, 0 ))=(y^{0},0,0)$ and the weighted estimates for the solution and the regularity results will be equivalent to those of the original system \eqref{eq:linearized_system_y} and \eqref{eq:linearized_system_p}. Keeping this in mind, for simplicity, we will maintain the notations. Thus, we consider the linear system, for $i,j=1,2$,
    \begin{equation}\label{eq:linearized_system new3}
		\begin{cases}
			y_{1t}-b(t)\left({a}(x)y_{1x}\right)_x-B\sqrt{a}y_{1x} + c_{11}y_1 + c_{12}y_2 = F_0+{h}\cara_{_{{\mathcal{O}}}}-\frac{1}{\mu_1}\rho_*^{-2}p_1^1\cara_{_{{\mathcal{O}}_1}}-\frac{1}{\mu_2}\rho_*^{-2}p_1^2\cara_{_{{\mathcal{O}}_2}}, & \ \ \ \text{in} \ \ \ {Q},\\
			y_{2t}-b(t)\left({a}(x)y_{2x}\right)_x-B\sqrt{a}y_{2x}+c_{21}y_1+c_{22}y_2=\overline{F}_0, & \ \ \ \text{in} \ \ \ {Q},\\
			p^i_{1t}-b(t)\left(a(x)p^i_{1x}\right)_x+\left(B\sqrt{a}p^i_1\right)_x+c_{11}p^i_1+c_{21}p^i_2=F_i+\alpha_i y_1\cara_{_{{\mathcal{O}}_{id}}}, \ \ \ \ i=\{1,2\} &\ \ \ \text{in} \ \ \ {Q},\\
			p^i_{2t}-b(t)\left(a(x)p^i_{2x}\right)_x+\left(B\sqrt{a}p^i_2\right)_x+c_{12}p^i_1 + c_{22}p^i_2=\overline{F}_i+\alpha_iy_2  \cara_{_{{\mathcal{O}}_{id}}}, &\ \ \ \text{in} \ \ \ {Q},\\
			y_1(0,t)=y_2(1,t)=y_2(0,t)=y_2(1,t)=0, & \ \ \ \text{on} \ \ \ (0,T),\\
			p^i_1(0,t)=p^i_1(1,t)=p^i_2(0,t)=p^i_2(1,t)=0, & \ \ \ \text{on} \ \ \ (0,T),\\
			p^i_1(T)=0, \ p^i_2(T)=0, & \ \ \ \text{in} \ \ \ \Omega,\\
			y_1(0)=y_1^0(x), \ y_2(0)=y_2^0(x) & \ \ \ \text{in} \ \ \ \Omega.
		\end{cases}
	\end{equation}	
	
	Let us multiply $\eqref{eq:linearized_system new3}_{1}$ by $\hat{\rho}^{2} y_1$, $\eqref{eq:linearized_system new3}_{2}$ by $\hat{\rho}^{2} y_2$, $\eqref{eq:linearized_system new3}_{3}$ by $\hat{\rho}^{2} p^{i}_1$, $\eqref{eq:linearized_system new3}_{4}$ by $\hat{\rho}^{2} p^{i}_2$ and integrate over $[0,1]$. Hence, using that $\hat{\rho}^{2} = \rho_{0}\rho_{1}$, and $\rho_{1}\leq C\rho_{2}$, we compute
	{
		\begin{equation}\label{second estimate3}
			\begin{array}{l}
				\dfrac{1}{2}\dfrac{d}{dt}\displaystyle\int_{0}^{1}\hat{\rho}^{2}(|y_1|^{2}+|y_2|^{2})dx +\sum_{i,j=1}^{2}\dfrac{1}{2}\dfrac{d}{dt}\displaystyle\int_{0}^{1}\hat{\rho}^{2} | p^{j}_i|^{2}dx + b(t) \displaystyle\int_{0}^{1}\hat{\rho}^{2}a(x)(|y_{1x}|^{2}+|y_{2x}|^{2}) dx+ \\
				b(t) \displaystyle \sum_{i,j=1}^{2}\displaystyle\int_{0}^{1}\hat{\rho}^{2}a(x)|p_{ix}^{j}|^{2}dx\hspace{0.1cm}+ \displaystyle\int_0^1 \hat{\rho}^2 d_1(x,t) \sqrt{a(x)}(y_{1x} y_1+y_{2x} y_2) dx \\
				+\displaystyle\sum_{i,j=1}^{2} \int_0^1 \hat{\rho}^2 d_2(x,t) \sqrt{a(x)}p^j_{ix} p^j_{i} dx  \\
				\leq C\left(\displaystyle\int_{0}^{1}\rho_{0}^{2}(|y_1|^{2}+|y_2|^{2})dx + \sum_{i,j=1}^{2}\displaystyle\int_{0}^{1}\rho_{0}^{2}|p^{i}_j|^{2}dx + \displaystyle\int_{O}\rho_{1}^{2}|h|^{2}dx +
				\sum_{i=0}^{2}\displaystyle\int_{0}^{1}\rho_{2}^{2}(|F_{i}|^{2}+|\overline{F}_{i}|^{2})dx\right)\hspace{0.1cm}\\
				+ \ \mathcal{M},
				%+ |\mathcal{N}|,
			\end{array}
	\end{equation}}
	where $\mathcal{M} = \displaystyle\int_{0}^{1}\hat{\rho}(\hat{\rho})_{t}(|y_1|^{2}+|y_2|^{2})dx +\sum_{i,j=1}^{2}\displaystyle\int_{0}^{1}\hat{\rho}(\hat{\rho})_{t}|p^{j}_i|^{2}dx,$
	and $(\cdot)_{t}=\frac{d}{dt}(\cdot)$. To simplify notation, in the following we will omit the summation sign. Recall that
	$A^*(t) = C_1 \tau(t)$, and $\zeta^*(t) = C_2 \tau(t)$, then we have that $A^*_{t}=\bar C (\zeta^*)_{t}$ and consequently
	\begin{equation*}
		\begin{split}
			\hat{\rho}(\hat{\rho})_{t} &= e^{-sA^*}(\zeta^*)^{-3}\left(-se^{-sA^*} A^*_{t}(\zeta^*)^{-3} -3 e^{-sA^*}(\zeta^*)^{-4}(\zeta^*)_{t}\right)    \\
			&= -e^{-2sA^*}(\zeta^*)^{-4}(\zeta^*)_{t}\left(s(\zeta^*)^{-2}\bar{C} + 3(\zeta^*)^{-3} \right)  \\
			&=-\rho_{0}^{2}(\zeta^*)_{t}\left(s(\zeta^*)^{-2}\bar{C} + 3(\zeta^*)^{-3} \right).
		\end{split}
	\end{equation*}
	Thus, for any $t\in [0,T)$,
	\begin{equation*}
		\begin{array}{l}
			|\hat{\rho}(\hat{\rho})_{t}|\leq C\rho_{0}^{2}\tau^{2}|s(\zeta^*)^{-2}\bar{C} + 3(\zeta^*)^{-3}|  
			\leq C\rho_{0}^{2}|s\bar{C}+3(\zeta^*)^{-1}| \leq C\rho_{0}^{2},
		\end{array}
	\end{equation*}
	and then, we obtain
	\begin{equation*}
		%\label{estimate of M}
		\mathcal{M}\leq C \displaystyle\int_{0}^{1}{\rho_{0}^{2}}(|y_1|^{2}+|y_2|^{2} + \sum_{i,j=1}^{2} |p^{j}_i|^{2})dx.
	\end{equation*}
	
	\noindent Therefore, using Young's inequality, \eqref{second estimate3} becomes, for a small $\epsilon>0$. 
    %( now we will only write the subscript and superscript notation that will indicate summation where necessary).
	\begin{multline*}
		\dfrac{1}{2}\dfrac{d}{dt}\displaystyle\int_{0}^{1}\hat{\rho}^{2}(|y_1|^{2}+|y_2|^{2}+| p^{j}_i|^{2})dx + b(t) \displaystyle\int_{0}^{1}\hat{\rho}^{2} a(x)(|y_{1x}|^{2}+|y_{2x}|^{2}+|p^{j}_{ix}|^{2})dx\\
		\leq C\left(\displaystyle\int_{0}^{1}\rho_{0}^{2}(|y_1|^{2} +|y_2|^{2}+ |p^{j}_i|^{2})dx + \displaystyle\int_{O}\rho_{1}^{2}|h|^{2}dx + \displaystyle\int_{0}^{1}\rho_{2}^{2}(|F_{i}|^{2}+|\overline{F}_{i}|^{2})dx\right)\\
		+ \epsilon \displaystyle\int_{0}^{1}\hat{\rho}^{2} a(x)(|y_{1x}|^{2}+|y_{2x}|^{2}+|p^{j}_{ix}|^{2})dx + C_\epsilon \left( \int_0^1 |y_1|^{2} +|y_2|^{2}+ |p^{j}_i|^{2} \right).
	\end{multline*}
	Thus, since $b(t)$ is bounded and $\rho_0$ is bounded by below, there is constant $D>0$ such that
	\begin{multline*}
		\dfrac{1}{2}\dfrac{d}{dt}\displaystyle\int_{0}^{1}\hat{\rho}^{2}(|y_1|^{2}+|y_2|^{2}+| p^{j}_i|^{2})dx + \displaystyle\int_{0}^{1}\hat{\rho}^{2} a(x)(|y_{1x}|^{2}+|y_{2x}|^{2}+|p^{j}_{ix})dx\\
		\leq D\left(\displaystyle\int_{0}^{1}\rho_{0}^{2}(|y_1|^{2} +|y_2|^{2}+ |p^{j}_i|^{2})dx + \displaystyle\int_{O}\rho_{1}^{2}|h|^{2}dx + \displaystyle\int_{0}^{1}\rho_{2}^{2}(|F_{i}|^{2}+|\overline{F}_{i}|^{2})dx\right),
	\end{multline*}
	and, integrating in time,  we conclude the desired result by the Linear Theorem.\\

\end{proof}

\section{Local null controllability of the nonlinear system }\label{control for nonlinear system}

%All along this section we use the weights defined in \eqref{eq:weights_rhos} and 
In this section we use Liusternik's (right) inverse function theorem (see \cite{Alekseev}) to obtain our local controllability results. Here $B_{r}(x_0)$ 
%and $B_{\delta}(\zeta_{0})$ are 
denote the open ball of radius $r$ centered at $x_0$.
%and $\zeta_0$, respectively.
\begin{teo}[Liusternik]\label{Liusternik}
Let $ \mathcal{Y}$ and $ \mathcal{Z}$ be Banach spaces and let $\mathcal{A}:B_{r}(0)\subset  \mathcal{Y}\rightarrow  \mathcal{Z}$ be a $\mathcal{C}^{1}$ mapping. Let as assume that $\mathcal{A}^{\prime}(0)$ is onto and let us set $\mathcal{A}(0)=\zeta_{0}$. Then, there exist $\delta >0$, a mapping $W: B_{\delta}(\zeta_{0})\subset  \mathcal{Z}\rightarrow  \mathcal{Y}$ and a constant $K>0$ such that
\begin{equation*}
    W(z)\in B_{r}(0),\,\, \mathcal{A}(W(z))=z\,\, \text{and}\,\, \Vert W(z)\Vert_{ \mathcal{Y}}\leq K\Vert z-\zeta_{0}\Vert_{ \mathcal{Z}}\, \, \forall\, z\in B_{\delta}(\zeta_{0}).
\end{equation*}
In particular, $W$ is a local inverse-to-the-right of $\mathcal{A}$.
\end{teo}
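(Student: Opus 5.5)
The plan is the classical Graves--Liusternik argument: a modified Newton iteration in which the derivative is frozen at $0$ and only a bounded (possibly nonlinear) right inverse of $\mathcal{A}'(0)$ is used.

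\emph{Step 1 (quantitative surjectivity).} Since $\Lambda:=\mathcal{A}'(0)\colon\mathcal{Y}\to\mathcal{Z}$ is bounded, linear and onto between Banach spaces, the open mapping theorem gives a constant $K_0>0$ such that for every $w\in\mathcal{Z}$ there is some $y\in\mathcal{Y}$ with $\Lambda y=w$ and $\|y\|_{\mathcal{Y}}\le K_0\|w\|_{\mathcal{Z}}$. For each $w$ I fix such a $y$ and call it $R(w)$. The map $R$ need not be linear or continuous; only the bound $\|R(w)\|\le K_0\|w\|$ is used. This is the one place where completeness enters essentially, via Baire's theorem.

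\emph{Step 2 (uniform linearization).} Since $\mathcal{A}$ is $\mathcal{C}^1$, I choose $r'\in(0,r]$ such that $\|\mathcal{A}'(u)-\Lambda\|\le \frac{1}{2K_0}$ for all $u\in B_{r'}(0)$. The mean value inequality then gives, for $y,y'\in B_{r'}(0)$,
\begin{equation*}
\|\mathcal{A}(y)-\mathcal{A}(y')-\Lambda(y-y')\|_{\mathcal{Z}}\le \tfrac{1}{2K_0}\|y-y'\|_{\mathcal{Y}} .
\end{equation*}
I set $\delta:=r'/(4K_0)$ and $K:=2K_0$.

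\emph{Step 3 (iteration and convergence).} Fix $z\in B_\delta(\zeta_0)$. Define $y_0=0$ and $y_{n+1}=y_n+R\big(z-\mathcal{A}(y_n)\big)$. Write $e_n:=z-\mathcal{A}(y_n)$, so that $e_0=z-\zeta_0$. Since $\Lambda(y_{n+1}-y_n)=e_n$, we get
\begin{equation*}
e_{n+1}=-\big[\mathcal{A}(y_{n+1})-\mathcal{A}(y_n)-\Lambda(y_{n+1}-y_n)\big].
\end{equation*}
As long as $y_n,y_{n+1}\in B_{r'}(0)$, Step 2 and Step 1 give $\|e_{n+1}\|\le \frac{1}{2K_0}\|y_{n+1}-y_n\|\le \frac12\|e_n\|$. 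I will prove by induction the two facts $\|e_n\|\le 2^{-n}\|z-\zeta_0\|$ and $\|y_n\|\le K_0\sum_{k<n}2^{-k}\|z-\zeta_0\|< 2K_0\delta=r'/2$. Together they keep every iterate inside $B_{r'}(0)$, so the contraction estimate remains valid at each step.

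Hence $(y_n)$ is Cauchy, because $\|y_{n+1}-y_n\|\le K_0 2^{-n}\|z-\zeta_0\|$. Its limit $W(z)$ satisfies $\|W(z)\|\le 2K_0\|z-\zeta_0\|=K\|z-\zeta_0\|$, which also places $W(z)\in B_r(0)$. Finally $e_n\to0$ and $\mathcal{A}$ is continuous, so $\mathcal{A}(W(z))=z$.

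The only genuinely non-routine step is Step 1: obtaining a right inverse with a uniform norm bound from surjectivity alone. Everything else is bookkeeping to keep the iterates inside the ball on which the linearization estimate holds. No continuity of $W$ is claimed, which is why a nonlinear selection $R$ suffices.
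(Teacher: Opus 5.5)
Your proof is correct and complete. The open mapping theorem supplies the bounded, possibly nonlinear, right inverse $R$. The induction correctly bounds $\|y_{n+1}\|$ from the step-$n$ bounds before the linearization estimate is invoked, so every iterate stays in $B_{r'/2}(0)$ and the limit satisfies $\|W(z)\|\le 2K_0\|z-\zeta_0\|$. The paper gives no proof of this theorem; it quotes it from \cite{Alekseev}, and your open-mapping plus modified Newton scheme is the standard Graves--Lyusternik proof of that result.
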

 
%In order to do so, 
For our problem, we define a map $\mathcal{A} : \mathcal{Y} \to \mathcal{Z}$ between suitable Banach spaces $\mathcal{Y}$ and $\mathcal{Z}$ whose definition and properties came from the controllability result of the linearized system and the additional estimates shown in Theorem \ref{theorem case linear3} and Proposition \ref{addicional_estimates_case_linear3}  in order to verify that the map $\mathcal{A}$ is well defined and verifies Liusternik's theorem hypothesis.

From the linearized system \eqref{eq:linearized_system_y} and \eqref{eq:linearized_system_p}, we denote, for $i=1,2$,
%
%All along this section we use the weights defined in \eqref{eq:weights_rhos}.
%
%We are interested in using Liusternik's inverse function theorem (on the right) to obtain our result, which can be found in \cite{Alekseev}, and is given below, where $B_{r}(0)$ and $B_{\delta}(\zeta_{0})$ are open ball, respectively of radius $r$ and $\delta$.
%\begin{teo}[{\bf Liusternik}]\label{Liusternik}
%	Let $ \mathcal{Y}$ and $ \mathcal{Z}$ be Banach spaces and let $\mathcal{A}:B_{r}(0)\subset  \mathcal{Y}\rightarrow  \mathcal{Z}$ be a $\mathcal{C}^{1}$ mapping. Let as assume that $\mathcal{A}^{\prime}(0)$ is onto and let us set $\mathcal{A}(0)=\zeta_{0}$. Then, there exist $\delta >0$, a mapping $W: B_{\delta}(\zeta_{0})\subset  \mathcal{Z}\rightarrow  \mathcal{Y}$ and a constant $K>0$ such that
%	\begin{equation*}
%		W(z)\in B_{r}(0),\,\, \mathcal{A}(W(z))=z\,\, \text{and}\,\, \Vert W(z)\Vert_{ \mathcal{Y}}\leq K\Vert z-\zeta_{0}\Vert_{ \mathcal{Z}}\, \, \forall\, z\in B_{\delta}(\zeta_{0}).
%	\end{equation*}
%	In particular, $W$ is a local inverse-to-the-right of $\mathcal{A}$.
%\end{teo}
%
%
%
%
%In order to do so, we define a map $\mathcal{A} : \mathcal{Y} \to \mathcal{Z}$ between suitable Banach spaces $\mathcal{Y}$ and $\mathcal{Z}$ whose definition and properties came from the controllability result of the linearized system and the additional estimates shown in Theorem \ref{theorem case linear}  in order to verify that the map $\mathcal{A}$ is well defined and verifies Liusternik's theorem hypothesis.
\begin{eqnarray*}
F_0&=&y_{1t}-b(t)\left({a}(x)y_{1x}\right)_x-B\sqrt{a}y_{1x}+c_{11}y_1 + c_{12}y_2 -{h}\cara_{_{{\mathcal{O}}}}+\frac{1}{\mu_1}\rho_*^{-2}p_1^1\cara_{_{{\mathcal{O}}_1}}+\frac{1}{\mu_2}\rho_*^{-2}p_1^2\cara_{_{{\mathcal{O}}_2}},\\
\overline{F}_0&=&y_{2t}-b(t)\left({a}(x)y_{2x}\right)_x-B\sqrt{a}y_{2x}+c_{21}y_1+c_{22}y_2,\\
F_i&=&p^i_{1t}-b(t)\left(a(x)p^i_{1x}\right)_x+\left(B\sqrt{a}p^i_1\right)_x+c_{11}p^i_1+c_{21}p^i_2-\alpha_i y_1\cara_{_{{\mathcal{O}}_{id}}},\\
\overline{F}_i&=&p^i_{2t}-b(t)\left(a(x)p^i_{2x}\right)_x+\left(B\sqrt{a}p^i_2\right)_x+c_{12}p^i_1 + c_{22}p^i_2-\alpha_iy_2  \cara_{_{{\mathcal{O}}_{id}}}, % \ \ \ \quad i=1,2.
\end{eqnarray*} 
Now, let us define the space
\begin{equation}
	\label{eq:espaceY}
	\begin{array}{c}
		\mathcal{Y} = \{  (y_1,p^1_1,p^2_1,y_2,p^1_2,p^2_2,h)\in [L^{2}(\Omega\times (0,T))]^{6}\times L^{2}( \mathcal{O}\times (0,T)) \ : \ y_i(\cdot,t), p^{j}_i(\cdot,t), i,j=1,2,\\ \text{are absolutely continuous in}\ [0, 1],\ \text{a.e. in}\ [0, T], \ \rho_{1}h\in L^{2}( \mathcal{O} \times (0,T)), \\
		% \text{for}\ H=y_t - (l(0) a(x) y_x)_x - h 1_{O} + \frac{1}{\mu_1} p^1 1_{O_1} + \frac{1}{\mu_2} p^2 1_{O_2}\ \text{and}\ \\H_{i}= -p_t^i - \left(l(0) a(x) p_x^i \right)_x  - \alpha_i y 1_{O_{i,d}}, \ i=1,2, 
		\rho_{0}y_i, \rho_{0}p^{j}_i, \rho_2 F_0, \rho_2 \overline{F}_0 \in L^2(Q),
		\rho_2 F_i, \rho_2 \overline{F}_i \in L^2(Q), \ i=1,2 \\
		y_i(1, t) \equiv p^{j}_i(1,t) \equiv  y_i(0,t) \equiv p^{j}_i(0,t) \equiv 0 \ \text{a.e in}\ [0, T], \ \ i,j=1,2,\\
		 y_1(\cdot,0), y_2(\cdot,0) \in H_a^1(\Omega) \}.
	\end{array}
\end{equation}

Then, $\mathcal{Y}$ is a Hilbert space with the norm
%$\Vert .\Vert_{\mathcal{Y}}$, where
\begin{multline*}
\Vert (y_1,p^1_1,p^2_1,y_2,p^1_2,p^2_2,h)\Vert^{2}_{\mathcal{Y}} = \Vert \rho_{0}y_1\Vert^{2}_{L^{2}(Q)} +\Vert \rho_{0}y_2\Vert^{2}_{L^{2}(Q)}+ \sum_{i,j=1}^{2}\Vert \rho_{0}p^{j}_i\Vert^{2}_{L^{2}(Q)}+ \Vert\rho_{1}h\Vert^{2}_{L^{2}(\mathcal{O}\times (0,T))}\\
+\sum_{i=0}^{2}\Vert\rho_{2} F_i\Vert^{2}_{L^{2}(Q)}+\Vert\rho_{2} \overline{F}_i\Vert^{2}_{L^{2}(Q)}+ \Vert y_1(\cdot,0)\Vert^{2}_{H^{1}_{a}(\Omega)}+ \Vert y_2(\cdot,0)\Vert^{2}_{H^{1}_{a}(\Omega)}.
\end{multline*}

Due to Proposition \ref{addicional_estimates_case_linear3}, for any $(y_1,p^1_1,p^2_1,y_2,p^1_2,p^2_2,h)\in \mathcal{Y}$ we get: 
\begin{multline}\label{adicionalnorma}
	\sup_{[0,T]}(\hat{\rho}^{2}\|y_1\|^{2}_{L^{2}(0,1)})+\sup_{[0,T]}(\hat{\rho}^{2}\|y_2\|^{2}_{L^{2}(0,1)}) + \sum_{i,j=1}^{2}\sup_{[0,T]}(\hat{\rho}^{2}\|p^{j}_i\|^{2}_{L^{2}(0,1)}) \\
	+\displaystyle\int_{Q}\hat{\rho}^{2} a(x)\left(|y_{1x}|^{2} +|y_{2x}|^{2}+ \sum_{i,j=1}^{2}|p^{j}_{ix}|^{2}\right)dxdt\ \leq C \Vert (y_1,p^1_1,p^2_1,y_2,p^1_2,p^2_2,h)\Vert^{2}_{\mathcal{Y}}. 
\end{multline}

Now, let us introduce the Banach space $\mathcal{Z} = \mathcal{F} \times \mathcal{F} \times \mathcal{F} \times\mathcal{F} \times \mathcal{F} \times \mathcal{F}\times H_a^1(\Omega)\times H_a^1(\Omega)$ such that $\mathcal{F}=\{ z \in L^2(Q) \ : \ \rho_2 z \in L^2(Q) \}$.

 Finally, consider the map $\mathcal{A} : \mathcal{Y} \to \mathcal{Z}$ such that 
$(y_1,p^1_1,p^2_1,y_2,p^1_2,p^2_2,h) \mapsto \mathcal{A}(y_1,p^1_1,p^2_1,y_2,p^1_2,p^2_2,h)$ and 
$\mathcal{A}(y_1,p^1_1,p^2_1,y_2,p^1_2,p^2_2,h) = (\mathcal{A}^1_0,\mathcal{A}^2_0, \mathcal{A}^1_1,\mathcal{A}^2_1, \mathcal{A}^1_2,\mathcal{A}^2_2, \mathcal{A}_3^1, \mathcal{A}^2_3)$. For simplicity we use the following notation  $(y_1,p^1_1,p^2_1,y_2,p^1_2,p^2_2,h) \equiv (y_1,y_2,p_i^j,h)$, where the components are given by
\begin{equation}\label{aplicação A3}
\hspace*{-0.8cm}	\left\{\begin{array}{l}
		\mathcal{A}^1_0(y_1,y_2,p_i^j,h) = y_{1t}-b(t)\left({a}(x)y_{1x}\right)_x-B\sqrt{a}y_{1x}+F_1(y_1,y_2)
		-{h}\cara_{_{{\mathcal{O}}}}+\frac{1}{\mu_1}\rho_*^{-2}p_1^1\cara_{_{{\mathcal{O}}_1}}+\frac{1}{\mu_2}\rho_*^{-2}p_1^2\cara_{_{{\mathcal{O}}_2}},\vspace{0.1cm}\\
		\mathcal{A}^2_0(y_1,y_2,p_i^j,h)  =	y_{2t}-b(t)\left({a}(x)y_{2x}\right)_x-B\sqrt{a}y_{2x}+F_2(y_1,y_2),\\
		\mathcal{A}^1_i(y_1,y_2,p_i^j,h)  = -p^i_{1t}-b(t)\left(a(x)p^i_{1x}\right)_x+\left(B\sqrt{a}p^i_1\right)_x+ D_1 F_1(y_1,y_2)p^i_1 + D_1 F_2(y_1,y_2) p^i_2 \\
        \qquad\qquad\qquad\qquad - \alpha_i\left( y_1-y_{1d}^i  \right)\cara_{_{{\mathcal{O}}_{id}}}, \quad i=1, 2, \\
		\mathcal{A}^2_i(y_1,y_2,p_i^j,h)  = -p^i_{2t}-b(t)\left(a(x)p^i_{2x}\right)_x+\left(B\sqrt{a}p^i_2\right)_x + D_2 F_1(y_1,y_2)p^i_1 + D_2 F_2(y_1,y_2)p^i_2 \\
        \qquad\qquad\qquad\qquad -\alpha_i\left( y_2-y_{2d}^i  \right)\cara_{_{{\mathcal{O}}_{id}}}, \quad i=1, 2,\\
		\mathcal{A}^1_3(y_1,y_2,p_i^j,h)  = y_1(.,0),\\
		\mathcal{A}^2_3(y_1,y_2,p_i^j,h)  = y_2(.,0).
	\end{array}\right.
\end{equation}

We prove that we can apply the Theorem \ref{Liusternik} to the mapping $\mathcal{A}$ in \eqref{aplicação A3}, through the following three lemmas:
\begin{lema}\label{A bem definido3}
	Let $\mathcal{A}: \mathcal{Y}\rightarrow  \mathcal{Z}$ be given by \eqref{aplicação A3}. Then, $\mathcal{A}$ is well defined and continuous. 
\end{lema}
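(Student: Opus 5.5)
To show that $\mathcal{A}$ is well defined, I will check that each component of $\mathcal{A}(y_1,y_2,p_i^j,h)$ lies in the right factor of $\mathcal{Z}$. The components $\mathcal{A}^1_3,\mathcal{A}^2_3$ are immediate, since $y_i(\cdot,0)\in H^1_a(\Omega)$ is part of the definition of $\mathcal{Y}$. For the remaining components, I will split each $\mathcal{A}$ into its linear part plus a nonlinear remainder. For instance,
$$\mathcal{A}^1_0 = F_0 + \big(F_1(y_1,y_2) - c_{11}y_1 - c_{12}y_2\big),$$
and similarly $\mathcal{A}^2_0=\overline{F}_0+(F_2(y_1,y_2)-c_{21}y_1-c_{22}y_2)$. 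For the adjoint components,
$$\mathcal{A}^k_i = (\text{the } F_i \text{ or } \overline{F}_i) + \sum_{j}\big(D_kF_j(y_1,y_2)-D_kF_j(0,0)\big)p^i_j + \alpha_i y^i_{k,d}\cara_{\mathcal{O}_{d}}.$$
Here I read the coupling coefficients in $F_i,\overline{F}_i$ as $c_{kj}=D_kF_j(0,0)$, consistent with the linearization. The linear parts satisfy $\rho_2(\cdot)\in L^2(Q)$ by the definition of $\mathcal{Y}$. The target term $\alpha_i y^i_{k,d}\cara_{\mathcal{O}_d}$ is handled by the hypothesis $\rho_2 u^i_d\in L^2$ from Theorem~\ref{thm:local_null_controllability4}. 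So everything reduces to bounding the nonlinear remainders in the weighted space $\mathcal{F}$.

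For the remainders I will use Taylor's formula together with \eqref{condiciones_a4f}. Since $F_i(0,0)=0$ and $F_i\in C^2$ with bounded second derivatives,
$$|F_1(y_1,y_2)-c_{11}y_1-c_{12}y_2|\le C(|y_1|^2+|y_2|^2),$$
and likewise
$$|D_kF_j(y_1,y_2)-D_kF_j(0,0)|\le C(|y_1|+|y_2|).$$
It therefore suffices to bound quantities of the form $\iint \rho_2^2|y|^2|z|^2$ with $y,z\in\{y_1,y_2,p^j_i\}$. I will use the weight comparison $\rho_2\le C\rho_1^2$ from \eqref{eq:compara_rhos3}, together with $\rho_1\le C\hat\rho$, to get $\rho_2\le C\hat\rho^2$. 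This gives
$$\iint_Q \rho_2^2|y|^2|z|^2 \le C\int_0^T \|\hat\rho y\|_{L^\infty(0,1)}^2\,\|\hat\rho z\|_{L^2(0,1)}^2\,dt .$$
The second factor is controlled in $L^\infty(0,T)$ by the $\sup_{[0,T]}\hat\rho^2\|\cdot\|^2_{L^2}$ term in \eqref{adicionalnorma}. The first factor needs a one-dimensional weighted Sobolev embedding for the weakly degenerate space: I will use $\|w\|_{L^\infty(0,1)}\le C\|\sqrt{a}\,w_x\|_{L^2(0,1)}$ for $w$ vanishing at $x=1$. This holds because $1/a\in L^1(0,1)$ when $a\sim x^\alpha$ with $\alpha<1$, by Cauchy--Schwarz applied to $w(x)=-\int_x^1 w_x$. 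Combining these, the integral is bounded by $\int_Q \hat\rho^2 a|y_x|^2$ times the sup term, and both are bounded by $C\|(y_1,y_2,p^j_i,h)\|^4_{\mathcal{Y}}$ by \eqref{adicionalnorma}. Hence $\mathcal{A}$ maps $\mathcal{Y}$ into $\mathcal{Z}$.

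For continuity I will estimate differences in the same way. Writing $F_1(y)-F_1(\tilde y)-c\cdot(y-\tilde y)$ via the mean value theorem gives a bound $C(|y|+|\tilde y|)|y-\tilde y|$. The terms $D_kF_j(y)p-D_kF_j(\tilde y)\tilde p$ split as $(D_kF_j(y)-D_kF_j(\tilde y))p+(D_kF_j(\tilde y)-D_kF_j(0))(p-\tilde p)$, each bilinear-type. The same weighted $L^\infty$--$L^2$ argument then yields
$$\|\mathcal{A}(Y)-\mathcal{A}(\tilde Y)\|_{\mathcal{Z}}\le C\big(\|Y\|_{\mathcal{Y}}+\|\tilde Y\|_{\mathcal{Y}}\big)\|Y-\tilde Y\|_{\mathcal{Y}},$$
which gives local Lipschitz continuity. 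I expect the main obstacle to be the weight bookkeeping: verifying that $\rho_2\le C\hat\rho^2$ genuinely follows from the definitions \eqref{eq:weights_rhos3}, whose powers of $\zeta^*$ and $\hat\zeta$ must be compared near $t=T$. If it fails as stated, I would first try adjusting the $\zeta$-exponents in the weights, keeping the $e^{-3sA^*/2}$ versus $e^{-2sA^*}$ gain supplied by \eqref{eq:comp_pesos3}. A secondary obstacle is justifying the $L^\infty$ embedding in the degenerate space $H^1_a$, for which the condition $K<1$ in \eqref{condiciones_a_b4} is exactly what is needed.
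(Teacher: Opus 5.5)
Your proof is correct, but it takes a different route from the paper's, and the difference matters.

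The paper treats the nonlinear remainders only to first order. From the mean value theorem and the boundedness of $DF_i$ it gets $|F_1(y_1,y_2)-c_{11}y_1-c_{12}y_2|\le C(|y_1|+|y_2|)$ and $|(D_kF_j(y_1,y_2)-D_kF_j(0,0))p^i_j|\le C|p^i_j|$. It then asserts that $\int_Q\rho_2^2(|y_1|^2+|y_2|^2)$ and $\int_Q\rho_2^2|p^i_j|^2$ are bounded by the $\mathcal{Y}$-norm. That step does not close as written. The space $\mathcal{Y}$ controls only $\rho_0y_i$ and $\rho_0p^j_i$ (and, through \eqref{adicionalnorma}, quantities weighted by $\hat\rho\le C\rho_0$). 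However, $\rho_2/\rho_0=e^{-sA^*/2}(\zeta^*)^{7}\hat\zeta^{-1}\to\infty$ as $t\to T$.

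Your second-order bounds $|F_1(y)-c\cdot y|\le C|y|^2$ and $|D_kF_j(y)-D_kF_j(0,0)|\le C|y|$ are exactly what lets one factor $\rho_2$ be traded for $\hat\rho^2$. Combined with $\sup_t\hat\rho^2\|w\|^2_{L^2(0,1)}$ and $\int_Q\hat\rho^2a|w_x|^2$ from \eqref{adicionalnorma}, and with $\|w\|_{L^\infty(0,1)}\le(\int_0^1a^{-1})^{1/2}\|\sqrt a\,w_x\|_{L^2(0,1)}$, this yields the bound $C\|Y\|^4_{\mathcal{Y}}$. This is presumably why the relation $\rho_2\le C\rho_1^2$ is recorded in \eqref{eq:compara_rhos3} at all.

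The weight comparison you were worried about does hold. Since $A^*=-c_\lambda\tau$ with $c_\lambda>0$, and $\zeta^*,\hat\zeta$ are constant multiples of $\tau$, one has $\rho_2/\hat\rho^2=e^{sA^*/2}(\zeta^*)^{21}\hat\zeta^{-1}\le C\tau^{20}e^{-c_\lambda s\tau/2}\le C$.

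Your route has a cost: it needs genuine weak degeneracy, $1/a\in L^1(0,1)$, i.e.\ $a(x)=x^\alpha$ with $\alpha<1$ as announced in the introduction. Hypothesis \eqref{condiciones_a_b4} literally allows $K=1$, and there the embedding fails. In exchange, you get an estimate that actually closes, and you also get continuity, which the paper's proof of this lemma does not address (it only argues well-definedness).

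One slip in the continuity part: the linear parts of $\mathcal{A}$ contribute $\|Y-\tilde Y\|_{\mathcal{Y}}$ directly. The local Lipschitz bound should therefore read $C(1+\|Y\|_{\mathcal{Y}}+\|\tilde Y\|_{\mathcal{Y}})\|Y-\tilde Y\|_{\mathcal{Y}}$.
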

\begin{proof}
	We want to show that $\mathcal{A}(y_1,y_2,p_i^j,h)$ belongs to $ \mathcal{Z}$, for every $(y_1,y_2,p_i^j,h)\in  \mathcal{Y}$. Therefore, we show that each $\mathcal{A}^j_{k}(y_1,y_2,p_i^j,h)$, with $k=\lbrace 0,1, 2, 3\rbrace$, $j=1,2$, defined in \eqref{aplicação A3}, belongs to its respective space. First, for $\mathcal{A}^1_{0}$, we have 
	\begin{equation*}
		\begin{array}{lll}
			\|\mathcal{A}^1_{0}(y_1,y_2,p_i^j,h)\|^{2}_{\mathcal{F}}&\leq & \displaystyle\int_{Q}\rho^{2}_{2}|F_0|^{2}dxdt 
			+ \displaystyle\int_{Q}\rho^{2}_{2}\left|F_1(y_1,y_2) -c_{11}y_1 - c_{12}y_2 \right|^{2}dxdt
			=  I_{1} +  I_{2}.
		\end{array}
	\end{equation*}
	
	It is immediate from the definition of the space $\mathcal{Y}$ that $I_{1}\leq C \|(y_1,y_2,p_i^j,h)\|^{2}_{\mathcal{Y}}$.\\

Furthermore, since $F$ is in the class $C^2$ with bounded derivatives, using the Mean value theorem, we obtain
\begin{equation*}
	\begin{array}{l}
		I_{2}=\displaystyle\int_{Q}\rho^{2}_{2}\left| F_1(y_1,y_2) -F_1(0,0) -c_{11}y_1 - c_{12}y_2 \right|^{2}dxdt  \\
		\quad \leq C\displaystyle\int_{Q}\rho^{2}_{2}\left|F_1(y_1,y_2) -F_1(0,0) \right|^{2}dxdt+\displaystyle\int_{Q}\rho^{2}_{2} \left( \left|c_{11}y_1 \right|^{2} + \left|c_{12}y_2 \right|^{2} \right)dxdt\\
		\quad \leq C\displaystyle\int_{Q}\rho^{2}_{2}\left|DF_1(\theta(y_1,y_2))\right|^{2} \left( |y_1|^2 + |y_2|^2 \right)dxdt + \|c_{11}\|_{L^\infty(Q)}\displaystyle\int_{Q}\rho^{2}_{2}\left|y_1 \right|^{2}dxdt \\
        \qquad+ \|c_{12}\|_{L^\infty(Q)}\displaystyle\int_{Q}\rho^{2}_{2}\left|y_2 \right|^{2}dxdt\\
		\quad \leq C\displaystyle\int_{Q}\rho^{2}_{2} \left( \left|y_1 \right|^{2} + \left|y_2 \right|^{2} \right)dxdt\\
		\quad \leq C\|(y_1,y_2,p_i^j,h)\|_{\mathcal{Y}}.
	\end{array}
\end{equation*}
Thus, $\mathcal{A}^1_{0}(y_1,y_2,p_i^j,h)\in \mathcal{F}.$\\

For $\mathcal{A}^2_{0}(y_1,y_2,p_i^j,h)$, we have
	\begin{equation*}
	\begin{array}{lll}
		\|\mathcal{A}^2_{0}(y_1,y_2,p_i^j,h)\|^{2}_{\mathcal{F}}&\leq & \displaystyle\int_{Q}\rho^{2}_{2}|\overline{F}_0|^{2}dxdt 
		+ \displaystyle\int_{Q}\rho^{2}_{2}\left|F_2(y_1,y_2) -c_{21}y_1 - c_{22}y_2 \right|^{2}dxdt
		= I_{3} +  I_{4}.
	\end{array}
\end{equation*}

It is also immediate
%through the  definition of the space $\mathcal{Y}$
that $I_{3}\leq C \|(y_1,y_2,p_i^j,h)\|^{2}_{\mathcal{Y}}$. 
Similarly to what was done for $I_2$, we get 
$$I_4 \leq C\displaystyle\int_{Q}\rho^{2}_{2} \left(\left|y_1 \right|^{2} + \left|y_2 \right|^{2} \right)dxdt\leq C\|(y_1,y_2,p_i^j,h)\|_{\mathcal{Y}}.$$

\begin{comment}
    
Furthermore, since $F_2$ is in the class $C^2$, we obtain
\begin{equation*}
	\begin{array}{l}
		I_{4}=\displaystyle\int_{Q}\rho^{2}_{2}\left|F_2(y_2) -F_2(0) -c_{22}y_2 \right|^{2}dxdt  \\
		\quad \leq C\displaystyle\int_{Q}\rho^{2}_{2}\left|F_2(y_2) -F_2(0) \right|^{2}dxdt+\displaystyle\int_{Q}\rho^{2}_{2}\left|-c_{22}y_2 \right|^{2}dxdt\\
		\quad \leq \ \ \ \ \ C\displaystyle\int_{Q}\rho^{2}_{2}\left|F_2'(y_2)\right|^{2} |y_2|^2dxdt+\|c_{22}\|_{L^\infty(Q)}\displaystyle\int_{Q}\rho^{2}_{2}\left|y_2 \right|^{2}dxdt\\
		\quad \leq \ \ C\displaystyle\int_{Q}\rho^{2}_{2}\left|y_2 \right|^{2}dxdt\\
		\quad \leq C\|(y_1,y_2,p_i^j,h)\|_{\mathcal{Y}}.
	\end{array}
\end{equation*}

\end{comment}

\noindent Now, let us analyze $\mathcal{A}^1_{i}(y_1,y_2,p_i^j,h)$, with $i=1,2$. As before, we 
\begin{equation*}
	\begin{split}
		\Vert\mathcal{A}^1_{i}(y_1,y_2,p_i^j,h)\Vert^{2}_{\mathcal{F}}
		&\leq  \displaystyle\int_{Q}\rho^{2}_{2}|F_i|^{2}dxdt 
        %&\quad & 
        + \displaystyle\int_{Q}\rho^{2}_{2}\left| D_1 F_1(y_1,y_2)p^i_1 + D_1 F_2(y_1,y_2) p^i_2 - c_{11}p^i_1 - c_{12}p^i_2 \right|^{2}dxdt \\
		%&\quad & + \ \displaystyle\int_{Q}\rho^{2}_{2}\left| F_1'(y_1)p^i_1 - c_{11}p^i_1 \right|^{2}dxdt \\
		&\quad  + \
		\displaystyle\int_{Q}\rho^{2}_{2}\left| \alpha_i y^i_{1,d} \cara_{{{\mathcal{O}}_{1d}}}\right|^{2}dxdt \\
		& =  I_{5} + I_{6}+I_7.
	\end{split}
\end{equation*}

\noindent From the definition of the space $\mathcal{Y}$ we have that $I_{5}\leq C \|(y_1,y_2,p_i^j,h)\|^{2}_{\mathcal{Y}}$ and, from the hypothesis $\rho^2 y^i_{1,d} \in L^2(Q)$, we have that $I_7$ is also bounded. Moreover, as above,
\begin{equation*}
    \begin{split}
        I_{6} &\leq C\displaystyle\int_{Q}\rho^{2}_{2} |p^i_1|^2dxdt + \|c_{11}\|_{L^\infty(Q)}\displaystyle\int_{Q}\rho^{2}_{2} \left( |p^i_1|^{2} + |p^i_2|^{2} \right)dxdt + \|c_{12}\|_{L^\infty(Q)}\displaystyle\int_{Q}\rho^{2}_{2} |p^i_2|^{2}dxdt \\
        &\leq C\|(y_1,y_2,p_i^j,h)\|^2_{\mathcal{Y}}.      
    \end{split}
\end{equation*}

\begin{comment}

Since $F_1$ is in the class $C^2$,  we obtain 
\begin{equation*}
	\begin{array}{l}
		I_{6}=\displaystyle\int_{Q}\rho^{2}_{2}\left|F_1'(y_1)p^i_1 -c_{11}p^i_1 \right|^{2}dxdt  \\
		\quad \leq C\displaystyle\int_{Q}\rho^{2}_{2}\left|F_1'(y_1) \right|^{2}\left|p^i \right|^{2}dxdt+\displaystyle\int_{Q}\rho^{2}_{2}\left|-c_{11}p^i_1 \right|^{2}dxdt\\
		\quad \leq C\displaystyle\int_{Q}\rho^{2}_{2} |p^i_1|^2dxdt+\|c_{11}\|_{L^\infty(Q)}\displaystyle\int_{Q}\rho^{2}_{2}\left|p^i_1 \right|^{2}dxdt\\
		\quad \leq  C\displaystyle\int_{Q}\rho^{2}_{2}\left|p^i_1 \right|^{2}dxdt\\
		\quad \leq C\|(y_1,y_2,p_i^j,h)\|^2_{\mathcal{Y}}.
	\end{array}
\end{equation*}
From the  hypothesis $\rho^2 y^i_{1,d} \in L^2(Q)$, we have that $I_7$ is bounded.
    
\end{comment}

Now, let us analyze $\mathcal{A}^2_{i}(y_1,y_2,p_i^j,h)$, with $i=\lbrace 1,2\rbrace$. We have
\begin{equation*}
	\begin{split}
		\Vert\mathcal{A}^2_{i}(y_1,y_2,p_i^j,h)\Vert^{2}_{\mathcal{F}}
		&\leq  \displaystyle\int_{Q}\rho^{2}_{2}|\overline{F}_i|^{2}dxdt + \displaystyle\int_{Q}\rho^{2}_{2}\left| D_2 F_1(y_1,y_2)p^i_1 + D_2 F_2(y_1,y_2)p^i_2 - c_{21}p^i_1 - c_{22}p^i_2 \right|^{2}dxdt 
		\\
        &\quad
        %+ \displaystyle\int_{Q}\rho^{2}_{2}\left| F_2'(y_2)p^i_2 - c_{22}p^i_2 \right|^{2}dxdt
		+ \displaystyle\int_{Q}\rho^{2}_{2}\left| \alpha_i y^i_{2,d} \cara_{{{\mathcal{O}}_{2d}}}\right|^{2}dxdt \\
		&= I_{8} + I_{9}+I_{10}.
	\end{split}
\end{equation*}
Analogous estimates give that
$I_k \leq C \|(y_1,y_2,p_i^j,h)\|^{2}_{\mathcal{Y}}$, for $k=8,9,10$.
Thus $\mathcal{A}$ is well-defined.

\begin{comment}
 From from definition of the space $\mathcal{Y}$, we have that $I_{8}\leq C \|(y_1,y_2,p_i^j,h)\|^{2}_{\mathcal{Y}}$. \\
Since $F_2$ is in the class $C^2$  we obtain 
\begin{equation*}
	\begin{array}{l}
		I_{9}=\displaystyle\int_{Q}\rho^{2}_{2}\left|F_2'(y_2)p^i_2 -c_{22}p^i_2 \right|^{2}dxdt  \\
		\quad \leq C\displaystyle\int_{Q}\rho^{2}_{2}\left|F_2'(y_2) \right|^{2}\left|p^i_2 \right|^{2}dxdt+\displaystyle\int_{Q}\rho^{2}_{2}\left|-c_{22}p^i_2 \right|^{2}dxdt\\
		\quad \leq \ \ \ \ \ C\displaystyle\int_{Q}\rho^{2}_{2} |p^i_2|^2dxdt+\|c_{22}\|_{L^\infty(Q)}\displaystyle\int_{Q}\rho^{2}_{2}\left|p^i_2 \right|^{2}dxdt\\
		\quad \leq \ \ C\displaystyle\int_{Q}\rho^{2}_{2}\left|p^i_2 \right|^{2}dxdt\\
		\quad \leq C\|(y_1,y_2,p_i^j,h)\|^2_{\mathcal{Y}}.
	\end{array}
\end{equation*}
Finally, from the hypothesis $\rho^2 y^i_{2,d} \in L^2(Q)$, we have that $I_{10}$ is bounded.
 Thus $\mathcal{A}$ is well-defined.

\end{comment}

\end{proof}

\begin{lema}\label{DA continuo3}
The mapping $\mathcal{A}: \mathcal{Y}\longrightarrow  \mathcal{Z}$ is continuously differentiable.
\end{lema}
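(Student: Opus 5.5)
The mapping $\mathcal{A}$ splits naturally into a part that is linear and continuous from $\mathcal{Y}$ to $\mathcal{Z}$ and a part that is nonlinear. The linear part consists of the differential operators, the control terms $-h\cara_{\mathcal{O}}$ and $\frac{1}{\mu_i}\rho_*^{-2}p_1^i\cara_{\mathcal{O}_i}$, the traces $y_k(\cdot,0)$, and the affine terms $\alpha_i(y_k-y^i_{k,d})\cara_{\mathcal{O}_{id}}$. By the definition of the $\mathcal{Y}$-norm it is bounded, exactly as $I_1,I_3,I_5,I_8$ were bounded in Lemma \ref{A bem definido3}. It is therefore enough to prove that the nonlinear maps
$$\mathcal{N}_0^k(y_1,y_2)=F_k(y_1,y_2)-c_{k1}y_1-c_{k2}y_2,\qquad \mathcal{N}^k_i(y_1,y_2,p^i)=\sum_{m=1}^2\bigl(D_kF_m(y_1,y_2)-D_kF_m(0,0)\bigr)p^i_m$$
are $C^1$ from $\mathcal{Y}$ into $\mathcal{F}$. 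To do this I will first show they are Gâteaux differentiable, with the expected derivatives
$$D\mathcal{N}_0^k(\bar y)=\sum_j\bigl(D_jF_k(y)-D_jF_k(0)\bigr)\bar y_j$$
and
$$D\mathcal{N}_i^k(\bar y,\bar p)=\sum_m\Bigl[\bigl(D_kF_m(y)-D_kF_m(0)\bigr)\bar p_m+\sum_j D^2_{kj}F_m(y)\,\bar y_j\, p^i_m\Bigr].$$
I will then show that the map $\mathcal{Y}\ni Y\mapsto D\mathcal{N}(Y)\in\mathcal{L}(\mathcal{Y},\mathcal{F})$ is continuous. Together these give Fréchet $C^1$.

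For the $\mathcal{N}_0^k$ terms I will use Taylor's formula with integral remainder together with \eqref{condiciones_a_c4f}. This gives pointwise
$$|F_k(y+\lambda\bar y)-F_k(y)-\lambda DF_k(y)\bar y|\le C\lambda^2|\bar y|^2$$
and also a bound by $\lambda |\bar y|\,\sup_{s}|DF_k(y+s\lambda\bar y)-DF_k(y)|$. Dominated convergence with weight $\rho_2^2$ then yields Gâteaux differentiability, since $\rho_2\le C\rho_0$ by \eqref{eq:compara_rhos3} and $\rho_0\bar y\in L^2(Q)$. Continuity of the derivative also follows easily. We have
$$\|(DF_k(y)-DF_k(z))\bar y\rho_2\|_{L^2}\le \|\rho_2\bar y\,|DF_k(y)-DF_k(z)|\|_{L^2},$$
and the Lipschitz bound $|DF_k(y)-DF_k(z)|\le M|y-z|$, which comes from the bounded second derivatives, combined with the bound $|DF_k|\le 2M$, makes this go to zero along convergent sequences. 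The argument is a dominated convergence one applied along subsequences, as in the $I_2$ estimate.

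The hard step is the $\mathcal{N}_i^k$ terms. Here a product $\bar y_j\,p^i_m$ of two state variables appears, and it must be estimated in the weighted space $\rho_2 L^2$. I will write
$$\|\rho_2\, D^2F(y)\,\bar y\, p\|_{L^2(Q)}\le C\|\rho_2\,\hat\rho^{-2}\|_{L^\infty}\,\|\hat\rho\bar y\|_{L^\infty(0,T;L^4)}\,\|\hat\rho p\|_{L^2(0,T;L^4)}.$$
To control the $L^4$ norms I will use the additional estimates \eqref{adicionalnorma}: the bounds $\sup_t\hat\rho^2\|\cdot\|^2_{L^2}$ and $\int\hat\rho^2 a|\cdot_x|^2$ give $\hat\rho(\cdot)\in L^\infty(0,T;L^2)\cap L^2(0,T;H^1_a)$. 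In the weakly degenerate case $K<1$, the space $H^1_a(0,1)$ embeds continuously into $L^\infty(0,1)$, since $|u(x)|\le \int_0^1 |u_x|\le \|\sqrt a u_x\|_{L^2}\|a^{-1/2}\|_{L^2}$ and $1/a$ is integrable. Hence
$$\|\rho_2\bar y p\|_{L^2(Q)}\le C\|\rho_2\hat\rho^{-2}\|_\infty \sup_t\|\hat\rho\bar y\|_{L^2}\,\|\hat\rho p\|_{L^2(0,T;L^\infty)}\le C\|\bar Y\|_{\mathcal{Y}}\|Y\|_{\mathcal{Y}}.$$
The obstacle is checking that $\rho_2\hat\rho^{-2}$ is bounded. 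This holds because $\rho_2=e^{-3sA^*/2}\hat\zeta^{-1}$ while $\hat\rho^{2}=e^{-2sA^*}(\zeta^*)^{-21}$, and $A^*<0$ with $-sA^*/2$ dominating any power of $\zeta^*$, as in \eqref{eq:compara_rhos3} ($\rho_2\le C\rho_1^2$). The same bilinear bound handles the term $(D_kF_m(y)-D_kF_m(0))\bar p_m$, using $|D_kF_m(y)-D_kF_m(0)|\le M|y|$. It also gives continuity of the derivative, by writing differences such as $D^2F(y)-D^2F(z)$ and $D^2F(y)p-D^2F(z)q$ as a telescoping sum and applying dominated convergence to the factor $D^2F(y_n)-D^2F(y)$. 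Finally, the terms $y_k(\cdot,0)$ and the linear parts are trivially $C^1$, which concludes the proof of Lemma \ref{DA continuo3}.
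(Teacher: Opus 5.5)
\textbf{Where you improve on the paper.} Your overall route is the paper's: compute the G\^ateaux derivative, then show that $Y\mapsto\mathcal{A}'(Y)$ is continuous into $\mathcal{L}(\mathcal{Y},\mathcal{Z})$. Your handling of the $\mathcal{N}^k_i$ products is sound and more careful than the paper's. The paper pulls $\sup_t\hat\rho^2\|\bar y\|^2_{L^2}$ out of $\int_Q\hat\rho^2\rho_0^2|y_n-y|^2|\bar y|^2$, which a spatial $L^2$ bound cannot do. Your pairing of $L^\infty(0,T;L^2)$ with $L^2(0,T;L^\infty)$ closes that estimate correctly, using $\rho_2\le C\hat\rho^2$, \eqref{adicionalnorma} and $H^1_a\hookrightarrow L^\infty$ for $K<1$. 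Drop your first display with $L^\infty(0,T;L^4)$, which \eqref{adicionalnorma} does not supply.

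\textbf{The gap is in the $\mathcal{N}^k_0$ terms.} You invoke $\rho_2\le C\rho_0$ ``by \eqref{eq:compara_rhos3}'', but that display states the reverse, $\rho_0\le C\rho_2$. Indeed $\rho_2/\rho_0=e^{-sA^*/2}(\zeta^*)^{7}\hat\zeta^{-1}\to\infty$ as $t\to T$. So $\rho_0\bar y\in L^2(Q)$ does not give $\rho_2\bar y\in L^2(Q)$. This has two consequences:
\begin{itemize}
\item your dominating function $\rho_2^2|\bar y|^2$ need not be integrable;
\item the bound $|DF_k|\le 2M$ in your continuity step cannot even show that $D\mathcal{N}^k_0(y)$ maps $\mathcal{Y}$ into $\mathcal{F}$.
\end{itemize}
The paper's $I_2$ bound and its $J_1$ dominated-convergence step rely on the same unavailable integrability, so ``as in the $I_2$ estimate'' does not rescue this. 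The terms $\mathcal{N}^k_0$ are quadratic, exactly like $\mathcal{N}^k_i$, and must go through your bilinear estimate $\|\rho_2 uv\|_{L^2(Q)}\le C\|U\|_{\mathcal{Y}}\|V\|_{\mathcal{Y}}$.

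\textbf{How to repair it.} Using $|DF_k(y)-DF_k(0)|\le M|y|$,
\[
\|\rho_2(DF_k(y)-DF_k(0))\bar y\|_{L^2(Q)}\le M\|\rho_2|y||\bar y|\|_{L^2(Q)}\le C\|Y\|_{\mathcal{Y}}\|\bar Y\|_{\mathcal{Y}}.
\]
Your own Taylor bound $C\lambda^2|\bar y|^2$, paired with this estimate, makes the difference quotient remainder $O(\lambda)\|\bar Y\|^2_{\mathcal{Y}}$. The Lipschitz bound $|DF_k(y)-DF_k(z)|\le M|y-z|$ then gives
\[
\|D\mathcal{N}^k_0(Y)-D\mathcal{N}^k_0(Z)\|_{\mathcal{L}(\mathcal{Y},\mathcal{F})}\le C\|Y-Z\|_{\mathcal{Y}}.
\]
This is operator-norm continuity directly. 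Dominated convergence, even if justified, would only give continuity pointwise in $\bar Y$.

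Dominated convergence is genuinely needed only for $(D^2F(y_n)-D^2F(y))\,p\,\bar y$, since $D^2F$ is merely continuous. There you must make the uniformity in $\bar Y$ explicit:
\begin{itemize}
\item $\|\hat\rho\bar y\|_{L^4(Q)}\le C\|\bar Y\|_{\mathcal{Y}}$, which follows from $L^\infty(0,T;L^2)\cap L^2(0,T;L^\infty)$;
\item $\hat\rho p\,(D^2F(y_n)-D^2F(y))\to0$ in $L^4(Q)$ along subsequences.
\end{itemize}
Pairing these two facts gives the required uniform convergence.
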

\begin{proof}
First we prove that $\mathcal{A}$ is Gateaux differentiable at any $(y_1,y_2,p_i^j,h) \in \mathcal{Y}$ and let us compute the
$\textit{G-derivative}$ ${\mathcal{A}}^{\prime}(y_1,y_2,p_i^j,h)$.
Consider the linear mapping $D \mathcal{A}: \mathcal{Y} \to \mathcal{Z}$ given by
$$
D\mathcal{A}(y_1,y_2,p_i^j,h) = (D\mathcal{A}^1_0,D\mathcal{A}^2_0,D\mathcal{A}^1_1,D\mathcal{A}^2_1,D\mathcal{A}^1_2,D\mathcal{A}^2_2,D\mathcal{A}^1_3,D\mathcal{A}^2_3),
$$
where, for $i=1,2$,
\begin{equation*}
    	%\left\{\begin{split}
    \begin{cases}
		D\mathcal{A}^1_0(\bar y_1,\bar y_2,\bar p_i^j,\bar h) = &  \, \bar y_{1t} - b(t)(a(x) \bar y_{1x})_x -B\sqrt{a} \bar y_{1x} +  D_1 F_1(y_1,y_2)\bar y_1 + D_2 F_1(y_1,y_2)\bar y_2 - \bar h \cara_{\mathcal{O}} \\
        &+ \frac{1}{\mu_1} \bar p^1_1 \cara_{\mathcal{O}_{1}} + \frac{1}{\mu_2} \bar p^2_1 \cara_{\mathcal{O}_{2}}, \\
		D\mathcal{A}^2_0(\bar y_1,\bar y_2,\bar p_i^j,\bar h) = &  \, \bar y_{2t} - b(t)(a(x) \bar y_{2x})_x - B\sqrt{a} \bar y_{2x} + D_1 F_2(y_1,y_2)\bar y_1 + D_2 F_2(y_1,y_2)\bar y_2, \\
        D\mathcal{A}^1_i(\bar y_1,\bar y_2,\bar p_i^j,\bar h) =& - \bar p_{1t}^i - (a(x) \bar p_{1x}^i)_x +\left(B\sqrt{a} \bar p_1^i\right)_x + D_{11}^2 F_1(y_1,y_2)\bar y_1 p^i_1 + D_{12}^2 F_1(y_1,y_2)\bar y_2 p^i_1 \\
        &+ D_{11}^2 F_2(y_1,y_2)\bar y_1 p^i_2 + D_{12}^2 F_2(y_1,y_2)\bar y_2 p^i_2 
        %+ F_1'(y_1)\bar p^i_1 +c_{21}\bar p^i_2 
        + D_1 F_1(y_1,y_2) \bar p^i_1 + D_1 F_2(y_1,y_2) \bar p^i_2 \\
        &- \alpha_i \bar y_1 \cara_{\mathcal{O}_{id}}, \\
		%D\mathcal{A}^1_i(\bar y_1,\bar y_2,\bar p_i^j,\bar h) =& - \bar p_{1t}^i - (a(x) \bar p_{1x}^i)_x +\left(B\sqrt{a} \bar p_1^i\right)_x+ F_1''(y_1)\bar y_1 p^i_1 + F_1'(y_1)\bar p^i_1 +c_{21}\bar p^i_2- \alpha_i \bar y_1 \cara_{\mathcal{O}_{id}}, \\
		D\mathcal{A}^2_i(\bar y_1,\bar y_2,\bar p_i^j,\bar h) =& - \bar p_{2t}^i - (a(x) \bar p_{2x}^i)_x +\left(B\sqrt{a} \bar p_2^i\right)_x + D_{21}^2 F_1(y_1,y_2)\bar y_1 p^i_1 + D_{22}^2 F_1(y_1,y_2)\bar y_2 p^i_1 \\
        &+ D_{21}^2 F_2(y_1,y_2)\bar y_1 p^i_2 + D_{22}^2 F_2(y_1,y_2)\bar y_2 p^i_2 
        %+ F_1'(y_1)\bar p^i_1 +c_{21}\bar p^i_2 
        + D_2 F_1(y_1,y_2) \bar p^i_1 + D_2 F_2(y_1,y_2) \bar p^i_2  \\
        %F_2''(y_2)\bar y_2 p^i_2 + F_2'(y_2)\bar p^i_2 
        &- \alpha_i \bar y_2 \cara_{\mathcal{O}_{id}}, \\
        %D\mathcal{A}^2_i(\bar y_1,\bar y_2,\bar p_i^j,\bar h) =& - \bar p_{2t}^i - (a(x) \bar p_{2x}^i)_x +\left(B\sqrt{a} \bar p_2^i\right)_x+ F_2''(y_2)\bar y_2 p^i_2 + F_2'(y_2)\bar p^i_2 - \alpha_i \bar y_2 \cara_{\mathcal{O}_{id}}, \\
		D\mathcal{A}^1_3(\bar y_1,\bar y_2,\bar p_i^j,\bar h) =& \, \bar y_1(0),\\
		D\mathcal{A}^2_3(\bar y_1,\bar y_2,\bar p_i^j,\bar h) =& \, \bar y_2(0).
	%\end{split}\right.    
    \end{cases}
\end{equation*}

We have to show that, for $i=0,1,2,3$ and $j=1,2$, 
$$
\frac{1}{\lambda}\left[ \mathcal{A}^j_i (( y_1, y_2, p_i^j, h)+\lambda(\bar y_1,\bar y_2,\bar p_i^j,\bar h)) - \mathcal{A}^j_i ( y_1, y_2, p_i^j, h) \right] \to D\mathcal{A}^j_i(\bar y_1,\bar y_2,\bar p_i^j,\bar h),
$$
strongly in the corresponding factor of $\mathcal{Z}$ as $\lambda \to 0$.

Indeed, we have
\begin{equation*}		
	\begin{split}
		&\left\| \frac{1}{\lambda}\left[ \mathcal{A}^1_0 (( y_1, y_2, p_i^j, h)+\lambda(\bar y_1,\bar y_2,\bar p_i^j,\bar h)) - \mathcal{A}^1_0( y_1, y_2, p_i^j, h) \right] - D\mathcal{A}^1_0(\bar y_1,\bar y_2,\bar p_i^j,\bar h) \right\|^{2}_{L^2(\rho_2^2,Q)} \\
		&=\left\|
		\frac{1}{\lambda}\left[ y_{1t}+\lambda \bar y_{1t} - b(t)\left( a(x) (y_{1x}+\lambda \bar y_{1x}) \right)_x - B\sqrt{a}(y_{1x}+\lambda \bar y_{1x}) + F_1(y_1+\lambda \bar y_1, y_2+\lambda \bar y_2) \right.  \right. \\
		& \left. \left.  \ \ \ \ \ \ \ \ \ \  - (h+\lambda \bar h) \cara_{\mathcal{O}} + \frac{1}{\mu_1}  (p^1_1+\lambda \bar p^1_1) \cara_{\mathcal{O}_1} + \frac{1}{\mu_2} (p^2_1+\lambda \bar p^2_1) \cara_{\mathcal{O}_2} \right. \right.\\
		& \left. \left. \ \ \ \ \ \ \ \ \ \ \  -y_{1t} + b(t)\left( a(x) y_{1x} \right)_x + B\sqrt{a}y_{1x} - F_1(y_1,y_2) + h 1_{O}  - \frac{1}{\mu_1} p^1_1 \cara_{\mathcal{O}_1} - \frac{1}{\mu_2} p^2_1 \cara_{\mathcal{O}_2} \right] \right.\\
		& \ \ \ \ \ \ \ \ \ \ \left. -\bar y_{1t} + b(t)(a(x) \bar y_{1x})_x +B\sqrt{a} \bar y_{1x} - D_1 F_1(y_1,y_2)\bar y_1 - D_2 F_1(y_1,y_2)\bar y_2 + \bar h \cara_{\mathcal{O}} -\frac{1}{\mu_1} \bar p^1_1 \cara_{\mathcal{O}_{1}} -\frac{1}{\mu_2} \bar p^2_1 \cara_{\mathcal{O}_{2}} \right\|^{2}_{L^2(\rho_2^2,Q)} \\
		& = \int_Q \rho_2^2 \left| \frac{1}{\lambda}( F_1(y_1+\lambda\bar y_1,y_2 + \lambda\bar y_2) - F_1(y_1,y_2)) - D_1 F_1(y_1,y_2)\bar y_1 - D_2 F_1(y_1,y_2)\bar y_2 \right|^2 = J_1.
		%& + \int_Q \rho_2^2 \left| l(\int_0^1 (y+\lambda\bar y)) - l(\int_0^1 y) \right|^2 |(a(x)\bar y_x)_x|^2 = J_1 + J_2,
	\end{split}
\end{equation*}

From the Mean Value Theorem, there exists 
%$c \in [a,b]$, where $a=\min \{y_1, y_1 +\lambda \bar y_1 \}$ and $b(t) = \max \{ y_1 , y_1 +\lambda \bar y_1 \}$, 
$\theta \in [0,1]$ such that
such that
\begin{eqnarray*}
%	J_1&=&\int_Q \rho_2^2 \left| \frac{1}{\lambda}(F_1(y_1+\lambda\bar y_1)-F_1(y_1)) - F_1'(y_1)\bar y_1 \right|^2\\
	J_1&=&\int_Q \rho_2^2 \left| \frac{1}{\lambda} \nabla F_1(y_1+\theta\lambda\bar y_1,y_2+\theta\lambda\bar y_2)\cdot (\lambda \bar y_1,\lambda \bar y_2) - D_1 F_1(y_1,y_2)\bar y_1 - D_2 F_1(y_1,y_2)\bar y_2 \right|^2\\
    &=& \int_Q \rho_2^2 \left| \nabla F_1(y_1+\theta\lambda\bar y_1,y_2+\theta\lambda\bar y_2)\cdot (\bar y_1,\bar y_2) - \nabla F_1(y_1,y_2) \cdot (\bar y_1,\bar y_2) \right|^2\\
    &=& \int_Q \rho_2^2 \left| \nabla F_1(y_1+\theta\lambda\bar y_1,y_2+\theta\lambda\bar y_2) - \nabla F_1(y_1,y_2) \right|^2 (\bar y_1^2 + \bar y_2^2 ).
	%&=&\int_Q \rho_2^2 \left| F_1'(c) \bar y_1 - F'_1(y_1)\bar y_1 \right|^2.
\end{eqnarray*}
Since $F_1$ is in the class $C^2$, we get that $J_1$ converges to zero as $\lambda \to 0$ by Lebesgue's dominated convergence theorem. 
%because $c\rightarrow y_1$.

Analogously, for $D\mathcal{A}^2_0(\bar y_1,\bar y_2,\bar p_i^j,\bar h)$ we get
\begin{equation*}		
	\begin{split}
		&\left\| \frac{1}{\lambda}\left[ \mathcal{A}^2_0 (( y_1, y_2, p_i^j, h)+\lambda(\bar y_1,\bar y_2,\bar p_i^j,\bar h)) - \mathcal{A}^2_0( y_1, y_2, p_i^j, h) \right] - D\mathcal{A}^2_0(\bar y_1,\bar y_2,\bar p_i^j,\bar h) \right\|^{2}_{L^2(\rho_2^2,Q)} \\
		& = \int_Q \rho_2^2 \left| \frac{1}{\lambda}(F_2(y_1+\lambda\bar y_1,y_2+\lambda\bar y_2)-F_2(y_1,y_2)) - D_1 F_2(y_1,y_2)\bar y_1 - D_2 F_2(y_1,y_2)\bar y_2 \right|^2 = J_2.
		%& + \int_Q \rho_2^2 \left| l(\int_0^1 (y+\lambda\bar y)) - l(\int_0^1 y) \right|^2 |(a(x)\bar y_x)_x|^2 = J_1 + J_2,
	\end{split}
\end{equation*}
As before, using the Mean Value Theorem we have, for some $\theta \in [0,1]$,  %$J_2=\int_Q \rho_2^2 \left| F_2'(c) \bar y_2 - F_2'(y_2)\bar y_2 \right|^2$,
$$
J_2 = \int_Q \rho_2^2 \left| \nabla F_1(y_1+\theta\lambda\bar y_1,y_2+\theta\lambda\bar y_2) - \nabla F_1(y_1,y_2) \right|^2 (\bar y_1^2 + \bar y_2^2 ),
$$
and since $F_2$ is of class $C^2$, then $J_2$ converges to zero as $\lambda \to 0$.

\begin{comment}
From the Mean Value Theorem, there exists $c \in [a,b]$, where $a=\min \{y_2, y_2 +\lambda \bar y_2 \}$ and $b(t) = \max \{ y_2 , y_2 +\lambda \bar y_2 \}$, such that
\begin{eqnarray*}
	J_2&=&\int_Q \rho_2^2 \left| \frac{1}{\lambda}(F_2(y_2+\lambda\bar y_2)-F_2(y_2)) - F_2'(y_2)\bar y_2 \right|^2\\
	&=&\int_Q \rho_2^2 \left| \frac{1}{\lambda}(F_2'(c)\lambda \bar y_2) - F_2'(y_2)\bar y_2 \right|^2\\
	&=&\int_Q \rho_2^2 \left| F_2'(c) \bar y_2 - F_2'(y_2)\bar y_2 \right|^2
\end{eqnarray*}
Since $F_2$ is in the class $C^2$, we get that $J_2$ converges to zero as $\lambda \to 0$ because $c\rightarrow y_2$. \\
\end{comment}

On the other hand, analogously, for $\mathcal{A}^1_i$, $i=1,2$, we get
\begin{equation}
	\begin{split}
		&\left\| \frac{1}{\lambda}\left[ \mathcal{A}^1_i (( y_1, y_2, p_i^j, h)+\lambda(\bar y_1,\bar y_2,\bar p_i^j,\bar h)) - \mathcal{A}^1_i( y_1, y_2, p_i^j, h) \right] - D\mathcal{A}^1_i(\bar y_1,\bar y_2,\bar p_i^j,\bar h) \right\|^{2}_{L^2(\rho_2^2,Q)} \\
        &= \left\| \frac{1}{\lambda} \left[ D_1 F_1(y_1+\lambda \bar y_1,y_2+\lambda \bar y_2) (p^i_1 +\lambda \bar p^i_1) + D_1 F_2(y_1+\lambda \bar y_1,y_2+\lambda \bar y_2) (p^i_2 +\lambda \bar p^i_2) \right.\right. \\
        &\left.\left.\qquad - D_1 F_1(y_1,y_2) p^i_1 - D_1 F_2(y_1,y_2) p^i_2 \right] - D_{11}^2 F_1(y_1,y_2)\bar y_1 p^i_1 - D_{12}^2 F_1(y_1,y_2)\bar y_2 p^i_1 \right. \\
        &\left.\qquad - D_{11}^2 F_2(y_1,y_2)\bar y_1 p^i_2 - D_{12}^2 F_2(y_1,y_2)\bar y_2 p^i_2 
        - D_1 F_1(y_1,y_2) \bar p^i_1 - D_1 F_2(y_1,y_2) \bar p^i_2
        %- F''_1(y_1)\bar y_1 p^i_1 - F_1'(y_1)\bar p^i_1
        \right\|^{2}_{L^2(\rho_2^2,Q)} := J_3 \\
		&= \left\| \frac{1}{\lambda} \left( F_1'(y_1+\lambda \bar y_1) (p^i_1 +\lambda \bar p^i_1) - F_1'(y_1) p^i_1  \right) - F''_1(y_1)\bar y_1 p^i_1 - F_1'(y_1)\bar p^i_1 \right\|^{2}_{L^2(\rho_2^2,Q)} := J_3
	\end{split}
\end{equation}
Then, 
\begin{equation*}
	\begin{split}
		J_3  
        %\int_Q \rho_2^2 \left| \frac{1}{\lambda} \left( F_1'(y_1+\lambda \bar y_1) (p^i_1 +\lambda \bar p^i_1) - F_1'(y_1) p^i_1  \right) - F''_1(y_1)\bar y_1 p^i_1 - F_1'(y_1)\bar p^i_1 \right|^2 \\
		&\leq \int_Q \rho_2^2 \left| \frac{1}{\lambda}\left[D_1 F_1(y_1+\lambda \bar y_1,y_2+\lambda \bar y_2) p^i_1 - D_1 F_1(y_1,y_2) p^i_1 \right] - D_{11}^2 F_1(y_1,y_2)\bar y_1 p^i_1 - D_{12}^2 F_1(y_1,y_2)\bar y_2 p^i_1\right|^2  \\
        &+ \int_Q \rho_2^2 \left| \frac{1}{\lambda}\left[ D_1 F_2(y_1+\lambda \bar y_1,y_2+\lambda \bar y_2) p^i_2  - D_1 F_2(y_1,y_2) p^i_2 \right] - D_{11}^2 F_2(y_1,y_2)\bar y_1 p^i_2 - D_{12}^2 F_2(y_1,y_2)\bar y_2 p^i_2 \right|^2 \\
        &+\int_Q \rho_2^2 \left| D_1 F_1(y_1+\lambda \bar y_1,y_2+\lambda \bar y_2) \bar p^i_1 + D_1 F_2(y_1+\lambda \bar y_1,y_2+\lambda \bar y_2) \bar p^i_2  - D_1 F_1(y_1,y_2) \bar p^i_1 - D_1 F_2(y_1,y_2) \bar p^i_2 \right|^2 \\
        &= J_{31} + J_{32} + J_{33}.
    \end{split}
\end{equation*}
Using the Mean Value Theorem, for $\theta \in [0,1]$,
\begin{equation*}
    \begin{split}
        J_{31} &\leq \int_Q \rho_2^2 \left| \frac{1}{\lambda}
        \nabla (D_1 F_1) (y_1+ \theta\lambda \bar y_1,y_2+\theta\lambda \bar y_2)\cdot(\lambda \bar y_1,\lambda \bar y_2) p^i_1 - \nabla (D_1 F_1)(y_1,y_2)\cdot(\bar y_1,\bar y_2)p^i_1
        \right|^2,
    \end{split}
\end{equation*}
and the fact that $D_1 F_1$ is in the class $C^1$, we get that $J_{31}$ converges to zero as $\lambda \to 0$. By Analogous estimates we get that $J_{32}$ and $F_{33}$ converges to zero as $\lambda \to 0$

\begin{comment}
Then, using the Mean Value Theorem,
\begin{equation*}
	\begin{split}
		J_3 %&= 
        %\int_Q \rho_2^2 \left| \frac{1}{\lambda} \left( F_1'(y_1+\lambda \bar y_1) (p^i_1 +\lambda \bar p^i_1) - F_1'(y_1) p^i_1  \right) - F''_1(y_1)\bar y_1 p^i_1 - F_1'(y_1)\bar p^i_1 \right|^2 \\
		&\leq C \int_Q \rho_2^2 \left| \frac{1}{\lambda} \left( F_1'(y_1+\lambda \bar y_1) - F_1'(y_1)\right) p^i_1 - F_1''(y_1)\bar y_1 p^i_1 \right|^2 + C \int_Q \rho_2^2 \left| F_1'(y_1+\lambda \bar y_1) \bar p^i_1 - F_1'(y_1)\bar p^i_1  \right|^2\\
		&\leq C \int_Q \rho_2^2 \left| \frac{1}{\lambda} \left( F_1''(c)\lambda \bar y_1 \right) p^i_1 - F_1''(y_1)\bar y_1 p^i_1 \right|^2 + C \int_Q \rho_2^2 \left| F_1'(y_1+\lambda \bar y_1) \bar p^i_1 - F_1'(y_1)\bar p^i_1  \right|^2\\
		&\leq C \int_Q \rho_2^2 \left|   F_1''(c) \bar y_1  p^i_1 - F_1''(y_1)\bar y_1 p^i_1 \right|^2 + C \int_Q \rho_2^2 \left| F_1'(y_1+\lambda \bar y_1) \bar p^i_1 - F_1'(y_1)\bar p^i_1  \right|^2
	\end{split}
\end{equation*}
and from the same estimates used for $J_2$ and the fact that $F_1'$ is in the class $C^1$, we get that $J_3$ converges to zero as $\lambda \to 0$. %because $c\rightarrow y_1$.
\end{comment}

For $\mathcal{A}^2_i$, $i=1,2$, a similar estimation gives
\begin{equation*}
	\begin{split}
        J_4 &:= \left\| \frac{1}{\lambda} \left[ D_2 F_1(y_1+\lambda \bar y_1,y_2+\lambda \bar y_2) (p^i_1 +\lambda \bar p^i_1) + D_2 F_2(y_1+\lambda \bar y_1,y_2+\lambda \bar y_2) (p^i_2 +\lambda \bar p^i_2) \right.\right. \\
        &\left.\left.\qquad - D_2 F_1(y_1,y_2) p^i_1 - D_2 F_2(y_1,y_2) p^i_2 \right] - D_{21}^2 F_1(y_1,y_2)\bar y_1 p^i_1 - D_{22}^2 F_1(y_1,y_2)\bar y_2 p^i_1 \right. \\
        &\left.\qquad - D_{21}^2 F_2(y_1,y_2)\bar y_1 p^i_2 - D_{22}^2 F_2(y_1,y_2)\bar y_2 p^i_2 
        - D_2 F_1(y_1,y_2) \bar p^i_1 - D_2 F_2(y_1,y_2) \bar p^i_2
        %- F''_1(y_1)\bar y_1 p^i_1 - F_1'(y_1)\bar p^i_1
        \right\|^{2}_{L^2(\rho_2^2,Q)}\\        
        %&\left\| \frac{1}{\lambda} \left( F_2'(y_2+\lambda \bar y_2) (p^i_2 +\lambda \bar p^i_2) - F_2'(y_2) p^i_2  \right) - F''_2(y_2)\bar y_2 p^i_2 - F_2'(y_2)\bar p^i_2 \right\|^{2}_{L^2(\rho_2^2,Q)}\\
        %&\leq C \int_Q \rho_2^2 \left|   F_2''(c) \bar y_2  p^i_2 - F_2''(y_2)\bar y_2 p^i_2 \right|^2 + C \int_Q \rho_2^2 \left| F_2'(y_2+\lambda \bar y_2) \bar p^i_2 - F_2'(y_2)\bar p^i_2  \right|^2,
        &\leq \int_Q \rho_2^2 \left| \frac{1}{\lambda}\left[D_2 F_1(y_1+\lambda \bar y_1,y_2+\lambda \bar y_2) p^i_1 - D_2 F_1(y_1,y_2) p^i_1 \right] - D_{21}^2 F_1(y_1,y_2)\bar y_1 p^i_1 - D_{22}^2 F_1(y_1,y_2)\bar y_2 p^i_1\right|^2  \\
        &+ \int_Q \rho_2^2 \left| \frac{1}{\lambda}\left[ D_2 F_2(y_1+\lambda \bar y_1,y_2+\lambda \bar y_2) p^i_2  - D_2 F_2(y_1,y_2) p^i_2 \right] - D_{21}^2 F_2(y_1,y_2)\bar y_1 p^i_2 - D_{22}^2 F_2(y_1,y_2)\bar y_2 p^i_2 \right|^2 \\
        &+\int_Q \rho_2^2 \left| D_2 F_1(y_1+\lambda \bar y_1,y_2+\lambda \bar y_2) \bar p^i_1 - D_2 F_1(y_1,y_2) \bar p^i_1 \right|^2 + \left| D_2 F_2(y_1+\lambda \bar y_1,y_2+\lambda \bar y_2) \bar p^i_2 - D_2 F_2(y_1,y_2) \bar p^i_2 \right|^2.
    \end{split}
\end{equation*}
By the Mean Value Theorem, and the fact that $D_2 F_i$, $i=1,2$, are in the class $C^1$, we get that $J_{4}$ converges to zero as $\lambda \to 0$.

 This finish the proof that $\mathcal{A}$ is Gateaux differentiable, with a \textit{G-derivative}, given by\\
 $\mathcal{A^{\prime}}( y_1, y_2, p_i^j, h)= D\mathcal{A}( y_1, y_2, p_i^j, h)$.

 Now, take $( y_1, y_2, p_i^j, h)\in \mathcal{Y}$ and let $\{(y_{1,n},y_{2,n},p^{j}_{i,n},h_{n})\}_{n=0}^{\infty}$ be a sequence which converges to  $( y_1, y_2, p_i^j, h)$ in $\mathcal{Y}$. For each $(\bar y_1,\bar y_2,\bar p_i^j,\bar h)\in B_{r}(0)$, we have to prove that 
\begin{equation}
	\begin{split}
		&\left\|(D\mathcal{A}^l_m(y_{1,n},y_{2,n},p^{j}_{i,n},h_{n})-D\mathcal{A}^l_m( y_1, y_2, p_i^j, h))(\bar y_1,\bar y_2,\bar p_i^j,\bar h) \right\|^{2}_{L^2(\rho_2^2,Q)}\rightarrow 0 
	\end{split}
\end{equation}
for $m=0,1,2,3. \ \ l=1,2$.

First, for $m=0$ and $l=1$, we have
\begin{eqnarray*}
	(D\mathcal{A}^1_0(y_{1,n},y_{2,n},p^{j}_{i,n},h_{n})-D\mathcal{A}^1_0( y_1, y_2, p_i^j, h))(\bar y_1,\bar y_2,\bar p_i^j,\bar h)= &(D_1 F_1(y_{1,n},y_{2,n})-D_1 F_1(y_1,y_2))\bar y_1\\
    &+ (D_2 F_1(y_{1,n},y_{2,n})-D_2 F_1(y_1,y_2))\bar y_2 = X_0^1.
\end{eqnarray*}
Then, 
\begin{eqnarray*}
	\int_{0}^{T}\int_{0}^{1}\rho^{2}_{2}|X_{1}^{0}|^{2} dxdt&\leq&  \int_{0}^{T}\int_{0}^{1}\rho_{2}^{2} |(D_1 F_1(y_{1,n},y_{2,n})-D_1 F_1(y_1,y_2))|^2 |\bar y_1|^{2}dxdt\\
    && + \int_{0}^{T}\int_{0}^{1}\rho_{2}^{2} |(D_2 F_1(y_{1,n},y_{2,n})-D_2 F_1(y_1,y_2))|^2 |\bar y_2|^{2}dxdt\\
	&\leq&C\int_{0}^{T}\int_{0}^{1}\rho_{2}^{2} M^2 ( |y_{1,n}-y_1|^2 + |y_{2,n}-y_2|^2 ) (|\bar y_1|^{2} + |\bar y_2|^{2} ) dxdt
\end{eqnarray*}
An the other hand, we have by \eqref{eq:compara_rhos3} that  $\rho_2^2\leq C \rho_1^2 \rho^2_1\leq C \hat{\rho}^2 \rho_0^2 $, then
\begin{eqnarray*}
	\int_{0}^{T}\int_{0}^{1}\rho^{2}_{2}|X_{1}^{0}|^{2} dxdt&\leq& C\int_{0}^{T}\int_{0}^{1}\hat{\rho}^2 \rho_0^2  ( |y_{1,n}-y_1|^2 + |y_{2,n}-y_2|^2 ) (|\bar y_1|^{2} + |\bar y_2|^{2} )dxdt\\
	&\leq& C\sup_{[0,T]}\left(\hat{\rho}^2 (\| \bar{y}_1\|^2_{L^2(\Omega)} + \| \bar{y}_2\|^2_{L^2(\Omega)})\right)\int_{0}^{T}\int_{0}^{1} \rho_0^2  (|y_{1,n}-y_1|^2 + |y_{2,n}-y_2|^2 ) dxdt\\
	&\leq&C\|(\bar y_1,\bar y_2,\bar p_i^j,\bar h)\|_{\mathcal{Y}} \cdot \left( \|\rho_0 (y_{1,n}-y_1)\|_{L^2(Q)} + \|\rho_0 (y_{2,n}-y_2)\|_{L^2(Q)} \right)\\
	&\leq&C\|(y_{1,n}-y_1),(y_{2,n}-y_2),(p^{j}_{i,n}-p^{j}_i),(h_{n}-h)\|_{\mathcal{Y}}\rightarrow 0 \quad\text{as}\quad n \to \infty.
\end{eqnarray*}
Now, for $m=0$, $l=2$,
\begin{eqnarray*}
	&X_0^2&=(D\mathcal{A}^2_0(y_{1,n},y_{2,n},p^{j}_{i,n},h_{n})-D\mathcal{A}^2_0( y_1, y_2, p_i^j, h))(\bar y_1,\bar y_2,\bar p_i^j,\bar h)\\
    &&= (D_1 F_2(y_{1,n},y_{2,n})-D_1 F_2(y_1,y_2))\bar y_1 + (D_2 F_2(y_{1,n},y_{2,n})-D_2 F_2(y_1,y_2))\bar y_2.
\end{eqnarray*}
We have, analogously,
\begin{eqnarray*}
	\int_{0}^{T}\int_{0}^{1}\rho^{2}_{2}|X_{0}^{2}|^{2} dxdt&\leq& 
    %C \int_{0}^{T}\int_{0}^{1}\rho_{2}^{2} |(F_2'(y_{2,n})-F_2'(y_2))|^2 |\bar y_2|^{2}dxdt\\
	%&\leq&C\int_{0}^{T}\int_{0}^{1}\rho_{2}^{2} M^2 |y_{2,n}-y_2|^2 |\bar y_2|^{2}dxdt\\
	%&\leq& 
    C\int_{0}^{T}\int_{0}^{1}\hat{\rho}^2 \rho_0^2 ( |y_{1,n}-y_1|^2+ |y_{2,n}-y_2|^2) (|\bar y_1|^{2} + |\bar y_2|^{2}) dxdt\\
	&\leq& C\sup_{[0,T]}\left(\hat{\rho}^2 (\| \bar{y}_1\|^2_{L^2(\Omega)} + \| \bar{y}_2\|^2_{L^2(\Omega)})\right)\int_{0}^{T}\int_{0}^{1} \rho_0^2  (|y_{1,n}-y_1|^2 + |y_{2,n}-y_2|^2 ) dxdt\\
	%&\leq&C\|(\bar y_1,\bar y_2,\bar p_i^j,\bar h)\|_{\mathcal{Y}} \cdot \|\rho_0 (y_{2,n}-y_2)\|_{L^2(Q)}\\
	&\leq&C\|(y_{1,n}-y_1),(y_{2,n}-y_2),(p^{j}_{i,n}-p^{j}_i),(h_{n}-h)\|_{\mathcal{Y}}\rightarrow 0, \quad\text{as}\quad n \to \infty.
\end{eqnarray*}

%---------------------------------------- HASTA AQUI ------------------

Now, consider the terms $D\mathcal{A}^1_{i}$, $i=1,2$. Then 
%\hspace*{-0.5cm}\begin{eqnarray*}
%	&(D\mathcal{A}^1_i(y_{1,n},y_{2,n},p^{j}_{i,n},h_{n})-D\mathcal{A}^1_i( y_1, y_2, p_i^j, h))(\bar y_1,\bar y_2,\bar p_i^j,\bar h) =& F_1''(y_{1,n})\bar y_1 p_{1,n}^i + F_1'(y_{1,n})\bar p^i_1\\
%	&&- F_1''(y_1)\bar y_1 p^i_1 - F_1'(y_1)\bar p^i_1 = X_i^1,
%\end{eqnarray*}
\hspace*{-0.5cm}\begin{eqnarray*}
	%&X_i^1&=
    &&(D\mathcal{A}^1_i(y_{1,n},y_{2,n},p^{j}_{i,n},h_{n})-D\mathcal{A}^1_i( y_1, y_2, p_i^j, h))(\bar y_1,\bar y_2,\bar p_i^j,\bar h) \\
    %&\quad = F_1''(y_{1,n})\bar y_1 p_{1,n}^i + F_1'(y_{1,n})\bar p^i_1\\
	%&- F_1''(y_1)\bar y_1 p^i_1 - F_1'(y_1)\bar p^i_1 = X_i^1,\\
    &&\quad=(D_{11}^2 F_1(y_{1,n,}y_{2,n}) p^i_{1,n} - D_{11}^2 F_1(y_{1,n,}y_2) p^i_1)\bar y_1 + (D_{11}^2 F_2(y_{1,n},y_{2,n})p^i_{2,n} - D_{11}^2 F_2(y_1,y_2)p^i_2) \bar y_1 \\
    &&\quad\quad+ (D_{12}^2 F_1(y_{1,n},y_{2,n})p^i_{1,n} - D_{12}^2 F_1(y_1,y_2)p^i_1)\bar y_2 + (D_{12}^2 F_2(y_{1,n},y_{2,n})p^i_{2,n} - D_{12}^2 F_2(y_1,y_2) p^i_2) \bar y_2 \\
    &&\quad\quad+ (D_1 F_1(y_{1,n},y_{2,n}) - D_1 F_1(y_1,y_2)) \bar p^i_1 + (D_1 F_2(y_{1,n},y_{2,n}) - D_1 F_2(y_1,y_2))\bar p^i_2\\
    &&\quad=X_{i,1}^1+X_{i,2}^1+X_{i,3}^1+X_{i,4}^1+X_{i,5}^1+X_{i,6}^1.
\end{eqnarray*}
First we estimate
\begin{eqnarray*}
%\hspace*{-0.6cm}	                     
    \int_{0}^{T}\int_{0}^{1}\rho^{2}_{2}|X_{i,1}^{1}|^{2} dxdt
    %&\leq& C \int_{0}^{T}\int_{0}^{1}\rho_{2}^{2} |F_1''(y_{1,n})\bar y_1 p_{1,n}^i + F_1'(y_{1,n})\bar p^i_1- F_1''(y_1)\bar y_1 p^i_1 - F_1'(y_1)\bar p^i_1|^2 dxdt\\
	&\leq&\int_{0}^{T}\int_{0}^{1}\rho_{2}^{2} |D_{11}^2 F_1(y_{1,n},y_{2,n})p_{1,n}^i - D_{11}^2 F_1(y_1,y_2) p^i_1 |^2 |\bar y_1|^2 \\
    %+ C\int_{0}^{T}\int_{0}^{1}\rho_{2}^{2} | F_1'(y_{1,n})\bar p^i_1 - F_1'(y_1)\bar p^i_1|^2 \\
%	&\leq &C\int_{0}^{T}\int_{0}^{1}\rho_{2}^{2} |F_1''(y_{1,n}) (p_{1,n}^i-p^i_1)+(F_1''(y_{1,n}) - F_1''(y_1)) |p^i_1 |^2 |\bar y_1|^2\\
%	&& +\int_{0}^{T}\int_{0}^{1}\rho_{2}^{2} | F_1'(y_{1,n}) - F_1'(y_1)|^2 |\bar p^i_1|^2\\
	&\leq &\int_{0}^{T}\int_{0}^{1}\rho_{2}^{2} |D_{11}^2 F_1(y_{1,n},y_{2,n}) (p_{1,n}^i-p^i_1) |^2 |\bar y_1|^2 \\
    &&+ \int_{0}^{T}\int_{0}^{1}\rho_{2}^{2} |(D_{11}^2 F_1(y_{1,n},y_{2,n}) - D_{11}^2 F_1(y_1,y_2))|^2 |p^i_1|^2 |\bar y_1|^2\\
	%&& +C\int_{0}^{T}\int_{0}^{1}\rho_{2}^{2} | F_1'(y_{1,n}) - F_1'(y_1)|^2 |\bar p^i_1|^2\\
	&\leq &C\int_{0}^{T}\int_{0}^{1}\rho_{2}^{2} |p_{1,n}^i-p^i_1 |^2 |\bar y_1|^2 + C\int_{0}^{T}\int_{0}^{1}\rho_{2}^{2} ( |y_{1,n} - y_1|^2 + |y_{2,n} - y_2|^2 ) | p^i_1 |^2 |\bar y_1|^2.
%	&& +C\int_{0}^{T}\int_{0}^{1}\rho_{2}^{2} |y_{1,n} - y_1|^2 |\bar p^i_1|^2.
\end{eqnarray*}

Moreover, we have by \eqref{eq:compara_rhos3} that  $\rho_2^2\leq C \rho_1^2 \rho^2_1\leq C \hat{\rho}^2 \rho_0^2\leq C \hat{\rho}^4 \rho_0^2 $. Then,
\begin{eqnarray*}
	\int_{0}^{T}\int_{0}^{1}\rho^{2}_{2}|X_{i,1}^{1}|^{2} dxdt &\leq& 
    %C\int_{0}^{T}\int_{0}^{1}\rho_{2}^{2} |p_{1,n}^i-p^i_1 |^2 |\bar y_1|^2+\int_{0}^{T}\int_{0}^{1}\rho_{2}^{2} |y_{1,n} - y_1|^2 | p^i_1 |^2 |\bar y_1|^2\\
	%&& +\int_{0}^{T}\int_{0}^{1}\rho_{2}^{2} |y_{1,n} - y_1|^2 |\bar p^i_1|^2\\
	%&\leq &
    C\int_{0}^{T}\int_{0}^{1}\hat{\rho}^2|\bar y_1|^2  \rho_0^2|p_{1,n}^i-p^i_1 |^2 \\
    &&+ C \int_{0}^{T}\int_{0}^{1}\hat{\rho}^2| p^i_1 |^2 \hat{\rho}^2|\bar y_1|^2 \rho_0^2 (|y_{1,n} - y_1|^2 + |y_{2,n} - y_2|^2) \\
	%&& + C \int_{0}^{T}\int_{0}^{1}\hat{\rho}^2|\bar p^i_1|^2  \rho_0^2 |y_{1,n} - y_1|^2 \\
	&\leq&  C\sup_{[0,T]}\left(\hat{\rho}^2\| \bar{y}_1\|^2_{L^2(\Omega)}\right)\int_{0}^{T}\int_{0}^{1} \rho_0^2  |p^i_{1,n}-p^i_1|^2\\
    &&+C\sup_{[0,T]}\left(\hat{\rho}^2\| \bar{p}^i_1\|^2_{L^2(\Omega)}\right)\sup_{[0,T]}\left(\hat{\rho}^2\| \bar{y}_1\|^2_{L^2(\Omega)}\right)\int_{0}^{T}\int_{0}^{1} \rho_0^2  (|y_{1,n}-y_1|^2 + |y_{2,n}-y_2|^2) \\
	%&& +C\sup_{[0,T]}\left(\hat{\rho}^2\| \bar{p}^i_1\|^2_{L^2(\Omega)}\right)\int_{0}^{T}\int_{0}^{1} \rho_0^2  |y_{1,n}-y_1|^2 \\
	&\leq & C \|(\bar y_1,\bar y_2,\bar p_i^j,\bar h)\|_{\mathcal{Y}} \cdot \|\rho_0 (p^i_{1,n}-p^i_1)\|_{L^2(Q)}\\
    &&+ C \|(\bar y_1,\bar y_2,\bar p_i^j,\bar h)\|_{\mathcal{Y}}\cdot \|( \bar y_1, \bar y_2, \bar p_i^j, \bar h)\|_{\mathcal{Y}}\cdot (\|\rho_0 (y_{1,n}-y_1)\|_{L^2(Q)}+\|\rho_0 (y_{2,n}-y_2)\|_{L^2(Q)})\\
	%&&+ C \|(\bar y_1,\bar y_2,\bar p_i^j,\bar h)\|_{\mathcal{Y}} \cdot \|\rho_0 (y_{{1,n}}-y_1)\|_{L^2(Q)}\\
	&\leq& C\|(y_{1,n}-y_1),(y_{2,n}-y_2),(p^{j}_{i,n}-p^{j}_i),(h_{n}-h)\|_{\mathcal{Y}}\rightarrow 0.
\end{eqnarray*}
The terms $X_{i,k}^{1}$, $k=2,3,4$ are estimated similarly. On the other hand
\begin{eqnarray*}
%\hspace*{-0.6cm}	                     
    \int_{0}^{T}\int_{0}^{1}\rho^{2}_{2}|X_{i,5}^{1}|^{2} dxdt
    %&\leq& C \int_{0}^{T}\int_{0}^{1}\rho_{2}^{2} |F_1''(y_{1,n})\bar y_1 p_{1,n}^i + F_1'(y_{1,n})\bar p^i_1- F_1''(y_1)\bar y_1 p^i_1 - F_1'(y_1)\bar p^i_1|^2 dxdt\\
	&\leq&
    %C\int_{0}^{T}\int_{0}^{1}\rho_{2}^{2} |F_1''(y_{1,n})\bar y_1 p_{1,n}^i - F_1''(y_1)\bar y_1 p^i_1 |^2 +
    C\int_{0}^{T}\int_{0}^{1}\rho_{2}^{2} | D_1 F_1(y_{1,n},y_{2,n}) - D_1 F_1(y_1,y_2)|^2 |\bar p^i_1|^2 \\
%	&\leq &C\int_{0}^{T}\int_{0}^{1}\rho_{2}^{2} |F_1''(y_{1,n}) (p_{1,n}^i-p^i_1)+(F_1''(y_{1,n}) - F_1''(y_1)) |p^i_1 |^2 |\bar y_1|^2\\
%	&& +\int_{0}^{T}\int_{0}^{1}\rho_{2}^{2} | F_1'(y_{1,n}) - F_1'(y_1)|^2 |\bar p^i_1|^2\\
	%&\leq &C\int_{0}^{T}\int_{0}^{1}\rho_{2}^{2} |F_1''(y_{1,n}) (p_{1,n}^i-p^i_1) |^2 |\bar y_1|^2 + C\int_{0}^{T}\int_{0}^{1}\rho_{2}^{2} |(F''_1(y_{1,n}) - F_1''(y_1))|^2 |p^i_1|^2 |\bar y_1|^2\\
	%&& +C\int_{0}^{T}\int_{0}^{1}\rho_{2}^{2} | F_1'(y_{1,n}) - F_1'(y_1)|^2 |\bar p^i_1|^2\\
	&\leq &
    %C\int_{0}^{T}\int_{0}^{1}\rho_{2}^{2} |p_{1,n}^i-p^i_1 |^2 |\bar y_1|^2 + C\int_{0}^{T}\int_{0}^{1}\rho_{2}^{2} |y_{1,n} - y_1|^2 | p^i_1 |^2 |\bar y_1|^2\\
	C\int_{0}^{T}\int_{0}^{1}\rho_{2}^{2} ( |y_{1,n} - y_1|^2 + |y_{2,n} - y_2|^2 ) |\bar p^i_1|^2.
\end{eqnarray*}

Once more, from \eqref{eq:compara_rhos3} we have $\rho_2^2\leq C \rho_1^2 \rho^2_1\leq C \hat{\rho}^2 \rho_0^2\leq C \hat{\rho}^4 \rho_0^2 $. Then,
\begin{eqnarray*}
	\int_{0}^{T}\int_{0}^{1}\rho^{2}_{2}|X_{i,5}^{1}|^{2} dxdt &\leq& 
    %C\int_{0}^{T}\int_{0}^{1}\rho_{2}^{2} |p_{1,n}^i-p^i_1 |^2 |\bar y_1|^2+\int_{0}^{T}\int_{0}^{1}\rho_{2}^{2} |y_{1,n} - y_1|^2 | p^i_1 |^2 |\bar y_1|^2\\
	%&& +\int_{0}^{T}\int_{0}^{1}\rho_{2}^{2} |y_{1,n} - y_1|^2 |\bar p^i_1|^2\\
	%&\leq &
    %C\int_{0}^{T}\int_{0}^{1}\hat{\rho}^2|\bar y_1|^2  \rho_0^2|p_{1,n}^i-p^i_1 |^2 + C \int_{0}^{T}\int_{0}^{1}\hat{\rho}^2| p^i_1 |^2 \hat{\rho}^2|\bar y_1|^2 \rho_0^2 |y_{1,n} - y_1|^2 \\
	%&& + 
    C \int_{0}^{T}\int_{0}^{1}\hat{\rho}^2|\bar p^i_1|^2  \rho_0^2 (|y_{1,n} - y_1|^2 - |y_{2,n} - y_2|^2) \\
	&\leq&  
    %C\sup_{[0,T]}\left(\hat{\rho}^2\| \bar{y}_1\|^2_{L^2(\Omega)}\right)\int_{0}^{T}\int_{0}^{1} \rho_0^2  |p^i_{1,n}-p^i_1|^2\\
    %&&+C\sup_{[0,T]}\left(\hat{\rho}^2\| \bar{p}^i_1\|^2_{L^2(\Omega)}\right)\sup_{[0,T]}\left(\hat{\rho}^2\| \bar{y}_1\|^2_{L^2(\Omega)}\right)\int_{0}^{T}\int_{0}^{1} \rho_0^2  |y_{1,n}-y_1|^2 \\
	%&& +
    C\sup_{[0,T]}\left(\hat{\rho}^2\| \bar{p}^i_1\|^2_{L^2(\Omega)}\right)\int_{0}^{T}\int_{0}^{1} \rho_0^2  (|y_{1,n}-y_1|^2 + |y_{2,n}-y_2|^2) \\
	&\leq & 
    %C \|(\bar y_1,\bar y_2,\bar p_i^j,\bar h)\|_{\mathcal{Y}} \cdot \|\rho_0 (p^i_{1,n}-p^i_1)\|_{L^2(Q)}\\
    %&&+ C \|(\bar y_1,\bar y_2,\bar p_i^j,\bar h)\|_{\mathcal{Y}}\cdot \|( y_1, y_2, p_i^j, h)\|_{\mathcal{Y}}\cdot \|\rho_0 (y_{1,n}-y_1)\|_{L^2(Q)}\\
	%&&+ 
    C \|(\bar y_1,\bar y_2,\bar p_i^j,\bar h)\|_{\mathcal{Y}} \cdot (\|\rho_0 (y_{{1,n}}-y_1)\|_{L^2(Q)} + \|\rho_0 (y_{{2,n}}-y_2)\|_{L^2(Q)})\\
	&\leq& C\|(y_{1,n}-y_1),(y_{2,n}-y_2),(p^{j}_{i,n}-p^{j}_i),(h_{n}-h)\|_{\mathcal{Y}}\rightarrow 0.
\end{eqnarray*}
The term $X_{i,6}^{1}$ is estimated analogously.

Now, consider the term $D\mathcal{A}^2_{i}$. The estimates are the same as for $D\mathcal{A}^1_{i}$ using
\hspace*{-0.5cm}\begin{eqnarray*}
	%&X_i^1&=
    &&(D\mathcal{A}^2_i(y_{1,n},y_{2,n},p^{j}_{i,n},h_{n})-D\mathcal{A}^2_i( y_1, y_2, p_i^j, h))(\bar y_1,\bar y_2,\bar p_i^j,\bar h) \\
    %&\quad = F_1''(y_{1,n})\bar y_1 p_{1,n}^i + F_1'(y_{1,n})\bar p^i_1\\
	%&- F_1''(y_1)\bar y_1 p^i_1 - F_1'(y_1)\bar p^i_1 = X_i^1,\\
    &&\quad=(D_{21}^2 F_1(y_{1,n,}y_{2,n}) p^i_{1,n} - D_{21}^2 F_1(y_{1,n,}y_2) p^i_1)\bar y_1 + (D_{21}^2 F_2(y_{1,n},y_{2,n})p^i_{2,n} - D_{21}^2 F_2(y_1,y_2)p^i_2) \bar y_1 \\
    &&\quad\quad+ (D_{22}^2 F_1(y_{1,n},y_{2,n})p^i_{1,n} - D_{22}^2 F_1(y_1,y_2)p^i_1)\bar y_2 + (D_{22}^2 F_2(y_{1,n},y_{2,n})p^i_{2,n} - D_{22}^2 F_2(y_1,y_2) p^i_2) \bar y_2 \\
    &&\quad\quad+ (D_2 F_1(y_{1,n},y_{2,n}) - D_2 F_1(y_1,y_2)) \bar p^i_1 + (D_1 F_2(y_{1,n},y_{2,n}) - D_1 F_2(y_1,y_2))\bar p^i_2.
    %\\
    %&&\quad=X_{i,1}^1+X_{i,2}^1+X_{i,3}^1+X_{i,4}^1+X_{i,5}^1+X_{i,6}^1.
\end{eqnarray*}

Therefore, $( y_1, y_2, p_i^j, h)\longmapsto\mathcal{A}^{\prime}( y_1, y_2, p_i^j, h)$ is continuous from $\mathcal{Y}$ into $\mathcal{L}(\mathcal{Y},\mathcal{Z})$ and consequently, in view of some classical results, we will
have that $\mathcal{A}$ is Fr\'echet-differentiable and $\mathcal{C}^{1}$.
\end{proof}

\begin{lema}\label{Mapa sobrejetivo3}
Let $\mathcal{A}$ be the mapping in \eqref{aplicação A3}. Then, $\mathcal{A}^{\prime}(0,0,0,0,0,0,0)$ is onto.
\end{lema}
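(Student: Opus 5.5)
The plan is to reduce surjectivity of $\mathcal{A}'(0,0,0,0,0,0,0)$ to the global null controllability of the linearized system, Theorem \ref{theorem case linear3}. At the origin all second-derivative terms $D^2_{jk}F_l(0,0)\,\bar y\,p^i$ vanish, because they are multiplied by $p^i=0$. Also $D_jF_l(0,0)=c_{lj}$. Hence $\mathcal{A}'(0)(\bar y_1,\bar y_2,\bar p_i^j,\bar h)$ is exactly the linear operator whose components are $F_0,\overline F_0,F_i,\overline F_i$ (as defined just before \eqref{eq:espaceY}), together with the traces $\bar y_1(\cdot,0),\bar y_2(\cdot,0)$. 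In the $\bar y$-equation the $\rho_*^{-2}$ factor is kept, as in $F_0$. So, given an arbitrary target
$$(g_0,\overline g_0,g_1,\overline g_1,g_2,\overline g_2,y^0_1,y^0_2)\in\mathcal{Z},$$
I need to find $(\bar y_1,\bar y_2,\bar p_i^j,\bar h)\in\mathcal{Y}$ solving the linearized optimality system \eqref{eq:linearized_system new3} in its original backward form. The right-hand sides are $F_0=g_0$, $\overline F_0=\overline g_0$, $F_i=g_i$, $\overline F_i=\overline g_i$, and the initial data are $y^0_1,y^0_2\in H^1_a(\Omega)$.

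First step: since each $g$ lies in $\mathcal{F}$, we have $\rho_2 g\in L^2(Q)$. Since $y^0_k\in H^1_a\subset L^2$, the hypotheses of Theorem \ref{theorem case linear3} hold. That theorem gives a control $\bar h$ and states $\bar y_k,\bar p^j_i\in C^0([0,T];L^2)\cap L^2(0,T;H^1_a)$ solving \eqref{optimal123} with these data. It also gives the estimate \eqref{estimate for solution3}, so $\rho_0\bar y_k,\rho_0\bar p^j_i\in L^2(Q)$ and $\rho_1\bar h\in L^2(\mathcal{O}\times(0,T))$.

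Second step: check that this tuple belongs to $\mathcal{Y}$. The quantities $\rho_2 F_0,\dots$ computed from the tuple are exactly $\rho_2 g_0,\dots$, which are in $L^2(Q)$ by construction. The traces $\bar y_k(\cdot,0)=y^0_k$ lie in $H^1_a$. The boundary conditions hold by construction. Absolute continuity in $x$ for a.e. $t$ follows from $L^2(0,T;H^1_a)$ in the weakly degenerate setting, since $1/a\in L^1$ makes $H^1_a$ functions absolutely continuous on $[0,1]$. Then $\mathcal{A}'(0)$ applied to this tuple returns the prescribed target, which proves the map is onto.

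The main obstacle I expect is the mismatch between the transposition solution delivered by Lax--Milgram and the regularity demanded in $\mathcal{Y}$: the equations must hold in a sense strong enough that $F_0,\dots$ coincide with $g_0,\dots$ as $L^2$ functions. I would handle this with the well-posedness of the linear degenerate system for $L^2$ sources and $H^1_a$ data, which gives $H^1(0,T;L^2)$ regularity with $(a u_x)_x\in L^2$ on compact time subintervals. The equations then hold a.e., and uniqueness identifies the transposition solution with the strong one. A secondary point is making sure the coupling matrix in \eqref{optimal123} matches that of $D\mathcal{A}^l_i(0)$, meaning the transposes $c_{lj}$ versus $c_{jl}$ and the $\rho_*^{-2}$ factors. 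I would verify this directly from the definitions of $\mathcal{L}$ and $\mathcal{L}^*$ in \eqref{operador}--\eqref{operadoradjun}.
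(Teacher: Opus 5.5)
Your proposal is correct and is essentially the paper's argument: for an arbitrary element of $\mathcal{Z}$ you invoke Theorem \ref{theorem case linear3} to get $(\bar y_1,\bar y_2,\bar p^j_i,\bar h)$ solving the linearized system \eqref{optimal123}, use \eqref{estimate for solution3} and the definition of $\mathcal{Y}$ to place this tuple in $\mathcal{Y}$, and observe that $\mathcal{A}'(0)$ maps it to the prescribed target. The extra details you supply are points the paper leaves implicit: the vanishing of the $D^2F$ terms at $p=0$, the identity $D_jF_l(0,0)=c_{lj}$, absolute continuity from $1/a\in L^1$, and the identification of the transposition solution (the strong-regularity step is a convenience rather than a necessity, since membership in $\mathcal{Y}$ only needs the equations to hold in $\mathcal{D}'(Q)$).
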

\begin{proof}
Let $(F_0, \overline{F}_0,F_1, \overline{F}_1,F_2, \overline{F}_2,y^{0}_1, y^{0}_2)\in \mathcal{Z}$. From Theorem \ref{theorem case linear3} we know there exists $y_1,y_2,p^{1}_1,p^{2}_1,p^{1}_2,p^{2}_2$ satisfying \eqref{optimal1} and \eqref{optimal2}. Furthermore, we know that $y_1,y_2,p^{1}_1,p^{2}_1,p^{1}_2,p^{2}_2\in C^{0}([0,T];L^{2}(0,1))\cap L^{2}(0,T;H^{1}_{a})$. Consequently, $(y,p^{1},p^{2},h)\in \mathcal{Y}$ and $$\mathcal{A}^{\prime}(0,0,0,0,0,0,0)(y_1,y_2,p^{1}_1,p^{2}_1,p^{1}_2,p^{2}_2,h)=(F_0, \overline{F}_0,F_1, \overline{F}_1,F_2, \overline{F}_2,y^{0}_1,y^0_2).$$ 
\hfill

This ends the proof.
\end{proof}
\noindent\textbf{Proof of Theorem \ref{thm:local_null_controllability4}.}

 According to Lemmas \ref{A bem definido3}-\ref{Mapa sobrejetivo3} we can apply the Inverse Mapping Theorem (Theorem \ref{Liusternik}) and consequently there exists $\delta > 0$ and a mapping $W:B_{\delta}(0)\subset {\mathcal{Z}}\rightarrow {\mathcal{Y}}$ such that
\begin{equation*}
W(z)\in B_{r}(0)\,\,\, \text{and}\,\,\, {\mathcal{A}}(W(z))=z, \,\,\, \forall z\in B_{\delta}(0).
\end{equation*}

Taking $(0,0,0,0,0,0,y_{1}^0,y_2^0)\in B_{\delta}(0)$ and $(y_1,y_2,p_1^{1},p_1^{2},p_2^{1},p_2^{2},h)=W(0,0,0,0,0,0,y_1^0,y_2^0)\in {\mathcal{Y}}$, we have
\begin{equation*}
{\mathcal{A}}(y_1,y_2,p^{1}_1,p^{2}_1,p^{1}_2,p^{2}_2,h)=(0,0,0,0,0,0,y^{0}_1,y^0_2).
\end{equation*}

Thus, we conclude that \eqref{sistemanocilin4} is locally null controllable at time $T > 0$.\\
\qed

\noindent\textbf{Acknowledgments}

This study was financed in part by the Coordenação de Aperfeiçoamento de Pessoal de Nível Superior - Brasil (CAPES) - Finance Code 001.
A.S. and L.Y were partially supported by CAPES-Brazil.
J.L. was partially supported by CNPq-Brazil.
The authors thank André Lopes and Laurent Prouvé for useful comments of a first draft of the paper. L.Y. thanks the kind hospitality of FAU DCN-AvH Chair for Dynamics, Control and Numerics-Germany and 
Instituto de Matemática Pura e Aplicada-Brazil, where significant parts of this work were done.

\appendix

\section{Proof of Proposition \ref{prop:carleman_sistema}.}\label{appendix_a}
\begin{proof}

    We rewrite the system of equations in order to present each equation in the appropriate form for using Proposition \ref{prop_carleman_13} and inequality \eqref{ecjv343}, with the notation $I(\cdot)$. We get
	\begin{eqnarray*}
	       I(\phi_1)	&\leq &  C\left(	\int_{0}^{T}\int_{0}^{1} e^{2s\varphi} |\rho_1\cara_{_{{\mathcal{O}}_{d}}}-c_{21}\phi_2+G_0|^2+(\lambda s)^3\int_{0}^{T}\int_{\mathcal{O}}e^{2s\varphi} \sigma^3  |\phi_1|^2  \right)\\
		I(\phi_2)	&\leq &  C\left(	\int_{0}^{T}\int_{0}^{1} e^{2s\varphi} |\rho_2\cara_{_{{\mathcal{O}}_{d}}}-c_{12}\phi_1+\overline{G}_0|^2+(\lambda s)^3\int_{0}^{T}\int_{\mathcal{O}}e^{2s\varphi} \sigma^3  |\phi_2|^2  \right)\\
		I(\rho_1)	&\leq &  C\left(	\int_{0}^{T}\int_{0}^{1} e^{2s\varphi} \left| \left( \frac{\alpha_1}{\mu_1}\cara_{_{{\mathcal{O}}_{1}}} +\frac{\alpha_2}{\mu_2}\cara_{_{{\mathcal{O}}_{2}}}  \right)    \phi_1-c_{12}\rho_2+G\right|^2+(\lambda s)^3\int_{0}^{T}\int_{\mathcal{O}}e^{2s\varphi} \sigma^3  |\rho_1|^2  \right)\\
		I(\rho_2)	&\leq &  C\left(	\int_{0}^{T}\int_{0}^{1} e^{2s\varphi} |-c_{21}\rho_1+\overline{G}|^2+(\lambda s)^3\int_{0}^{T}\int_{\mathcal{O}}e^{2s\varphi} \sigma^3  |\rho_2|^2  \right).
	\end{eqnarray*}
    Thus,
	\begin{multline*}
	   I(\phi_1)+I(\phi_2)+I(\rho_1)+I(\rho_2)\leq C\int_{0}^{T}\int_{0}^{1} e^{2s\varphi}(|G_0|^2+|\overline{G}_0|^2+|G|^2+|\overline{G}|^2)\\
	   \hspace*{5cm}+C\left(	\int_{0}^{T}\int_{0}^{1} e^{2s\varphi} \left[(K+\|c_{12}\|_{L^\infty})|\phi_1|^2+\|c_{21}\|_{L^\infty} |\phi_2|^2 \right.\right.\\
	   \hspace*{5cm}\left.+(1+\|c_{21}\|_{L^\infty})|\rho_1|^2+(1+\|c_{12}\|_{L^\infty})|\rho_2|^2\right]\\
	   \left.    +(\lambda s)^3\int_{0}^{T}\int_{\mathcal{O}}e^{2s\varphi} \sigma^3  \left(|\phi_1|^2+|\phi_2|^2+|\rho_1|^2+|\rho_2|^2  \right)  \right).
	\end{multline*}
	Now, we take $s$ and $\lambda$ sufficiently large such that $I(\phi_1)$ absorbs the integral depending on $\phi_1$, $I(\phi_2)$ absorbs the integral depending on $\phi_2$, $I(\rho_1)$ absorbs the integral depending on $\rho_1$ and $I(\rho_2)$ absorbs the integral depending on $\rho_2$. This give us the following inequality:
	\begin{equation}\label{eq:first_absorb}
        \begin{split}
            I(\phi_1)+I(\phi_2)+I(\rho_1)+I(\rho_2)\leq	C\int_{0}^{T}\int_{0}^{1} e^{2s\varphi}(|G_0|^2+|\overline{G}_0|^2+|G|^2+|\overline{G}|^2)\\
	        +C\left(	\int_{0}^{T}\int_{\mathcal{O}}e^{2s\varphi} \lambda^3 s^3\sigma^3  \left(|\phi_1|^2+|\phi_2|^2+|\rho_1|^2+|\rho_2|^2  \right)  \right) .       
        \end{split}   
	\end{equation}
    
    Let us take $\chi\in C_0^\infty(\mathcal{O})$ satisfying $0\leq \chi \leq 1$	 and $\chi =1$ in $ \mathcal{O}$. Since , we can easily see that	
    $$\int_{0}^{T}\int_{\widetilde{\mathcal{O}}}e^{2s\varphi}\lambda^3s^3\sigma^3|\rho_1|^2 \leq \int_{0}^{T}\int_{{\mathcal{O}}}\chi e^{2s\varphi}\lambda^3s^3\sigma^3|\rho_1|^2.  $$
	
    Now, multiplying the first equation in \eqref{optimaladj1_simp} by $e^{2s\varphi}\lambda^3s^3\sigma^3 \chi\rho_1$ and integrating over $(0,T)\times (0,1)$, we get
	\begin{multline}\label{eq:est_int_rho1_cuadrado}
		\int_{0}^{T}\int_{0}^{1}\chi e^{2s\varphi}\lambda^3s^3\sigma^3 |\rho_1|^2\cara_{_{{\mathcal{O}}_{d}}}=-\int_{0}^{T}\int_{0}^{1}\chi e^{2s\varphi}\lambda^3s^3\sigma^3 \rho_1 \phi_{1t}-\int_{0}^{T}\int_{0}^{1}\chi e^{2s\varphi}\lambda^3s^3\sigma^3b(t)\left({a}(x)\phi_{1x}\right)_x\rho_1\\
		\hspace*{5cm}+\int_{0}^{T}\int_{0}^{1}\chi e^{2s\varphi}\lambda^3s^3\sigma^3(B\sqrt{a}\phi_1)_{x}\rho_1+\int_{0}^{T}\int_{0}^{1}\chi e^{2s\varphi}\lambda^3s^3\sigma^3 c_{11}\phi_1\rho_1\\
		\hspace*{4cm}+\int_{0}^{T}\int_{0}^{1}\chi e^{2s\varphi}\lambda^3s^3\sigma^3 c_{21}\phi_2\rho_1-\int_{0}^{T}\int_{0}^{1}\chi e^{2s\varphi}\lambda^3s^3\sigma^3 G_0 \rho_1\\
	   =:I_1+I_2+I_3+I_4+I_5+I_6.
	\end{multline}
    
	Since $c_{11}$ and $c_{21}$ are bounded, from Young's inequality, we have the following estimates for $I_4, I_5$ and $I_6$:
    \begin{equation}\label{eq:Est_I4}
        \begin{split}
            I_4&\leq C \int_{0}^{T}\int_{0}^{1}\chi e^{2s\varphi}b\lambda^3s^3\sigma^3 |\phi_1||\rho_1|\leq \varepsilon \int_{0}^{T}\int_{0}^{1}e^{2s\varphi}\lambda^2s^2\sigma^2b^2 |\rho_1|^2+C_\varepsilon\int_{0}^{T}\int_{0}^{1}e^{2s\varphi}\chi^2\lambda^4s^4\sigma^4 |\phi_1|^2 \\
            &\leq \varepsilon I(\rho_1)+C_\varepsilon\int_{0}^{T}\int_{\mathcal{O}}e^{2s\varphi}\lambda^4s^4\sigma^4 |\phi_1|^2.
        \end{split}
    \end{equation}
    Analogously,
    \begin{equation}\label{eq:Est_I5}
        \begin{split}
	       I_5 &\leq C \int_{0}^{T}\int_{0}^{1}\chi e^{2s\varphi}\lambda^3s^3\sigma^3b |\phi_2||\rho_1|\leq \varepsilon I(\rho_1)+C_\varepsilon\int_{0}^{T}\int_{\mathcal{O}}e^{2s\varphi}\chi^2\lambda^4s^4\sigma^4 |\phi_2|^2,
        \end{split}
    \end{equation}
    \begin{equation}\label{eq:Est_I6}
        \begin{split}
            I_6 &\leq C \int_{0}^{T}\int_{0}^{1}\chi e^{2s\varphi}\lambda^3s^3\sigma^3b |\rho_1||G_0|\leq
            %\varepsilon \int_{0}^{T}\int_{0}^{1}e^{2s\varphi}\lambda^2s^2\sigma^2b^2 |\rho_1|^2 + C_\varepsilon\int_{0}^{T}\int_{0}^{1}e^{2s\varphi}\chi^2\lambda^4s^4\sigma^4 |G_0|^2
            \varepsilon I(\rho_1)+C_\varepsilon\int_{0}^{T}\int_{0}^{1}e^{2s\varphi}\lambda^4s^4\sigma^4 |G_0|^2.
        \end{split}
    \end{equation}
	
	Using integration by parts, we split up $I_1$ and $I_2$ in several integrals. Indeed,
    \begin{equation*}
        \begin{split}
            I_2&=-\int_{0}^{T}\int_{0}^{1}\chi e^{2s\varphi}\lambda^3s^3\sigma^3b(t)\left({a}(x)\phi_{1x}\right)_x\rho_1=\int_{0}^{T}\int_{0}^{1}b(t)\lambda^3s^3(\chi e^{2s\varphi}\sigma^3\rho_{1})_x a(x)\phi_{1x} \\
            &=\int_{0}^{T}\int_{0}^{1}b(t)\lambda^3s^3\chi e^{2s\varphi}\sigma^3\rho_{1x} a(x)\phi_{1x}+\int_{0}^{T}\int_{0}^{1}b(t)\lambda^3s^3(\chi e^{2s\varphi}\sigma^3)_x a(x)\rho_{1}\phi_{1x} \\
            &=\int_{0}^{T}\int_{0}^{1}b(t)\lambda^3s^3\chi e^{2s\varphi}\sigma^3\rho_{1x} a(x)\phi_{1x}-\int_{0}^{T}\int_{0}^{1}b(t)\lambda^3s^3(\chi e^{2s\varphi}\sigma^3)_x a(x)\rho_{1x}\phi_{1}\\
            &\quad -\int_{0}^{T}\int_{0}^{1} b(t)\lambda^3s^3(a(\chi e^{2s\varphi}\sigma^3)_x)_x	 \rho_{1}\phi_{1}.
        \end{split}
    \end{equation*}
	and, recalling that $e^{2s\varphi}$ vanishes at $0$ and $T$ and using the third equation of \eqref{optimaladj1_simp}, we have
    $$I_1=-\int_{0}^{T}\int_{0}^{1}\chi e^{2s\varphi}\lambda^3s^3\sigma^3 \rho_1 \phi_{1t}=\int_{0}^{T}\int_{0}^{1}\lambda^3s^3 \chi (e^{2s\varphi}\sigma^3\rho_{1})_t \phi_{1} $$
    \begin{eqnarray*}
		I_1&=&\int_{0}^{T}\int_{0}^{1}\lambda^3s^3 \chi (e^{2s\varphi}\sigma^3)_t \rho_{1}\phi_{1} +\int_{0}^{T}\int_{0}^{1}\lambda^3s^3 \chi e^{2s\varphi}\sigma^3\phi_{1}  \rho_{1t}
	   =\int_{0}^{T}\int_{0}^{1}\lambda^3s^3 \chi (e^{2s\varphi}\sigma^3)_t \rho_{1}\phi_{1} \\
	   &&+\int_{0}^{T}\int_{0}^{1}\lambda^3s^3 \chi e^{2s\varphi}\sigma^3\phi_{1} \left(b(t)\left(a(x)\rho_{1x}\right)_x+B\sqrt{a}\rho_{1x}-c_{11}\rho_1 - c_{12}\rho_2 -\rho_*^{-2}  d    \phi_1+G  \right) \\
	   &=&\int_{0}^{T}\int_{0}^{1}\left(\lambda^3s^3 \chi (e^{2s\varphi}\sigma^3)_t   - \lambda^3s^3 \chi e^{2s\varphi}\sigma^3 c_{11} \right) \rho_{1}\phi_{1}-\int_{0}^{T}\int_{0}^{1}\lambda^3s^3b (\chi e^{2s\varphi}\sigma^3)_x\phi_{1} a(x)\rho_{1x}\\
       &&- \int_{0}^{T}\int_{0}^{1}\left(\lambda^3s^3 \chi e^{2s\varphi}\sigma^3 c_{12} \right) \rho_{2}\phi_{1}\\
	   &&-\int_{0}^{T}\int_{0}^{1}\lambda^3s^3b \chi e^{2s\varphi}\sigma^3\phi_{1x} a(x)\rho_{1x}+\int_{0}^{T}\int_{0}^{1}\lambda^3s^3b \chi e^{2s\varphi}\sigma^3B\phi_1\sqrt{a}\rho_{1x}\\
	   && -\int_{0}^{T}\int_{0}^{1}\lambda^3s^3b \chi e^{2s\varphi}\sigma^3 d|\phi_1|^2+\int_{0}^{T}\int_{0}^{1}\lambda^3s^3b \chi e^{2s\varphi}\sigma^3 G \phi_1.
    \end{eqnarray*}	

    Thus,
    {\small \begin{eqnarray*}
        I_1+I_2+I_3&=&\int_{0}^{T}\int_{0}^{1}\left(\lambda^3s^3 \chi (e^{2s\varphi}\sigma^3)_t   - \lambda^3s^3 \chi e^{2s\varphi}\sigma^3 c_{11} \right) \rho_{1}\phi_{1}-\int_{0}^{T}\int_{0}^{1}\lambda^3s^3b (\chi e^{2s\varphi}\sigma^3)_x\phi_{1} a(x)\rho_{1x}\\
        &&- \int_{0}^{T}\int_{0}^{1}\left(\lambda^3s^3 \chi e^{2s\varphi}\sigma^3 c_{12} \right) \rho_{2}\phi_{1}\\
	   &&-\int_{0}^{T}\int_{0}^{1}\lambda^3s^3b \chi e^{2s\varphi}\sigma^3\phi_{1x} a(x)\rho_{1x}+\int_{0}^{T}\int_{0}^{1}\lambda^3s^3b \chi e^{2s\varphi}\sigma^3B\phi_1\sqrt{a}\rho_{1x}\\
        && -\int_{0}^{T}\int_{0}^{1}\lambda^3s^3b \chi e^{2s\varphi}\sigma^3 d|\phi_1|^2+\int_{0}^{T}\int_{0}^{1}\lambda^3s^3b \chi e^{2s\varphi}\sigma^3 G \phi_1\\
        &&+\int_{0}^{T}\int_{0}^{1}b(t)\lambda^3s^3\chi e^{2s\varphi}\sigma^3\rho_{1x} a(x)\phi_{1x}-\int_{0}^{T}\int_{0}^{1}b(t)\lambda^3s^3(\chi e^{2s\varphi}\sigma^3)_x a(x)\rho_{1x}\phi_{1}\\
        &&-\int_{0}^{T}\int_{0}^{1}b(t)\lambda^3s^3(a(\chi e^{2s\varphi}\sigma^3)_x)_x	 \rho_{1}\phi_{1}+\int_{0}^{T}\int_{0}^{1}\chi e^{2s\varphi}\lambda^3s^3\sigma^3(B\sqrt{a}\phi_1)_{x}\rho_1.
    \end{eqnarray*}}
    After rewriting,
    {\small \begin{eqnarray}\label{ecrho1}
		I_1+I_2+I_3&=&\int_{0}^{T}\int_{0}^{1}\left[\lambda^3s^3 \chi (e^{2s\varphi}\sigma^3)_t   - \lambda^3s^3 \chi e^{2s\varphi}\sigma^3 c_{11} -b(t)\lambda^3s^3(a(\chi e^{2s\varphi}\sigma^3)_x)_x	\right] \rho_{1}\phi_{1}\nonumber\\
		&&-2\int_{0}^{T}\int_{0}^{1}\lambda^3s^3b (\chi e^{2s\varphi}\sigma^3)_x\phi_{1} a(x)\rho_{1x}+\int_{0}^{T}\int_{0}^{1}\chi e^{2s\varphi}\lambda^3s^3\sigma^3(B\sqrt{a}\phi_1)_{x}\rho_1\nonumber\\
		&&+\int_{0}^{T}\int_{0}^{1}\lambda^3s^3b \chi e^{2s\varphi}\sigma^3B\phi_1\sqrt{a}\rho_{1x} -\int_{0}^{T}\int_{0}^{1}\lambda^3s^3b \chi e^{2s\varphi}\sigma^3 d|\phi_1|^2\nonumber\\
		&&+\int_{0}^{T}\int_{0}^{1}\lambda^3s^3b \chi e^{2s\varphi}\sigma^3 |G||\phi_1| - \int_{0}^{T}\int_{0}^{1}\left(\lambda^3s^3 \chi e^{2s\varphi}\sigma^3 c_{12} \right) \rho_{2}\phi_{1}\\\nonumber\\
		&=&J_1+J_2+J_3+J_4+J_5+J_6+J_7.
    \end{eqnarray}}

    In order to estimate $J_1$, we analyze each term between the brackets. Firstly, we observe that all the terms are multiplied by $\chi$, which vanishes outside of $\mathcal{O}$. Clearly,
    $$\left|-\lambda^3s^3 \chi e^{2s\varphi}\sigma^3 c_{11} \right|\leq C\chi\lambda^5s^5  \sigma^5e^{2s\varphi}.$$
    
    Since $|\sigma_x|\leq C\lambda\sigma$, $|\sigma_{xx}|\leq C\lambda^2\sigma$ and $a\in C^1(\mathcal{O})$, after distributing the derivatives with respect to $x$, we see that
    $$\left|  -b(t)\lambda^3s^3(a(\chi e^{2s\varphi}\sigma^3)_x)_x \right|\leq C \chi s^5\lambda^5\sigma^5e^{2s\varphi}.$$
    
    Likewise, the relations $|\varphi_t|\leq C\sigma^2$ and $|\sigma_t|\leq C\sigma^2$ yield
    $$\left|\lambda^3s^3 \chi (e^{2s\varphi}\sigma^3)_t  \right|\leq  C \chi s^5\lambda^5\sigma^5e^{2s\varphi}.$$
    
    As a conclusion, from Young's inequality, we have
    $$J_1\leq C \int_{0}^{T}\int_{0}^{1}\chi s^5\lambda^5\sigma^5be^{2s\varphi} |\rho_1||\phi_1|\leq \varepsilon \int_{0}^{T}\int_{0}^{1}s^2\lambda^2\sigma^2b^2e^{2s\varphi}|\rho_1|^2+C_{\varepsilon} \int_{0}^{T}\int_{0}^{1}\chi^2s^8\lambda^8\sigma^8e^{2s\varphi}|\phi_1|^2.$$
    That is,
    $$J_1\leq \varepsilon I(\rho_1)+C_{\varepsilon} \int_{0}^{T}\int_{\mathcal{O}}s^8\lambda^8\sigma^8e^{2s\varphi}|\phi_1|^2.$$
    
    Estimate for $J_2$: First, we have the estimate 
    $$(\chi e^{2s\varphi}\sigma^3)_x=\chi(2s\varphi_x\sigma^3+3\sigma^2\sigma_x)e^{2s\varphi}\leq \chi(2s\lambda \psi_x \sigma^4+3\sigma^2\lambda \sigma)e^{2s\varphi}\leq C \chi s\lambda \sigma^4e^{2s\varphi}.$$
    
    Thus,
    \begin{equation*}
        \begin{split}
            J_2 &\leq Cs\lambda\int_{0}^{T}\int_{0}^{1}e^{s\varphi}b^{1/2}\sigma^{1/2}\sqrt{a}\rho_{1x} e^{s\varphi}\chi\lambda^3s^3\sigma^{7/2}b^{-1/2}\sqrt{a}\phi_{1} \\
            &\leq \varepsilon \int_{0}^{T}\int_{0}^{1}e^{2s\varphi}s\lambda b\sigma {a}|\rho_{1x}|^2+C_\varepsilon \int_{0}^{T}\int_{0}^{1}\chi^2e^{2s\varphi}\lambda^7s^7 \sigma^{7}b^{-1}{a}|\phi_{1}|^2.
        \end{split}
    \end{equation*}
    Using that $a\in C^1(\mathcal{O})$ and $b(t)\geq m$, we get
    $$J_2\leq \varepsilon I(\rho_1)+C_\varepsilon \int_{0}^{T}\int_{\mathcal{O}}\lambda^8s^8\sigma^8 e^{2s\varphi} |\phi_{1}|^2.$$
    
    Estimate for $J_3$:
    \begin{eqnarray*}
	   J_3&=& =\int_{0}^{T}\int_{0}^{1}\chi e^{2s\varphi}\lambda^3s^3\sigma^3(B\sqrt{a}\phi_1)_{x}\rho_1=-\int_{0}^{T}\int_{0}^{1}\lambda^3s^3(\chi e^{2s\varphi}\sigma^3\rho_1)_xB\sqrt{a}\phi_1 \\
       &=&-\int_{0}^{T}\int_{0}^{1}\lambda^3s^3(\chi e^{2s\varphi}\sigma^3)_xB\sqrt{a}\phi_1\rho_1-\int_{0}^{T}\int_{0}^{1}\lambda^3s^3\chi e^{2s\varphi}\sigma^3B\sqrt{a}\phi_1\rho_{1x}\\
	   &=&\mathcal{J}_1+\mathcal{J}_2.
    \end{eqnarray*}
    
    For $\mathcal{J}_1$ we have: 
    \begin{eqnarray*}
	   \mathcal{J}_1&\leq & \int_{0}^{T}\int_{0}^{1}\left(e^{s\varphi} \lambda s b \sigma|\rho_1| \right)\left( \chi e^{s\varphi}\lambda^3s^3 \sigma^{3}b^{-1}B\sqrt{a}|\phi_1|  \right)\\
	   &\leq&\varepsilon \int_{0}^{T}\int_{0}^{1}e^{2s\varphi}\lambda^2 s^2 b^2 \sigma^2 |\rho_1|^2+C_\varepsilon  \int_{0}^{T}\int_{0}^{1}\chi^2 e^{2s\varphi}\lambda^6s^6 \sigma^6 b^{-2}B^2 {a}|\phi_1|^2 \\
	   &\leq&\varepsilon I(\rho_1) +C_\varepsilon \int_{0}^{T}\int_{\mathcal{O}}\lambda^8s^8\sigma^8 e^{2s\varphi} |\phi_{1}|^2.
    \end{eqnarray*}
    Analogously,
    \begin{eqnarray*}
	   \mathcal{J}_2&=&\lambda s \int_{0}^{T}\int_{0}^{1}\left( e^{s\varphi}  b \sigma^{1/2} \sqrt{a}|\rho_{1x}| \right)\left( e^{s\varphi} \lambda^2s^2 \sigma^{5/2}b^{-1}B |\phi_1|  \right)\\
	   &\leq&\varepsilon \int_{0}^{T}\int_{0}^{1}e^{2s\varphi}\lambda s b^2 \sigma a|\rho_{1x}|^2+C_\varepsilon  \int_{0}^{T}\int_{0}^{1}\chi^2e^{2s\varphi}\lambda^5s^5 \sigma^{5}b^{-2}B^2|\phi_1|^2 \\
	   &\leq&\varepsilon I(\rho_1) +C_\varepsilon \int_{0}^{T}\int_{\mathcal{O}}\lambda^8s^8\sigma^8 e^{2s\varphi} |\phi_{1}|^2. 
    \end{eqnarray*}

    Estimate for $J_4$:
    \begin{eqnarray*}
	   J_4 &=&\lambda s\int_{0}^{T}\int_{0}^{1}\left( e^{s\varphi} \sigma^{1/2}b\sqrt{a}\rho_{1x} \right)\left( \lambda^2s^2  \chi e^{s\varphi}\sigma^{5/2}B\phi_1\right)\\
	   &\leq&\varepsilon \int_{0}^{T}\int_{0}^{1}e^{2s\varphi}\lambda s b^2 \sigma a|\rho_{1x}|^2+C_\varepsilon  \int_{0}^{T}\int_{0}^{1}\chi^2e^{2s\varphi}\lambda^5s^5 \sigma^{5}B^2|\phi_1|^2 \\
	   &\leq&\varepsilon I(\rho_1) +C_\varepsilon \int_{0}^{T}\int_{\mathcal{O}}\lambda^8s^8\sigma^8 e^{2s\varphi} |\phi_{1}|^2. 
    \end{eqnarray*}

    It is immediate that
    $$J_5\leq  C \int_{0}^{T}\int_{\mathcal{O}}\lambda^8s^8\sigma^8 e^{2s\varphi} |\phi_{1}|^2.$$

    Moreover,
    \begin{equation*}
        \begin{split}
            J_6&\leq C \int_{0}^{T}\int_{0}^{1}\chi e^{2s\varphi}\lambda^3s^3\sigma^3b |\phi_1||G|\leq \varepsilon \int_{0}^{T}\int_{0}^{1}e^{2s\varphi}\lambda^2s^2\sigma^2b^2 |\phi_1|^2+C_\varepsilon \int_{0}^{T}\int_{0}^{1}e^{2s\varphi}\chi^2\lambda^4s^4\sigma^4 |G|^2 \\
            &\leq \varepsilon I(\phi_1)+C_\varepsilon\int_{0}^{T}\int_{0}^{1}e^{2s\varphi}\lambda^4s^4\sigma^4 |G|^2,
        \end{split}
    \end{equation*}
    and
    \begin{equation*}
        \begin{split}
            J_7&\leq C \int_{0}^{T}\int_{0}^{1}\left(\lambda^3s^3 \chi e^{2s\varphi}\sigma^3 \right) \rho_{2}\phi_{1} \leq \varepsilon \int_{0}^{T}\int_{0}^{1}e^{2s\varphi}\lambda^2s^2\sigma^2 |\rho_2|^2+C_\varepsilon \int_{0}^{T}\int_{0}^{1}e^{2s\varphi}\chi^2\lambda^4s^4\sigma^4 |\phi_1|^2 \\
            &\leq \varepsilon I(\rho_2)+C_\varepsilon\int_{0}^{T}\int_{\mathcal{O}}e^{2s\varphi}\lambda^4 s^4\sigma^4 |\phi_1|^2
        \end{split}
    \end{equation*}

    This finished the estimates of $I_1 + I_2 + I_3$ in \eqref{ecrho1}. This, jointly with \eqref{eq:Est_I4}-\eqref{eq:Est_I6} finished the estimate of \eqref{eq:est_int_rho1_cuadrado}.

    Therefore, the integral where $\rho_1$ appears in \eqref{eq:first_absorb} is estimated as:
    \begin{equation} \label{ecintrho1}
        \begin{split}
            \int_{0}^{T}\int_{0}^{1}\chi e^{2s\varphi}\lambda^3s^3\sigma^3 |\rho_1|^2\cara_{_{{\mathcal{O}}_{d}}}\leq& \varepsilon I(\rho_1) + \varepsilon I(\rho_2) + \varepsilon I(\phi_1)+C_\varepsilon\int_{0}^{T}\int_{0}^{1}e^{2s\varphi}\lambda^4s^4\sigma^4 \left(|G_0|^2+|G|^2\right)\\
            &+C_\varepsilon \int_{0}^{T}\int_{\overline{\mathcal{O}}}\lambda^4s^4\sigma^4 e^{2s\varphi} \left(|\phi_{1}|^2+|\phi_{2}|^2\right)
            +C_\varepsilon \int_{0}^{T}\int_{\overline{\mathcal{O}}}\lambda^8s^8\sigma^8 e^{2s\varphi} |\phi_{1}|^2.    
        \end{split}
    \end{equation}

    To estimate the integral with $|\rho_{2}|^2$ in \eqref{eq:first_absorb} we proceed as follows:
    
    Multiplying the second equation in \eqref{optimaladj1_simp} by $ e^{2s\varphi}\lambda^3s^3\sigma^3 \chi\rho_2$ and integrating over $(0,T)\times (0,1)$, we get
    \begin{equation*}
        \begin{split}
            \int_{0}^{T}\int_{0}^{1}\chi e^{2s\varphi}\lambda^3s^3\sigma^3 |\rho_2|^2\cara_{_{{\mathcal{O}}_{d}}}&=-\int_{0}^{T}\int_{0}^{1}\chi e^{2s\varphi}\lambda^3s^3\sigma^3 \rho_2 \phi_{2t}-\int_{0}^{T}\int_{0}^{1}\chi e^{2s\varphi}\lambda^3s^3\sigma^3 b\left(a(x) \phi_{2x}\right)_x\rho_2\\
            &\qquad +\int_{0}^{T}\int_{0}^{1}\chi e^{2s\varphi}\lambda^3s^3\sigma^3(B\sqrt{a}\phi_2)_{x}\rho_2 +\int_{0}^{T}\int_{0}^{1}\chi e^{2s\varphi}\lambda^3s^3\sigma^3 c_{22}\phi_2\rho_2\\
            &\qquad + \int_{0}^{T}\int_{0}^{1}\chi e^{2s\varphi}\lambda^3s^3\sigma^3 c_{12}\phi_1\rho_2 - \int_{0}^{T}\int_{0}^{1}\chi e^{2s\varphi}\lambda^3s^3\sigma^3 \overline{G}_0\rho_2\\
            &=\mathcal{I}_1+\mathcal{I}_2+\mathcal{I}_3+\mathcal{I}_4+\mathcal{I}_5 + \mathcal{I}_6.        
        \end{split}
    \end{equation*}

    In the same way, since $c_{12}$ and $c_{22}$ are bounded, from Young’s inequality, we have the estimates
    \begin{equation*}
        \begin{split}
            \mathcal{I}_4 &\leq C \int_{0}^{T}\int_{0}^{1}\chi e^{2s\varphi}b\lambda^3s^3\sigma^3 |\phi_2||\rho_2|\leq \varepsilon \int_{0}^{T}\int_{0}^{1}e^{2s\varphi}\lambda^2s^2\sigma^2b^2 |\rho_2|^2+C_\varepsilon \int_{0}^{T}\int_{0}^{1}e^{2s\varphi}\chi^2\lambda^4s^4\sigma^4 |\phi_2|^2 \\
            &\leq \varepsilon I(\rho_2)+C_\varepsilon\int_{0}^{T}\int_{\mathcal{O}}e^{2s\varphi}\lambda^4s^4\sigma^4 |\phi_2|^2,
        \end{split}
    \end{equation*}
    Moreover,
    \begin{equation*}
        \begin{split}
            \mathcal{I}_5 &\leq C \int_{0}^{T}\int_{0}^{1}\chi e^{2s\varphi}b\lambda^3s^3\sigma^3 |\phi_1||\rho_2|\leq \varepsilon \int_{0}^{T}\int_{0}^{1}e^{2s\varphi}\lambda^2s^2\sigma^2b^2 |\rho_2|^2+C_\varepsilon \int_{0}^{T}\int_{0}^{1}e^{2s\varphi}\chi^2\lambda^4s^4\sigma^4 |\phi_1|^2 \\
            &\leq \varepsilon I(\rho_2)+C_\varepsilon\int_{0}^{T}\int_{\mathcal{O}}e^{2s\varphi}\lambda^4s^4\sigma^4 |\phi_1|^2,
        \end{split}
    \end{equation*}
    and
    \begin{equation*}
        \begin{split}
            \mathcal{I}_6&\leq C \int_{0}^{T}\int_{0}^{1}\chi e^{2s\varphi}b\lambda^3s^3\sigma^3 |\overline{G}_0||\rho_2|\leq \varepsilon \int_{0}^{T}\int_{0}^{1}e^{2s\varphi}\lambda^2s^2\sigma^2b^2 |\rho_2|^2+ C_\varepsilon \int_{0}^{T}\int_{0}^{1}e^{2s\varphi}\chi^2\lambda^4s^4\sigma^4 |\overline{G}_0|^2 \\
            &\leq \varepsilon I(\rho_2)+C_\varepsilon\int_{0}^{T}\int_{\mathcal{O}}e^{2s\varphi}\lambda^4s^4\sigma^4 |\overline{G}_0|^2.
        \end{split}
    \end{equation*}
      
    Using integration by parts, we split up $\mathcal{I}_1$ and $\mathcal{I}_2$ in several integrals. Indeed,
    \begin{equation*}
        \begin{split}
            \mathcal{I}_2&=-\int_{0}^{T}\int_{0}^{1}\chi e^{2s\varphi}\lambda^3s^3\sigma^3b(t)\left({a}(x)\phi_{2x}\right)_x\rho_2=\int_{0}^{T}\int_{0}^{1}b(t)\lambda^3s^3(\chi e^{2s\varphi}\sigma^3\rho_{2})_x a(x)\phi_{2x} \\
            &=\int_{0}^{T}\int_{0}^{1}b(t)\lambda^3s^3\chi e^{2s\varphi}\sigma^3\rho_{2x} a(x)\phi_{2x}+\int_{0}^{T}\int_{0}^{1}b(t)\lambda^3s^3(\chi e^{2s\varphi}\sigma^3)_x a(x)\rho_{2}\phi_{2x} \\
            &=\int_{0}^{T}\int_{0}^{1}b(t)\lambda^3s^3\chi e^{2s\varphi}\sigma^3\rho_{2x} a(x)\phi_{2x}-\int_{0}^{T}\int_{0}^{1}b(t)\lambda^3s^3(\chi e^{2s\varphi}\sigma^3)_x a(x)\rho_{2x}\phi_{2}\\
		  &\quad -\int_{0}^{T}\int_{0}^{1}b(t)\lambda^3s^3(a(\chi e^{2s\varphi}\sigma^3)_x)_x	 \rho_{2}\phi_{2}
        \end{split}
    \end{equation*}

%	$$\mathcal{I}_2=-\int_{0}^{T}\int_{0}^{1}\chi e^{2s\varphi}\lambda^3s^3\sigma^3b(t)\left({a}(x)\phi_{2x}\right)_x\rho_2=\int_{0}^{T}\int_{0}^{1}b(t)\lambda^3s^3(\chi e^{2s\varphi}\sigma^3\rho_{2})_x a(x)\phi_{2x}$$
		
%    $$=\int_{0}^{T}\int_{0}^{1}b(t)\lambda^3s^3\chi e^{2s\varphi}\sigma^3\rho_{2x} a(x)\phi_{2x}+\int_{0}^{T}\int_{0}^{1}b(t)\lambda^3s^3(\chi e^{2s\varphi}\sigma^3)_x a(x)\rho_{2}\phi_{2x}$$
	
%    \begin{multline*}
%		=\int_{0}^{T}\int_{0}^{1}b(t)\lambda^3s^3\chi e^{2s\varphi}\sigma^3\rho_{2x} a(x)\phi_{2x}-\int_{0}^{T}\int_{0}^{1}b(t)\lambda^3s^3(\chi e^{2s\varphi}\sigma^3)_x a(x)\rho_{2x}\phi_{2}\\
%		-\int_{0}^{T}\int_{0}^{1}b(t)\lambda^3s^3(a(\chi e^{2s\varphi}\sigma^3)_x)_x	 \rho_{2}\phi_{2}
%	\end{multline*}
	and, recalling that $e^{2s\varphi}$ vanishes at $0$ and $T$, 
		$$\mathcal{I}_1=-\int_{0}^{T}\int_{0}^{1}\chi e^{2s\varphi}\lambda^3s^3\sigma^3 \rho_2 \phi_{1t}=\int_{0}^{T}\int_{0}^{1}\lambda^3s^3 \chi (e^{2s\varphi}\sigma^3\rho_{2})_t \phi_{1}, $$
    and, using the fourth equation of \eqref{optimaladj1_simp}, we have
		\begin{eqnarray*}
			\mathcal{I}_1&=&\int_{0}^{T}\int_{0}^{1}\lambda^3s^3 \chi (e^{2s\varphi}\sigma^3)_t \rho_{2}\phi_{2} +\int_{0}^{T}\int_{0}^{1}\lambda^3s^3 \chi e^{2s\varphi}\sigma^3\phi_{2}  \rho_{2t}
			=\int_{0}^{T}\int_{0}^{1}\lambda^3s^3 \chi (e^{2s\varphi}\sigma^3)_t \rho_{2}\phi_{2} \\
			&&+\int_{0}^{T}\int_{0}^{1}\lambda^3s^3 \chi e^{2s\varphi}\sigma^3\phi_{2} \left(b(t)\left(a(x)\rho_{2x}\right)_x+B\sqrt{a}\rho_{2x}-c_{21}\rho_1-c_{22}\rho_2  +\overline{G} \,  \right) \\
			&=&\int_{0}^{T}\int_{0}^{1}\left(\lambda^3s^3 \chi (e^{2s\varphi}\sigma^3)_t   - \lambda^3s^3 \chi e^{2s\varphi}\sigma^3 c_{22} \right) \rho_{2}\phi_{2}-\int_{0}^{T}\int_{0}^{1}\lambda^3s^3b (\chi e^{2s\varphi}\sigma^3)_x\phi_{2} a(x)\rho_{2x}\\
			&&-\int_{0}^{T}\int_{0}^{1}\lambda^3s^3b \chi e^{2s\varphi}\sigma^3\phi_{2x} a(x)\rho_{2x}+\int_{0}^{T}\int_{0}^{1}\lambda^3s^3b \chi e^{2s\varphi}\sigma^3B\phi_2\sqrt{a}\rho_{2x}\\
			&& -\int_{0}^{T}\int_{0}^{1}\lambda^3s^3b \chi e^{2s\varphi}\sigma^3 c_{21} \rho_2\phi_2+\int_{0}^{T}\int_{0}^{1}\lambda^3s^3b \chi e^{2s\varphi}\sigma^3 \overline{G}\phi_2
		\end{eqnarray*}

		Thus,
		{\small \begin{eqnarray*}
				\mathcal{I}_1+\mathcal{I}_2+\mathcal{I}_3&=&\int_{0}^{T}\int_{0}^{1}\left(\lambda^3s^3 \chi (e^{2s\varphi}\sigma^3)_t   - \lambda^3s^3 \chi e^{2s\varphi}\sigma^3 c_{22} \right) \rho_{2}\phi_{2}-\int_{0}^{T}\int_{0}^{1}\lambda^3s^3b (\chi e^{2s\varphi}\sigma^3)_x\phi_{2} a(x)\rho_{2x}\\
				&&-\int_{0}^{T}\int_{0}^{1}\lambda^3s^3b \chi e^{2s\varphi}\sigma^3\phi_{2x} a(x)\rho_{2x}+\int_{0}^{T}\int_{0}^{1}\lambda^3s^3b \chi e^{2s\varphi}\sigma^3B\phi_2\sqrt{a}\rho_{2x}\\
				&& -\int_{0}^{T}\int_{0}^{1}\lambda^3s^3b \chi e^{2s\varphi}\sigma^3 c_{21} \rho_1\phi_2+\int_{0}^{T}\int_{0}^{1}\lambda^3s^3b \chi e^{2s\varphi}\sigma^3 \overline{G}\phi_2\\
				&&+\int_{0}^{T}\int_{0}^{1}b(t)\lambda^3s^3\chi e^{2s\varphi}\sigma^3\rho_{2x} a(x)\phi_{2x}-\int_{0}^{T}\int_{0}^{1}b(t)\lambda^3s^3(\chi e^{2s\varphi}\sigma^3)_x a(x)\rho_{2x}\phi_{2}\\
				&&-\int_{0}^{T}\int_{0}^{1}b(t)\lambda^3s^3(a(\chi e^{2s\varphi}\sigma^3)_x)_x	 \rho_{2}\phi_{2}+\int_{0}^{T}\int_{0}^{1}\chi e^{2s\varphi}\lambda^3s^3\sigma^3(B\sqrt{a}\phi_2)_{x}\rho_2,
		\end{eqnarray*}}
    and, after simplification, we get
		{\small \begin{eqnarray*}
				\mathcal{I}_1+\mathcal{I}_2+\mathcal{I}_3&=&\int_{0}^{T}\int_{0}^{1}\left(\lambda^3s^3 \chi (e^{2s\varphi}\sigma^3)_t   - \lambda^3s^3 \chi e^{2s\varphi}\sigma^3 c_{22} -b(t)\lambda^3s^3(a(\chi e^{2s\varphi}\sigma^3)_x)_x	\right) \rho_{2}\phi_{2}\\
				&&-2\int_{0}^{T}\int_{0}^{1}\lambda^3s^3b (\chi e^{2s\varphi}\sigma^3)_x\phi_{2} a(x)\rho_{2x}+\int_{0}^{T}\int_{0}^{1}\chi e^{2s\varphi}\lambda^3s^3\sigma^3(B\sqrt{a}\phi_2)_{x}\rho_2\\
				&&+\int_{0}^{T}\int_{0}^{1}\lambda^3s^3b \chi e^{2s\varphi}\sigma^3B\phi_2\sqrt{a}\rho_{2x} -\int_{0}^{T}\int_{0}^{1}\lambda^3s^3b \chi e^{2s\varphi}\sigma^3 c_{21} \rho_1\phi_2\\
				&&+\int_{0}^{T}\int_{0}^{1}\lambda^3s^3b \chi e^{2s\varphi}\sigma^3 \overline{G}\phi_2
		\end{eqnarray*}}
		We see that all the integrals above are of the same type as those in (\ref{ecrho1}). 
		Therefore, the integral where $\rho_2$ appears in \eqref{eq:first_absorb} is estimated similarly to (\ref{ecintrho1}), 
		\begin{multline}\label{ecintrho2}
			\int_{0}^{T}\int_{0}^{1}\chi e^{2s\varphi}\lambda^3s^3\sigma^3 |\rho_2|^2\cara_{_{{\mathcal{O}}_{d}}}\leq \varepsilon I(\rho_2)+\varepsilon I(\phi_2)+C_\varepsilon\int_{0}^{T}\int_{0}^{1}e^{2s\varphi}\lambda^4s^4\sigma^4 \left(|\overline{G}_0|^2+|\overline{G}|^2\right)\\
			C \int_{0}^{T}\int_{\overline{\mathcal{O}}}\lambda^4s^4\sigma^4 e^{2s\varphi} \left(|\phi_{1}|^2+|\phi_{2}|^2\right)
			+C\int_{0}^{T}\int_{\overline{\mathcal{O}}}\lambda^8s^8\sigma^8 e^{2s\varphi} |\phi_{2}|^2.
		\end{multline}

	%Following the same arguments developed, we have the partial result of \eqref{ecintrho1} and \eqref{ecintrho2}.
    Substituting the estimates \eqref{ecintrho1} and \eqref{ecintrho2} in \eqref{eq:first_absorb}, absorbing the terms with $\varepsilon$, leads to the following intermediate estimate:        
	\begin{multline}\label{carlemanintermedio}
        I(\phi_1)+I(\phi_2)+I(\rho_1)+I(\rho_2)\leq C\int_{0}^{T}\int_{0}^{1}e^{2s\varphi}\lambda^4s^4\sigma^4 \left(|{G}_0|^2+|{G}|^2|+\overline{G}_0|^2+|\overline{G}|^2\right)\\
        + C \int_{0}^{T}\int_{\overline{\mathcal{O}}}\lambda^8s^8\sigma^8 e^{2s\varphi} \left(|\phi_{1}|^2+|\phi_{2}|^2 \right).
	\end{multline}
    
	Now, we want to eliminate the local integral term of $\phi_2$ on the right hand side of \eqref{carlemanintermedio}.
	We take a nonempty open set $\overline{\mathcal{O}}$ such that $\widetilde{\mathcal{O}} \subset\overline{\mathcal{O}} \subset \mathcal{O}_d \cap\mathcal{O}$.\\
		
	Let us take $\chi\in C_0^\infty(\mathcal{O})$ satisfying $0\leq \chi \leq 1$	 and $\chi =1$ in $ \widetilde{\mathcal{O}}$. We easily
	see that	
    $$\int_{0}^{T}\int_{\widetilde{\mathcal{O}}}e^{2s\varphi}\lambda^8s^8\sigma^8|\phi_2|^2 \leq \int_{0}^{T}\int_{{\mathcal{O}}}\chi e^{2s\varphi}\lambda^8s^8\sigma^8|\phi_2|^2.$$
    
	Now, multiplying the first equation in \eqref{optimaladj1_simp} by $ e^{2s\varphi}\lambda^8s^8\sigma^8 \chi\phi_2$ and integrating over $(0,T)\times (0,1)$, we get	
			
	\begin{eqnarray}
		\int_{0}^{T}\int_{0}^{1}\chi e^{2s\varphi}\lambda^8s^8\sigma^8 c_{21} |\phi_2|^2 &=& \int_{0}^{T}\int_{0}^{1}\chi e^{2s\varphi}\lambda^8s^8\sigma^8 \phi_2 \phi_{1t}+\int_{0}^{T}\int_{0}^{1}\chi e^{2s\varphi}\lambda^8s^8\sigma^8b(t)\left({a}(x)\phi_{1x}\right)_x\phi_2 \nonumber\\
		&&-\int_{0}^{T}\int_{0}^{1}\chi e^{2s\varphi}\lambda^8s^8\sigma^8(B\sqrt{a}\phi_1)_{x}\phi_2-\int_{0}^{T}\int_{0}^{1}\chi e^{2s\varphi}\lambda^8s^8\sigma^8 c_{11}\phi_1\phi_2 \nonumber \\
		&&+\int_{0}^{T}\int_{0}^{1}\chi e^{2s\varphi}\lambda^8s^8\sigma^8 \phi_2\rho_1\cara_{\mathcal{O}_{d}}+\int_{0}^{T}\int_{0}^{1}\chi e^{2s\varphi}\lambda^8s^8\sigma^8 \phi_2G_0 \nonumber \\
        &=&\varUpsilon_1+\varUpsilon_2+\varUpsilon_3+\varUpsilon_4+\varUpsilon_5+\varUpsilon_6. \label{eq:Y1-Y6}
	\end{eqnarray}
		
%	In the same way, since $c_{11}, c_{21}$ is bounded, from Young's inequality, we have
%	$$I_4\leq C \int_{0}^{T}\int_{0}^{1}\chi e^{2s\varphi}b\lambda^3s^3\sigma^3 |\phi_1||\rho_1|\leq \varepsilon \int_{0}^{T}\int_{0}^{1}e^{2s\varphi}\lambda^2s^2\sigma^2b^2 |\rho_1|^2+\int_{0}^{T}\int_{0}^{1}e^{2s\varphi}\chi^2\lambda^4s^4\sigma^4 |\phi_1|^2$$
%	$$I_4\leq \varepsilon I(\rho_1)+C_\varepsilon\int_{0}^{T}\int_{\mathcal{O}}e^{2s\varphi}\lambda^4s^4\sigma^4 |\phi_1|^2$$
%	$$I_5\leq C \int_{0}^{T}\int_{0}^{1}\chi e^{2s\varphi}\lambda^3s^3\sigma^3b |\phi_2||\rho_1|\leq \varepsilon \int_{0}^{T}\int_{0}^{1}e^{2s\varphi}\lambda^2s^2\sigma^2b^2 |\rho_1|^2+\int_{0}^{T}\int_{0}^{1}e^{2s\varphi}\chi^2\lambda^4s^4\sigma^4 |\phi_2|^2$$
%	$$I_5\leq \varepsilon I(\rho_1)+C_\varepsilon\int_{0}^{T}\int_{\mathcal{O}}e^{2s\varphi}\chi^2\lambda^4s^4\sigma^4 |\phi_2|^2$$

    Now we estimate each term of \eqref{eq:Y1-Y6}. The result is given in \eqref{ecphi2}.
    
	Using integration by parts, we split up $\varUpsilon_1$ and $\varUpsilon_2$ in several integrals. In fact
    \begin{equation*}
        \begin{split}
            \varUpsilon_2 &= \int_{0}^{T}\int_{0}^{1}\chi e^{2s\varphi}\lambda^8s^8\sigma^8b(t)\left({a}(x)\phi_{1x}\right)_x\phi_2=-\int_{0}^{T}\int_{0}^{1}b(t)\lambda^8s^8(\chi e^{2s\varphi}\sigma^3\phi_{2})_x a(x)\phi_{1x}\\
            &=-\int_{0}^{T}\int_{0}^{1}b(t)\lambda^8s^8\chi e^{2s\varphi}\sigma^8\phi_{2x} a(x)\phi_{1x}-\int_{0}^{T}\int_{0}^{1}b(t)\lambda^8s^8(\chi e^{2s\varphi}\sigma^8)_x a(x)\phi_{2}\phi_{1x}\\
            &=-\int_{0}^{T}\int_{0}^{1}b(t)\lambda^8s^8\chi e^{2s\varphi}\sigma^8\phi_{2x} a(x)\phi_{1x}+\int_{0}^{T}\int_{0}^{1}b(t)\lambda^8s^8(\chi e^{2s\varphi}\sigma^8)_x a(x)\phi_{2x}\phi_{1}\\
            &\quad +\int_{0}^{T}\int_{0}^{1}b(t)\lambda^8s^8(a(\chi e^{2s\varphi}\sigma^8)_x)_x	 \phi_{2}\phi_{1}.
        \end{split}
    \end{equation*}
	and, recalling that $e^{2s\varphi}$ vanishes at $0$ and $T$ and using the second equation of \eqref{optimaladj1_simp}, we have
    \begin{equation*}
        \begin{split}
            \varUpsilon_1&=\int_{0}^{T}\int_{0}^{1}\chi e^{2s\varphi}\lambda^8s^8\sigma^8 \phi_2 \phi_{1t}=-\int_{0}^{T}\int_{0}^{1}\lambda^8s^8 \chi (e^{2s\varphi}\sigma^8\phi_{2})_t \phi_{1} \\
            &=-\int_{0}^{T}\int_{0}^{1}\lambda^8s^8 \chi (e^{2s\varphi}\sigma^8)_t \phi_{2}\phi_{1} -\int_{0}^{T}\int_{0}^{1}\lambda^8s^8 \chi e^{2s\varphi}\sigma^8\phi_{1}  \phi_{2t}\\
            &=-\int_{0}^{T}\int_{0}^{1}\lambda^8s^8 \chi (e^{2s\varphi}\sigma^8)_t \phi_{2}\phi_{1} \\
            &\qquad +\int_{0}^{T}\int_{0}^{1}\lambda^8s^8 \chi e^{2s\varphi}\sigma^8\phi_{1} \left(b(t)\left(a(x)\phi_{2x}\right)_x-(B\sqrt{a}\phi_{2})_x - c_{12}\phi_1 -c_{22}\phi_2 + \rho_2\cara_{\mathcal{O}_{d}} + \overline{G}_0  \right).
        \end{split}
    \end{equation*}
    Rewriting the latter we get
	\begin{eqnarray*}
		\varUpsilon_1&=&\int_{0}^{T}\int_{0}^{1}\left(-\lambda^8s^8 \chi (e^{2s\varphi}\sigma^8)_t   - \lambda^8s^8 \chi e^{2s\varphi}\sigma^8 c_{22} \right) \phi_{2}\phi_{1}-\int_{0}^{T}\int_{0}^{1}\lambda^8s^8b (\chi e^{2s\varphi}\sigma^8)_x\phi_{1} a(x)\phi_{2x}\\
		&&-\int_{0}^{T}\int_{0}^{1}\lambda^8s^8b \chi e^{2s\varphi}\sigma^8\phi_{1x} a(x)\phi_{2x}-\int_{0}^{T}\int_{0}^{1}\lambda^8s^8 \chi e^{2s\varphi}\sigma^8\phi_1(B\sqrt{a}\phi_{2})_x\\
		&& +\int_{0}^{T}\int_{0}^{1}\lambda^8s^8 \chi e^{2s\varphi}\sigma^8\phi_1 \rho_2\cara_{\mathcal{O}_{d}} - \int_{0}^{T}\int_{0}^{1}\lambda^8s^8 \chi e^{2s\varphi}\sigma^8 c_{12} |\phi_{1}|^2 + \int_{0}^{T}\int_{0}^{1}\lambda^8s^8 \chi e^{2s\varphi}\sigma^8\phi_{1} \overline{G}_0.
	\end{eqnarray*}	
	Thus,
	{\small \begin{eqnarray*}
			\varUpsilon_1+\varUpsilon_2+\varUpsilon_3&=&\int_{0}^{T}\int_{0}^{1}\left[ -\lambda^8s^8 \chi (e^{2s\varphi}\sigma^8)_t   - \lambda^8s^8 \chi e^{2s\varphi}\sigma^8 c_{22} \right] \phi_{2}\phi_{1}-\int_{0}^{T}\int_{0}^{1}\lambda^8s^8b (\chi e^{2s\varphi}\sigma^8)_x\phi_{1} a(x)\phi_{2x}\\
			&&-\int_{0}^{T}\int_{0}^{1}\lambda^8s^8b \chi e^{2s\varphi}\sigma^8\phi_{1x} a(x)\phi_{2x}-\int_{0}^{T}\int_{0}^{1}\lambda^8s^8 \chi e^{2s\varphi}\sigma^8\phi_1(B\sqrt{a}\phi_{2})_x\\
			&&  +\int_{0}^{T}\int_{0}^{1}\lambda^8s^8 \chi e^{2s\varphi}\sigma^8\phi_1 \rho_2\cara_{\mathcal{O}_{d}} - \int_{0}^{T}\int_{0}^{1}\lambda^8s^8 \chi e^{2s\varphi}\sigma^8 c_{12} |\phi_{1}|^2 + \int_{0}^{T}\int_{0}^{1}\lambda^8s^8 \chi e^{2s\varphi}\sigma^8\phi_{1} \overline{G}_0\\
			&&-\int_{0}^{T}\int_{0}^{1}b(t)\lambda^8s^8\chi e^{2s\varphi}\sigma^8\phi_{2x} a(x)\phi_{1x}+\int_{0}^{T}\int_{0}^{1}b(t)\lambda^8s^8(\chi e^{2s\varphi}\sigma^8)_x a(x)\phi_{2x}\phi_{1}\\
			&&+\int_{0}^{T}\int_{0}^{1}b(t)\lambda^8s^8(a(\chi e^{2s\varphi}\sigma^8)_x)_x	 \phi_{2}\phi_{1}-\int_{0}^{T}\int_{0}^{1}\chi e^{2s\varphi}\lambda^8s^8\sigma^8(B\sqrt{a}\phi_1)_{x}\phi_2
	\end{eqnarray*}}
	{\small \begin{eqnarray*}
		\varUpsilon_1+\varUpsilon_2+\varUpsilon_3&=&\int_{0}^{T}\int_{0}^{1}\left[ -\lambda^8s^8 \chi (e^{2s\varphi}\sigma^8)_t   - \lambda^8s^8 \chi e^{2s\varphi}\sigma^8 c_{22}+b\lambda^8s^8(a(\chi e^{2s\varphi}\sigma^8)_x)_x	 \right] \phi_{1}\phi_{2}\\
		&&-2\int_{0}^{T}\int_{0}^{1}\lambda^8s^8b \chi e^{2s\varphi}\sigma^8\phi_{1x} a(x)\phi_{2x}-\int_{0}^{T}\int_{0}^{1}\lambda^8s^8 \chi e^{2s\varphi}\sigma^8\phi_1(B\sqrt{a}\phi_{2})_x\\
		&&-\int_{0}^{T}\int_{0}^{1}\chi e^{2s\varphi}\lambda^8s^8\sigma^8(B\sqrt{a}\phi_1)_{x}\phi_2  +\int_{0}^{T}\int_{0}^{1}\lambda^8s^8 \chi e^{2s\varphi}\sigma^8\phi_1 \rho_2\cara_{\mathcal{O}_{d}}\\
        &&- \int_{0}^{T}\int_{0}^{1}\lambda^8s^8 \chi e^{2s\varphi}\sigma^8 c_{12} |\phi_{1}|^2 + \int_{0}^{T}\int_{0}^{1}\lambda^8s^8 \chi e^{2s\varphi}\sigma^8\phi_{1} \overline{G}_0\\
		&=& \varPi_1 + \varPi_2 + \varPi_3 + \varPi_4 + \varPi_5 + \varPi_6 + \varPi_7.
    \end{eqnarray*}}	

	In order to estimate $\varPi_1$, we will analyze each term between the square brackets. First, we observe that all the terms are multiplied by $\chi$, which vanishes outside of $\widetilde{\mathcal{O}}$. Clearly,
	$$\left|-\lambda^8s^8 \chi e^{2s\varphi}\sigma^8 c_{22} \right|\leq C\chi\lambda^9s^9  \sigma^9e^{2s\varphi}.$$
		
    Since $|\sigma_x|\leq C\lambda\sigma$, $|\sigma_{xx}|\leq C\lambda^2\sigma$ and $a\in C^1(\widetilde{\mathcal{O}})$, after distributing the derivatives with respect to $x$, we see that
	$$\left|  -b(t)\lambda^8s^8(a(\chi e^{2s\varphi}\sigma^8)_x)_x \right|\leq C \chi s^9\lambda^9\sigma^9e^{2s\varphi}.$$
	
    Likewise, the relations $|\varphi_t|\leq C\sigma^2$ and $|\sigma_t|\leq C\sigma^2$ yield
	$$\left|-\lambda^8s^8 \chi (e^{2s\varphi}\sigma^8)_t  \right|\leq  C \chi s^9\lambda^9\sigma^9e^{2s\varphi}.$$
	
    As a conclusion, from Young's inequality, we have
	\begin{equation*}
        \begin{split}
            \varPi_1 & \leq C \int_{0}^{T}\int_{0}^{1}\chi s^5\lambda^9\sigma^9be^{2s\varphi} |\phi_1||\phi_2| \\
            &\leq \varepsilon \int_{0}^{T}\int_{0}^{1}s^2\lambda^2\sigma^2b^2e^{2s\varphi}|\phi_2|^2 + C_{\varepsilon} \int_{0}^{T}\int_{0}^{1}\chi^2s^{16}\lambda^{16}\sigma^{16}e^{2s\varphi}|\phi_1|^2 \\
            &\leq \varepsilon I(\phi_2)+C_{\varepsilon} \int_{0}^{T}\int_{\mathcal{O}}s^{16}\lambda^{16}\sigma^{16}e^{2s\varphi}|\phi_1|^2.
        \end{split}
	\end{equation*}
	
    Estimate for $\varPi_2:$ Analogously,  
	\begin{eqnarray*}
        \varPi_2&\leq& Cs\lambda\int_{0}^{T}\int_{0}^{1}\left( e^{s\varphi}b\sigma^{1/2}  \sqrt{a}|\phi_{2x}|\right)\left( \chi e^{s\varphi}\lambda^7s^7\sigma^{15/2}  \sqrt{a}|\phi_{1x}| \right) \\
        &\leq& \varepsilon\int_{0}^{T}\int_{0}^{1}e^{2s\varphi}s\lambda\sigma a b^2|\phi_{2x}|^2 + \frac{C_\varepsilon}{m}\int_{0}^{T}\int_{\widetilde{\mathcal{O}}}s^{15}\lambda^{15}\sigma^{15}e^{2s\varphi}b a |\phi_{1x}|^2\\
        &\leq & \varepsilon I(\phi_2)+ \Xi
	\end{eqnarray*}

    Estimate for $\Xi$:
    
    Let $\chi\in C^{\infty}(\widetilde{\mathcal{O}})$ such that satisfying $0\leq \chi \leq 1$	 and $\chi =1$ in $ \mathcal{O}'$. Then, since
    $$
        \int_{0}^{T}\frac{d}{dt}\left[ \int_{0}^{1} \chi e^{2s\varphi}\phi_1^2 dx\right]dt=0,
    $$
    using the first equation in \eqref{optimaladj1_simp}, this implies
    \begin{eqnarray*}
	   %0&=&\int_{0}^{T}\frac{d}{dt}\left[ \int_{0}^{1} \chi e^{2s\varphi}\phi_1^2 dx\right]dt=2\int_{0}^{T}\int_{0}^{1}\chi s\varphi_t e^{2s\varphi}\phi_1^2+2\int_{0}^{T}\int_{0}^{1}\chi e^{2s\varphi}\phi_1 \phi_{1t}\\
	   &&2\int_{0}^{T}\int_{0}^{1}\chi s\varphi_t e^{2s\varphi}\phi_1^2\\
	   &&+2\int_{0}^{T}\int_{0}^{1}\chi e^{2s\varphi}(-b(t)\left({a}(x)\phi_{1x}\right)_x+(B\sqrt{a}\phi_1)_{x}+c_{11}\phi_1+c_{21}\phi_2-\rho_1\cara_{_{{\mathcal{O}}_{d}}}-G_0)\phi_1 = 0.
    \end{eqnarray*}

    Integrating by parts the terms with second-order derivative and using that, from \eqref{eq:def_b_B} $(B\sqrt{a})_x$, is bounded, we get
%    \begin{eqnarray*}
%	   0&=&2\int_{0}^{T}\int_{0}^{1}\chi s\varphi_t e^{2s\varphi}\phi_1^2\\
%	   && +2\int_{0}^{T}\int_{0}^{1}(\chi e^{2s\varphi})_x a\phi_{1x} \phi_1+2\int_{0}^{T}\int_{0}^{1}\chi e^{2s\varphi} a\phi_{1x}^2+2\int_{0}^{T}\int_{0}^{1}\chi e^{2s\varphi}(B\sqrt{a})_x\phi_1^2\\
%	   &&+2\int_{0}^{T}\int_{0}^{1}\chi e^{2s\varphi}B\sqrt{a}\phi_1 \phi_{1x}+2\int_{0}^{T}\int_{0}^{1}\chi e^{2s\varphi}c_{11}\phi_1^2+2\int_{0}^{T}\int_{0}^{1}\chi e^{2s\varphi}c_{21}\phi_1\phi_2\\
%	   &&-2\int_{0}^{T}\int_{0}^{1}\chi e^{2s\varphi}\phi_1 \rho_1\cara_{_{{\mathcal{O}}_{d}}}-2\int_{0}^{T}\int_{0}^{1}\chi e^{2s\varphi}G_0 \phi_1 
%   \end{eqnarray*}
    \begin{equation*}
        \begin{split}
    	   \int_{0}^{T}\int_{0}^{1}\chi e^{2s\varphi} b a \phi_{1x}^2 &= -\int_{0}^{T}\int_{0}^{1}\chi s\varphi_t e^{2s\varphi}\phi_1^2 - \int_{0}^{T}\int_{0}^{1}(\chi e^{2s\varphi})_x b a\phi_{1x} \phi_1 %-\int_{0}^{T}\int_{0}^{1}\chi e^{2s\varphi}(B\sqrt{a})_x\phi_{1}^2 
            \\
	       &\qquad -\int_{0}^{T}\int_{0}^{1}\chi e^{2s\varphi}B\sqrt{a}\phi_1 \phi_{1x}  - \int_{0}^{T}\int_{0}^{1} \chi e^{2s\varphi} (B\sqrt{a})_x \phi_1^2
	       -\int_{0}^{T}\int_{0}^{1}\chi e^{2s\varphi}c_{11}\phi_1^2\\
           &\qquad-\int_{0}^{T}\int_{0}^{1}\chi e^{2s\varphi} c_{21} \phi_{1}\phi_{2}+\int_{0}^{T}\int_{0}^{1}\chi e^{2s\varphi}\phi_1 \rho_1\cara_{_{{\mathcal{O}}_{d}}}
	       +\int_{0}^{T}\int_{0}^{1}\chi e^{2s\varphi}G_0 \phi_1 \\
          &= -\int_{0}^{T}\int_{0}^{1} \chi \left( s\varphi_t + (B\sqrt{a})_x + c_{11} \right) e^{2s\varphi} \phi_1^2 -\int_{0}^{T}\int_{0}^{1}\chi e^{2s\varphi} c_{21} \phi_{1}\phi_{2}\\
        &\qquad -\int_{0}^{T}\int_{0}^{1} (\chi 2s\varphi_x b a + \chi_x b a + \chi B\sqrt{a})e^{2s\varphi}\phi_1 \phi_{1x}
        +\int_{0}^{T}\int_{0}^{1}\chi e^{2s\varphi}\phi_1 \rho_1\cara_{_{{\mathcal{O}}_{d}}}\\
        &\qquad +\int_{0}^{T}\int_{0}^{1}\chi e^{2s\varphi}G_0 \phi_1 
        \end{split}
    \end{equation*}

%    \begin{eqnarray*}
%        \int_{0}^{T}\int_{0}^{1}\chi e^{2s\varphi}a\phi_{1x}^2 &=&-\int_{0}^{T}\int_{0}^{1}\chi (s\varphi_t+(B\sqrt{a})_x+c_{11}) e^{2s\varphi}\phi_1^2-\int_{0}^{T}\int_{0}^{1}\chi e^{2s\varphi} c_{21} \phi_{1}\phi_{2}\\
%        &&-\int_{0}^{T}\int_{0}^{1}\chi (2s\varphi_xa+B\sqrt{a})e^{2s\varphi}\phi_1 \phi_{1x}
%        +\int_{0}^{T}\int_{0}^{1}\chi e^{2s\varphi}\phi_1 \rho_1\cara_{_{{\mathcal{O}}_{d}}}\\
%        &&+\int_{0}^{T}\int_{0}^{1}\chi e^{2s\varphi}G_0 \phi_1
%    \end{eqnarray*}

    Then, using that $c_{11}, c_{21} \in L^\infty(Q)$, that $\varphi, \chi, a(x), b(t), B(x,t)$ and their derivatives are bounded,the fact that $\phi_1 \phi_2\leq \phi_1^2+\phi_2^2$ and $2\phi_i \phi_{ix}= (\phi_i^2)_x$, and integration by parts in the term $(\phi_i^2)_x$, we deduce the estimate
    \begin{eqnarray*}
	   \Xi&\leq &C\int_{0}^{T}\int_{0}^{1}\chi s^{15}\lambda^{15} e^{2s\varphi}|\phi_1|^2+\int_{0}^{T}\int_{0}^{1}\chi s^{15}\lambda^{15} e^{2s\varphi}|\phi_1||\phi_2|+\int_{0}^{T}\int_{0}^{1}\chi s^{15}\lambda^{15} e^{2s\varphi}|\phi_1|^2 \\
	   &&+\int_{0}^{T}\int_{0}^{1}\chi s^{15}\lambda^{15} e^{2s\varphi}|\rho_1|^2\cara_{_{{\mathcal{O}}_{d}}}+\int_{0}^{T}\int_{0}^{1}s^{15}\lambda^{15}	\chi e^{2s\varphi} |G_0| |\phi_1|\\
	   &\leq &C\int_{0}^{T}\int_{0}^{1}\chi s^{15}\lambda^{15} \sigma^{15}e^{2s\varphi}|\phi_1|^2+\varepsilon I(\phi_2)+C_\varepsilon \int_{0}^{T}\int_{\widetilde{\mathcal{O}}} s^{28}\lambda^{28} \sigma^{28}e^{2s\varphi}|\phi_1|^2\\
	   &&+\int_{0}^{T}\int_{\widetilde{\mathcal{O}}}s^{15}\lambda^{15} \sigma^{15} e^{2s\varphi}  |\rho_1|^2+\int_{0}^{T}\int_{0}^{1}s^{14}\lambda^{14}\sigma^{14} e^{2s\varphi}|G_0|^2 +C\int_{0}^{T}\int_{\widetilde{\mathcal{O}}}s^{16}\lambda^{16}	\sigma^{16} e^{2s\varphi}|\phi_1|^2.
    \end{eqnarray*}			
		
%	Now, multiplying the first equation in \eqref{optimaladj1_simp} by $ \chi e^{2s\varphi}s^{16}\lambda^{16}\sigma^{16} \phi_1$ and integrating over $(0,T)\times (0,1)$, we get	
%			\begin{multline*}
%			\int_{0}^{T}\int_{0}^{1}\chi e^{2s\varphi}s^{16}\lambda^{16}\sigma^{16}b(t)\left({a}(x)\phi_{1x}\right)_x\phi_1=-\int_{0}^{T}\int_{0}^{1}\chi e^{2s\varphi}s^{16}\lambda^{16}\sigma^{16} \phi_1 \phi_{1t}\\
%			+\int_{0}^{T}\int_{0}^{1}\chi e^{2s\varphi}s^{16}\lambda^{16}\sigma^{16} c_{11}|\phi_1|^2
%			+\int_{0}^{T}\int_{0}^{1}\chi e^{2s\varphi}s^{16}\lambda^{16}\sigma^{16}(B\sqrt{a}\phi_1)_{x}\phi_1\\
%			+\int_{0}^{T}\int_{0}^{1}\chi e^{2s\varphi}s^{16}\lambda^{16}\sigma^{16} c_{21}\phi_1\phi_2
%			-\int_{0}^{T}\int_{0}^{1}\chi e^{2s\varphi}s^{16}\lambda^{16}\sigma^{16} \phi_1\rho_1\cara_{\mathcal{O}_{d}}=\varUpsilon_1+\varUpsilon_2+\varUpsilon_3+\varUpsilon_4+\varUpsilon_5
%		\end{multline*}

	Estimate for $\varPi_3$ and $\varPi_4$:
    
	We observe that
    $$(\chi e^{2s\varphi}\sigma^8)_x=\chi(2s\varphi_x\sigma^8+8\sigma^7\sigma_x)e^{2s\varphi}\leq \chi(2s\lambda \psi_x \sigma^9+8\sigma^7\lambda \sigma)e^{2s\varphi}\leq C \chi s\lambda \sigma^9e^{2s\varphi}.$$
    
%		$$\varPi_3\leq Cs\lambda\int_{0}^{T}\int_{0}^{1}e^{s\varphi}b^{1/2}\sigma^{1/2}\sqrt{a}\rho_{1x} e^{s\varphi}\chi\lambda^3s^3\sigma^{7/2}b^{-1/2}\sqrt{a}\phi_{1} $$
%		$$J_2\leq \varepsilon \int_{0}^{T}\int_{0}^{1}e^{2s\varphi}s\lambda b\sigma {a}|\rho_{1x}|^2+C_\varepsilon \int_{0}^{T}\int_{0}^{1}\chi^2e^{2s\varphi}\lambda^7s^7 \sigma^{7}b^{-1}{a}|\phi_{1}|^2   $$
%		recalling that $a\in C^1(\mathcal{O})$ and bounded and $b(t)\geq m$.
%		$$J_2\leq \varepsilon I(\rho_1)+C_\varepsilon \int_{0}^{T}\int_{\mathcal{O}}\lambda^8s^8\sigma^8 e^{2s\varphi} |\phi_{1}|^2   $$
%		Estimate for $J_3$
	
    %$$\varPi_3=-\int_{0}^{T}\int_{0}^{1}\chi e^{2s\varphi}\lambda^8s^8\sigma^8(B\sqrt{a}\phi_2)_{x}\phi_1=-\int_{0}^{T}\int_{0}^{1}\lambda^8s^8\chi e^{2s\varphi}\sigma^8\left((B\sqrt{a})_x\phi_2+B\sqrt{a}\phi_{2x} \right)\phi_1$$
    We have 
	\begin{eqnarray*}
	   \varPi_3&=& -\int_{0}^{T}\int_{0}^{1}\lambda^8s^8\chi e^{2s\varphi}\sigma^8\left((B\sqrt{a})_x\phi_2+B\sqrt{a}\phi_{2x} \right)\phi_1 \\
       %&=&-\int_{0}^{T}\int_{0}^{1}\lambda^8s^8\chi e^{2s\varphi}\sigma^8(B\sqrt{a})_x\phi_2\phi_1-\int_{0}^{T}\int_{0}^{1}\lambda^8s^8\chi e^{2s\varphi}\sigma^8B\sqrt{a}\phi_{2x}\phi_1\\
		&=&\mathcal{K}_1+\mathcal{K}_2.
	\end{eqnarray*}
    Then, using Young's inequality,
    \begin{eqnarray*}
	   \mathcal{K}_1&= &   \int_{0}^{T}\int_{0}^{1}\left(e^{s\varphi} \lambda s b \sigma|\phi_2| \right)\left( \chi e^{s\varphi}\lambda^7s^7 \sigma^{7}b^{-1}(B\sqrt{a})_x|\phi_1|  \right)\\
	   &\leq&\varepsilon \int_{0}^{T}\int_{0}^{1}e^{2s\varphi}\lambda^2 s^2 b^2 \sigma^2 |\phi_2|^2+C_\varepsilon  \int_{0}^{T}\int_{0}^{1}\chi^2 e^{2s\varphi}\lambda^{14}s^{14} \sigma^{14} b^{-2}|(B\sqrt{a})_x|^2|\phi_1|^2 \\
	   &\leq&\varepsilon I(\phi_2) +C_\varepsilon \int_{0}^{T}\int_{\widetilde{\mathcal{O}}}e^{2s\varphi}\lambda^{16}s^{16}\sigma^{16}  |\phi_{1}|^2, 
    \end{eqnarray*}
    and, analogously,
    \begin{eqnarray*}
	   \mathcal{K}_2&=&\lambda s \int_{0}^{T}\int_{0}^{1}\left( e^{s\varphi}  b \sigma^{1/2} \sqrt{a}|\phi_{2x}| \right)\left( e^{s\varphi} \lambda^7s^7 \sigma^{15/2}b^{-1}B |\phi_1|  \right)\\
	   &\leq&\varepsilon \int_{0}^{T}\int_{0}^{1}e^{2s\varphi}\lambda s b^2 \sigma a|\phi_{2x}|^2+C_\varepsilon  \int_{0}^{T}\int_{0}^{1}\chi^2e^{2s\varphi}\lambda^{15}s^{15} \sigma^{15}b^{-2}B^2|\phi_1|^2 \\
	   &\leq&\varepsilon I(\phi_2) +C_\varepsilon \int_{0}^{T}\int_{\widetilde{\mathcal{O}}}e^{2s\varphi}\lambda^{16}s^{16}\sigma^{16}  |\phi_{1}|^2.
    \end{eqnarray*}

    %--------------- HASTA AQUI ------------------

    Estimate for $\varPi_4$: Using integration by parts,	
	%$$\varPi_4=-\int_{0}^{T}\int_{0}^{1}\chi e^{2s\varphi}\lambda^8s^8\sigma^8(B\sqrt{a}\phi_1)_{x}\phi_2 = \int_{0}^{T}\int_{0}^{1}\lambda^8s^8\left(\chi e^{2s\varphi}\sigma^8\phi_2\right)_xB\sqrt{a}\phi_1$$
	\begin{eqnarray*}
	   \varPi_4&=& \int_{0}^{T}\int_{0}^{1}\lambda^8s^8\left(\chi e^{2s\varphi}\sigma^8\phi_2\right)_xB\sqrt{a}\phi_1 \\
       &=&\int_{0}^{T}\int_{0}^{1}\lambda^8s^8\left(\chi e^{2s\varphi}\sigma^8\right)_x\phi_2B\sqrt{a}\phi_1+\int_{0}^{T}\int_{0}^{1}\lambda^8s^8\chi e^{2s\varphi}\sigma^8B\sqrt{a}\phi_{2x}\phi_1\\
	   &=&\mathcal{K}_3+\mathcal{K}_2.
	\end{eqnarray*}
    
    The term $\mathcal{K}_2$ has already been estimated. For $\mathcal{K}_3$ we have
    \begin{eqnarray*}
%	   \mathcal{K}_3&\leq&C\int_{0}^{T}\int_{0}^{1}\lambda^9s^9\chi e^{2s\varphi}\sigma^9\phi_2B\sqrt{a}\phi_1\\
	   \mathcal{K}_3&\leq &   \int_{0}^{T}\int_{0}^{1}\left(e^{s\varphi} \lambda s b \sigma|\phi_2| \right)\left( \chi e^{s\varphi}\lambda^8s^8 \sigma^{8}b^{-1}(B\sqrt{a})|\phi_1|  \right)\\
	   &\leq&\varepsilon \int_{0}^{T}\int_{0}^{1}e^{2s\varphi}\lambda^2 s^2 b^2 \sigma^2 |\phi_2|^2+C_\varepsilon  \int_{0}^{T}\int_{0}^{1}\chi^2 e^{2s\varphi}\lambda^{16}s^{16} \sigma^{16} b^{-2}|B\sqrt{a}|^2|\phi_1|^2 \\
	   &\leq&\varepsilon I(\phi_2) + C_\varepsilon \int_{0}^{T}\int_{\widetilde{\mathcal{O}}}e^{2s\varphi}\lambda^{16}s^{16}\sigma^{16}  |\phi_{1}|^2.
    \end{eqnarray*}		
	
    Estimate for $\varPi_5$:
	\begin{eqnarray*}
	   \varPi_5&=&\int_{0}^{T}\int_{0}^{1}\left(e^{s\varphi}\lambda s \sigma b |\phi_1|\right)\left( e^{s\varphi} \lambda^7s^7 \chi \sigma^7 b^{-1} |\rho_2|\cara_{\mathcal{O}_{d}}  \right)\\
	   &\leq& \varepsilon \int_{0}^{T}\int_{0}^{1}e^{2s\varphi}\lambda^2 s^2 \sigma^2 b^2 |\phi_1|^2+C_\varepsilon\int_{0}^{T}\int_{0}^{1}\chi^2 e^{2s\varphi}\lambda^{14}s^{14} \sigma^{14} b^{-2} |\rho_2|^2\cara_{\mathcal{O}_{d}}\\
	   &\leq&\varepsilon I(\phi_1) +C_\varepsilon \int_{0}^{T}\int_{\widetilde{\mathcal{O}}}e^{2s\varphi}\lambda^{16}s^{16}\sigma^{16}  |\rho_2|^2\cara_{\mathcal{O}_{d}}.
	\end{eqnarray*}

    Clearly, 
    $$
        \varPi_6 \leq C \int_{0}^{T}\int_{\widetilde{\mathcal{O}}}e^{2s\varphi}\lambda^{16}s^{16}\sigma^{16}  |\phi_{1}|^2,
    $$
	and, as above,
    \begin{equation*}
        \begin{split}
            \varPi_7 & = \int_{0}^{T}\int_{0}^{1}\lambda^8s^8 \chi e^{2s\varphi}\sigma^8 |\phi_{1}| |\overline{G}_0| \\
            & \leq \varepsilon I(\varphi_1) + C_\varepsilon \int_{0}^{T}\int_{\widetilde{\mathcal{O}}} \lambda^{14} s^{14} e^{2s\varphi}\sigma^{14} \phi_{1} |\overline{G}_0|^2.
        \end{split}
    \end{equation*}
    This finishes the estimate of $\varUpsilon_1+\varUpsilon_2+\varUpsilon_3$.
    
    \noindent Estimate for $\varUpsilon_4$:
	\begin{eqnarray*}
		\varUpsilon_4&\leq&C\int_{0}^{T}\int_{0}^{1}(e^{s\varphi}\lambda s \sigma b|\phi_2|)(\chi e^{s\varphi}\lambda^7s^7\sigma^7b^{-1}|\phi_1|)\\
		&\leq& \varepsilon I(\phi_2)+C_\varepsilon \int_{0}^{T}\int_{\widetilde{\mathcal{O}}}e^{2s\varphi}\lambda^{16}s^{16}\sigma^{16} |\phi_1|^2.
	\end{eqnarray*}

	\noindent Estimate for $\varUpsilon_5$:
	\begin{eqnarray*}
		\varUpsilon_5&\leq&\int_{0}^{T}\int_{0}^{1}(e^{s\varphi}\lambda s \sigma b|\phi_2|)(\chi e^{s\varphi}\lambda^7s^7\sigma^7b^{-1}|\rho_1|
        %\chi e^{2s\varphi}\lambda^8s^8\sigma^8 \phi_2\rho_1
        \cara_{\mathcal{O}_{d}})\\
		&\leq& \varepsilon I(\phi_2)+C_\varepsilon \int_{0}^{T}\int_{\widetilde{\mathcal{O}}}e^{2s\varphi}\lambda^{16}s^{16}\sigma^{16} |\rho_1|^2.
	\end{eqnarray*}
    
	\noindent Estimate for $\varUpsilon_6$:
	\begin{eqnarray*}
		\varUpsilon_6&\leq&\int_{0}^{T}\int_{0}^{1}(e^{s\varphi}\lambda s \sigma b|\phi_2|)(\chi e^{s\varphi}\lambda^7s^7\sigma^7b^{-1}|G_0|)\\
		&\leq& \varepsilon I(\phi_2)+C_\varepsilon \int_{0}^{T}\int_{\widetilde{\mathcal{O}}}e^{2s\varphi}\lambda^{14}s^{14}\sigma^{14} |G_0|^2.
	\end{eqnarray*}
    
    This Finishes the proof of the estimate of $\varUpsilon_1+\varUpsilon_2+\varUpsilon_3+\varUpsilon_4+\varUpsilon_5+\varUpsilon_6$. 
    This allows us to estimate \eqref{eq:Y1-Y6}. Using that $c_{21}$ is bounded, we get
	\begin{multline}\label{ecphi2}
				\int_{0}^{T}\int_{0}^{1}\chi e^{2s\varphi}\lambda^8s^8\sigma^8 |\phi_2|^2\leq \varepsilon I(\phi_2)+\int_{0}^{T}\int_{0}^{1}e^{2s\varphi}\lambda^{14}s^{14}\sigma^{14} |G_0|^2
				\\ \hspace*{5cm}+C\int_{0}^{T}\int_{\widetilde{\mathcal{O}}}e^{2s\varphi}\lambda^{16}s^{16}\sigma^{16} |\phi_1|^2+C_\varepsilon \int_{0}^{T}\int_{\widetilde{\mathcal{O}}} s^{28}\lambda^{28} \sigma^{28}e^{2s\varphi}|\phi_1|^2\\
				+C_\varepsilon \int_{0}^{T}\int_{\widetilde{\mathcal{O}}}e^{2s\varphi}\lambda^{16}s^{16}\sigma^{16} \left(|\rho_1|^2+|\rho_2|^2\right).
	\end{multline}
    
    To estimate the last integral in \eqref{ecphi2}, we recall that $\phi$ and $\sigma$ are bounded in $\widetilde{\mathcal{O}}$, then
	$$\int_{0}^{T}\int_{\widetilde{\mathcal{O}}}e^{2s\varphi}\lambda^{16}s^{16}\sigma^{16} \left(|\rho_1|^2+|\rho_2|^2\right)\leq C\lambda^{16}s^{16}\int_{0}^{T}\int_{\widetilde{\mathcal{O}}}\left(|\rho_1|^2+|\rho_2|^2\right).$$
		
    Standard energy estimates for the third and fourth equations of the system \eqref{optimaladj1_simp} give
	\begin{equation}\label{ecenergia}
        \int_{0}^{T}\int_{\widetilde{\mathcal{O}}}\left(|\rho_1|^2+|\rho_2|^2\right)\leq C \left(\frac{\alpha_{1}^2}{\mu_1^2}+\frac{\alpha_{2}^2}{\mu_2^2}\right)\int_{0}^{T}\int_{\widetilde{\mathcal{O}}}|\rho_*^{-2}\phi_1|^2 + C \int_0^T \int_0^1 |G|^2 + |\overline{G}|^2.
	\end{equation}
	
    Using the definition of $\rho_*(t)$, we get $\rho_*^{-2}\leq e^{2\varphi_*}\leq e^{s\varphi}$\\
	Then, the inequality \eqref{ecenergia} becomes
	\begin{equation}\label{ecenergia2}
        \int_{0}^{T}\int_{\widetilde{\mathcal{O}}}\left(|\rho_1|^2+|\rho_2|^2\right)\leq C \left(\frac{\alpha_{1}^2}{\mu_1^2}+\frac{\alpha_{2}^2}{\mu_2^2}\right)\int_{0}^{T}\int_{\widetilde{\mathcal{O}}}e^{2s\varphi}|\phi_1|^2 + C \int_0^T \int_0^1 |G|^2 + |\overline{G}|^2.
	\end{equation}

    Combining \eqref{ecphi2} with \eqref{ecenergia2}, we obtain
	\begin{multline}\label{ecphi22}
	   \int_{0}^{T}\int_{0}^{1}\chi e^{2s\varphi}\lambda^8s^8\sigma^8 |\phi_2|^2\leq C \int_{0}^{T}\int_{0}^{1}e^{2s\varphi}\lambda^{14}s^{14}\sigma^{14}|G_0|^2  + C\int_{0}^{T}\int_{\widetilde{\mathcal{O}}}e^{2s\varphi}\lambda^{16}s^{16}\sigma^{16} |\phi_1|^2 \\
       +C_\varepsilon \int_{0}^{T}\int_{\widetilde{\mathcal{O}}} s^{28}\lambda^{28} \sigma^{28}e^{2s\varphi}|\phi_1|^2 +  C \int_0^T \int_0^1 |G|^2 + |\overline{G}|^2.
	\end{multline}

    Thus, substituting \eqref{ecphi22} in the intermediate Carleman estimate \eqref{carlemanintermedio}, we get
	\begin{multline}\label{carlemanfin}
	   I(\phi_1)+I(\phi_2)+I(\rho_1)+I(\rho_2)\leq C\int_{0}^{T}\int_{0}^{1}e^{2s\varphi}\lambda^{14}s^{14}\sigma^{14} \left(|{G}_0|^2+|{G}|^2+|\overline{G}_0|^2+|\overline{G}|^2\right)\\
	   + C \int_{0}^{T}\int_{{\mathcal{O}}}s^{28}\lambda^{28} \sigma^{28} e^{2s\varphi} |\phi_{1}|^2.
    \end{multline}
	This ends the proof.
\end{proof}

\section{Proof of Proposition \ref{propocarleman23}.}\label{appendix_b}
\begin{proof}
    The proof is very similar to the case of one equation as was done in Proposition 4 of \cite{GYL-equation-2025}.
    %, then we give only the main steps. 
    We decompose the integral $\Gamma_0(\phi_1,\phi_2,\rho_1,\rho_2)$ as
    $$\Gamma_0(\phi_1,\phi_2,\rho_1,\rho_2)=\Gamma^1_0(\phi_1,\phi_2,\rho_1,\rho_2)+\Gamma^2_0(\phi_1,\phi_2,\rho_1,\rho_2),$$
	where
	\begin{multline*}
	   \Gamma^1_0(\phi_1,\phi_2,\rho_1,\rho_2) = \int_{0}^{T/2}\int_{0}^{1} e^{2s A}(s\lambda) \zeta b^2 a (|\phi_{1x}|^2 + |\phi_{2x}|^2+|\rho_{1x}|^2 + |\rho_{2x}|^2)dxdt\\
	   +\int_{0}^{T/2}\int_{0}^{1} e^{2s A}(s\lambda)^2 {\zeta}^2 b^2 (|\phi_1|^2 + |\phi_2|^2+|\rho_1|^2 + |\rho_2|^2)dxdt
	\end{multline*}
    and
	\begin{multline*}
	   \Gamma^2_0(\phi_1,\phi_2,\rho_1,\rho_2) = \int_{T/2}^{T}\int_{0}^{1} e^{2s A}(s\lambda) \zeta b^2 a (|\phi_{1x}|^2 + |\phi_{2x}|^2+|\rho_{1x}|^2 + |\rho_{2x}|^2)dxdt\\
	   +\int_{T/2}^{T}\int_{0}^{1} e^{2s A}(s\lambda)^2 {\zeta}^2 b^2 (|\phi_1|^2 + |\phi_2|^2+|\rho_1|^2 + |\rho_2|^2)dxdt.
    \end{multline*}
            
%	We will prove that, for $i=1,2$,
%	\begin{multline*}
%	   \Gamma^i_0(\phi_1,\phi_2,\rho_1,\rho_2) \leq C \left( \int_{0}^{T}\int_{0}^{1}e^{2sA}\lambda^{14}s^{14}\zeta^{14} \left(|{G}_0|^2+|{G}|^2+|\overline{G}_0|^2+|\overline{G}|^2\right)\right. \\
%       \left.+ C \int_{0}^{T}\int_{{\mathcal{O}}}e^{2sA} s^{28}\lambda^{28} \zeta^{28} |\phi_{1}|^2\right).
%%		\Gamma^2_0(\phi_1,\phi_2,\rho_1,\rho_2) \leq C \left( \int_{0}^{T}\int_{0}^{1}e^{2sA}\lambda^{14}s^{14}\zeta^{14} \left(|{G}_0|^2+|{G}|^2+|\overline{G}_0|^2+|\overline{G}|^2\right)\right. \\
%%		\left.+ C \int_{0}^{T}\int_{{\mathcal{O}}}e^{2sA} s^{28}\lambda^{28} \zeta^{28} |\phi_{1}|^2\right)
%    \end{multline*}
    
	In order to estimate $	\Gamma^2_0(\phi_1,\phi_2,\rho_1,\rho_2)$, let us observe that $e^{2s\varphi}\sigma^n\leq Ce^{2sA}\zeta^n$ for all $(x,t)\in [0,1]\times[0,T]$ and $n\geq 0$. Since $\tau=\theta$ and $A=\varphi$ in $[T/2, T]$, the Carleman inequality \eqref{carlemansistema} directly implies that
	\begin{multline}\label{eq:est_mitad2}
        \Gamma^2_0(\phi_1,\phi_2,\rho_1,\rho_2) \leq C \left( \int_{0}^{T}\int_{0}^{1}e^{2sA}\lambda^{14}s^{14}\zeta^{14} \left(|{G}_0|^2+|{G}|^2+|\overline{G}_0|^2+|\overline{G}|^2\right)\right. \\
        \left.+ C \int_{0}^{T}\int_{{\mathcal{O}}}e^{2sA} s^{28}\lambda^{28} \zeta^{28} |\phi_{1}|^2\right).
    \end{multline}
    
	Now, we will prove an analogous estimate for $\Gamma^1_0(\phi_1,\phi_2,\rho_1,\rho_2)$ arguing as in \cite{DemarqueLimacoViana2020}. Multiplying the first, second, third and fourth equations  of \eqref{optimaladj1_simp} by $\phi_1, \phi_2, \rho_1$ and $\rho_2$, respectively, and integrating over $[0, 1]$, we obtain	
	{\small $$\hspace*{-0.6cm}		-          \int_{0}^{1}\phi_{1t}\phi_1 - \int_{0}^{1}b(t)(a(x)\phi_{1x})_x\phi_1  - \int_{0}^{1}d_1\sqrt{a}\phi_{1x}\phi_1 + \int_{0}^{1}b_{11}\phi_1^2 + \int_{0}^{1}b_{12}\phi_1\phi_2 -\int_{0}^{1}\phi_1\rho_1\cara_{\mathcal{O}_{d}}=\int_{0}^{1}G_0\phi_1, $$
	}
	{\small $$\hspace*{-0.6cm}		-  \int_{0}^{1}\phi_{2t}\phi_2 - \int_{0}^{1}b(t)(a(x)\phi_{2x})_x\phi_2  - \int_{0}^{1}d_2\sqrt{a}\phi_{2x}\phi_2 + \int_{0}^{1}b_{22}\phi_2^2  -\int_{0}^{1}\phi_2\rho_2\cara_{\mathcal{O}_{d}}=\int_{0}^{1} \overline{G}_0\phi_1, $$
	}
	{\small $$\hspace*{-0.6cm}		-\int_{0}^{1}\rho_{1t}\rho_1 - \int_{0}^{1}b(t)(a(x)\rho_{1x})_x\rho_1  - \int_{0}^{1}d_3\sqrt{a}\rho_{1x}\rho_1 + \int_{0}^{1}b_{11}\rho_1^2  +\int_{0}^{1}\rho_*^{-2}d\phi_1\rho_1=\int_{0}^{1} {G}\rho_1, $$
	}
	{\small $$\hspace*{-0.6cm}		-\int_{0}^{1}\rho_{2t}\rho_2 - \int_{0}^{1}b(t)(a(x)\rho_{2x})_x\rho_2  - \int_{0}^{1}d_4\sqrt{a}\rho_{2x}\rho_2+ \int_{0}^{1}b_{12}\rho_1\rho_2 + \int_{0}^{1}b_{22}\rho_2^2  =\int_{0}^{1} \overline{G}\rho_2. $$
	}
    
	We use the notations $F_1=G_0$, $F_2=\overline{G}_0$, $F_3=G$ and $F_4=\overline{G}$. Using energy estimates as in Appendix A. of \cite{GYL-equation-2025},
    %\ref{apenA1} of the first chapter, 
    we have, for some $C>0$,
	\begin{multline}\label{ecsiste310}
		-\frac{1}{2} \sum_{i=1}^2 \frac{d}{dt}\left( \|\phi_i\|^2_{L^2(0,1)}+\|\rho_i\|^2_{L^2(0,1)}\right)-C \sum_{i=1}^2 \left(\|\phi_i\|^2_{L^2(0,1)}+\|\rho_i\|^2_{L^2(0,1)}\right)\\
		+ \sum_{i=1}^2 \left(\|\sqrt{a} \phi_i\|^2_{L^2(0,1)}+\|\sqrt{a} \rho_i\|^2_{L^2(0,1)}\right)\leq \sum_{i=1}^4 \|F_i\|^2_{L^2(0,1)},
	\end{multline}
	which implies
	$$- \sum_{i=1}^2 \frac{d}{dt}\left[e^{Ct}\left(\|\phi_i\|^2_{L^2(0,1)}+\|\rho_i\|^2_{L^2(0,1)}\right)\right]\leq e^{Ct}\left( \sum_{i=1}^4 \|F_i\|^2_{L^2(0,1)} \right)$$
	
    Integrating from a $t\in [0,T/2]$ to $t+T/4$, we get
%	$$-\int_{t}^{t+T/4}\frac{d}{dt}\left[e^{Ct}\left(\|y\|^2_{L^2(0,1)}+\|z\|^2_{L^2(0,1)}\right)\right]dt\leq\int_{t}^{t+T/4} e^{Ct}\left(\|F_1\|^2_{L^2(0,1)}+\|F_2\|^2_{L^2(0,1)}\right)dt$$
	\begin{multline*}
		e^{Ct} \sum_{i=1}^2 \left(\|\phi_i\|^2_{L^2(0,1)}+\|\rho_i\|^2_{L^2(0,1)}\right)-e^{C(t+T/4)} \sum_{i=1}^2 \left(\|\phi_i(t+T/4)\|^2_{L^2(0,1)}+\|\rho_i(t+T/4)\|^2_{L^2(0,1)}\right)\\ \leq\int_{t}^{t+T/4} e^{Ct} \sum_{i=1}^2 \|F_i\|^2_{L^2(0,1)}dt.
	\end{multline*}
	Thus, 
    \begin{multline*}
        \sum_{i=1}^2 \left( \|\phi_i\|^2_{L^2(0,1)}+\|\rho_i\|^2_{L^2(0,1)} \right) \leq e^{CT}\int_{0}^{3T/4} \sum_{i=1}^4 \|F_i\|^2_{L^2(0,1)}dt\\
		+e^{3CT/4} \sum_{i=1}^2 \left(\|\phi_i(t+T/4)\|^2_{L^2(0,1)}+\|\rho_i(t+T/4)\|^2_{L^2(0,1)}\right).
	\end{multline*}
    
	Integrating again from 0 to $T/2$, we have	
%	\begin{multline*}
%				\int_{0}^{T/2}\left(\|y\|^2_{L^2(0,1)}+\|z\|^2_{L^2(0,1)}\right) \leq \int_{0}^{T/2}\left[e^{CT}\int_{0}^{3T/4} \left(\|F_1(t)\|^2_{L^2(0,1)}+\|F_2(t)\|^2_{L^2(0,1)}\right)dt\right]dt\\
%				+\int_{0}^{T/2}e^{3CT/4}\left(\|y(t+T/4)\|^2_{L^2(0,1)}+\|z(t+T/4)\|^2_{L^2(0,1)}\right)
%	\end{multline*}
%	\vspace*{-1.0cm}
	\begin{multline}\label{ecsis311}
		\int_{0}^{T/2} \sum_{i=1}^2 \left(\|\phi_i\|^2_{L^2(0,1)}+\|\rho_i\|^2_{L^2(0,1)}\right) \leq \frac{T}{2}e^{CT}\int_{0}^{3T/4} \sum_{i=1}^4 \|F_i\|^2_{L^2(0,1)}dt\\
		+e^{3CT/4}\int_{T/4}^{3T/4} \sum_{i=1}^2 \left(\|\phi_i(t)\|^2_{L^2(0,1)}+\|\rho_i(t)\|^2_{L^2(0,1)}\right).
    \end{multline}
    
	Now, integrating inequality \eqref{ecsiste310} over $[0,t]$, for $t\in [0, T]$,
	\begin{multline}\label{ecsiste312}
		\int_{0}^{t} \sum_{i=1}^2 \left(\|\sqrt{a}\phi_i\|^2_{L^2(0,1)}+\|\sqrt{a}\rho_i\|^2_{L^2(0,1)}\right) \leq \frac{1}{2} \sum_{i=1}^2 \left(\|\phi_i\|^2_{L^2(0,1)}+\|\rho_i\|^2_{L^2(0,1)}\right)\\
		+C\left[ \int_{0}^{t} \sum_{i=1}^2 \left(\|\phi_i\|^2_{L^2(0,1)}+\|\rho_i\|^2_{L^2(0,1)}\right)+ \int_{0}^{t} \sum_{i=1}^4 \|F_i\|^2_{L^2(0,1)}\right].
	\end{multline}
	
    Let us consider the double integral 	$$I=\int_{T/2}^{3T/4}\int_{0}^{t} \sum_{i=1}^2\left(\|\sqrt{a}\phi_i(\tau)\|^2_{L^2(0,1)}+\|\sqrt{a}\rho_i(\tau)\|^2_{L^2(0,1)}\right)d\tau dt$$
	and define $$J(t)=\int_{0}^{t} \sum_{i=1}^2 \left(\|\sqrt{a}\phi_i(\tau)\|^2_{L^2(0,1)}+\|\sqrt{a}\rho_i(\tau)\|^2_{L^2(0,1)}\right)d\tau.$$
	
    Since  $J(t)$ is non-decreasing, we have that 
    %. For all $t\in[T/2,3T/4] $, we have $J(t)\geq J(T/2)$. Similarly, for $t\in[T/2,3T/4] $, we have  
    $J(T/2) \leq J(t)\leq  J(3T/4)$, for $t \in [T/2,3T/4]$.
	Then we can bound the integral $I$ as follows:\\
%	For $t\in[T/2,3T/4] $, we have $J(t)\geq J(T/2)$. Thus,
	$$I=\int_{T/2}^{3T/4}J(t)dt\geq \int_{T/2}^{3T/4}J(T/2)dt=\frac{T}{4}J(T/2).$$
    
	That is,
    \begin{equation}
        \label{eq:est_I_1}
        I\geq \frac{T}{4}\int_{0}^{T/2} \sum_{i=1}^2\left(\|\sqrt{a}\phi_i(\tau)\|^2_{L^2(0,1)}+\|\sqrt{a}\rho_i(\tau)\|^2_{L^2(0,1)}\right)d\tau.  
    \end{equation}
    
	Similarly, using that $J(t)\leq  J(3T/4)$, we also have
	$$I\leq \frac{T}{4}\int_{0}^{3T/4} \sum_{i=1}^2\left(\|\sqrt{a}\phi_i(\tau)\|^2_{L^2(0,1)}+\|\sqrt{a}\rho_i(\tau)\|^2_{L^2(0,1)}\right)d\tau.$$
    
	%Combining both estimates, we obtain:
	%{\small $$
    %\hspace*{-0.5cm}
    %\int_{0}^{T/2} \sum_{i=1}^2 \left(\|\sqrt{a}\phi_i(\tau)\|^2_{L^2(0,1)}+\|\sqrt{a}\rho_i(\tau)\|^2_{L^2(0,1)}\right)d\tau \leq \frac{4}{T}I\leq \int_{0}^{3T/4} \sum_{i=1}^2\left(\|\sqrt{a}\phi_i(\tau)\|^2_{L^2(0,1)}+\|\sqrt{a}\rho_i(\tau)\|^2_{L^2(0,1)}\right)d\tau.$$}
	
    On the other hand, integrating inequality \eqref{ecsiste312} from $T/2$ to $3T/4$, we get 
	\begin{multline}\label{eq:estimate_T2_3T4}
		\int_{T/2}^{3T/4}\int_{0}^{t} \sum_{i=1}^2 \left(\|\sqrt{a}\phi_i(\tau)\|^2_{L^2(0,1)}+\|\sqrt{a}\rho_i(\tau)\|^2_{L^2(0,1)}\right)d\tau dt \leq \frac{1}{2}\int_{T/2}^{3T/4} \sum_{i=1}^2 \left(\|\phi_i\|^2_{L^2(0,1)}+\|\rho_i\|^2_{L^2(0,1)}\right) dt\\
		+ C\left[\int_{T/2}^{3T/4} \int_{0}^{t} \sum_{i=1}^2 \left(\|\phi_i(\tau)\|^2_{L^2(0,1)}+\|\rho_i(\tau)\|^2_{L^2(0,1)}\right)d\tau dt + \int_{T/2}^{3T/4}\int_{0}^{t} \sum_{i=1}^4 \|F_i\|^2_{L^2(0,1)} d\tau dt \right].
	\end{multline}
    
	Using \eqref{ecsis311} we estimate the following term of the right-hand side of \eqref{eq:estimate_T2_3T4}, 
    \begin{equation*}
        \begin{split}
            &\int_{T/2}^{3T/4} \int_{0}^{t} \sum_{i=1}^2 \left(\|\phi_i(\tau)\|^2_{L^2(0,1)}+\|\rho_i(\tau)\|^2_{L^2(0,1)}\right)d\tau dt \\
            &\qquad \leq \frac{T}{4} \left( \int_{0}^{T/2} + \int_{T/2}^{3T/4} \right) \sum_{i=1}^2 \left(\|\phi_i(\tau)\|^2_{L^2(0,1)}+\|\rho_i(\tau)\|^2_{L^2(0,1)}\right)d\tau \\
            &\qquad \leq C \left[ \int_{0}^{3T/4} \sum_{i=1}^4 \|F_i\|^2_{L^2(0,1)}dt
		      + \int_{T/4}^{3T/4} \sum_{i=1}^2 \left(\|\phi_i(t)\|^2_{L^2(0,1)}+\|\rho_i(t)\|^2_{L^2(0,1)}\right) \right] \\
            &\qquad \quad + \frac{T}{4} \int_{T/2}^{3T/4} \sum_{i=1}^2 \left(\|\phi_i(\tau)\|^2_{L^2(0,1)}+\|\rho_i(\tau)\|^2_{L^2(0,1)}\right)d\tau.
        \end{split}
    \end{equation*}

    Substituting the latter in \eqref{eq:estimate_T2_3T4}, we get, for some constant $C>0$,
    \begin{multline}\label{eq:est_T2_3T4_1}
		\int_{T/2}^{3T/4}\int_{0}^{t} \sum_{i=1}^2\left(\|\sqrt{a}\phi_i(\tau)\|^2_{L^2(0,1)}+\|\sqrt{a}\rho_i(\tau)\|^2_{L^2(0,1)}\right)d\tau dt\leq\\
		C\left[\int_{T/2}^{3T/4} \sum_{i=1}^2 \left(\|\phi_i(t)\|^2_{L^2(0,1)}+\|\rho_i(t)\|^2_{L^2(0,1)}\right)dt+ \int_{T/2}^{3T/4}\int_{0}^{t} \sum_{i=1}^4 \|F_i(\tau)\|^2_{L^2(0,1)} d\tau dt\right].
	\end{multline}
	
    Using the bound \eqref{eq:est_I_1}, the inequality (\ref{eq:est_T2_3T4_1}) becomes, for another constant $C>0$,
	\begin{multline}\label{ecsis314}
        \int_{0}^{T/2} \sum_{i=1}^2 \left(\|\sqrt{a}\phi_i(\tau)\|^2_{L^2(0,1)}+\|\sqrt{a}\rho_i(\tau)\|^2_{L^2(0,1)}\right)d\tau dt\leq\\
		C\left[\int_{T/4}^{3T/4} \sum_{i=1}^2\left(\|\phi_i(t)\|^2_{L^2(0,1)}+\|\rho_i(t)\|^2_{L^2(0,1)}\right)dt + \int_{0}^{3T/4} \sum_{i=1}^4 \|F_i(t)\|^2_{L^2(0,1)} dt\right].
	\end{multline}
	
    Finally, we observe that $e^{2sA}(s\lambda \zeta)^n$ and $e^{2sA}(s\lambda \sigma)^n$ are bounded in $[0, T/2]$ and $[T/4, 3T/4]$ respectively, for all $n\in \mathbb{Z}$. Hence, estimates (\ref{ecsis311}) and (\ref{ecsis314}) 
    %and Carleman's Inequality \eqref{carlemansistema} 
    imply
	\begin{eqnarray*}
	   \Gamma^1_0(\phi_1,\phi_2,\rho_1,\rho_2) &=&\int_{0}^{T/2}\int_{0}^{1} e^{2s A} \sum_{i=1}^2 \left((s\lambda) \zeta b^2 a (|\phi_{ix}|^2 + |\rho_{ix}|^2) + (s\lambda)^2 {\zeta}^2 b^2 (|\phi_i|^2 + |\rho_i|^2)\right)dxdt\\
        &\leq&C\int_{0}^{T/2}\int_{0}^{1} \sum_{i=1}^2 \left(  a (|\phi_{ix}|^2 + |\rho_{ix}|^2) +  (|\phi_i|^2 + |\rho_i|^2) \right)dxdt\\
		&\leq&C\left[\int_{T/4}^{3T/4}\int_{0}^{1} \sum_{i=1}^2\left(|\phi_i|^2 + |\rho_i|^2\right)dxdt+ \int_{0}^{3T/4}\int_{0}^{1} \sum_{i=1}^4 |F_i|^2 dxdt\right]\\
        &\leq& C\left[ \sum_{i=1}^2 \int_{T/4}^{3T/4}\int_{0}^{1}e^{2s \varphi} (s\lambda)^2 {\sigma}^2 b^2\left(|\phi_i|^2 + |\rho_i|^2\right)dxdt\right.\\
		&&\qquad + \left. \sum_{i=1}^4 \int_{0}^{3T/4}\int_{0}^{1}e^{2s \varphi} (s\lambda)^{14} {\zeta}^{14} |F_i|^2 dxdt\right].
	\end{eqnarray*}
%	\vspace*{-0.6cm}
%	\begin{multline*}
%				\leq C\left[\int_{T/4}^{3T/4}\int_{0}^{1}e^{2s \varphi} (s\lambda)^2 {\sigma}^2 b^2\left(|y|^2+|z|^2\right)dxdt\right.+\\
%				\left.\int_{0}^{3T/4}\int_{0}^{1}e^{2s \varphi} (s\lambda)^4 {\sigma}^4 \left(|F_1|^2+|F_2|^2\right) dxdt\right]
%	\end{multline*}

    Finally, by Carleman's inequality (\ref{carlemansistema}), the last inequality gives 
    $$\Gamma^1_0(\phi_1,\phi_2,\rho_1,\rho_2) \leq C\left[ 
	\int_{0}^{T}\int_{\mathcal{O}}e^{2s A} (s\lambda)^{28} {\zeta}^{28} |\phi_1|^2  dxdt + \sum_{i=1}^4 \int_{0}^{T}\int_{0}^{1}e^{2s A}(s\lambda)^{14} {\zeta}^{14} |F_i|^2 dxdt \right],$$
    which, together with \eqref{eq:est_mitad2}, implies the result.    
\end{proof}

\bibliographystyle{abbrv}
\bibliography{referencias1}

\end{document}